\newtheorem{theorem}{Theorem}[section]
\newtheorem{lemma}{Lemma}[section]
\newtheorem{proposition}{Proposition}[section]
\newtheorem{remark}{Remark}[section]
\newtheorem{definition}{Definition}[section]
\numberwithin{equation}{section}
      \newcommand{\hy}{\hat y}
      \newcommand{\hu}{\hat u}
      \newcommand{\hv}{\hat v}
      \newcommand{\hg}{\hat g}
      \newcommand{\hh}{\hat h}
      \newcommand{\tlambda}{\bar \lambda}
   \newcommand{\ty}{\widetilde y}
      \newcommand{\cN}{{\mathcal N}}
      \newcommand{\R}{{\mathbb{R}}}
      \newcommand{\N}{\mathbb{N}}
      \newcommand{\loc}{\operatorname{loc}}
      \newcommand{\eps}{\varepsilon}
      \newcommand{\mR}{\mathbb{R}}
      \newcommand{\mZ}{\mathbb{Z}}
      \newcommand{\mC}{\mathbb{C}}
      \newcommand{\supp}{\operatorname{supp}}
      \newcommand{\M}{{\mathcal M}}
      \def\@setcopyright{}
      \def\serieslogo@{}
   \newcommand{\tr}{^\mathsf{T}}
\newcommand{\diff}[1][-3]{\mathop{}\mkern#1mu{d}}
\newcommand{\cM}{\mathcal M}
\newcommand{\cL}{\mathcal L}
\newcommand{\proj}{\mbox{proj}}
\newcommand{\by}{{\bf y}}
\newcommand{\tvarphi}{\widetilde \varphi}
\newcommand{\tu}{\widetilde u}
\newcommand{\cA}{{\mathcal A}}
\newcommand{\cV}{{\mathcal V}}
\newcommand{\dy}{\delta y}
\newcommand{\be}{\begin{equation}}
\newcommand{\ee}{\end{equation}}
\title[KDV]{Local controllability of the Korteweg-de Vries equation with the right Dirichlet control}
\author[H.-M. Nguyen]{Hoai-Minh Nguyen}
\address[H.-M. Nguyen]{Laboratoire Jacques Louis Lions, \newline\indent
Sorbonne Universit\'e\newline\indent
Paris, France}
\email{hoai-minh.nguyen@sorbonne-universite.fr}
\begin{document}

\maketitle

\begin{abstract} 
The Korteweg-de Vries (KdV)  equation with the right Dirichlet control was initially investigated more than twenty years ago. It was shown that this system is small time, locally, exactly controllable for all non-critical lengths and its linearized system is not controllable for {\it all} critical lengths.  Even though the controllability of the KdV system has been studied extensively in the last two decades, the local controllability of this system for critical lengths remains an open question.  In this paper, we give a definitive answer to this question. First,  we characterize all critical lengths and the corresponding unreachable space for the linearized system. In particular, we show that the unreachable space is always of dimension 1.  Second, we prove that the KdV system with the right Dirichlet control is not locally null controllable in small time. Third, we give a criterion to determine whether the system is locally exactly controllable in finite time or {\it not} locally null controllable in any positive time for {\it all} critical lengths. Consequently, we show that there exist critical lengths such that the system is not locally null controllable in small time but is locally exactly controllable in finite time. These facts are surprising and distinct in comparison with related known results. First,  it is known that the corresponding KdV system with the right zero Dirichlet is locally exactly controllable in small time using internal controls. Second, the unreachable space of the linearized system of
the corresponding KdV system with the right Neumann control might be of arbitrary dimension. Third, the KdV system with the right Neumann control is locally exactly controllable in small time if the corresponding unreachable space is of dimension 1. 
\end{abstract}

\medskip
\noindent{\bf Key words:} Controllability, KdV equations, critical lengths, unreachable space,  power series expansion, Hilbert uniqueness method

\medskip
\noindent{\bf AMS subject classification:} 35C20, 35Q53, 93B05, 93B07, 93C20.

\tableofcontents

\section{Introduction and the statement of the main results}

This paper is devoted to the local controllability of the Korteweg-de Vries (KdV) equation using the right Dirichlet control. More precisely, we consider the following control problem, for $T>0$,  
\begin{equation}\label{sys-KdV}\left\{
\begin{array}{cl}
y_t  + y_x  + y_{xxx}  + y  y_x  = 0 &  \mbox{ in } (0, T) \times (0, L), \\[6pt]
y(\cdot, 0) = y_x(\cdot, L) = 0 & \mbox{ in }   (0, T), \\[6pt]
y(\cdot, L) = u & \mbox{ in }  (0, T), \\[6pt]
y(0, \cdot)  = y_0  &  \mbox{ in }  (0, L). 
\end{array}\right.
\end{equation}
Here  $y$ is the state, $y_0 \in L^2(0, L)$ is an initial datum,  and $u$ is a control, belonging to an appropriate functional space.  The KdV equation has been introduced by Boussinesq \cite{1877-Boussinesq} and Korteweg
and de Vries \cite{KdV} as a model for the propagation of surface water waves along a
channel. This equation also furnishes a very useful nonlinear approximation model
including a balance between weak nonlinearity and weak dispersive effects, see e.g.~\cite{Whitham74, Miura76, Kato83}.  The KdV
equation has been investigated from various aspects of mathematics, including the well-posedness, the existence, and stability of solitary waves, the integrability, the
long-time behavior, etc., see e.g.~\cite{Whitham74, Miura76, Kato83, Tao06, LP15}.

\subsection{State of the art} The local controllability for the KdV equation has been studied extensively in the literature, see, e.g., the surveys \cite{RZ09, Cerpa14} and the references therein. We briefly review here some results concerning boundary controls.  When the controls are $y(\cdot, 0)$, $y(\cdot, L)$,   $y_x(\cdot, L)$, Russell and Zhang \cite{RZ96} proved that the KdV equation is small time,  locally exactly controllable. The case of left boundary control ($y(\cdot, L) = y_x(\cdot, L) = 0$) was investigated by Rosier \cite{Rosier04} (see also \cite{GG08}). The small-time local, null controllability holds in this case. The exact controllability does not hold for initial and final data in the $L^2(0, L)$ due to the regularisation effect  but 
holds for a subclass of infinitely smooth initial and final data \cite{MRRR19}.    

A very close setting to the one considered here is the setting in which one controls the right Neumann boundary, i.e.,  $y(\cdot, 0) = y(\cdot, L) = 0$ and $y_x(\cdot, L)$ is a control. For initial and final data in $L^2(0, L)$, and controls in $L^2(0, T)$, Rosier~\cite{Rosier97} proved  that the KdV system with the right Neumann control
is small time, locally, exactly controllable provided that the length $L$ is not critical,
i.e., $L \notin \cN_N$ \footnote{The letter $N$ stands for the Neumann boundary control.}, where \footnote{$\N_* = \N \setminus \{0 \}.$}
\begin{equation}\label{def-cN}
\cN_N : = \left\{ 2 \pi \sqrt{\frac{k^2 + kl + l^2}{3}}; \, k, l \in \N_*\right\}.
\end{equation}
To this end, Rosier studied the controllability of the corresponding linearized system and showed that the linearized system is exactly controllable if $L \not \in \cN_N$. He as well established that when $L \in \cN_N$,  the linearized system is not controllable. More precisely, Rosier showed
that there exists a non trivial, finite dimensional subspace $\M_{N}$  of $L^2(0, L)$  such that its orthogonal space is reachable from $0$ for small time whereas  $\M_N$ is not for any time.  
To tackle the control problem for a critical length $L \in \cN_N$ with initial and final data in $L^2(0, L)$ and controls in $L^2(0, T)$, Coron and Cr\'epeau
introduced the power series expansion method \cite{CC04}. The idea is to take into account the effect of the nonlinear term $y y_x$  absent in the corresponding linearized system. Using this 
method, Coron and Cr\'epeau showed \cite{CC04} (see also \cite[section 8.2]{Coron07}) that the KdV system is small time,  locally,  exactly controllable when  $\dim \M_N = 1$. Cerpa \cite{Cerpa07} 
developed the analysis in \cite{CC04} to prove that the KdV system  is {\it finite time}, locally, exactly 
controllable  in the case $\dim \M_N = 2$.  Later, Cr\'epeau and Cerpa \cite{CC09}
succeeded to extend the ideas in \cite{Cerpa07} to obtain the local, exact 
controllability in {\it finite time} for all other critical lengths. Recently, with Coron and Koenig \cite{CKN20}, we prove that such a system is not small time, locally, null controllable for a class of critical lengths.  This fact is surprising when compared with known results on internal controls for the KdV equation. It is known, see \cite{CPR15, PVZ02, Pazoto05}, that the KdV system \eqref{sys-KdV} with $u=0$ is small time,  locally controllable using internal controls {\it whenever} the control region contains an {\it arbitrary}, open subset of $(0, L)$.  The power expansion method is also a starting point of the analysis in \cite{CKN20}. Part of the analysis is to characterize all controls that bring 0 at 0 to 0 at time T for the corresponding linearized system. 
This idea is then used in the study of the water tank problem \cite{CKN-WT}. 
It is interesting to note that there are other types of boundary controls for the KdV equation for which there is no critical length, see   \cite{Rosier97, Rosier04, GG10, Cerpa14}.
There are also results on the internal controllability for the KdV equation, see
\cite{RZ96, LRZ10, CPR15} and the references therein. A minimal time of the null controllability is also required for some linear partial differential equations.
This is the case for equations with a finite speed of propagation, such as
the transport equation, the wave equation,  or the hyperbolic system, see,  e.g., \cite{Coron07} and the references therein. But this can also happen for equations with the infinite speed of propagation, such as
some parabolic systems~\cite{BAM20}, Grushin-type equations \cite{BCG14, BMM15, Koenig17, BDE20}, Kolmogorov-type equations~\cite{BHHR15},   and the references therein.

We now turn back to the control problem \eqref{sys-KdV}. This control problem was first investigated by Glass and Guerrero \cite{GG10}. To this end, in the spirit of Rosier's work mentioned above, they introduced the corresponding set of critical lengths \footnote{The letter $D$ stands for the Dirichlet boundary control.} 
\begin{equation}\label{def-ND}
\cN_D = \Big\{L \in \mR_+; \, \exists z_1, z_2 \in \mC:  \eqref{cond-z1z2} \mbox{ holds} \Big\}, 
\end{equation}
where 
\begin{equation}\label{cond-z1z2}
z_1 e^{z_1} = z_2 e^{z_2} = - (z_1+ z_2) e^{-(z_1  + z_2)} \quad \mbox{ and } \quad  L^2 = -(z_1^2 + z_1 z_2 + z_2^2). 
\end{equation}
They proved that the set $\cN_D$ is infinite and has no accumulation point. Concerning \eqref{sys-KdV},  Glass and Guerrero proved that the corresponding linearized KdV system is small time, exactly controllable with initial and final data in $H^{-1}(0, L)$ using controls in $L^2(0, T)$  if $L \not \in \cN_D$. Developing this result, they also established that the KdV system \eqref{sys-KdV}  is small-time locally controllable for initial and final data in $L^2(0, L)$ and controls in $H^{1/6_-}$ \footnote{Controls in $H^{1/6_-}$ means controls in $H^{1/6 - \eps}$ for all $\eps > 0$.}  for {\it non-critical} lengths, i.e., $L \not \in \cN_D$.

Even the local controllability of the KdV system using boundary controls has been investigated extensively in the last two decades, to our knowledge, there is no result on the local controllability of system \eqref{sys-KdV} for critical lengths.  In comparison with the set of critical lengths $\cN_N$, the set $\cN_D$ is less explicit. Moreover, the unreachable space for the linearized system related to \eqref{sys-KdV}  has not been determined due to the lack of an appropriate observability inequality for the linearized system in a suitable functional setting. As revealed later in this paper, the control properties related to the unreachable space for the right Dirichlet control are quite distinct from the one for the right Neumann control.

\subsection{Statement of the main results}
The main goal of this paper is to give a rather complete picture of the local controllability of the control KdV system \eqref{sys-KdV}.  We first characterize all the critical lengths and describe the corresponding unreachable space $\cM_D$ for the corresponding linearized system. The first result in this direction is. 

\begin{theorem} \label{thm-MD} Let $L \in \cN_D$. There exists a unique pair $(a, b) \in \mR^2$ such that  $a < 0$, $b > 0$, 
\be \label{thm-MD-ab1}
(a + i b)e^{a + i b} = (a - ib) e^{a - i b} = - 2a e^{-2 a}, 
\ee
and 
\be \label{thm-MD-L}
L^2 = b^2 - 3 a^2. 
\ee
Set 
\be \label{thm-MD-q}
q = - \frac{2 a (a^2 + b^2)}{L^3} > 0, 
\ee
\be \label{thm-MD-phi}
\phi(x) = - \beta e^{\alpha x} \cos (\beta x) +  \beta e^{- 2 \alpha x} + 3 \alpha e^{\alpha x} \sin (\beta x)  \mbox{ for } x \in [0, L], 
\ee
with 
\be \label{thm-MD-alphabeta}
\mbox{$\alpha = - a / L \quad$ and $ \quad \beta = - b / L$},
\ee
and 
\be \label{thm-MD-Phi}
\Phi (t, x) = e^{q t} \phi(x) \mbox{ for } (t, x) \in \mR \times [0, L]. 
\ee
Then $\Phi$ is a solution of the system 
\be \label{sys-Phi}
\left\{\begin{array}{cl}
\Phi_t + \Phi_x  + \Phi_{xxx}  = 0 &  \mbox{ in }   \mR 
\times  (0, L), \\[6pt]
\Phi(\cdot, 0) = \Phi_x(\cdot, 0) = \Phi (\cdot, L) = \Phi_{xx}(\cdot, L)= 0 & \mbox{ in }  \mR, 
\end{array}\right.
\ee
and the unreachable space $\cM_D$ of the linearized system of \eqref{sys-KdV} is given by 
\be \label{def-MD}
\cM_D = \mbox{span } \Big\{ \phi \Big\}. 
\ee
Consequently, 
\be \label{dim-MD}
\dim \cM_D = 1. 
\ee
\end{theorem}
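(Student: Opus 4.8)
The plan is to characterise $\cM_D$ through the adjoint (dual) system, reduce the whole statement to a spectral problem for a third order ODE, and then carry out a real-variable analysis of the resulting transcendental system.

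First I would set up the dual characterisation by the Hilbert uniqueness method. Multiplying the linearised equation $y_t+y_x+y_{xxx}=0$ by a smooth $\Phi$ and integrating by parts over $(0,T)\times(0,L)$, using $y(\cdot,0)=y_x(\cdot,L)=0$ and $y(\cdot,L)=u$, one obtains, whenever $\Phi$ solves $\Phi_t+\Phi_x+\Phi_{xxx}=0$,
\[
\int_0^Ly(T,x)\Phi(T,x)\,dx=\int_0^Ly(0,x)\Phi(0,x)\,dx-\int_0^T\big[(y+y_{xx})\Phi-y_x\Phi_x+y\Phi_{xx}\big]_{x=0}^{x=L}\,dt .
\]
The traces of $y$ that are not prescribed, namely $y_{xx}(\cdot,L)$, $y_{xx}(\cdot,0)$ and $y_x(\cdot,0)$, are annihilated exactly by imposing $\Phi(\cdot,0)=\Phi_x(\cdot,0)=\Phi(\cdot,L)=0$, and the control then couples only through $u\,\Phi_{xx}(\cdot,L)$. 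Hence a direction is orthogonal to the set reachable from $0$ if and only if it is the value at a fixed time of an adjoint trajectory whose observation $\Phi_{xx}(\cdot,L)$ vanishes identically, i.e. of a solution of the overdetermined system \eqref{sys-Phi}; this identifies $\cM_D$. The delicate point here, and the reason the problem stayed open, is the functional setting: one must establish well-posedness of the forward problem and its adjoint in the Glass--Guerrero spaces, the hidden regularity giving meaning to the trace $\Phi_{xx}(\cdot,L)$, and the compactness making $\cM_D$ finite dimensional, so that the separated-variable analysis below is exhaustive.

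Next I would reduce \eqref{sys-Phi} to an ODE problem. Since $\cM_D$ is finite dimensional and invariant under the adjoint flow, it is spanned by (generalised) eigenfunctions, so it suffices to seek $\Phi(t,x)=e^{qt}\phi(x)$, giving $\phi'''+\phi'+q\phi=0$ with $\phi(0)=\phi'(0)=\phi(L)=\phi''(L)=0$. With $r_1,r_2,r_3$ the roots of $r^3+r+q=0$, Vieta yields $r_1+r_2+r_3=0$, $\sum_{i<j}r_ir_j=1$, $r_1r_2r_3=-q$, and the two conditions at $0$ force $(c_1,c_2,c_3)\propto(r_3-r_2,r_1-r_3,r_2-r_1)$, whence
\[
\phi(x)=(r_3-r_2)e^{r_1x}+(r_1-r_3)e^{r_2x}+(r_2-r_1)e^{r_3x}.
\]
Imposing $\phi(L)=\phi''(L)=0$ and setting $z_j:=-Lr_j$, I read these as the statement that $\big((z_2-z_3)e^{-z_1},(z_3-z_1)e^{-z_2},(z_1-z_2)e^{-z_3}\big)$ is orthogonal to both $(1,1,1)$ and $(z_1^2,z_2^2,z_3^2)$; computing its direction as a cross product and matching forces $e^{-z_j}\propto z_j$, that is $z_1e^{z_1}=z_2e^{z_2}=z_3e^{z_3}$ with $z_3=-(z_1+z_2)$. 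Combined with the middle Vieta relation, which reads $-(z_1^2+z_1z_2+z_2^2)=L^2$, this is precisely \eqref{cond-z1z2}, so a nontrivial separated solution exists if and only if $L\in\cN_D$, the eigenvalue being $q=-r_1r_2r_3=z_1z_2z_3/L^3$. Writing $z_1=a+ib$, $z_2=a-ib$, $z_3=-2a$ turns the equality of the $z_je^{z_j}$ into \eqref{thm-MD-ab1}, the symmetric-function identity into \eqref{thm-MD-L} and $q$ into \eqref{thm-MD-q}; expanding $\phi$ in real form with $\alpha=-a/L$, $\beta=-b/L$ reproduces \eqref{thm-MD-phi}, and a direct substitution confirms \eqref{sys-Phi}.

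The main obstacle is the remaining real analysis: the uniqueness of the configuration with $a<0$, $b>0$ and the equality $\dim\cM_D=1$. I would first rule out three real roots (the map $z\mapsto ze^z$ is at most two-to-one on $\R$), and then use $z_1z_2z_3=w^3$ with $w:=z_je^{z_j}$, together with $z_1+z_2+z_3=0$ and $z_1z_2+z_2z_3+z_3z_1=L^2\in\R$, to show that the root triple is conjugation closed; hence $z_2=\overline{z_1}=a-ib$, $z_3=-2a\in\R$, $q\in\R$ and $\phi$ real. Taking real and imaginary parts of \eqref{thm-MD-ab1} gives $a\sin b+b\cos b=0$, while equating moduli gives $\sqrt{a^2+b^2}\,e^{3a}=2|a|$; the latter is incompatible with $L^2=b^2-3a^2>0$ unless $a<0$, in which case it reads $\sqrt{a^2+b^2}\,e^{3a}=-2a$, and swapping $z_1\leftrightarrow z_2$ (which sends $\phi\mapsto-\phi$) normalises $b>0$. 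The crux is then injectivity of $(a,b)\mapsto L=\sqrt{b^2-3a^2}$ on the solution set: eliminating $b$ reduces the system to the single scalar equation $\sin^2 b=1-\tfrac14 e^{6a}$ along the curve $b=-a\sqrt{4e^{-6a}-1}$, whose solutions form a sequence accumulating only at infinity (recovering the Glass--Guerrero discreteness of $\cN_D$), and one must prove $L$ is strictly monotone along this sequence. Uniqueness of $(a,b)$ then leaves a single real eigenvalue with a one-dimensional eigenspace; excluding a Jordan block (no $\Phi=(t\,\phi+\psi)e^{qt}$ can solve \eqref{sys-Phi}, which follows from simplicity of the corresponding zero in $q$ of the boundary determinant) shows the algebraic multiplicity is also $1$, so $\cM_D=\mathrm{span}\{\phi\}$ and $\dim\cM_D=1$. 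The two points I expect to demand the most care are the functional-analytic reduction of the first step, guaranteeing that only separated solutions contribute, and the strict monotonicity of $L$ along the critical sequence.
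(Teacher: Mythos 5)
Your overall architecture --- dual characterization of $\cM_D$ via the adjoint system with observation $\Phi_{xx}(\cdot,L)$, reduction to the spectral ODE $\phi'''+\phi'+q\phi=0$ with the four boundary conditions, identification of the resulting transcendental system with \eqref{cond-z1z2}, and the real-form computation of $\phi$ --- coincides with the paper's (Sections 4 and 5, in particular \Cref{lem-Phi-Psi}, \Cref{lem-obs}, and \Cref{lem-Phi}). However, there is a genuine gap at the step you describe as using ``$z_1z_2z_3=w^3$ with $w:=z_je^{z_j}$, together with $z_1+z_2+z_3=0$ and $z_1z_2+z_2z_3+z_3z_1=L^2\in\mR$, to show that the root triple is conjugation closed.'' This does not follow: the triple is conjugation-closed iff all three elementary symmetric functions are real, and while $e_1=0$ and $e_2=L^2$ are real, the identity $e_3=w^3$ gives nothing because $w$ is a priori a complex number. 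Knowing that $(\bar z_1,\bar z_2,\bar z_3)$ is \emph{another} solution triple for the same $L$ does not force it to be a permutation of $(z_1,z_2,z_3)$ unless one already knows uniqueness --- which is circular. This is exactly the point the paper flags as the main, non-trivial content of \Cref{thm-MD}, and its proof (\Cref{thmCha-L}, Step 1) is an extended case analysis on the real parts $a$, $c$, $-(a+c)$, combining the modulus and phase identities \eqref{thmCha-L-cond-modulus}--\eqref{thmCha-L-cond-phase} with explicit numerical bounds to exclude every non-conjugate configuration. Your sketch contains no substitute for this. (Once conjugacy is granted, your derivation of $a<0$, the normalization $b>0$, and the uniqueness via $L^2=4a^2(e^{-6a}-1)$ with $t\mapsto 4t^2(e^{-6t}-1)$ strictly monotone are correct and match the paper's Step 2.)

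A second, smaller issue is that the functional-analytic input --- well-posedness in $X_T$ with the hidden trace regularity $y_{xx}(\cdot,L)\in[H^{1/3}(0,T)]^*$, the observability inequality on $\cM_D^\perp$, and the compactness/finite-dimensionality making the separated-variable ansatz exhaustive --- is only \emph{named} in your proposal, not supplied. In the paper this is the content of Sections \ref{sect-KdV-B}, \ref{sect-Pro-KdV} and \ref{sect-US} (via the KdV--Burgers reduction and \Cref{lem-A}), and the second-derivative trace estimate is emphasized there as new even relative to the real-line theory; it cannot be cited from Glass--Guerrero, whose framework is $H^{-1}$ data and does not yield the $L^2$-based observability needed here. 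Your treatment of the eigenvalue problem itself (including the remark that Jordan blocks must be excluded, which the paper handles implicitly by extracting an eigenfunction of the restriction to $\cV\subset\cM_D^\perp$ in \Cref{lem-Phi}) is sound once that machinery is in place.
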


\begin{remark} \rm The main part of  \Cref{thm-MD} is to show that  the complex numbers $z_1$, $z_2$ from the definition of $\cN_D$ in \eqref{def-ND} can be chosen as 
$$
z_1 = a + ib \quad \mbox{ and } \quad z_2 = a - ib,  
$$
for some $a < 0$ and $b > 0$. This is non-trivial, see \Cref{sect-CL}. 
\end{remark}

\begin{remark} \rm  Implicit information given in \Cref{thm-MD} is that the function $\phi$, defined in \eqref{thm-MD-phi}, satisfies the following boundary condition: 
$$
\phi(0) = \phi_x(0) = \phi (L) = \phi_{xx}(L)= 0. 
$$
\end{remark}

Using \Cref{thm-MD}, we can precisely describe the set $\cN_D$ in the following result. 

\begin{proposition}\label{corCha-L} Let $L  > 0$. Then $L  \in \cN_D$ if and only if  $L = L_n$ for some $n \in \N$ where $L_n$ is given by 
$$
L_n^2 = 4 a_n^2 (e^{- 6 a_n} - 1), 
$$
with $a_n < 0$ being uniquely determined by 
$$
b_n \cos b_n + a_n \sin b_n = 0, 
$$
where $b_n$ satisfies 
$$
\pi + 2 n \pi <  b_n <    3 \pi/ 2 + 2 n \pi \quad \mbox{ and } \quad b_n^2 = 4 a_n^2 (e^{- 6 a_n} - 1/4). 
$$
\end{proposition}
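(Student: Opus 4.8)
The plan is to start from \Cref{thm-MD}, which reduces the membership $L \in \cN_D$ to the existence of a unique pair $a < 0$, $b > 0$ satisfying $(a+ib)e^{a+ib} = -2ae^{-2a}$ together with $L^2 = b^2 - 3a^2$, and then to solve this transcendental system essentially explicitly. Conversely, for $z_1 = a+ib$ and $z_2 = a - ib$ one checks directly that these two relations are equivalent to \eqref{cond-z1z2}, so that the task is exactly to enumerate the admissible pairs $(a,b)$ and read off the corresponding lengths.

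First I would extract modulus, phase, and sign information from the complex identity. Taking absolute values gives $(a^2+b^2)e^{2a} = 4a^2 e^{-4a}$, that is $a^2 + b^2 = 4a^2 e^{-6a}$, which is precisely $b^2 = 4a^2(e^{-6a} - 1/4)$; the imaginary part gives $b\cos b + a\sin b = 0$. Since then $\tan b = -b/a$, the identity $\sec^2 b = 1 + b^2/a^2 = (a^2+b^2)/a^2$ combined with the modulus relation yields the clean formula $\cos^2 b = e^{6a}/4$. Finally, because the right-hand side $-2ae^{-2a}$ is a \emph{positive} real number (recall $a<0$), the argument of $(a+ib)e^{a+ib}$ must vanish modulo $2\pi$; as $\arg(a+ib) \in (\pi/2, \pi)$ for $a<0<b$, the relation $\arg(a+ib) + b \in 2\pi\mZ$ forces $b \in (\pi + 2n\pi, 3\pi/2 + 2n\pi)$ for some $n \in \N$. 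On such an interval $\cos b < 0$, so $\cos b = -\tfrac12 e^{3a}$, and moreover $L^2 = b^2 - 3a^2 = 4a^2 e^{-6a} - 4a^2 = 4a^2(e^{-6a}-1)$, which is the asserted formula for $L_n^2$. This already produces the interval and both displayed identities in the statement.

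It then remains to prove that, for each $n \in \N$, the system $b\cos b + a\sin b = 0$ and $\cos b = -\tfrac12 e^{3a}$ has exactly one solution with $b \in I_n := (\pi + 2n\pi, 3\pi/2 + 2n\pi)$. Since $a < 0$ forces $\cos b = -\tfrac12 e^{3a} \in (-1/2, 0)$, any solution in fact lies in the sub-interval $J_n \subset I_n$ on which $\cos b \in (-1/2, 0)$, where $a = \tfrac13 \ln(-2\cos b)$ is determined by $b$. Substituting this into the first equation reduces the whole system to the single scalar equation $f(b) = 0$, where $f(b) = b\cos b + \tfrac13 \sin b \, \ln(-2\cos b)$. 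I would check that $f$ is negative at the left endpoint of $J_n$ and tends to $+\infty$ at the right endpoint (there $\ln(-2\cos b) \to -\infty$ while $\sin b \to -1$), so that a root exists by the intermediate value theorem, and then prove that $f$ is strictly increasing on $J_n$ to obtain uniqueness.

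The strict monotonicity is the one genuinely delicate point. Differentiating gives $f'(b) = \cos b - b\sin b + \tfrac13\cos b\,\ln(-2\cos b) - \tfrac{\sin^2 b}{3\cos b}$; on $J_n$ the last two terms are nonnegative, because $\cos b < 0$, $\ln(-2\cos b) < 0$ and $-1/\cos b > 0$, so the sign is governed by $\cos b - b\sin b$. Here I would use the crude domination estimate valid on every $J_n$: one has $b > 4\pi/3$ and $-\sin b \ge \sqrt 3/2$, hence $-b\sin b > 2\sqrt 3 > 1/2 > |\cos b|$ and therefore $f' > 0$. This gives exactly one admissible pair $(a_n, b_n)$ for each $n \in \N$, hence $\cN_D = \{L_n : n \in \N\}$; finally, since $b \mapsto a$ is monotone through $a^2+b^2 = 4a^2 e^{-6a}$ and $a \mapsto 4a^2(e^{-6a}-1)$ is strictly decreasing on $(-\infty, 0)$, the lengths $L_n$ are distinct, indeed strictly increasing in $n$, so the indexing is faithful.
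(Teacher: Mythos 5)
Your proof is correct, and its overall skeleton matches the paper's: both extract the modulus relation $a^2+b^2=4a^2e^{-6a}$ and the phase relation $b\cos b+a\sin b=0$ from \Cref{thm-MD}, both localize $b$ to $(\pi+2n\pi,\,3\pi/2+2n\pi)$ from the positivity of $-2ae^{-2a}$ (you do it via the argument of $(a+ib)e^{a+ib}$, the paper via the signs of $\cos b$ and $\sin b$ — the same fact), and both reduce existence and uniqueness to a one-variable monotonicity argument. Where you genuinely diverge is in how that one-variable reduction is carried out. The paper keeps $|a_n|$ as an implicitly defined increasing function of $b_n$ through the modulus relation, first localizes numerically ($|a_n|\ge 0.43$, $|\cot b_n|\le 0.045$, so $t_n=b_n-\pi-2n\pi\in(\pi/4,\pi/2)$), and then compares the strictly decreasing left side with the strictly increasing right side of $(t_n+\pi+2n\pi)\cos t_n=|a_n|\sin t_n$; the existence part and the final verification that "the corresponding $a_n$ fulfills all the requirements" are left rather terse. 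You instead eliminate $a$ explicitly via $\cos b=-\tfrac12 e^{3a}$ (a clean consequence of combining the two relations, which the paper does not isolate), obtaining a single explicit function $f(b)=b\cos b+\tfrac13\sin b\,\ln(-2\cos b)$ on the subinterval where $\cos b\in(-1/2,0)$, and you prove $f'>0$ there by the elementary bound $-b\sin b>2\sqrt3>|\cos b|$ together with the observation that the two logarithmic terms of $f'$ are nonnegative. This buys you two things: the numerical localization disappears entirely, and the existence step becomes a transparent intermediate value argument with explicit endpoint behavior ($f<0$ where $\cos b=-1/2$, $f\to+\infty$ where $\cos b\to 0^-$). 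Your closing remark that $n\mapsto L_n$ is strictly increasing (via the monotonicity of $|a|\mapsto b$ and $|a|\mapsto 4a^2(e^{-6a}-1)$) is a worthwhile addition not spelled out in the paper's proof.
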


After presenting the results on the unreachable space of the linearized system and the characterization of the critical lengths, we next discuss the local controllability of \eqref{sys-KdV}.  Set, for $T> 0$,  
\begin{equation}\label{def-XT}
X_T: = C\big([0, T]; L^2(0, L) \big) \cap L^2\big((0, T); H^1(0, L) \big)
\end{equation}
equipped with the corresponding norm: 
$$
\|y\|_{X_T} =\|y \|_{C\big([0, T]; L^2(0, L) \big)} + \| y\|_{L^2\big((0, T); H^1(0, L) \big)}. 
$$

We are ready to state the local controllability property of \eqref{sys-KdV} in small time.

\begin{theorem}\label{thm1} Let $L \in \cN_D$. There exist $T =T_0>0$ and $\eps_0 >0$ such that for all solutions $y \in X_{T}$  of the system
\begin{equation}\label{sys-y-O}\left\{
\begin{array}{cl}
y_t + y_x  + y_{xxx}  + y y_x = 0 &  \mbox{ in }   (0, T)
\times  (0, L), \\[6pt]
y(\cdot, 0) = y_x(\cdot, L) = 0 & \mbox{ in }  (0, T), \\[6pt]
y(\cdot, L) = u & \mbox{ in }  (0, T),  \\[6pt]
y(0, \cdot) = \eps \phi & \mbox{ in }  (0,L),
\end{array}\right.
\end{equation}
with $0< \eps < \eps_0$ and  $\| u \|_{H^{1/2}(0, T)} < \eps_0$, we have
\[
y(T, \cdot) \neq 0.
\]
\end{theorem}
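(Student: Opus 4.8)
The plan is to test the equation against the explicit solution $\Phi(t,x)=e^{qt}\phi(x)$ of the adjoint system \eqref{sys-Phi} furnished by \Cref{thm-MD}, and to exploit that the boundary conditions $\phi(0)=\phi_x(0)=\phi(L)=\phi_{xx}(L)=0$ force \emph{every} boundary contribution to vanish, including the one carrying the control $u$. Concretely, I multiply $y_t+y_x+y_{xxx}+yy_x=0$ by $\Phi$, integrate over $(0,T)\times(0,L)$, and integrate by parts. At $x=L$ the factors $\Phi(L)=0$ and $\Phi_{xx}(L)=0$ kill the terms involving $y(L)=u$, while $y_x(L)=0$ removes the remainder; at $x=0$ the conditions $y(0)=0$, $\Phi(0)=0$, $\Phi_x(0)=0$ do the same. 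Using $\Phi_t+\Phi_x+\Phi_{xxx}=0$ for the bulk and $\int_0^L yy_x\Phi\,dx=-\tfrac12\int_0^L y^2\Phi_x\,dx$ for the nonlinearity, this produces the fundamental identity
\[
e^{qT}\langle y(T),\phi\rangle-\eps\,\|\phi\|_{L^2}^2=\frac12\int_0^T\!\!\int_0^L y^2\,e^{qt}\phi_x\,dx\,dt,
\]
where $\langle\cdot,\cdot\rangle$ is the $L^2(0,L)$ inner product. Equivalently, $g(t):=\langle y(t),\phi\rangle$ solves the scalar ODE $g'=-qg+\tfrac12\int_0^L y^2\phi_x\,dx$ with $g(0)=\eps\|\phi\|_{L^2}^2$, which exhibits the nonlinearity as the only source able to move the $\phi$-component.

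Next I argue by contradiction, assuming $y(T)=0$, so that $g(T)=0$ and the identity collapses to
\[
\eps\,\|\phi\|_{L^2}^2=-\frac12\int_0^T\!\!\int_0^L y^2\,e^{qt}\phi_x\,dx\,dt.
\]
The crucial structural fact is the vanishing of the leading moment: since $\phi(0)=\phi(L)=0$,
\[
\int_0^L \phi^2\phi_x\,dx=\frac13\big[\phi^3\big]_0^L=0 ,
\]
so the order-$\eps^2$ part of the right-hand side drops out entirely. Writing $\int_0^L y^2\phi_x\,dx=\int_0^L(y-\eps\phi)(y+\eps\phi)\phi_x\,dx$ isolates the correction $r:=y-\eps\phi$, which vanishes at $t=0$, as the genuine object to be estimated.

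The heart of the matter, and the step I expect to be the main obstacle, is a sharp \emph{small-time} bound on this nonlinear contribution that is \emph{uniform over all admissible controls}. A naive estimate $\bigl|\int_0^T\!\int_0^L y^2 e^{qt}\phi_x\bigr|\le C e^{qT}\|\phi_x\|_{L^\infty}\int_0^T\|y\|_{L^2}^2\,dt\le C'\eps_0^2\,T_0$ only yields $\eps\le C''\eps_0^2 T_0$, which is \emph{no} contradiction when $\eps\ll\eps_0$: because $u$ may be as large as $\eps_0$, the state $y$, and hence the nonlinear forcing, need not be small compared with $\eps_0$. Thus the single identity above is necessary but far from sufficient, and one must bring in the full linearized control structure. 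Following the power series expansion method and the philosophy of \cite{CKN20}, I would split $y=y_L+y_N$ into its linearized part $y_L$ (carrying the same data and control) and the nonlinear correction $y_N$ (with homogeneous boundary data), observe that the linear obstruction of \Cref{thm-MD} forces $\langle y_L(T),\phi\rangle=\eps\,e^{-qT}\|\phi\|_{L^2}^2\neq0$ independently of $u$, and deduce that $y(T)=0$ would require the correction to satisfy $\langle y_N(T),\phi\rangle=-\eps\,e^{-qT}\|\phi\|_{L^2}^2$. The decisive point is then to prove, by analyzing the linearized flow together with the sharp smoothing and trace estimates available in $X_T$, that reaching $y(T)=0$ in the whole state simultaneously constrains $y_N$ so tightly that its $\phi$-component cannot attain the required value for $T\le T_0$ and $\|u\|_{H^{1/2}}<\eps_0$. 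It is precisely this uniform small-time estimate—absent the magnitude cushion that allows exact controllability in large finite time—where essentially all the difficulty lies.
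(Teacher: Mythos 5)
Your opening computation is exactly the paper's starting point: multiplying the equation by $\Phi$ and integrating by parts yields the identity $\eps\|\phi\|_{L^2}^2+\tfrac12\int_0^T\int_0^L y^2\Phi_x\,dx\,dt=0$ when $y(T,\cdot)=0$ (this is \eqref{thm-NL-id}), and you correctly observe that a crude bound on the quadratic term cannot produce a contradiction. But the proof stops there: the entire content of the theorem is the ``decisive point'' you defer, and you offer no mechanism for it. What the paper actually proves (\Cref{pro-dir}, \Cref{pro-monotone}) is that for every control steering $0$ to $0$ for the \emph{linearized} system in time $T<1$, the quadratic form has a definite sign and is in fact coercive:
\[
\int_0^{\infty}\!\!\int_0^L y_1^2\,\Phi_x\,dx\,dt=E\,\|u e^{q\cdot/2}\|_{[H^{1/6}(0,+\infty)]^*}^2\big(1+O(T^{2/9})\big),\qquad E>0 .
\]
This is obtained by Fourier transform in time, Paley--Wiener, and the asymptotics of the roots of $\lambda^3+\lambda+iz=0$ (\Cref{lem-B}); the positivity of $E$ and the smallness of $T$ are both essential, and neither follows from ``smoothing and trace estimates in $X_T$.'' Without this coercivity there is no contradiction with $\eps\|\phi\|_{L^2}^2>0$; indeed for the lengths in \Cref{rem-scilab} the form ceases to be nonnegative for large $T$ and exact controllability is restored, so any argument that does not see the smallness of $T$ quantitatively must fail.

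Two further ingredients you omit are also needed to close the argument. First, the identity involves the full nonlinear solution $y$, not a linearized trajectory returning to $0$; the paper constructs an auxiliary control $u_1$ (via the observability inequality on $\cM_D^\perp$, \Cref{lem-obs}, and \Cref{pro-C}) so that $\ty=y_1+\ty_1$ does steer $0$ to $0$, and then controls $\|y^2-\ty^2\|_{L^1}$ using the well-posedness estimates of \Cref{sect-Pro-KdV} together with \Cref{lem-interpolationL1L2}. Second, the resulting error $C\|u\|_{L^2(0,T)}^{5/2}\|u\|_{H^{1/3}(0,T)}^{1/2}$ must be absorbed into $\alpha\|u\|_{[H^{1/6}(0,T)]^*}^2$, which is done by interpolating $L^2$ and $H^{1/3}$ between $[H^{1/6}]^*$ and $H^{1/2}$ and using $\|u\|_{H^{1/2}(0,T)}<\eps_0$; this is precisely where the hypothesis on the $H^{1/2}$ norm of the control enters, and your proposal never uses it. In short, your identity and your diagnosis of the difficulty agree with the paper, but the analysis that resolves the difficulty is entirely missing.
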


Recall that $\phi$ is defined in \eqref{thm-MD-phi}.

\medskip 
We next present  a criterion on the local controllability property of \eqref{sys-KdV} in finite time. Set 
\be \label{def-Omega}
\Omega (z) =  \int_{0}^L \left|\sum_{j=1}^3 \big(\lambda_{j}e^{\lambda_{j} L } - \lambda_{j+1} e^{\lambda_{j+1} L } \big) e^{\lambda_{j+2} x} \right|^2 \phi_x(x) \, dx, 
\ee
where $\lambda_j = \lambda_j (z)$ with $j=1, 2, 3$ are the three solutions of the equation 
$$
\lambda^3 + \lambda + i z = 0
$$ 
and the convention  $\lambda_{j+3} = \lambda_j$ for $j \ge 1$ is used.  It is clear that $\Omega$ is continuous with respect to $z$. One can show that, for $c \in \mR$, 
$$
\lim_{|z| \to \infty, z \in \mR} \Omega (z + i c) = + \infty. 
$$

The criterion on the local controllability property of \eqref{sys-KdV} in finite time is stated in the following result. 

\begin{theorem} \label{thm2} Let $L \in \cN_D$ and let $q>0$ be defined by \eqref{thm-MD-q}. Set 
$$
\omega = \min_{z \in \mR}  \Omega (z + i q/2), 
$$
where $\Omega$ is defined by \eqref{def-Omega}. The following two facts hold. 

\medskip 
$i)$ If $\omega \ge 0$, then system \eqref{sys-KdV} is not locally null controllable in any positive time with controls in $H^{1/2}$. More precisely, given $T>0$ arbitrary, there exists $\eps_T > 0$ such that  for all solution of $y \in X_T$ of  \eqref{sys-y-O} with $0 < \eps < \eps_T$ and $\| u\|_{H^{1/2}(0, T)} < \eps_T$, we have 
$$
y(T, \cdot) \neq 0. 
$$

\medskip  $ii)$ If $\omega < 0$, then  system \eqref{sys-KdV} is locally exactly controllable in finite time with controls in $H^{1/3}$ and initial and final data in $L^2(0, L)$. More precisely,  there exist $T_0 > 0$ and $\eps_0 > 0$ such that for all $y_0, y_1 \in L^2(0, L)$  with $\| y_0\|_{L^2(0, L)} \le \eps_0$ and   $\| y_1\|_{L^2(0, L)} \le \eps_0$, there exists $u \in H^{1/3}(0, T_0)$ with  $\| u\|_{H^{1/3}(0, T_0)} \le C( \| y_0\|_{L^2(0, L)} + \| y_1\|_{L^2(0, L)} )^{1/2}$ such that 
$$
y(T_0, \cdot) = y_1, 
$$
where $y \in X_{T_0}$ is the unique solution of \eqref{sys-KdV}. Here $C$ denotes 
a positive constant independent of $y_0$ and $y_1$. 
\end{theorem}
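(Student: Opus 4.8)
The argument for both parts rests on a single identity obtained by testing the nonlinear equation in \eqref{sys-y-O} against the adjoint profile $\Phi$ furnished by \Cref{thm-MD}. The plan is to multiply $y_t + y_x + y_{xxx} + y y_x = 0$ by $\Phi$, integrate over $(0,T)\times(0,L)$, and integrate by parts. All boundary contributions cancel thanks to the boundary conditions of $y$ together with \eqref{sys-Phi}; the decisive point is that the extra condition $\Phi_{xx}(\cdot,L)=0$ annihilates the control term carrying $u$. Since $\Phi(t,x)=e^{qt}\phi(x)$ and $\int_0^L y y_x \Phi\,dx = -\tfrac12\int_0^L y^2 \Phi_x\,dx$ (the boundary term vanishing because $\Phi(t,0)=\Phi(t,L)=0$ and $y(t,0)=0$), I obtain
\[
e^{qT}\langle y(T,\cdot),\phi\rangle - \langle y(0,\cdot),\phi\rangle = \tfrac12\int_0^T\!\!\int_0^L y^2\,\Phi_x\,dx\,dt .
\]
At the linear level the right-hand side vanishes, so the $\phi$-component of the state at time $T$ is rigidly fixed by the data (consistent with $\cM_D = \mathrm{span}\{\phi\}$ being unreachable): the quadratic functional $Q(y):=\int_0^T\!\int_0^L y^2 \Phi_x$ is the \emph{only} mechanism that can move the solution in the missing direction, and everything reduces to the sign that $Q$ can attain.

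\textbf{Identifying the symbol of $Q$ with $\Omega$.} Next I would compute the leading behaviour of $Q$ along the linearized flow by feeding in a single-frequency control $u(t)=e^{i(s+iq/2)t}$, $s\in\mR$. The forced response has the form $y^1(t,x)=e^{i(s+iq/2)t}g_s(x)$, where $g_s$ solves $g'''+g'+i(s+iq/2)g=0$ with $g(0)=0$, $g'(L)=0$, $g(L)=1$; expressing $g_s$ through the roots $\lambda_j$ of $\lambda^3+\lambda+iz=0$ reproduces exactly the exponential sum in \eqref{def-Omega}. The weight $\Phi_x=e^{qt}\phi_x$ converts the cross term $|y^1|^2 e^{qt}=|g_s|^2$ into a time-independent quantity — this is precisely why the argument of $\Omega$ is shifted by $iq/2$ and why a modulus square appears — so that, to leading order, $Q(y^1)\sim T\,\Omega(s+iq/2)$, the remaining terms being oscillatory or of lower order. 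Minimizing over $s$ produces $\omega$, and since $\Omega(\cdot+iq/2)\to +\infty$ at infinity, the sign of $\omega$ decides whether $Q$ can be driven negative.

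\textbf{Part $i)$ (obstruction).} Assume $\omega\ge 0$ and suppose, for contradiction, that some admissible $y$ with $y(0,\cdot)=\eps\phi$ reaches $y(T,\cdot)=0$. The identity then forces $Q(y)=-2\eps\|\phi\|^2<0$. The core step is a robust lower bound: I would promote the single-frequency computation $Q\sim T\,\Omega(s+iq/2)$ into a genuine spectral (Plancherel-type) statement showing that, for \emph{arbitrary} admissible controls, the symbol of the quadratic form is $\Omega(\cdot+iq/2)$, hence the form is essentially positive semidefinite when $\omega\ge 0$; that is, the control-driven part of $Q$ is bounded below by $-o(\|u\|^2)$. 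Combining this with the higher-order smallness of the initial-datum contribution and of the quadratic remainder (via the $X_T$ well-posedness estimates) yields, for every $\eps<\eps_T$ and $\|u\|_{H^{1/2}}<\eps_T$, a lower bound strictly above $-2\eps\|\phi\|^2$, contradicting the identity.

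\textbf{Part $ii)$ (controllability) and main obstacle.} Assume now $\omega<0$. Then $\Omega(\cdot+iq/2)$ is negative somewhere and, being $+\infty$ at infinity, positive elsewhere, so $Q$ attains \emph{both} signs along the linearized flow; the second-order term can therefore reach $\mathrm{span}\{\phi\}$ in either direction. Coupling this with the exact controllability of the linearized system on the complement $\cM_D^\perp$ (from the linearized analysis in the spirit of \cite{GG10}), I would run the power series expansion of Coron–Cr\'epeau \cite{CC04}: steer $\cM_D^\perp$ at first order and $\phi$ at second order. The quadratic scaling of $Q$ in $u$ forces controls of size proportional to the square root of the data — explaining both the exponent $1/2$ in the estimate and the regularity $H^{1/3}$ — and the construction is closed by a fixed-point argument in $X_{T_0}$ absorbing the higher-order remainder, for a suitably chosen finite $T_0$. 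In both parts the principal obstacle is the same: turning the single-frequency identity $Q\sim T\,\Omega(s+iq/2)$ into a statement uniform over all admissible controls — a coercivity/positive-semidefiniteness lower bound with symbol $\Omega(\cdot+iq/2)$ in part $i)$, and an invertible (onto a two-sided neighbourhood of $0$ in $\mathrm{span}\{\phi\}$) second-order control map in part $ii)$ — while simultaneously controlling the nonlinear error in $X_T$ uniformly in the small parameter.
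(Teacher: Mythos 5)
Your overall architecture does match the paper's: the identity $e^{qT}\langle y(T,\cdot),\phi\rangle-\langle y(0,\cdot),\phi\rangle=\tfrac12\int_0^T\!\!\int_0^L y^2\Phi_x$ is the starting point, the quadratic form is diagonalized in time--frequency with symbol essentially $\Omega(\cdot+iq/2)$ (this is \Cref{pro-dir}), and part $ii)$ is run by a second-order power series expansion plus a fixed point. The gaps are in the two steps you defer. For part $i)$, ``positive semidefinite'' and ``bounded below by $-o(\|u\|^2)$'' are not enough to close the contradiction: $\eps$ and $\|u\|$ are independent small parameters, and the identity must absorb a remainder of size $C\|u\|_{L^2}^{5/2}\|u\|_{H^{1/3}}^{1/2}+C\eps_0\eps$ coming from replacing the nonlinear solution by a linearized one steering $0$ to $0$; with no strictly positive quadratic term on the left there is nothing to absorb it with. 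What is actually needed (and is the content of \Cref{thm-NL} combined with \eqref{thm2-key-i}) is genuine coercivity $\int\!\!\int y^2\Phi_x\ge c\,\|u\|_{[H^{1/6}(0,T)]^*}^2$: the specific negative norm comes from the asymptotics $\int_0^L B(z+iq/2,x)\,dx=E|z|^{-1/3}+O(|z|^{-2/3})$ with $E>0$ (\Cref{lem-B}), and the remainder is then dominated via $\|u\|_{L^2}^{5/2}\|u\|_{H^{1/3}}^{1/2}\le C\|u\|_{[H^{1/6}]^*}^{2}\|u\|_{H^{1/2}}$ --- which is precisely where the $H^{1/2}$-smallness hypothesis on the control enters; your sketch never uses it. The borderline case $\omega=0$ needs a further argument (the paper uses real-analyticity of $|\Xi(\cdot+iq/2)|^2|\Omega(\cdot+iq/2)|^2$ to reduce to finitely many real zeros of the symbol and splits the frequency integral), since pointwise nonnegativity of a symbol vanishing somewhere does not by itself give coercivity.

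For part $ii)$, the claim that ``$Q$ attains both signs along the linearized flow'' cannot be obtained from a single frequency: the Plancherel identity is valid only for controls steering $0$ to $0$, and by Paley--Wiener such controls have entire Fourier transforms of exponential type, so pure frequencies are inadmissible. One must construct an admissible control whose spectral density $|\hat u(\cdot+iq/2)|^2$ concentrates near a minimizer $z_0$ of $\Omega(\cdot+iq/2)$; the paper does this explicitly with $\hat u_m(z)=e^{imz}\hat\sigma_m(z-iq/2-z_0)\det Q(z)\Xi(z)$, the factor $\det Q\,\Xi$ being inserted so that $\hat y_{xx}(\cdot,0)$ is entire of the right exponential type and the state indeed returns to $0$ (via \Cref{lem-Phi}). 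Moreover the fixed-point step is not routine because of the asymmetry of the boundary conditions: the control is the right Dirichlet trace while the left Dirichlet trace must vanish, so one cannot simply reverse time with the same boundary data; the paper introduces the auxiliary problem of \Cref{pro-kdv3} (with $y_x(\cdot,0)$, $y(\cdot,L)$, $y_x(\cdot,L)$ prescribed) and a Banach --- not Brouwer --- fixed point, together with a preliminary short-time control reducing general data to data with a definite $\phi$-component. You correctly anticipate the square-root cost of the control, but these constructions are the substance of the proof rather than technical afterthoughts.
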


\begin{remark} \label{rem-scilab} \rm Using Scilab, the program is given in \Cref{appendix-Scilab}, one can show that
\begin{itemize}
\item if $n=0$, then $L=4.5183604$,  $a=-0.5065520$, $b=4.6027563$,  $q=0.2354919$, and  $\Omega (iq/2) =-0.0140641$. 

\item if $n=1$, then $L=10.866906$,  $a=-0.6903700$,  $b=10.932497$,  $q=0.1291104$, and $\Omega (iq/2) = -0.0061256$. 

\item if $n=2$, then $L=17.177525$,  $a=-0.7947960$,  $b=17.232599$, $q=0.0933315$, and  $\Omega (iq/2) =-0.0036196$. 

\item if $n=3$, then $L=23.476776$,  $a=-0.8687610$,  $b=23.524949$,  $q=0.0744156$, and  $\Omega (iq/2) = -0.0024525$. 

\end{itemize}
Parameters of the program ($k$, $j$, $j0$) are given for $n=0$. The parameters for $n=1, 2, 3$ are also given there. 
\end{remark}

As a consequence of \Cref{thm1} and \Cref{thm2} (see also \Cref{rem-scilab}), there exist critical lengths for which system \eqref{sys-KdV} is not locally null controllable in small time but locally exactly controllable in finite time even the unreachable space of the corresponding linearized system is of dimension 1. This result is surprising and distinct when compared with known related results on the local controllability of the  KdV equation. First,  it is known that the corresponding KdV system with the right zero Dirichlet is locally exactly controllable in small time using internal controls. Second, the unreachable space of the linearized system of
the corresponding KdV system with the right Neumann control might be of arbitrary dimension. Third,   the KdV system with the right Neumann control is locally exactly controllable in small time if the corresponding unreachable space is of dimension 1.

\medskip 
Finally,  in the case $L \not \in \cN_D$, we can prove the following result, which sharpens the results in \cite{GG10} mentioned previously.  

\begin{theorem}\label{thm3} Let $L  \not \in \cN_D$ and $T>0$. There exist  $C>0$ and $\eps_0 > 0$ depending only on $L$ and $T$ such that  for all $y_0, \, y_1 \in L^2(0, L)$ with $\| y_0\|_{L^2(0, L)} \le \eps_0$ and   $\| y_1\|_{L^2(0, L)} \le \eps_0$, there exists $u \in H^{1/3}(0, T)$ with  $\| u\|_{H^{1/3}(0, T)} \le C( \| y_0\|_{L^2(0, L)} + \| y_1\|_{L^2(0, L)} )$ such that 
$$
y(T, \cdot) = y_1, 
$$
where $y \in X_T$ is the unique solution of \eqref{sys-KdV}. 
\end{theorem}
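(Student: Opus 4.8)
The plan is to reduce \Cref{thm3} to the exact controllability of the \emph{linearized} system and then absorb the nonlinearity $y y_x$ by a fixed-point argument, keeping all estimates in the space $X_T$. Concretely, I would first establish that, for $L \notin \cN_D$ and any $T>0$, the linear controlled problem
\begin{equation*}
y_t + y_x + y_{xxx} = f, \quad y(\cdot,0) = y_x(\cdot,L) = 0, \quad y(\cdot,L) = u, \quad y(0,\cdot) = y_0,
\end{equation*}
is exactly controllable from $y_0 \in L^2(0,L)$ to any target $y_1 \in L^2(0,L)$ for sources $f$ in a suitable class (e.g.\ $L^1((0,T);L^2(0,L))$), \emph{and} that the control $u$ may be chosen as a bounded linear function of $(y_0,y_1,f)$ with $u \in H^{1/3}(0,T)$. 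This linear statement, once available with the correct norms, is the engine of the whole proof.

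To build the linear control map I would use the Hilbert uniqueness method. Pairing the state against a solution $\varphi$ of the adjoint system, solved backward in time from a datum at $t=T$,
\begin{equation*}
\varphi_t + \varphi_x + \varphi_{xxx} = 0, \quad \varphi(\cdot,0) = \varphi(\cdot,L) = \varphi_x(\cdot,0) = 0,
\end{equation*}
integration by parts shows that the only control-carrying boundary term is $\int_0^T u(t)\,\varphi_{xx}(t,L)\,dt$, so the observation operator is $\varphi \mapsto \varphi_{xx}(\cdot,L)$. Controllability is then equivalent to an observability inequality bounding a suitable norm of the adjoint datum by $\varphi_{xx}(\cdot,L)$ in the dual norm. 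The hypothesis $L \notin \cN_D$ enters exactly here: a nontrivial adjoint solution with vanishing observation would additionally satisfy $\varphi_{xx}(\cdot,L)=0$, i.e.\ the four homogeneous conditions of \eqref{sys-Phi}, and hence (by \Cref{thm-MD}) force $L \in \cN_D$; its absence yields unique continuation, which I would upgrade to the quantitative observability inequality through the spectral/complex-analytic analysis of the symbol $\lambda^3 + \lambda + iz = 0$ underlying the determinant condition \eqref{cond-z1z2}. Finally, the $H^{1/3}$ regularity of $u$ is not an extra input but the \emph{natural} trace regularity: realizing the HUM control as the boundary value $u = y(\cdot,L)$ of a solution with $L^2$ datum, the sharp Kato smoothing and hidden-boundary-regularity estimates for KdV ($y_0 \in L^2 \Rightarrow y \in X_T$ and $\partial_x^j y(\cdot,L) \in H^{(1-j)/3}(0,T)$) give $u \in H^{1/3}$ with the stated linear bound.

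With the linear control map in hand, I would close the nonlinear problem by a contraction argument. Writing the equation as the linear controlled problem with source $f = -y y_x = -\tfrac12 (y^2)_x$, the basic bilinear estimate $\|y y_x\|_{L^1((0,T);L^2(0,L))} \le C\|y\|_{X_T}^2$, which follows from $H^1(0,L)\hookrightarrow L^\infty(0,L)$ and $\|y y_x\|_{L^2} \le \|y\|_{L^\infty}\|y_x\|_{L^2}$, places $f$ in the admissible source class. Defining the map that sends $y$ to the solution $\widetilde y$ driven from $y_0$ to $y_1$ by the HUM control associated with the source $-y y_x$, the well-posedness estimate in $X_T$ together with the boundedness of the control map makes this a contraction on a small ball of $X_T$ once $\|y_0\|_{L^2}$ and $\|y_1\|_{L^2}$ are small enough; its fixed point is the desired solution. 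Because $L$ is non-critical, the linear map already steers at linear rate and the nonlinear correction is quadratic, so the control obeys $\|u\|_{H^{1/3}(0,T)} \le C(\|y_0\|_{L^2} + \|y_1\|_{L^2})$, with $C$ and $\eps_0$ depending only on $L$ and $T$.

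The main obstacle is the linear step, specifically the observability inequality in the sharp functional setting. Unique continuation follows cleanly from $L \notin \cN_D$ and \Cref{thm-MD}, but turning it into a quantitative inequality with constants depending only on $(L,T)$, in norms dual to $L^2$ data and $H^{1/3}$ controls, requires a careful analysis of the boundary symbol and of the resolvent/determinant that defines $\cN_D$; this is precisely the point where Glass--Guerrero were forced to work with $H^{-1}$ data, and the improvement to $L^2$ data with $H^{1/3}$ controls and a clean bounded control map is the delicate part. The nonlinear fixed point, by contrast, is routine once the bilinear estimate and the bounded linear control map are in place.
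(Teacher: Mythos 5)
Your proposal follows essentially the same route as the paper: the paper's (very terse) proof of this theorem invokes exactly the nonlinear well-posedness estimate in $X_T$ (\Cref{pro-WP}) and the observability inequality for non-critical lengths (\Cref{lem-obs2}) — itself proved by a compactness–uniqueness argument that reduces, as you suggest, to the spectral characterization of $\cN_D$ — and then runs HUM plus a fixed point ``as usual'', with the same adjoint boundary conditions and observation operator $\varphi\mapsto\varphi_{xx}(\cdot,L)$ that you identify. The one imprecision is your claim that the $H^{1/3}$ regularity of the control follows from hidden trace regularity of the controlled solution at $x=L$: that Dirichlet trace \emph{is} the control, so its regularity cannot be derived that way; in the paper it comes instead from composing the raw HUM observation $\varphi_{xx}(\cdot,L)\in[H^{1/3}(0,T)]^*$ with a fixed coercive duality map $\cL_1:[H^{1/3}(0,T)]^*\to H^{1/3}(0,T)$, as in the construction of $\cL$ and $\hat\cL$ in \Cref{pro-C} and \Cref{pro-C-n}.
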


\subsection{Ideas of the proofs}

The characterization of the critical lengths and the unreachable space is based on the study of system \eqref{cond-z1z2}. In comparison with the characterization of the set $\cN_N$, the study of $\cN_D$ is more complex. The main goal and the difficulty in this direction, see \Cref{sect-CL},  are to obtain/characterize {\it all} solutions of \eqref{cond-z1z2}. The existence of a class of solutions is easier as observed previously in \cite{GG10}. As a consequence, we derive that $\dim \cM_D = 1$ which is completely different from what is known for the right Neumann control. 

Our approaches to \Cref{thm1} and \Cref{thm2} are inspired by the power
series expansion method introduced by Coron and Cr\'epeau \cite{CC04}.
The idea of this method is to search for/understand a control $u$ of the form
$$
u = \eps u_1 + \eps^2 u_2 +  \cdots.
$$
The corresponding solution then formally has the form
$$
y = \eps y_1 + \eps^2 y_2  + \cdots, 
$$
and the non-linear term $y y_x$ can be written as
$$
y y_x = \eps^2 y_1 y_{1, x} + \cdots.
$$
One then obtains the following systems for $y_1$ and $y_2$: 
\begin{equation}\left\{
\begin{array}{cl}
y_{1, t}  + y_{1, x}  + y_{1, xxx}  = 0 &  \mbox{ in } 
(0, T) \times (0, L), \\[6pt]
y_1(\cdot, 0) = y_{1, x} (\cdot, L) = 0 & \mbox{ in }  (0, T), \\[6pt]
y_{1}(\cdot, L) = u_1 & \mbox{ in } (0, T),
\end{array}\right.
\end{equation}
\begin{equation}\left\{
\begin{array}{cl}
y_{2, t} + y_{2, x}  + y_{2, xxx}  + y_1  y_{1, x}   = 0 &
\mbox{ in } (0, T) \times (0, L), \\[6pt]
y_2(\cdot, 0) = y_{2, x}(\cdot, L)  = 0 & \mbox{ in }  (0, T), \\[6pt]
y_{2}(\cdot, L) = u_2 & \mbox{ in } (0, T). 
\end{array}\right.
\end{equation}
The idea   is then to find the corresponding controls $u_1$ and $u_2$ such that, if  $y_1(0, \cdot)=y_2(0, \cdot)=0$, then $y_1(T, \cdot) = 0$ and the $L^2$-orthogonal projection of  $y_2(T, \cdot)$ on $\M_D$ is a given (non-zero) element in $\M_D$. To this end,  in \cite{CC04, Cerpa07, CC09}, the authors 
used delicate contradiction arguments to capture the structure of their studied KdV system.

We here use the ideas in the spirit of the joint work with Coron and Koenig \cite{CKN20}.  The starting point of the analysis is also the power
series expansion method. The strategy is to characterize all possible
$u_1$ which steers 0 at time 0 to $0$ at time $T$ (\Cref{pro-Gen}).  This is done by taking the  Fourier transform with respect to time of the solution $y_1$  and applying Paley-Wiener's theorem. We then investigate the projection of $y_2$ into the unreachable space $\cM_D$ (in \Cref{sect-0-0}).  This is where our analysis deviates from \cite{CKN20} to take the advantage of the fact that $q$ is real and to use the characterization via Paley-Wiener's theorem to construct $u_1$ for which the projection of $y_2$ into $\cM_D$ has a desired direction (see the proof of Assertion $ii)$ of \Cref{thm2}). Concerning the projection of $y_2$ into $\cM_D$, we use the information of  $|\hat u(z + i q/2)|^2$ instead of $\hat u(z) \overline{\hat u(z + i q)}$ as suggested in \cite{CKN20}. This new proposal has two advantages. First, the way to obtain 
the projection of $y_2$ into the unreachable space $\cM_D$ is less complex.  More importantly, it allows us to obtain the criterion on the local controllability of \eqref{sys-KdV} in finite time given in \Cref{thm2} by taking into account the fact that $|\hat u(z + i q/2)|^2 \ge 0$. Another important part in the proof of the local controllability in \Cref{thm1} and \Cref{thm2} is to obtain the observability for the linearized system with initial/final data in  $\cM_D^\perp$. To this end, one needs to establish the optimal results on the stability and the well-posedness of the linearized system of \eqref{sys-KdV} and related ones in which the boundary conditions are in fractional Sobolev spaces, see \Cref{sect-Pro-KdV}.  The proof of the local controllability in finite time given in Assertion $ii)$ of \Cref{thm2} is based on a new strategy to handle the lack of symmetry in our control system in comparison with the one using the right Neumann control. Concerning the KdV system using the right Neumann control, one can bring a state forward or backward easily thanks to the symmetry of the boundary conditions. This is not the case for  \eqref{sys-KdV} since the Dirichlet condition on the right is controlled and the Dirichlet condition on the left is imposed to be 0. 
To overcome this, we introduce a new approach that involves a KdV system with a new type boundary condition, investigated in \Cref{sect-Pro-KdV},  in a backward way.  Our analysis uses the Banach fixed point arguments while the Brouwer fixed point theorem is usually applied to related contexts. It is worth noting that the analysis in this paper is based on the expansion up to the second order even if the unreachable space is of dimension 1. This explains the cost of the control given in Assertion $ii)$ of \Cref{thm2}. This way is different from what has been done for the KdV system with the right Neumann control where the expansion up to the third order is required. 

\medskip 
Here are other comments on the analysis. 

\medskip 
$i)$ Various interpolation inequalities involving fractional Sobolev spaces and their dual spaces are established and used in the proof of \Cref{thm1} and \Cref{thm2}.

$ii)$ The analysis for the well-posedness of the KdV equation with various boundary conditions is in the spirit of the joint work with Coron and  Koenig \cite{CKN20}. This partly involves a connection between the linear KdV equation and the linear KdV-Burgers equation as previously used by Bona et al. \cite{Bona09} and inspired by the work of Bourgain \cite{Bourgain93}, and  Molinet and Ribaud \cite{MR02}. In this work, 
to deal with the boundary conditions containing the information of $y_{xx}$  and derive {\it optimal} estimates on $y$, $y_x$, and $y_{xx}$ in fractional Sobolev spaces and their dual spaces, new ideas and technique are implemented (see \Cref{rem-RW}).  It is worth noting that similar estimates for $y_{xx}$ are not known for the KdV equation in the real line setting and it is not clear whether such estimates hold for that setting. Our estimates particularly allow us to get the observability inequality for the critical lengths, which plays a role in determining the unreachable space for the linearized KdV system (see \Cref{sect-Pro-KdV}). The unreachable space is not known previously and this determination is also a contribution of the paper.   

$iii)$ The arguments used in this paper to disprove the small time, local controllability improve the ones in \cite{CKN20}. One of the key steps in the proof is to establish new (optimal) estimates for solutions of the KdV equations with various boundary conditions (see \Cref{sect-Pro-KdV}). The proof also relies on the positivity of a scalar product which comes naturally in the study of the controllability of the KdV system for small time.  The positivity of a scalar product for small time was also a crucial point of several lacks of small time, local controllability results for systems with infinite propagation speed. Nevertheless, the previous ways used to derive the positivity are different, see, e.g., \cite{Coron06, Marbach18, BM20}.

\subsection{Organisation of the paper}
The paper is organized as follows. \Cref{sect-KdV-B} is devoted to the study of the linear KdV-Burgers equation with the periodic boundary condition.  \Cref{sect-Pro-KdV} is devoted to the study of the (linear and nonlinear) KdV equations with various boundary conditions. \Cref{sect-CL} is devoted to the study of critical lengths and some properties of the unreachable space $\cM_D$.  We there prove \eqref{thm-MD-ab1}, \eqref{thm-MD-L}, and \eqref{sys-Phi}, and establish \Cref{corCha-L}. In \Cref{sect-US}, we prove \eqref{def-MD} and establish the corresponding observability inequality (\Cref{lem-obs}).  The proof of \Cref{thm3} is given at the end of this section.  In \Cref{sect-0-0}, we study properties of the controls which steer 0 at time 0 to 0 at time T. Using results in \Cref{sect-0-0}, we study attainable directions for small time in \Cref{sect-dir}. The proofs of \Cref{thm1} and  \Cref{thm2}  are given in \Cref{sect-thm1} and \Cref{sect-thm2}, respectively. Several technical results are stated and proved in the appendix. The Scilab program is also given there.

\section{Linear KdV-Burgers equations} \label{sect-KdV-B}

This section is devoted to the study of the linear KdV-Burgers equations with the periodic boundary condition.  Here is the main result of this section, which plays an essential role in the study of the linearized KdV equation with various boundary conditions investigated in \Cref{sect-Pro-KdV}.

\begin{proposition}\label{pro-kdv-B} Let $g \in L^1\big(\mR_+; L^2 (0, L)\big)$ with $\int_0^L g(t, x) \, dx = 0$ for a.e. $t > 0$, and let $y_0 \in L^2(0, L)$ be such that $\int_0^L y_0 (x) \, dx = 0$.  There exists a unique solution $y \in C\big([0, + \infty); L^2(0, L) \big) \cap L^2\big([0, +\infty); H^1([0, L]) \big)$ which is periodic in space of the system 
\be \left\{
\begin{array}{cl}
y_t  + 4 y_{x} + y_{xxx} - 3 y_{xx} = g  & \mbox{ in } \mR_+ \times (0, L), \\[6pt]
y(0, \cdot) = y_0 & \mbox{ in } (0, L). 
\end{array} \right. 
\ee 
Moreover,  for $t \in \mR_+$ and  $x \in [0, L]$,   
\begin{multline}\label{lem-kdvB-cl3}
\| y(t, \cdot) \|_{L^2(0, L)} + \| y(\cdot, x)\|_{H^{1/3}(\mR_+)} + \| y_x(\cdot, x)\|_{L^2(\mR_+)} + \| y_{xx}(\cdot, x)\|_{[H^{1/3}(\mR_+)]^*}  \\[6pt]
\le C \Big(\| y_0\|_{L^2(0, L)} +   \|g \|_{L^1\big(\mR_+; L^2(0, L)\big)} \Big), 
\end{multline}
and
\be\label{lem-kdvB-cl4}
\|y(\cdot, x)\|_{L^2(\mR_+)} +   \| y_x(\cdot, x)\|_{[H^{1/3}(\mR_+)]^*}  
\le C \Big(\| y_0\|_{H^{-1}(0, L)} +   \|g \|_{L^1\big(\mR_+; H^{-1}(0, L)\big)} \Big)
\ee
for some positive constant $C$ depending only on $L$. 
\end{proposition}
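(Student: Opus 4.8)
The plan is to pass to Fourier series in the space variable, where the periodicity and the zero-mean hypotheses diagonalize the operator, and then to read off all four estimates from the resulting explicit symbol. Writing $\xi_k = 2\pi k/L$, I would seek $y(t,x) = \sum_{k\neq 0} c_k(t)\,e^{i\xi_k x}$ (the $k=0$ mode is absent because $y_0$ and $g(t,\cdot)$ have zero mean). Each coefficient solves the scalar ODE $\dot c_k + \mu_k c_k = \hat g_k$ with $\mu_k = 3\xi_k^2 + i(4\xi_k - \xi_k^3)$, so that $c_k(t) = e^{-\mu_k t}\widehat{(y_0)}_k + \int_0^t e^{-\mu_k(t-s)}\hat g_k(s)\,ds$. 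The decisive structural feature is $\operatorname{Re}\mu_k = 3\xi_k^2 > 0$: the Burgers term $-3y_{xx}$ supplies genuine parabolic decay. Existence, uniqueness and membership in $C([0,\infty);L^2)\cap L^2([0,\infty);H^1)$ follow by summing these coefficients. Multiplying the equation by $\bar y$ and integrating over $(0,L)$, the transport and dispersion terms drop by skew-adjointness on the periodic domain while the Burgers term gives
$$\tfrac12 \tfrac{d}{dt}\|y(t,\cdot)\|_{L^2(0,L)}^2 + 3\|y_x(t,\cdot)\|_{L^2(0,L)}^2 = \operatorname{Re}\int_0^L g\,\bar y\,dx,$$
which yields the uniform-in-$t$ bound on $\|y(t,\cdot)\|_{L^2}$ (and the space-time control of $y_x$) in terms of $\|y_0\|_{L^2}+\|g\|_{L^1(\mR_+;L^2)}$, and also forces uniqueness.

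The substance of the proposition is the three pointwise-in-$x$ trace estimates in the time variable. For these I would take the Fourier transform in $t$, extending $y$ by $0$ for $t<0$ (legitimate by causality). In the homogeneous case $g=0$ the transform of the $k$-th coefficient is $\widehat{c_k}(\tau) = \widehat{(y_0)}_k/(\mu_k + i\tau)$, with $\mu_k + i\tau = 3\xi_k^2 + i(\tau + 4\xi_k - \xi_k^3)$. Fixing $x$, I would estimate the weighted integrals $\int_{\mR}(1+|\tau|)^{s}\,|\sum_k \xi_k^m \widehat{c_k}(\tau)\,e^{i\xi_k x}|^2\,d\tau$ for $(m,s)=(0,\tfrac23),\,(1,0),\,(2,-\tfrac23)$, which are exactly the $H^{1/3}$, $L^2$ and $[H^{1/3}]^*$ time-norms of $y$, $y_x$, $y_{xx}$. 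The key is the cubic dispersion relation: the denominator is smallest where $\tau \approx \xi_k^3$, i.e. $\xi_k \approx |\tau|^{1/3}$, so on the resonant region $(1+|\tau|)^{1/3}$ is comparable to $\xi_k$ and $|\mu_k+i\tau|\approx \xi_k^2$. The change of variables $\tau \approx \xi^3$ (hence $d\tau \approx \xi^2\,d\xi$) then converts each of the three weighted integrals into $\sum_k |\widehat{(y_0)}_k|^2 \approx \|y_0\|_{L^2}^2$; the exponents $\tfrac23,\,0,\,-\tfrac23$ are precisely calibrated so that a fractional time-derivative of order $\tfrac13$ trades for one space derivative. Off-resonant contributions are handled by the decay of $1/|\mu_k+i\tau|$, with the parabolic factor $\operatorname{Re}\mu_k = 3\xi_k^2$ ensuring summability over $k$; I would make this rigorous by a Schur test / almost-orthogonality argument. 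The forced case ($y_0=0$) then reduces to the homogeneous one: applying Minkowski's inequality in the $s$-variable to the Duhamel formula, together with time-translation invariance and causality, bounds each trace norm by $\int_0^\infty \|g(s,\cdot)\|_{L^2}\,ds = \|g\|_{L^1(\mR_+;L^2)}$.

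The second estimate \eqref{lem-kdvB-cl4} I would not prove directly but deduce from \eqref{lem-kdvB-cl3}. Since the equation has constant coefficients and preserves zero mean, the antiderivative $w = \partial_x^{-1} y$, well-defined and periodic on mean-zero functions, solves the same KdV-Burgers equation with data $\partial_x^{-1} y_0$ and forcing $\partial_x^{-1} g$. Applying \eqref{lem-kdvB-cl3} to $w$ and using $w_x = y$, $w_{xx} = y_x$ together with the equivalences $\|\partial_x^{-1} y_0\|_{L^2}\approx \|y_0\|_{H^{-1}}$ and $\|\partial_x^{-1} g\|_{L^2}\approx\|g\|_{H^{-1}}$ gives \eqref{lem-kdvB-cl4} at once.

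The main obstacle is the second paragraph: obtaining the trace estimates uniformly in $x$ while summing over all Fourier modes, and in particular the $y_{xx}$ trace in $[H^{1/3}(\mR_+)]^*$, which carries two space derivatives and has no analogue for KdV on the line. The difficulty lies in the interaction between the dispersive oscillation — which supplies the $\tfrac13$ smoothing through the relation $\tau\approx\xi^3$ — and the summation over modes; here the parabolic gain $\operatorname{Re}\mu_k = 3\xi_k^2$ from the Burgers term is indispensable, and the relevant multiplier bounds are of the type developed by Bourgain and Molinet–Ribaud and exploited for the KdV-Burgers equation by Bona et al.
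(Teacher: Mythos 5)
Your proposal follows essentially the same route as the paper's proof: spatial Fourier series reducing to the explicit symbol $3\xi_k^2+i(4\xi_k-\xi_k^3)$, the energy identity for the uniform-in-time $L^2$ bound, the temporal Fourier transform with the resonance analysis $\tau\approx\xi_k^3$ for the three trace estimates (the paper isolates your ``Schur test'' step as a standalone Bourgain-type lemma, proved by exactly the block decomposition $z\in[m^3,(m+1)^3]$, $n=m+k$, near/far splitting that you sketch), and Duhamel plus Minkowski's inequality for the forced case. Your derivation of \eqref{lem-kdvB-cl4} by applying \eqref{lem-kdvB-cl3} to the mean-zero antiderivative is a cosmetic variant of the paper's device of applying the same summation lemma to the rescaled coefficients $a_n(0)/n$ with the shifted exponents $s=0,-1$.
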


The rest of this section consisting of two subsections is organized as follows. In the first subsection, we present and prove a useful lemma in the spirit of Bourgain's. The second subsection is devoted to the proof of \Cref{pro-kdv-B}.  

\subsection{A useful lemma}

The following result in the spirit of Bourgain's is used in the proof of \Cref{pro-kdv-B}.  

\begin{lemma} \label{lem-A} Let $(a_n)_{n \in \mZ \setminus \{0 \}} \subset \mR$ be such that $\sum_{n \in \mZ \setminus \{0 \}} a_n^2 < + \infty$.  For $- 2 <  s \le 3$, there exists a positive constant $C = C_s$  such that 
\be
\int_{\mR} (1 + |z|)^{2s/3}  \left(\sum_{n \in \mZ \setminus \{0 \}} \frac{|a_n| |n|^{3-s} }{(z + 4n - n^3)^2 + n^4} \right)^2 \, dz \le C \sum_{n \in \mZ \setminus \{0 \}} a_n^2. 
\ee
\end{lemma}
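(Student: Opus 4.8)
The plan is to read the left-hand side as a quadratic form in the sequence $(|a_n|)$ and to bound the associated matrix on $\ell^2$ by Schur's test. Writing $z_n = n^3 - 4n$ and $\phi_n(z) = |n|^{3-s}\big[(z - z_n)^2 + n^4\big]^{-1}$, I would expand the square and integrate term by term to get
\[
\int_{\mR}(1+|z|)^{2s/3}\Big(\sum_n |a_n|\,\phi_n\Big)^2 dz = \sum_{n,m} K_{nm}\,|a_n|\,|a_m|, \qquad K_{nm} := \int_{\mR}(1+|z|)^{2s/3}\phi_n(z)\,\phi_m(z)\,dz.
\]
Since $K_{nm}=K_{mn}\ge 0$, Schur's test reduces the claim to the uniform bound $\sup_n\sum_m K_{nm}\le C$. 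The geometry driving the estimate is that $\phi_n$ is a Lorentzian peaked at $z_n$ with half-width $n^2$; that near its peak $(1+|z|)^{2s/3}\sim\langle n\rangle^{2s}$ (as $|z_n|\sim|n|^3$); and that distinct peaks are well separated, $|z_n-z_m| = |n-m|\,|n^2+nm+m^2-4|\gtrsim |n-m|\,\max(n,m)^2$, using $n^2+nm+m^2\ge\tfrac34\max(n,m)^2$.

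For the diagonal, the elementary identity $\int_{\mR}\big[(z-z_n)^2+n^4\big]^{-2}dz=\tfrac{\pi}{2}n^{-6}$ gives $K_{nn}\sim \langle n\rangle^{2s}\cdot n^{2(3-s)}\cdot n^{-6}\sim 1$, uniformly in $n$. For $n\ne m$ I would split $\mR$ into the region closer to $z_n$ and the region closer to $z_m$. On the part near $z_n$ the other factor is bounded by $\phi_m\le |m|^{3-s}(D_{nm}/2)^{-2}$, leaving $\int(1+|z|)^{2s/3}\phi_n$. Here lies the one genuinely delicate point: the single-Lorentzian weighted integral $\int_{\mR}(1+|z|)^{2s/3}\big[(z-z_n)^2+n^4\big]^{-1}dz$ \emph{diverges} once $s\ge 3/2$, so one must not discard $\phi_m$ and extend to all of $\mR$. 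Instead I keep the cut-off $|z-z_n|<D_{nm}/2$ and control the weight by Peetre's inequality $(1+|z|)^{2s/3}\le C(1+|z_n|)^{2s/3}(1+|z-z_n|)^{2|s|/3}$, so that both the core contribution ($\sim \langle n\rangle^{2s}n^{-2}$) and the truncated tail are tamed by the separation $D_{nm}$. Performing this on both halves yields
\[
K_{nm}\le C\,\frac{\langle n\rangle^{1+s}\langle m\rangle^{3-s}+\langle n\rangle^{3-s}\langle m\rangle^{1+s}}{\langle n-m\rangle^{2}\,\max(\langle n\rangle,\langle m\rangle)^{4}}\qquad (n\ne m).
\]

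It remains to sum this over $m$ for fixed $n$, which I would do in the three regimes $|m|\le \tfrac12|n|$, $\tfrac12|n|<|m|<2|n|$, and $|m|\ge 2|n|$. In the comparable regime each summand is $\lesssim \langle n-m\rangle^{-2}$, hence sums to $O(1)$. In the regime $|m|\gg|n|$ the two terms reduce to $\langle n\rangle^{1+s}\sum_{|m|>|n|}\langle m\rangle^{-3-s}$ and $\langle n\rangle^{3-s}\sum_{|m|>|n|}\langle m\rangle^{s-5}$, each $\lesssim \langle n\rangle^{-1}$, the first series converging \emph{precisely} because $s>-2$. In the regime $|m|\ll|n|$ one similarly obtains $\langle n\rangle^{s-5}\sum_{|m|<|n|}\langle m\rangle^{3-s}$ and $\langle n\rangle^{-3-s}\sum_{|m|<|n|}\langle m\rangle^{1+s}$, again each $\lesssim\langle n\rangle^{-1}$, with the admissibility $s\le 3$ (more than enough to keep $3-s$ and $1+s$ in the convergent range) and $s>-2$ used here. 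Thus $\sum_m K_{nm}\le C$ uniformly, and Schur's test closes the argument. The main obstacle is exactly the tail bookkeeping of the weight described above, and it is this analysis that fixes the admissible interval: the endpoint $s=-2$ is the threshold at which the series $\sum\langle m\rangle^{-3-s}$ and $\sum\langle m\rangle^{1+s}$ fail to converge.
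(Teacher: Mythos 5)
Your argument is correct in substance but takes a genuinely different route from the paper's. The paper never forms the Gram matrix: it partitions the $z$-axis into blocks $[m^3,(m+1)^3]$, on which the weight $(1+|z|)^{2s/3}$ is frozen at $m^{2s}$, bounds the inner sum pointwise there by writing $n=m+k$ and using $(z+4n-n^3)^2+n^4\gtrsim m^4(|k|+1)^2$ for $|k|\le 2m$ (and $\gtrsim k^4(|k|+1)^2$ otherwise), and closes with a weighted Cauchy--Schwarz in $k$ followed by the sum over $m$; the constraints $s\le 3$ and $s>-2$ enter there as $|m+k|^{6-2s}\lesssim m^{6-2s}$ on $|k|\le 2m$ and as the convergence of the tail sum in $k$. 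Your route --- expand the square, bound $K_{nm}$ via the Lorentzian/separation structure, and apply Schur's test --- exploits the same two facts (peaks at $z_n=n^3-4n$ of width $n^2$, separated by roughly $|n-m|\max(|n|,|m|)^2$). Your kernel bound and the three-regime row-sum computation check out: the bound agrees with the exact formula for $\int_{\mR}\big[(z-a)^2+\alpha^2\big]^{-1}\big[(z-b)^2+\beta^2\big]^{-1}dz$ at $s=0$, and your identification of $s=-2$ as the threshold where $\sum\langle m\rangle^{-3-s}$ and $\sum\langle m\rangle^{1+s}$ degenerate matches exactly where the paper needs $s>-2$. What each buys: the paper's block decomposition yields a shorter write-up because freezing the weight on each block sidesteps the weighted-tail issue entirely, while your version produces an explicit, reusable off-diagonal decay estimate for the kernel.

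Two points need tightening before this is a complete proof. First, the separation $|z_n-z_m|=|n-m|\,|n^2+nm+m^2-4|$ is \emph{not} bounded below by $c\,|n-m|\max(|n|,|m|)^2$ for all pairs: for instance $z_2=z_{-2}=0$. Only the finitely many pairs with $\max(|n|,|m|)\le 3$ are affected, and they are disposed of by $K_{nm}\le(K_{nn}K_{mm})^{1/2}=O(1)$, but this must be said. Second, and more substantively, the two truncated neighborhoods $\{|z-z_n|<D_{nm}/2\}$ and $\{|z-z_m|<D_{nm}/2\}$ do not cover $\mR$; on the leftover region $\min(|z-z_n|,|z-z_m|)\ge D_{nm}/2$ you cannot, for $s\ge 3/2$, afford to drop either Lorentzian (as you yourself observe, the single-factor weighted integral diverges there). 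One must retain the quadratic decay of \emph{both} factors on that region --- and, for $s<0$, treat the neighborhood of $z=0$ separately, since there the weight exceeds $\langle n\rangle^{2s}$ --- after which a short computation shows the contribution is dominated by your claimed bound for $K_{nm}$. So the estimate survives, but as written this region is unaccounted for.
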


\begin{proof} For $s > -5/2$ and  for $-1 \le z  \le 1$, one has 
$$
 \sum_{n \in \mZ \setminus \{0 \}} \frac{|a_n| |n|^{3-s} }{(z + 4n - n^3)^2 + n^4} \, \le C \left( \sum_{n \in \mZ \setminus \{0 \}} a_n^2 \right)^{1/2}.  
$$
It hence suffices to prove that, for $- 2 <  s \le 3$,  
\be\label{lem-A-c}
\int_{z \in \mR; |z| > 1} (1 + |z|)^{2s/3}  \left(\sum_{n \in \mZ \setminus \{0 \}} \frac{|a_n| |n|^{3-s} }{(z + 4n - n^3)^2 + n^4} \right)^2 \, dz \le C \sum_{n \in \mZ \setminus \{0 \}} a_n^2. 
\ee

We have
\begin{multline}\label{lem-A-p0}
\int_{z > 1} (1 + |z|)^{2s/3}  \left(\sum_{n \in \mZ \setminus \{0 \}} \frac{|a_n| |n|^{3-s} }{(z + 4n - n^3)^2 + n^4} \right)^2 \, dz \\[6pt]
 \le C \sum_{m \in \N_*} \int_{m^3}^{(m+1)^3} m^{2 s} \left(\sum_{n \in \mZ \setminus \{0 \}} \frac{|a_n| |n|^{3-s} }{(z + 4n - n^3)^2 + n^4} \right)^2  \, dz.
\end{multline}
For $m^3 \le z \le (m+1)^3$,  one gets
\begin{multline}\label{lem-A-p1}
\sum_{n \in \mZ \setminus \{0 \}} \frac{|a_n| |n|^{3-s} }{(z + 4n - n^3)^2 + n^4}  \\[6pt]
=   \sum_{k; \,  m + k \in \mZ \setminus \{0 \}} \frac{|m+k|^{3 -s } |a_{m+k}|}{(z + 4(m+k) - (m+k)^3)^2 + (m+k)^4} \\[6pt]
\le  C \mathop{\sum_{k; |k| \le 2 m}}_{\, m + k \in \mZ \setminus \{0 \}}  \frac{|m+k|^{3 -s } |a_{m+k}|}{m^4 (|k|+1)^2} + C \mathop{\sum_{k; \; |k| \ge 2m + 1}}_{m + k \in \mZ \setminus \{0 \}}  \frac{|k|^{3 -s } |a_{m+k}|}{k^4 (|k|+1)^2}.  
\end{multline}
Combining \eqref{lem-A-p0} and \eqref{lem-A-p1} yields  
\begin{multline}\label{lem-A-p2}
\int_{z > 1} (1 + |z|)^{2s/3}  \left(\sum_{n \in \mZ \setminus \{0 \}} \frac{|a_n| |n|^{3-s} }{(z + 4n - n^3)^2 + n^4} \right)^2 \, dz \\[6pt]
\le C \sum_{m \in \N_*}   \left( \mathop{\sum_{k; |k| \le 2 m}}_{m + k \in \mZ \setminus \{0 \}}  \frac{|m+k|^{3 -s } |a_{m+k}|}{m^{3-s} (|k|+1)^2} + C \mathop{\sum_{k; \; |k| \ge 2m + 1}}_{m + k \in \mZ \setminus \{0 \}}  \frac{|m|^{1+s}  |a_{m+k}|}{|k|^{1 + s} (|k|+1)^2} \right)^2 \\[6pt] 
\le C   \sum_{m \in \N_*} \left( \mathop{\sum_{k}}_{m+k \in \mZ \setminus \{0\}} \frac{|a_{m+k}|^2}{(|k| + 1)^{1+\eps}} \right) \left( \mathop{\sum_{k; |k| \le 2 m}}_{m+k \in \mZ \setminus \{0\}} \frac{|m+k|^{6-2s}}{m^{6 - 2s}(|k|+1)^{3 -\eps}} + \mathop{\sum_{k; |k| \ge 2m +1}}_{m+k \in \mZ \setminus \{0\}} \frac{m^{2+2s} }{|k|^{2+2s}(|k|+1)^{3 -\eps}} \right).
\end{multline}
Here $0< \eps \le 1$ is fixed such that $s > (\eps - 4)/2$ (the fact $s > -2$ is used).  We have, for $m \in \N_*$, $|k| \le 2m$,  and $m + k \in \mZ \setminus \{0 \}$, 
$$
\frac{|m+k|^{6-2s}}{m^{6-2s}} \le C \mbox{ for } s \le 3,  
$$
and, for $m \in \N_*$, 
$$
\mathop{\sum_{k; |k| \ge 2m +1}}_{m+k \in \mZ \setminus \{0\}} \frac{m^{2+2s} }{|k|^{2+2s}(|k|+1)^{3 -\eps}} = m^{2+2s}  \mathop{\sum_{k; |k| \ge 2m +1}}_{m+k \in \mZ \setminus \{0\}} \frac{1}{k^{2+2s}(|k|+1)^{3 - \eps}} \le \frac{C}{m^{2-\eps}} \le C \mbox{ for } s > (\eps-4)/2. 
$$

Using the fact, for $m \in \N_*$,  
$$
\sum_{k; \, m+k \in \mZ \setminus \{0\}} \frac{1}{(|k|+1)^{3 - \eps}}  < + \infty \mbox{ for } 0 < \eps < 1, 
$$
we derive from \eqref{lem-A-p2} that 
\begin{multline}
\int_{z > 1} (1 + |z|)^{2s/3}  \left(\sum_{n \in \mZ \setminus \{0 \}} \frac{|a_n| |n|^{3-s} }{(z + 4n - n^3)^2 + n^4} \right)^2 \, dz \\[6pt]
  \le C      \sum_{m \in \N_*}  \sum_{k; \, m+k \in \mZ \setminus \{0\}} \frac{|a_{m+k}|^2}{(|k| + 1)^2} \le C      \sum_{k \in \mZ}  \frac{1}{(|k|+1)^2} \sum_{m ; \,  m+k \in \mZ \setminus \{0\}}|a_{m+k}|^2, 
\end{multline}
which yields 
\be\label{lem-A-a}
\int_{z > 1} (1 + |z|)^{2s/3}  \left(\sum_{n \in \mZ \setminus \{0 \}} \frac{|a_n| |n|^{3-s} }{(z + 4n - n^3)^2 + n^4} \right)^2 \, dz  \le C \sum_{n \in \mZ \setminus \{0 \}} |a_n|^2. 
\ee

Similarly, we have 
\be\label{lem-A-b}
\int_{z < - 1} (1 + |z|)^{2s/3}  \left(\sum_{n \in \mZ \setminus \{0 \}} \frac{|a_n| |n|^{3-s} }{(z + 4n - n^3)^2 + n^4} \right)^2 \, dz  \le C \sum_{n \in \mZ \setminus \{0 \}} |a_n|^2. 
\ee
Estimate \eqref{lem-A-c} now follows from \eqref{lem-A-a} and \eqref{lem-A-b}. The proof is complete.
\end{proof}

\subsection{Proof of \Cref{pro-kdv-B}}
We only derive the estimates. The uniqueness follows from the estimates and the existence follows from the proof of these estimates as well.  

Multiplying the equation of $y$ by $y$ and integrating by parts, we have 
$$
\frac{1}{2}\frac{d}{dt} \int_0^L |y(t, x)|^2 \, dx + 3 \int_0^L |y_x(t, x)|^2 \, dx= \int_0^L g(t, x) y(t, x) \, dx. 
$$
This yields 
$$
\frac{1}{2}\frac{d}{dt} \int_0^L |y(t, x)|^2 \, dx \le  \|g(t, \cdot) \|_{L^2(0, L)} \| y(t, \cdot) \|_{L^2(0, L)}. 
$$
Applying the Gr\"onwall lemma, we obtain the desired estimate for  $\| y(t, \cdot) \|_{L^2(0, L)}$.

We next  establish the estimates for $\| y(\cdot, x)\|_{H^{1/3}(\mR_+)}$, $\| y_x(\cdot, x)\|_{L^2(\mR_+)}$, and $\| y_{xx}(\cdot, x)\|_{[H^{1/3}(\mR_+)]^*}$.  For notational ease, we assume that $L = 2 \pi$. It suffices to consider the case $g \equiv 0$ and the case  $y_0 \equiv 0$ separately. 

\medskip 
We first consider the case $g \equiv 0$. 
Write the solution under the form 
$$
y(t, x) = \sum_{n \in \mZ } a_n(t) e^{i n x} \mbox{ in } \mR_+ \times (0, L) = \mR_+ \times (0, 2 \pi).  
$$ 
We then derive that 
$$
y_0 (x) =  \sum_{n \in \mZ} a_n(0) e^{i  n x} \mbox{ for } x \in [0, 2 \pi]
$$ 
and 
$$
a_n'(t) = \big(- 3 n^2 - i (4n - n^3) \big) a_n (t) \mbox{ for } t \in \mR_+. 
$$
We thus have 
$$
a_n(t) = e^{\big(- 3 n^2 - i (4n - n^3) \big) t} a_n(0) \mbox{ for } t \in \mR_+, n \in \mZ. 
$$
Since $\int_0^L y_0 \, dx = 0$, it follows that $
a_0 = 0$.  This in turn implies that $a_0(t) = 0$ for $t \in \mR_+$. 
We hence obtain 
$$
y(t, x) = \sum_{n \in \mZ \setminus \{0 \}}  e^{i n x} e^{\big(- 3 n^2 - i (4n - n^3) \big) t} a_n(0)  \mbox{ in } \mR_+ \times (0, 2 \pi).  
$$

Extend $y(t, x)$ for $t < 0$ by 
\be\label{pro-kdv-B-y}
y(t, x) = \sum_{n \in \mZ \setminus \{0 \}}  e^{i n x} e^{\big(3 n^2 - i (4n - n^3) \big) t} a_n(0)  \mbox{ in } \mR_- \times (0, 2 \pi),  
\ee
and still denote this extension by $y(t, x)$. We have, for $z \in \mR$ and $n \in \mZ \setminus \{0\}$,  
\begin{multline}\label{lem-kdv-B-Fourier}
\int_0^\infty e^{\big(-3 n^2 - i (4n - n^3) \big) t} e^{- i t z} \, dt + \int_{-\infty}^0 e^{\big(3 n^2  - i (4n - n^3) \big) t} e^{- i t z} \, dt \\[6pt]
= \frac{1}{3 n^2 + i (4n - n^3 + z)} +  \frac{1}{3 n^2 - i (4n - n^3 + z)} = \frac{6 n^2}{9n^4 + (4n - n^3 + z)^2}. 
\end{multline}
This implies 
\be
\| y(\cdot, x) \|_{H^{1/3}(\mR_+)}^2 \le C \int_{z \in \mR} (1 + |z|)^{2/3} \left(\sum_{n \in \mZ \setminus 
\{0 \}} \frac{n^2 |a_n(0)|}{n^4 + (z + 4n - n^3)^2}\right)^2 \, dz. 
\ee
Applying \Cref{lem-A}  with $s = 1$,  we obtain  
\be
\| y(\cdot, x) \|_{H^{1/3}(\mR_+)}^2 \le C \| y_0\|_{L^2(0, 2 \pi)}^2.  
\ee

Similarly, we have 
\be
\| y_x(\cdot, x) \|_{L^2(\mR_+)}^2 \le C \int_{z \in \mR}  \left(\sum_{n \in \mZ \setminus 
\{0 \}} \frac{|n|^3 |a_n(0)|}{(z + 4n - n^3)^2 + n^4}\right)^2 
\ee
and 
\be
\| y_{xx}(\cdot, x) \|_{[H^{1/3}(\mR_+)]^*}^2 \le C \int_{z \in \mR} (1 + |z|)^{-2/3} \left(\sum_{n \in \mZ \setminus 
\{0 \}} \frac{n^4 |a_n(0)|}{(z + 4n - n^3)^2 + n^4}\right)^2. 
\ee
Applying \Cref{lem-A}  with $s = 0$ and $s=-1$, we get 
\be
\| y_x(\cdot, x) \|_{L^2(\mR_+)}^2 \le C \| y_0\|_{L^2(0, 2 \pi)}^2.  
\ee
and 
\be
\| y_{xx}(\cdot, x) \|_{[H^{1/3}(\mR_+)]^*}^2 \le C \| y_0\|_{L^2(0, 2 \pi)}^2.  
\ee
The proof of \eqref{lem-kdvB-cl3} in the case $g \equiv 0$ is complete. 

We next deal with $\eqref{lem-kdvB-cl4}$. From \eqref{pro-kdv-B-y} and \eqref{lem-kdv-B-Fourier}, we have 
$$
\| y(\cdot, x) \|_{L^2(\mR_+)}^2 \le C \int_{z \in \mR}  \left(\sum_{n \in \mZ \setminus 
\{0 \}} \frac{|n|^3 (|a_n(0)|/|n|))}{n^4 + (z + 4n - n^3)^2}\right)^2 \, dz  
$$
and
\be
\| y_x(\cdot, x) \|_{[H^{1/3}(\mR_+)]^*}^2 \le C \int_{z \in \mR}  (1 + |z|)^{-2/3}\left(\sum_{n \in \mZ \setminus 
\{0 \}} \frac{|n|^4 (|a_n(0)|/|n|) }{(z + 4n - n^3)^2 + n^4}\right)^2.  
\ee
Applying \Cref{lem-A} with $s =0$ and $s=-1$, we obtain 
\be \label{lem-kdvB-cl4-p1}
\| y(\cdot, x) \|_{L^2(\mR_+)}^2 \le C  \sum_{n \in \mZ \setminus \{0\}} \frac{|a_n(0)|^2}{n^2} \le C \| y_0\|_{H^{-1}(0, 2 \pi)}^2  
\ee
and 
\be \label{lem-kdvB-cl4-p2}
\| y_x(\cdot, x) \|_{[H^{1/3}(\mR_+)]^*}^2 \le C  \sum_{n \in \mZ \setminus \{0\}} \frac{|a_n(0)|^2}{n^2} \le C \| y_0\|_{H^{-1}(0, 2 \pi)}^2.  
\ee
This yields \eqref{lem-kdvB-cl4}.

We next deal with the case $y_0 \equiv 0$. The proof, in this case, can be derived from the previous case as follows.  For $t > 0$, let $W(t)$ be the mapping which maps $y_0$ to $y(t, \cdot)$ with $g \equiv 0$. We then have \footnote{This identity can be derived from \eqref{rem-pro-kdv-B2-p1} and \eqref{rem-pro-kdv-B2-p2} in \Cref{rem-pro-kdv-B2}.}
\be
y(t, x) = \int_0^t W(t-s) g(s, x) \, ds. 
\ee
This implies
\be
y_x(t, x) = \int_0^t \partial_x \Big(W(t-s) g(s, x) \Big) \, ds = \int_0^{\infty} \mathds{1}_{(0, t)}(s)\partial_x \Big(W(t-s) g(s, x) \Big) \, ds.
\ee
Hence 
\begin{align*}
\| y_x(\cdot, x) \|_{L^2_t(\mR_+)}   & \le \int_0^{+ \infty} \| \mathds{1}_{(0, t)}(s) \partial_x \big(W(t-s) g(s, x) \big) \|_{L^2_t(\mR_+)} \, ds \\[6pt]
& = \int_0^{+ \infty} \left( \int_0^{\infty} |\mathds{1}_{(0, t)} (s) \partial_x \big(W(t-s) g(s, x) \big) |^2\, d t \right)^{1/2} \, ds \\[6pt]
& =  \int_0^{+ \infty} \left( \int_s^{\infty} | \partial_x \big(W(t-s) g(s, x) \big) |^2 \, d t \right)^{1/2} \, ds. 
\end{align*}
By applying the results in the previous case, we have 
$$
\left( \int_s^{\infty} | \partial_x \big( W(t-s) g(s, x) \big) |^2 \, d t \right)^{1/2} \le C\| g(s, x)\|_{L^2_x(0, L)}. 
$$
We thus obtain 
\be \label{lem-kdv-B-c1}
\| y_x(\cdot, x) \|_{L^2_t(\mR_+)}   \le  C \int_0^{+ \infty} \| g(s, x)\|_{L^2_x} \, ds. 
\ee

By the same arguments,  we have
\begin{multline}\label{lem-kdv-B-c2}
\| y_{xx}(\cdot, x) \|_{[H^{1/3}_t(\mR_+)]^*}   \le \int_0^{+ \infty} \| \mathds{1}_{(0, t)} (s) \partial_{xx} \big( W(t-s) g(s, x) \big) \|_{[H^{1/3}_t(\mR_+)]^*} \, ds \\[6pt]
= \int_0^{+ \infty} \| \partial_{xx} \big( W(t-s) g(s, x) \big) \|_{[H^{1/3}_t (s, + \infty)]^*} \, ds  \le 
C \int_0^{+ \infty} \| g(s, x)\|_{L^2_x(0, L)} \, ds. 
\end{multline} 

Similarly, we obtain 
\begin{multline}\label{lem-kdv-B-c3}
\| y(\cdot, x) \|_{H^{1/3}_t(\mR_+)}   \le \int_0^{+ \infty} 
\| \mathds{1}_{(0, t)}(s) W(t-s) g(s, x) \|_{H^{1/3}_t(\mR_+)} \, ds \\[6pt]
= \int_0^{+ \infty} \|  W(t-s) g(s, x) |\|_{H^{1/3}_t (s, + \infty)} \, ds  \le 
C \int_0^{+ \infty} \| g(s, x)\|_{L^2_x(0, L)} \, ds. 
\end{multline} 

Assertion \eqref{lem-kdvB-cl3} in the case $y_0 \equiv 0$ now follows from \eqref{lem-kdv-B-c1}, \eqref{lem-kdv-B-c2}, and 
\eqref{lem-kdv-B-c3}.

Assertion \eqref{lem-kdvB-cl4} in the case $y_0 \equiv 0$ follows similarly and the details are omitted. 

\medskip 
The proof is complete.  \qed

\begin{remark} \rm The proof gives as well that 
$$
 y \in C([0, L]; H^{1/3}(0, +\infty)), \; \; y_x \in C([0, L]; L^2(0, +\infty)), \; \;  \mbox{ and } \; \;    y_{xx} \in C([0, L]; [H^{1/3}(0, +\infty)]^*).
$$ 
\end{remark}

\begin{remark}\label{rem-pro-kdv-B1} \rm Assume that $g =0$ in $\mR_+ \times (0, L)$ and $y_0 \in C^\infty([0, L])$ is such that  $\int_{0}^L y_0(x) \, dx  = 0$,  $y_0 - c \in C^\infty_c((0, L))$ for some constant $c$. Using  \eqref{pro-kdv-B-y}, one can show that $y \in C^\infty([0, + \infty) \times [0, L])$. Moreover, using the equation of $y$, one can show that $\partial_t^k y(0, x) = 0$ for all $k \ge 1$.  
\end{remark}

\begin{remark}\label{rem-pro-kdv-B2} \rm Assume that $y_0 =0$ in $(0, L)$ and $g \in C^\infty_c\big( (0, + \infty) \times [0, L] \big)$ being such that $\int_0^L g(t, x) \, dx =0$ for $t > 0$. One can prove that the solution is then smooth and is 0 around the time $0$.  Indeed, for notational ease, assume that $L = 2 \pi$. One then can show that 
\be \label{rem-pro-kdv-B2-p1}
y(t, x) = \sum_{n \in \mZ \setminus 0} e^{i n x }\int_0^t e^{\big(-3 n^2 - i (4n - n^3) \big) (t-s)} g_n(s) \, ds,  
\ee  
where 
\be \label{rem-pro-kdv-B2-p2}
g(t, x) = \sum_{n \in \mZ \setminus \{0 \}} g_n(t) e^{in x}. 
\ee
The properties of $y$ follow. 
\end{remark}

\begin{remark} \rm There is no term $\| y_{xx}\|_{[H^{2/3}(\mR_+)]^*}$ in \eqref{lem-kdvB-cl4} 
since we applied  \Cref{lem-A}, which requires $s > -2$.
\end{remark}

\begin{remark} \rm In \cite[Lemma 4.1]{CKN20}, we proved that, when $y_0 = 0$, 
\be
\|y(\cdot, x)\|_{L^2(\mR_+)} \le C \|g \|_{L^1\big(\mR_+ \times (0, L)\big)}. 
\ee
Estimate \eqref{lem-kdvB-cl4} improves this. 
\end{remark}

\section{Linearized KdV equations} \label{sect-Pro-KdV}

In this section, we establish several well-posedness results (\Cref{pro-kdv1}, \Cref{pro-kdv2}, and \Cref{pro-kdv3}) for the linearized KdV equation equipped with various boundary conditions in the energy space $X_T$ defined in \eqref{def-XT} and present some of their consequences. \Cref{pro-kdv1} implies the local well-posedness of \eqref{sys-KdV} in $X_T$ (\Cref{pro-WP}).  \Cref{pro-kdv1} and  \Cref{pro-kdv2} are the starting points of our analysis in deriving the unreachable space and establishing the corresponding observability inequality in \Cref{sect-US} (\Cref{pro-C}). \Cref{pro-kdv3} and its consequence (\Cref{pro-kdv3-NL}) will be used in the proof of \Cref{thm2}.

\medskip 
Here is the first main result of this section. 

\begin{proposition}\label{pro-kdv1} Let $L>0$,  $T>0$, $(h_1, h_2, h_3) \in H^{1/3} (0, T) \times
H^{1/3}(0, T) \times L^2 (0, T)$,  $ f \in L^1\big((0, T); L^2(0, L)\big)$, and $y_0 \in L^2(0, L)$. There exists a unique solution $y \in X_T$  of the system
\begin{equation}\label{sys-y-LKdV}\left\{
\begin{array}{cl}
y_t + y_x  + y_{xxx}  = f &  \mbox{ in } (0, T) \times  (0, L),
\\[6pt]
y(\cdot, 0) = h_1,  \;  y(\cdot, L) = h_2, \;  y_x(\cdot, L)   = h_3 & \mbox{ in } (0,
T), \\[6pt]
y(0, \cdot)  = y_0 & \mbox{ in } (0, L).
\end{array}\right.
\end{equation}
Moreover, for $x \in [0, L]$,
\begin{multline}\label{pro-kdv1-cl1} 
\| y\|_{X_T} + \| y(\cdot, x)\|_{H^{1/3}(0, T)} + \| y_x(\cdot, x)\|_{L^2(0, T)}   + \| y_{xx}(\cdot, x)\|_{[H^{1/3}(0, T)]^*}  \\[6pt]
\le C_{T, L} \Big( \| y_0\|_{L^2(0, L)} + \| f\|_{L^1\big( (0, T);  L^2(0, L) \big)} +  \| (h_1, h_2) \|_{H^{1/3}(0, T)} + \|
h_3 \|_{L^2(0, T)}\Big),  
\end{multline}
and
\begin{multline} \label{pro-kdv1-cl2} 
\| y(\cdot, x)\|_{L^2(0, T)} + \| y_x(\cdot, x)\|_{[H^{1/3}(0, T)]^*} \\[6pt]
\le C_{T, L} \Big( \| y_0\|_{[H^{1}(0, L)]^*} + \| f\|_{L^1\big( (0, T);  [H^{1}(0, L)]^* \big)} +  \| (h_1, h_2) \|_{L^2(0, T)} + \|
h_3 \|_{[H^{1/3}(0, T)]^*}\Big),
\end{multline}
where  $C_{T, L}$ denotes a positive constant  independent of $x$,  $y_0$, $f$, and $h_1, \, h_2, \,  h_3$. 
\end{proposition}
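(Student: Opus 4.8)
The plan is to establish the two estimates \eqref{pro-kdv1-cl1} and \eqref{pro-kdv1-cl2} (uniqueness and existence then follow from them in the usual way), by splitting, by linearity, the solution of \eqref{sys-y-LKdV} as $y = y^{\mathrm{int}} + y^{\mathrm{bd}}$, where $y^{\mathrm{int}}$ carries the interior source $f$ and the initial datum $y_0$ with homogeneous boundary data, and $y^{\mathrm{bd}}$ carries the boundary data $(h_1, h_2, h_3)$ with $f = 0$ and $y_0 = 0$. The two pieces are treated by genuinely different mechanisms: $y^{\mathrm{int}}$ directly through \Cref{pro-kdv-B}, and $y^{\mathrm{bd}}$ by transposition against the adjoint problem.

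For $y^{\mathrm{int}}$, the key device is the gauge transformation $w(t,x) = e^{x - 2t}\, y^{\mathrm{int}}(t,x)$, which turns the KdV operator into the KdV--Burgers operator: a direct computation shows that $y^{\mathrm{int}}_t + y^{\mathrm{int}}_x + y^{\mathrm{int}}_{xxx} = f$ is equivalent to $w_t + 4 w_x + w_{xxx} - 3 w_{xx} = e^{x-2t} f$. After extending the spatial domain and periodizing (subtracting the spatial mean so that the zero-mean hypotheses of \Cref{pro-kdv-B} hold, and treating the mean by an elementary ODE), I would compare $w$ with the solution of the periodic KdV--Burgers problem: its estimate \eqref{lem-kdvB-cl3} yields the $X_T$-bound together with the traces $\|y^{\mathrm{int}}(\cdot,x)\|_{H^{1/3}}$, $\|y^{\mathrm{int}}_x(\cdot,x)\|_{L^2}$, $\|y^{\mathrm{int}}_{xx}(\cdot,x)\|_{[H^{1/3}]^*}$ of \eqref{pro-kdv1-cl1}, while its estimate \eqref{lem-kdvB-cl4} yields the lower-regularity traces of \eqref{pro-kdv1-cl2}. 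Since the gauge factor $e^{x-2t}$ and its inverse are smooth and bounded on $[0,T]\times[0,L]$, all constants are controlled by $T$ and $L$.

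For $y^{\mathrm{bd}}$, I would define the solution by transposition. Multiplying the equation by a test function $\psi$ and integrating by parts in both variables produces, besides the interior pairing with the adjoint operator $-\psi_t - \psi_x - \psi_{xxx}$, the boundary terms $\int_0^L [y\psi]_0^T\,dx$ and $\int_0^T [y_{xx}\psi - y_x\psi_x + y\psi_{xx} + y\psi]_0^L\,dt$. Choosing $\psi$ to solve the homogeneous adjoint problem with the boundary conditions $\psi(\cdot,0) = \psi(\cdot,L) = \psi_x(\cdot,0) = 0$ makes all the \emph{unknown} traces of $y$ drop out, leaving exactly the pairings $\int_0^T \big( h_3\,\psi_x(\cdot,L) - h_2\,\psi_{xx}(\cdot,L) + h_1\,\psi_{xx}(\cdot,0)\big)\,dt$ together with the datum $\int_0^L y_0\,\psi(0,\cdot)\,dx$. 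Reversing time and reflecting $x \mapsto L - x$ turns this adjoint problem back into one of the form \eqref{sys-y-LKdV} with homogeneous boundary data, so the trace estimates just proved for $y^{\mathrm{int}}$ apply to $\psi$: the norms $\|\psi_x(\cdot,L)\|_{L^2}$, $\|\psi_{xx}(\cdot,L)\|_{[H^{1/3}]^*}$, and $\|\psi_{xx}(\cdot,0)\|_{[H^{1/3}]^*}$ are all bounded by $\|\psi_T\|_{L^2}$. The pairings then match perfectly, $h_3 \in L^2$ with $\psi_x(\cdot,L) \in L^2$ and $h_1, h_2 \in H^{1/3}$ with $\psi_{xx} \in [H^{1/3}]^*$, which defines $y^{\mathrm{bd}}$ and gives \eqref{pro-kdv1-cl1}; estimate \eqref{pro-kdv1-cl2} follows from the dual pairing, where the roles of the regularities are exchanged and the lower-regularity adjoint estimate \eqref{lem-kdvB-cl4} is used.

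The main obstacle is the sharp trace estimate for the second derivative, $\|y_{xx}(\cdot,x)\|_{[H^{1/3}]^*}$, which has no counterpart in the real-line theory. Everything downstream—both the well-posedness and, through the transposition above, the very definition of the solution for rough boundary data—rests on having this estimate at the precise exponent $[H^{1/3}]^*$, which is why \Cref{pro-kdv-B} was engineered to supply it; this in turn forces the delicate summation in \Cref{lem-A} to be pushed to $s = -1$, near the threshold $s > -2$. A secondary, but still delicate, point is the simultaneous bookkeeping of the boundary conditions under the gauge transform, the periodization, and the transposition, since the conditions involve $y_{xx}$ at the endpoints and one must verify that no uncontrolled trace survives.
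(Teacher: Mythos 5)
Your overall architecture inverts the paper's, and this creates a genuine circularity. The paper first solves the pure boundary-data problem ($f=0$, $y_0=0$) directly: it extends $h_1,h_2,h_3$ by zero, takes the Fourier transform in time, writes $\hat y(z,\cdot)=\sum_j a_j e^{\lambda_j x}$ with the $a_j$ expressed through $\det Q(z)$, removes the finitely many real zeros of $\det Q$ with the moment-method correction of \Cref{lem-zeros}, and reads off the trace estimates from the asymptotics of \Cref{lem-lambda}; only then does it treat general $(f,y_0)$ via the gauge transform and \Cref{pro-kdv-B}. You propose to do the interior part first and then define the boundary part by transposition. But the gauge-transformed periodic KdV--Burgers solution of \Cref{pro-kdv-B} does \emph{not} satisfy the boundary conditions $y(\cdot,0)=y(\cdot,L)=y_x(\cdot,L)=0$: the discrepancy $y^{\mathrm{int}} - e^{2t-x}\bigl(y_1+\int_0^t\psi\bigr)$ solves the homogeneous equation with zero source and zero initial datum but \emph{nonzero boundary traces}, and estimating it is exactly the pure boundary-data problem (this is the role of $y_2$ in \eqref{pro-kdv1-sys-y2}). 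Since your boundary-data solver is in turn built by duality from the adjoint trace estimates, and the adjoint problem is (after reversal and reflection) a special case of the interior problem, neither half of your argument can be completed before the other.

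Second, even granting the adjoint bounds, transposition against final data $\psi_T\in L^2(0,L)$ only yields $y(t,\cdot)\in L^2(0,L)$ with a bound of the type \eqref{pro-kdv1-cl2}; it does not produce the pointwise-in-$x$ hidden-regularity traces $\| y(\cdot,x)\|_{H^{1/3}(0,T)}$, $\| y_x(\cdot,x)\|_{L^2(0,T)}$ and $\| y_{xx}(\cdot,x)\|_{[H^{1/3}(0,T)]^*}$ of \eqref{pro-kdv1-cl1}, nor the $L^2\big((0,T);H^1(0,L)\big)$ component of $\|y\|_{X_T}$ for the boundary-data part. Recovering a trace at an interior point $x_0$ by duality would require adjoint problems with data concentrated on $\{x=x_0\}$ (of the order of $\delta''(x-x_0)$ for the $y_{xx}$ trace), which is not the problem you set up and is far outside $L^1\big((0,T);L^2(0,L)\big)$. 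These traces are the whole point of the proposition: they feed the observability inequality of \Cref{lem-obs} and the determination of $\cM_D$. Finally, you never confront the real zeros of $\det Q$; in the direct construction these critical frequencies are the main obstruction, and \Cref{lem-zeros} exists precisely to neutralize them. The workable order is the paper's: solve the boundary-data problem explicitly in the Fourier variable first, then bootstrap to general $f$ and $y_0$ through the KdV--Burgers comparison.
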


Here and in what follows, $[H^s]^*$ denotes the dual space of $H^s$ for $s > 0$  and it is equipped with the standard corresponding norm. 

\begin{remark} \rm In \Cref{pro-kdv1}, we implicitly admit that 
$$
 y \in C([0, L]; H^{1/3}(0, T)), \quad y_x \in C([0, L]; L^2(0, T)), \quad \mbox{ and } \quad   y_{xx} \in C([0, L]; [H^{1/3}(0, T)]^*).
$$ 
These facts are derived from the proof. Results and analysis which are related to \Cref{pro-kdv1}  will be discussed in \Cref{rem-RW}.  
\end{remark}

The next result is on the well-posedness of the linearized KdV system for which the Dirichlet condition and the second derivative in $x$ on the right are described. 

\begin{proposition}\label{pro-kdv2} Let $L>0$,  $T>0$, $(h_1, h_2, h_3) \in H^{1/3} (0, T) \times
H^{1/3}(0, T) \times [H^{1/3}(0, T)]^*$, \\
$ f \in L^1\big((0, T); L^2(0, L) \big)$, and $y_0 \in L^2(0, L)$. There exists a unique solution  $y \in X_T$ of the system
\begin{equation}\label{sys-y-LKdV2}\left\{
\begin{array}{cl}
y_t + y_x  + y_{xxx}  = f &  \mbox{ in } (0, T) \times  (0, L),
\\[6pt]
y(\cdot, 0) = h_1,  \;  y(\cdot, L) = h_2, \;  y_{xx}(\cdot, L)   = h_3& \mbox{ in } (0,
T),\\[6pt]
y(0, \cdot)  = y_0 &  \mbox{ in } (0, L).
\end{array}\right.
\end{equation}
Moreover, for $0 \le x \le L$, 
\begin{multline}\label{pro-kdv2-cl1}
\| y\|_{X_T} + \| y(\cdot, x)\|_{H^{1/3}(0, T)} +
\| y_x(\cdot, x)\|_{L^2(0, T)} + \| y_{xx}(\cdot, x)\|_{[H^{1/3}(0, T)]^*}
 \\[6pt] \le C_{T, L} \Big(\| y_0\|_{L^2(0, T)} + \| f\|_{L^1\big( (0, T);  L^2(0, L) \big)} +  \| (h_1, h_2) \|_{H^{1/3}(0, T)} + \|
h_3 \|_{[H^{1/3}(0, T)]^*}\Big), 
\end{multline}
and
\begin{multline} \label{pro-kdv2-cl2} 
\| y(\cdot, x)\|_{L^2(0, T)} + \| y_x(\cdot, x)\|_{[H^{1/3}(0, T)]^*} \\[6pt]
\le C_{T, L} \Big( \| y_0\|_{[H^{1}(0, L)]^*} + \| f\|_{L^1\big( (0, T);  [H^{1}(0, L)]^* \big)} +  \| (h_1, h_2) \|_{L^2(0, T)} + \|
h_3 \|_{[H^{2/3}(0, T)]^*}\Big), 
\end{multline}
for some positive constant $C_{T, L}$  independent of $x$, $y_0$, $f$, and $(h_1, h_2, h_3)$.
\end{proposition}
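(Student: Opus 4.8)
The plan is to deduce \Cref{pro-kdv2} from \Cref{pro-kdv1}. By linearity, I would first use \Cref{pro-kdv1} to dispose of the data $(y_0, f, h_1, h_2)$: solving \eqref{sys-y-LKdV} with these data and with $y_x(\cdot, L) = 0$ produces $\tilde y \in X_T$ whose trace $\tilde y_{xx}(\cdot, L)$ is controlled in $[H^{1/3}(0,T)]^*$ by \eqref{pro-kdv1-cl1}. Subtracting $\tilde y$ reduces everything to the homogeneous problem
\[
y_t + y_x + y_{xxx} = 0, \quad y(\cdot, 0) = 0, \ y(\cdot, L) = 0, \ y_{xx}(\cdot, L) = k, \quad y(0, \cdot) = 0,
\]
with $k \in [H^{1/3}(0,T)]^*$ (respectively $k \in [H^{2/3}(0,T)]^*$ for the low-regularity estimate \eqref{pro-kdv2-cl2}, where the reduction is carried out at the corresponding regularity level). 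The key idea is to realize the solution of this reduced problem as the solution $y$ furnished by \Cref{pro-kdv1} with $y_x(\cdot, L) = g$ for a suitably chosen $g$; that is, I would introduce the boundary operator $\Gamma \colon g \mapsto y_{xx}(\cdot, L)$, where $y$ solves \eqref{sys-y-LKdV} with homogeneous data except $y_x(\cdot, L) = g$, and solve $\Gamma g = k$.

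To analyze $\Gamma$, I would take the Fourier transform in time. As in the proof of \Cref{pro-kdv-B}, the substitution $w = e^{x} y$ turns the equation into a linear KdV--Burgers equation, which supplies the decay needed to apply the Fourier/Laplace transform together with Paley--Wiener's theorem. On the Fourier side the equation becomes the ODE $\hat y_{xxx} + \hat y_x + i z \hat y = 0$, whose solutions are $\hat y(z, x) = \sum_{j=1}^3 c_j(z) e^{\lambda_j(z) x}$ with $\lambda_j(z)$ the three roots of $\lambda^3 + \lambda + i z = 0$ from \eqref{def-Omega}. Imposing $\hat y(z,0) = 0$, $\hat y(z,L) = 0$, $\hat y_x(z,L) = \hat g(z)$ determines the $c_j(z)$ by Cramer's rule, and $\Gamma$ becomes multiplication by
\[
M(z) = \frac{D_2(z)}{D_1(z)},
\]
where $D_1(z)$ and $D_2(z)$ are the $3 \times 3$ determinants whose first two rows are $(1,1,1)$ and $(e^{\lambda_j L})_j$, and whose third rows are $(\lambda_j e^{\lambda_j L})_j$ and $(\lambda_j^2 e^{\lambda_j L})_j$, respectively. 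Here $D_1$ is the very function governing the solvability in \Cref{pro-kdv1}, while $D_2$ is the genuinely new object attached to the second-derivative boundary condition.

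The main obstacle is to establish the two-sided bound $|M(z)| \asymp (1 + |z|)^{1/3}$, together with the non-vanishing of $D_1$ and $D_2$, uniformly on the horizontal contour dictated by the exponential weight from the KdV--Burgers reduction. The asymptotic $|M(z)| \asymp |z|^{1/3}$ as $|z| \to \infty$ I would read off from the large-$|z|$ behavior of the roots ($|\lambda_j(z)| \sim |z|^{1/3}$, with real parts of order $+|z|^{1/3}$, $0$, and $-|z|^{1/3}$): the extra factor $\lambda_j$ in the third row of $D_2$ multiplies the leading determinant asymptotics by a factor of order $|z|^{1/3}$. The delicate point is the absence of zeros of $D_2$ (and $D_1$) on the working contour. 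Since these entire functions have only discrete zeros, and since the asymptotics confine them to finitely many in any horizontal strip, one avoids them by the shift of the frequency variable induced by the weight $e^{x}$ — equivalently, by deforming the contour onto a zero-free line $\{\operatorname{Im} z = c_0\}$, which is legitimate because $\hat k$ is entire by Paley--Wiener. This is precisely where the structure carried by the second-derivative boundary condition, absent in \Cref{pro-kdv1}, must be exploited.

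Granting these multiplier bounds, $\Gamma^{-1}$ maps $[H^{1/3}(0,T)]^*$ boundedly into $L^2(0,T)$ and $[H^{2/3}(0,T)]^*$ into $[H^{1/3}(0,T)]^*$, so that $g = \Gamma^{-1} k$ satisfies $\|g\|_{L^2} \le C \|k\|_{[H^{1/3}]^*}$, respectively $\|g\|_{[H^{1/3}]^*} \le C \|k\|_{[H^{2/3}]^*}$. Feeding this $g$ into the estimates \eqref{pro-kdv1-cl1} and \eqref{pro-kdv1-cl2} of \Cref{pro-kdv1} then yields \eqref{pro-kdv2-cl1} and \eqref{pro-kdv2-cl2}, with the traces $y(\cdot,x)$, $y_x(\cdot,x)$, and $y_{xx}(\cdot,x)$ controlled exactly as stated. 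Finally, uniqueness follows from these estimates applied to the difference of two solutions with vanishing data, as in \Cref{pro-kdv-B}, and existence is read off from the explicit construction.
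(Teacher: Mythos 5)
Your overall strategy differs from the paper's: the paper proves \Cref{pro-kdv2} by running the \emph{same} two-step construction as for \Cref{pro-kdv1} (Fourier transform in time for the boundary data, leading to the explicit formula \eqref{lem-kdv2-p1} with the new denominator $\sum_k e^{-\lambda_k L}(\lambda_{k+2}^2-\lambda_{k+1}^2)$, plus the KdV--Burgers reduction for $(y_0,f)$), whereas you factor the solution operator through \Cref{pro-kdv1} via the boundary map $\Gamma\colon g\mapsto y_{xx}(\cdot,L)$. Note that this factorization buys little: on the Fourier side $\widehat{\Gamma^{-1}k}=\hat k\, D_1/D_2$, and the subsequent application of \Cref{pro-kdv1} divides by $D_1$ again, so you end up with exactly $\hat y=\hat k\,\big(\sum_j\cdots\big)/D_2$, i.e., the direct formula; every difficulty attached to $D_2$ remains.

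The genuine gap is your treatment of the zeros of $D_2$. Deforming the contour to a zero-free horizontal line is not a legitimate fix: the multiplier $D_1/D_2$ must be free of singularities in the region relevant for causality, so that $g=\Gamma^{-1}k$ stays supported in $[0,T]$ and is an admissible Neumann datum; moving the contour does not remove zeros of $D_2$ lying off the chosen line, and crossing a real zero produces a residue contribution that is not supported in $[0,T]$. Moreover, nothing in your argument shows that the second-derivative boundary condition forces $D_2\neq 0$ on the working line: the energy identity for $\varphi(0)=\varphi(L)=\varphi_{xx}(L)=0$ gives $\tfrac12|\varphi'(0)|^2-\tfrac12|\varphi'(L)|^2=\Im(z)\,\|\varphi\|_{L^2}^2$, which has no sign, in contrast with \Cref{lem-p}. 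The mechanism the paper actually uses (Step~1 of the proof of \Cref{pro-kdv1}, invoked verbatim for \Cref{pro-kdv2}) is \Cref{lem-zeros}: since $D_2\,\Xi$ is entire with finitely many real zeros, one corrects $h_3$ by a smooth function supported in $[T,3T]$ whose Fourier transform matches $\hat h_3$ at those zeros to the appropriate order; the quotient then makes sense and the correction does not alter the solution on $(0,T)$. Your proposal is missing this ingredient. Two further points: for \eqref{pro-kdv2-cl2} your reduction requires $\tilde y_{xx}(\cdot,L)\in[H^{2/3}(0,T)]^*$ controlled by the weak norms of $(y_0,f,h_1,h_2)$, an estimate that \Cref{pro-kdv1} does not supply (the paper explicitly remarks that the $[H^{2/3}]^*$ bound on $y_{xx}$ is unavailable from \Cref{lem-A}); and uniqueness cannot be read off the a priori estimates, since a solution in the sense of \Cref{def-pro-kdv2} with zero data is not a priori of the constructed form --- the paper instead uses a duality argument against a smooth solution of the backward adjoint system built from \Cref{pro-kdv3}.
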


Here is the third main result of this section.

\begin{proposition}\label{pro-kdv3}
Let $L>0$, $T>0$, $(h_1, h_2, h_3) \in L^2(0, T)  
\times H^{1/3} (0, T) \times  L^2(0, T)$, 
$ f \in L^1\big((0, T); L^2(0, L) \big)$, and $y_0 \in L^2(0, L)$. There exists a unique solution  $y \in X_T$ of the system
\begin{equation}\label{sys-y-LKdV3}\left\{
\begin{array}{cl}
y_t + y_x  + y_{xxx}  = f &  \mbox{ in } (0, T) \times  (0, L),
\\[6pt]
y_x(\cdot, 0) = h_1, \; y(\cdot, L) = h_2, \;  y_{x}(\cdot, L)   = h_3& \mbox{ in } (0,
T),\\[6pt]
y(0, \cdot)  = y_0 &  \mbox{ in } (0, L).
\end{array}\right.
\end{equation}
Moreover, for $0 \le x \le L$, 
\begin{multline}\label{pro-kdv3-cl1}
\| y\|_{X_T} + \| y(\cdot, x)\|_{H^{1/3}(0, T)} +
\| y_x(\cdot, x)\|_{L^2(0, T)} + \| y_{xx}(\cdot, x)\|_{[H^{1/3}(0, T)]^*}
 \\[6pt] \le C_{T, L} \Big(\| y_0\|_{L^2(0, T)} + \| f\|_{L^1\big( (0, T);  L^2(0, L) \big)} +  \| h_2 \|_{H^{1/3}(0, T)} + \|(h_1, 
h_3) \|_{L^2(0, T)}\Big) 
\end{multline}
and
\begin{multline} \label{pro-kdv3-cl2} 
\| y(\cdot, x)\|_{L^2(0, T)} + \| y_x(\cdot, x)\|_{[H^{1/3}(0, T)]^*} \\[6pt]
\le C_{T, L} \Big( \| y_0\|_{[H^{1}(0, L)]^*} + \| f\|_{L^1\big( (0, T);  [H^{1}(0, L)]^* \big)} +  \| (h_1, h_2) \|_{L^2(0, T)} + \|
h_3 \|_{[H^{1/3}(0, T)]^*}\Big), 
\end{multline}
for some positive constant $C_{T, L}$  independent of $x$, $y_0$, $f$, and $(h_1, h_2, h_3)$. 
\end{proposition}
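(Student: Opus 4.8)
The plan is to follow the scheme already used for \Cref{pro-kdv1} and \Cref{pro-kdv2}, the only genuine difference being that the left boundary operator is now of Neumann type ($y_x(\cdot,0)=h_1$) rather than Dirichlet. By linearity I would split $y=y^{(1)}+y^{(2)}$, where $y^{(1)}$ absorbs the source $f$ and the initial datum $y_0$, and $y^{(2)}$ carries the boundary data. For $y^{(1)}$ I would invoke the connection between the linear KdV equation and the periodic linear KdV-Burgers equation: after a periodic extension of the data (adjusted to have zero spatial mean) and a change of unknown of the form $y=e^{\gamma x+\mu t}v$ turning $y_t+y_x+y_{xxx}=f$ into a KdV-Burgers equation of the type treated in \Cref{sect-KdV-B}, \Cref{pro-kdv-B} produces a function solving the equation with the prescribed $f,y_0$ whose traces $y^{(1)}(\cdot,L)$, $y^{(1)}_x(\cdot,0)$, $y^{(1)}_x(\cdot,L)$ lie in $H^{1/3}(0,T)$, $L^2(0,T)$, $L^2(0,T)$ and are controlled by $\|y_0\|_{L^2}+\|f\|_{L^1 L^2}$ (and, for \eqref{pro-kdv3-cl2}, by the weaker norms $\|y_0\|_{[H^1]^*}+\|f\|_{L^1 [H^1]^*}$). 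These are exactly the regularities appearing in the boundary data of \Cref{pro-kdv3}, so subtracting $y^{(1)}$ reduces everything to the homogeneous case $f\equiv0$, $y_0\equiv0$ with modified boundary data of the same class.

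For the pure boundary problem I would extend the solution by $0$ for $t<0$, take the Fourier transform in time, and solve the resulting third-order ODE in $x$. Writing $\lambda_j=\lambda_j(z)$ for the three roots of $\lambda^3+\lambda+iz=0$ (as in \eqref{def-Omega}), the transformed solution is $\hat y(z,x)=\sum_{j=1}^3 c_j(z)\,e^{\lambda_j(z)x}$, and the three boundary conditions $\hat y_x(z,0)=\hat h_1$, $\hat y(z,L)=\hat h_2$, $\hat y_x(z,L)=\hat h_3$ become the $3\times3$ linear system
\[
\sum_{j=1}^3 \lambda_j c_j=\hat h_1,\qquad \sum_{j=1}^3 e^{\lambda_j L}c_j=\hat h_2,\qquad \sum_{j=1}^3 \lambda_j e^{\lambda_j L}c_j=\hat h_3.
\]
I would solve this by Cramer's rule, obtaining each $c_j(z)$ as an explicit ratio of determinants in the $\lambda_j$ and $e^{\lambda_j L}$. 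The analytic inputs driving the estimates are the asymptotics $\lambda_j(z)\sim|z|^{1/3}$ as $|z|\to\infty$ and a quantitative lower bound on the modulus of the system determinant.

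Given the $c_j(z)$, the estimates follow by Plancherel exactly as in \Cref{pro-kdv-B}: the trace norms $\|\cdot\|_{H^{1/3}}$, $\|\cdot\|_{L^2}$, $\|\cdot\|_{[H^{1/3}]^*}$ correspond to integrals in $z$ against the weight $(1+|z|)^{2s/3}$ with $s=1,0,-1$, and the reduction of these integrals to a convergent sum is governed by \Cref{lem-A}. The interior bound $\|y\|_{X_T}$ follows from the pointwise-in-$x$ trace estimates together with $\sum_j \operatorname{Re}\lambda_j=0$ (the characteristic cubic has no quadratic term). The low-regularity estimate \eqref{pro-kdv3-cl2} is obtained by repeating the computation with the weights shifted by one power of $|z|^{1/3}$, equivalently by a transposition argument against the adjoint problem.

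The main obstacle is the uniform control of $c_j(z)$ across all frequencies, and this is precisely where the particular boundary conditions enter. For large $|z|$ one exponential $e^{\lambda_j L}$ grows like $e^{c|z|^{1/3}L}$ while another decays, so naive bounds diverge; the estimates must be organized mode by mode, pairing each growing mode with the boundary datum imposed at the endpoint where that mode is $O(1)$, and one must verify that the determinant does not degenerate after the natural rescaling by powers of $|z|$. This frequency-by-frequency bookkeeping, not any new idea, is the technical heart, and once it is carried out the passage to \Cref{lem-A} is routine. I expect the only substantive departure from \Cref{pro-kdv1} to be the replacement of the row $(1,1,1)$ coming from $y(\cdot,0)$ by the row $(\lambda_1,\lambda_2,\lambda_3)$ coming from $y_x(\cdot,0)$, which modifies the determinant computation but leaves the structure of the estimates unchanged; alternatively, one may deduce \Cref{pro-kdv3} directly from \Cref{pro-kdv1} by inverting the left Dirichlet-to-Neumann map $y(\cdot,0)\mapsto y_x(\cdot,0)$ of the homogeneous problem, which the two propositions force to be bounded and boundedly invertible between $H^{1/3}(0,T)$ and $L^2(0,T)$.
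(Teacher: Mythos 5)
Your overall route is the one the paper takes: the source and initial datum are absorbed by the periodic KdV--Burgers problem of \Cref{pro-kdv-B} through the change of unknown $y=e^{2t-x}v$, and the remaining pure boundary problem is solved by Fourier transform in time, writing $\hat y(z,\cdot)=\sum_j a_j e^{\lambda_j x}$ and inverting the $3\times 3$ system whose first row is $(\lambda_1,\lambda_2,\lambda_3)$ in place of $(1,1,1)$. The paper records exactly the Cramer-type formula you describe and then declares the rest ``almost the same'' as the proof of \Cref{pro-kdv1}, so the substance of your proposal and of the paper's argument coincide.

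There is, however, one genuine gap: your treatment of the denominator. You list as an analytic input ``a quantitative lower bound on the modulus of the system determinant'' and later only worry about non-degeneracy after rescaling for large $|z|$. But the relevant determinant (after division by the Vandermonde factor $\Xi$, which makes it an entire function of $z$) can vanish at finitely many \emph{real} frequencies for certain lengths $L$, exactly as $\det Q$ does in the proof of \Cref{pro-kdv1}; since the proposition is asserted for every $L>0$, such zeros cannot be ruled out, and at a real zero the quotient $\hat h_3/\det(\cdot)$ need not be square integrable, so the Plancherel step collapses. The paper's device, which your proposal omits, is \Cref{lem-zeros}: one subtracts from the boundary datum a smooth function $g$ supported in $[T,3T]$ whose Fourier transform matches $\hat h_3$ to the appropriate order at each real zero of the determinant, proves the estimates for the regularized datum $h_3-g$, and observes that the corresponding solution agrees with the desired one on $(0,T)$ because $g$ lives after time $T$. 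Without this correction the argument fails at those frequencies. Separately, your fallback of deducing \Cref{pro-kdv3} from \Cref{pro-kdv1} by ``inverting the left Dirichlet-to-Neumann map'' is circular as stated: the bounded invertibility of that map between $H^{1/3}(0,T)$ and $L^2(0,T)$ is essentially equivalent to the estimate you are trying to prove, and neither injectivity nor surjectivity is supplied by \Cref{pro-kdv1} alone.
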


As a consequence of \Cref{pro-kdv1}, we obtain the following well-posedness result, which particularly yields the well-posedness for system \eqref{sys-KdV}. 

\begin{proposition}\label{pro-WP} Let $L > 0$ and $T>0$. There exists $\eps_0 > 0$ such that for  $ (h_1, h_2, h_3) \in H^{1/3}(0, T) \times H^{1/3} (0, T) \times  L^2(0, T)$, 
$ f \in L^1\big((0, T); L^2(0, L)\big)$, and $y_0 \in L^2(0, L)$ 
with 
$$
\| y_0 \|_{L^2(0, L)} + \| f\|_{L^1\big((0, T); L^2(0, L)\big)} + \| (h_1, h_2) \|_{H^{1/3}(0, T)} + \|
h_3 \|_{L^2(0, T)} \le \eps_0,
$$ 
there exists a unique solution $y \in X_T$ of the system 
\begin{equation}\label{pro-WP-S}\left\{
\begin{array}{cl}
y_t  + y_x  + y_{xxx}  + y y_x = f  &  \mbox{ in } (0, T) \times  (0, L), \\[6pt]
y(\cdot, 0) = h_1, \; y(\cdot, L) = h_2, \;  y_{x}(\cdot, L)   = h_3& \mbox{ in } (0,
T),\\[6pt] 
y(0, \cdot) = y_0 & \mbox{ in } (0, L).  
\end{array}\right.
\end{equation}
Moreover, we have 
\begin{multline}\label{pro-WP-statement}
\| y\|_{X_T} + \| y(\cdot, x)\|_{H^{1/3}(0, T)} +
\| y_x(\cdot, x)\|_{L^2(0, T)} + \| y_{xx}(\cdot, x)\|_{[H^{1/3}(0, T)]^*} \\[6pt]
 \le C_{T, L} \Big( \| y_0 \|_{L^2(0, L)} + \| f\|_{L^1\big((0, T); L^2(0, L) \big)} + \| (h_1, h_2) \|_{H^{1/3}(0, T)} + \|
h_3 \|_{L^2(0, T)}  \Big), 
\end{multline}
for some positive constant $C_{T, L}$  independent of $x$, $y_0$, $f$, and $(h_1, h_2, h_3)$. 
\end{proposition}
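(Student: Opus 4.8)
The plan is to solve \eqref{pro-WP-S} by a Banach fixed point argument, treating the nonlinear term $y y_x$ as a source and appealing to the linear theory in \Cref{pro-kdv1}. Concretely, for $\tilde y \in X_T$ I would let $\Lambda(\tilde y) = y$ be the unique solution, provided by \Cref{pro-kdv1}, of the linear system \eqref{sys-y-LKdV} with source $f - \tilde y \tilde y_x$ and with the given data $y_0, h_1, h_2, h_3$. A fixed point of $\Lambda$ is then exactly a solution of \eqref{pro-WP-S}, so it suffices to show that $\Lambda$ is a contraction of a small closed ball of $X_T$ into itself. For this to make sense one must first check that $\tilde y \tilde y_x$ belongs to the source space $L^1\big((0, T); L^2(0, L)\big)$ of \Cref{pro-kdv1}, which is precisely the content of the bilinear estimate below and explains the choice of the energy space $X_T$.

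The crucial ingredient is the bilinear estimate
\be
\| \tilde y \tilde y_x \|_{L^1\big((0, T); L^2(0, L)\big)} \le C\, T^{1/4} \| \tilde y \|_{X_T}^2,
\ee
which places the nonlinearity in the space where the source lives in \Cref{pro-kdv1}. To obtain it I would bound, for a.e.\ $t$, $\| \tilde y(t, \cdot) \tilde y_x(t, \cdot) \|_{L^2(0, L)} \le \| \tilde y(t, \cdot) \|_{L^\infty(0, L)} \| \tilde y_x(t, \cdot) \|_{L^2(0, L)}$, invoke the one-dimensional interpolation inequality $\| \tilde y(t, \cdot) \|_{L^\infty(0, L)} \le C \| \tilde y(t, \cdot) \|_{L^2(0, L)}^{1/2} \| \tilde y(t, \cdot) \|_{H^1(0, L)}^{1/2}$, and then integrate in time using H\"older's inequality with exponents $4/3$ and $4$; the factor $T^{1/4}$ arises from this last step, and the two factors of $\tilde y$ are controlled by $\| \tilde y \|_{C([0,T];L^2(0,L))}$ and $\| \tilde y \|_{L^2((0,T);H^1(0,L))}$, respectively.

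Writing $\eta := \| y_0 \|_{L^2(0, L)} + \| f \|_{L^1((0, T); L^2(0, L))} + \| (h_1, h_2) \|_{H^{1/3}(0, T)} + \| h_3 \|_{L^2(0, T)}$ for the size of the data, \Cref{pro-kdv1} together with the bilinear estimate gives $\| \Lambda(\tilde y) \|_{X_T} \le C_{T, L}\big( \eta + C\, T^{1/4} \| \tilde y \|_{X_T}^2 \big)$. Choosing the radius $R = 2 C_{T, L}\, \eta$ and $\eps_0$ small enough (depending on $T$ and $L$) that $2 C C_{T, L} T^{1/4} R < 1$, the map $\Lambda$ sends the closed ball $\{ \| \tilde y \|_{X_T} \le R \}$ into itself. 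For the contraction I would apply \Cref{pro-kdv1} to $\Lambda(\tilde y_1) - \Lambda(\tilde y_2)$, which solves the linear system with zero data and source $-(\tilde y_1 \tilde y_{1,x} - \tilde y_2 \tilde y_{2,x}) = -(\tilde y_1 - \tilde y_2)\tilde y_{1,x} - \tilde y_2 (\tilde y_1 - \tilde y_2)_x$; estimating each term by the same bilinear bound yields $\| \Lambda(\tilde y_1) - \Lambda(\tilde y_2) \|_{X_T} \le 2 C C_{T, L} T^{1/4} R\, \| \tilde y_1 - \tilde y_2 \|_{X_T}$, a contraction by the choice of $\eps_0$. The Banach fixed point theorem then produces a unique solution in the ball. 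Uniqueness in all of $X_T$ follows by the same difference estimate restricted to a short subinterval $[0, \tau]$, on which the bilinear bound carries the factor $\tau^{1/4}$, so for small $\tau$ two solutions must coincide and one iterates over $[0, T]$. Finally, applying the full conclusion \eqref{pro-kdv1-cl1} of \Cref{pro-kdv1} to the fixed point $y$ with source $f - y y_x$ and absorbing $\| y y_x \|_{L^1((0, T); L^2(0, L))} \le C T^{1/4} R\, \| y \|_{X_T}$ into the left-hand side yields the pointwise-in-$x$ trace bounds and hence \eqref{pro-WP-statement}.

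The only genuinely delicate point is the bilinear estimate; once it is in place the remainder is the standard small-data contraction scheme, and the positive power $T^{1/4}$ is what makes the local-in-time uniqueness argument go through while keeping the smallness threshold $\eps_0$ compatible with the fixed time horizon $T$.
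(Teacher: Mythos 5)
Your proposal is correct and follows essentially the same route as the paper, which dispatches \Cref{pro-WP} in a remark as a ``standard'' small-data contraction based on \Cref{pro-kdv1} together with the bilinear bound $\| y y_x \|_{L^1((0,T);L^2(0,L))} \le C \| y \|_{X_T}^2$. Your $T^{1/4}$-refined version of that bound is precisely the ``sharper estimate'' the paper says can be derived from \Cref{lem-interpolationL1L2}, and the rest of your argument (self-mapping of a small ball, contraction, local-in-time uniqueness in all of $X_T$, and recovering the trace bounds from \eqref{pro-kdv1-cl1}) matches the intended proof.
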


As a consequence of \Cref{pro-kdv3}, we obtain the following well-posedness result which will be used in the proof of Assertion $ii)$ of  \Cref{thm2}.
 
\begin{proposition}\label{pro-kdv3-NL}
Let $L>0$ and  $T>0$. There exists $\eps_0 > 0$ such that for  $(h_1, h_2, h_3) \in L^2(0, T)  
\times H^{1/3} (0, T) \times  L^2(0, T)$,  
$ f \in L^1\big((0, T); L^2(0, L) \big)$, and $y_0 \in L^2(0, L)$ 
with $$
\| y_0 \|_{L^2(0, L)} + \| f\|_{L^1\big((0, T); L^2(0, L)\big)} +\| h_2 \|_{H^{1/3}(0, T)} + \|(h_1, 
h_3) \|_{L^2(0, T)}  \le \eps_0,
$$ 
there exists a unique solution  $y \in X_T$ of the system
\begin{equation}\label{sys-y-LKdV3-NL}\left\{
\begin{array}{cl}
y_t + y_x  + y_{xxx}  + y y_x= f &  \mbox{ in } (0, T) \times  (0, L),
\\[6pt]
y_x(\cdot, 0) = h_1, \; y(\cdot, L) = h_2,  \;  y_x(\cdot, L) = h_3, & \mbox{ in } (0,
T),\\[6pt]
y(0, \cdot)  = y_0 &  \mbox{ in } (0, L).
\end{array}\right.
\end{equation}
Moreover, for $0 \le x \le L$, 
\begin{multline}\label{pro-kdv3-cl2-NL}
\| y\|_{X_T} + \| y(\cdot, x)\|_{H^{1/3}(0, T)} +
\| y_x(\cdot, x)\|_{L^2(0, T)} + \| y_{xx}(\cdot, x)\|_{[H^{1/3}(0, T)]^*} \\[6pt]
 \le C_{T, L} \Big( \| y_0 \|_{L^2(0, L)} + \| f\|_{L^1\big((0, T); L^2(0, L)\big)} + \| h_2 \|_{H^{1/3}(0, T)} + \|(h_1, 
h_3) \|_{L^2(0, T)}  \Big),  
\end{multline}
for some positive constant $C_{T, L}$  independent of $x$, $y_0$, $f$, and $u$. 
\end{proposition}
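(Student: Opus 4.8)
The plan is to construct the solution of the nonlinear system \eqref{sys-y-LKdV3-NL} by a Banach fixed point argument built on the linear well-posedness result \Cref{pro-kdv3}, in exactly the same spirit as \Cref{pro-WP} is obtained from \Cref{pro-kdv1}. Fix $R>0$ (to be chosen small, depending only on $T$ and $L$) and, on the closed ball $\cB_R = \{ w \in X_T : \| w\|_{X_T} \le R \}$, define the map $\Theta$ by letting $\Theta(w) = y$ be the unique solution in $X_T$ furnished by \Cref{pro-kdv3} of the linear system
\begin{equation*}
\left\{
\begin{array}{cl}
y_t + y_x + y_{xxx} = f - w w_x & \mbox{ in } (0,T) \times (0,L), \\[6pt]
y_x(\cdot, 0) = h_1, \; y(\cdot, L) = h_2, \; y_x(\cdot, L) = h_3 & \mbox{ in } (0,T), \\[6pt]
y(0, \cdot) = y_0 & \mbox{ in } (0,L).
\end{array}
\right.
\end{equation*}
A fixed point $y = \Theta(y)$ is precisely a solution of \eqref{sys-y-LKdV3-NL}, so the whole matter reduces to showing that $\Theta$ is a contraction on $\cB_R$.

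The key analytic ingredient, and the heart of the argument, is the bilinear estimate controlling the nonlinear term as a source in $L^1\big((0,T); L^2(0,L)\big)$. Using the one-dimensional Gagliardo--Nirenberg inequality $\| w(t,\cdot)\|_{L^\infty(0,L)} \le C \| w(t,\cdot)\|_{L^2(0,L)}^{1/2} \| w(t,\cdot)\|_{H^1(0,L)}^{1/2}$ together with H\"older's inequality in time, I would show that
\begin{equation*}
\| w w_x\|_{L^1\left((0,T); L^2(0,L)\right)} \le C T^{1/4} \| w\|_{X_T}^2,
\end{equation*}
and, writing $w_1 w_{1,x} - w_2 w_{2,x} = (w_1 - w_2) w_{1,x} + w_2 (w_{1,x} - w_{2,x})$, the corresponding Lipschitz bound
\begin{equation*}
\| w_1 w_{1,x} - w_2 w_{2,x} \|_{L^1\left((0,T); L^2(0,L)\right)} \le C T^{1/4} \big( \| w_1\|_{X_T} + \| w_2\|_{X_T} \big) \| w_1 - w_2 \|_{X_T}.
\end{equation*}
Here the precise power of $T$ plays no role; only the quadratic, respectively bilinear, dependence on the $X_T$ norms matters. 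In particular $f - w w_x \in L^1\big((0,T); L^2(0,L)\big)$ for $w \in X_T$, so that \Cref{pro-kdv3} indeed applies.

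Combining these estimates with the $X_T$ bound of \Cref{pro-kdv3} applied with source $f - w w_x$, one gets, for $w \in \cB_R$,
\begin{equation*}
\| \Theta(w)\|_{X_T} \le C_{T,L}\big(\|y_0\|_{L^2(0,L)} + \|f\|_{L^1\left((0,T);L^2(0,L)\right)} + \|h_2\|_{H^{1/3}(0,T)} + \|(h_1, h_3)\|_{L^2(0,T)}\big) + C_{T,L}' R^2,
\end{equation*}
together with $\| \Theta(w_1) - \Theta(w_2)\|_{X_T} \le 2 C_{T,L}' R \, \| w_1 - w_2\|_{X_T}$, where $C_{T,L}' = C_{T,L} C T^{1/4}$. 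Choosing first $R$ small enough that $2 C_{T,L}' R \le 1/2$, and then $\eps_0$ small enough that the data term is at most $R/2$, the map $\Theta$ sends $\cB_R$ into itself and is a contraction; the Banach fixed point theorem yields a unique fixed point in $\cB_R$, hence existence and local uniqueness of $y \in X_T$. The quantitative bound \eqref{pro-kdv3-cl2-NL}, including all trace terms, then follows by applying the full estimate \eqref{pro-kdv3-cl1} of \Cref{pro-kdv3} to this $y$ with right-hand side $f - y y_x$ and absorbing $C_{T,L} \| y y_x\|_{L^1\left((0,T);L^2(0,L)\right)} \le C_{T,L}' R \,\|y\|_{X_T}$ into the left-hand side, using the smallness of $R$. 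Since everything reduces to the already established linear theory plus the classical one-dimensional bilinear estimate, I do not expect a genuine obstacle; the only point requiring mild care is fixing the radius $R$ and the threshold $\eps_0$ consistently for the given (arbitrary but fixed) $T$, which is why the constants in \eqref{pro-kdv3-cl2-NL} are allowed to depend on $T$ and $L$.
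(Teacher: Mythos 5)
Your proposal is correct and follows essentially the same route as the paper, which simply remarks that \Cref{pro-kdv3-NL} follows from \Cref{pro-kdv3} by standard (fixed point) arguments and records the same key bilinear estimate $\| y y_x\|_{L^1((0,T);L^2(0,L))} \le C \| y\|_{X_T}^2$ via the Gagliardo--Nirenberg bound $\|y(t,\cdot)\|_{L^\infty} \le C\|y(t,\cdot)\|_{L^2}^{1/2}\|y(t,\cdot)\|_{H^1}^{1/2}$ (cf.\ \Cref{lem-interpolationL1L2}). Your contraction-mapping write-up, including the absorption step for the final estimate, is exactly the "standard argument" the paper leaves to the reader.
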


\begin{remark} \rm The meaning of the solutions considered in \Cref{pro-kdv1}, \Cref{pro-kdv2}, and \Cref{pro-kdv3} are given in \Cref{def-pro-kdv1}, \Cref{def-pro-kdv2}, \Cref{def-pro-kdv3}, respectively. These are motivated by the integration by parts arguments.  The meaning of the solutions in \Cref{pro-WP} and \Cref{pro-kdv3-NL} are understood in the same manner where the nonlinear term $y y_x$ plays as a part of the source $f$.    
\end{remark}

\Cref{pro-WP} and \Cref{pro-kdv3-NL} follows from  \Cref{pro-kdv1} and \Cref{pro-kdv3} by standard arguments. We just mention here that 
\begin{multline*}
\| y y_x \|_{L^1\big( (0, T); L^2 (0, L) \big)}  \le \int_0^T \sup_{x \in [0, L]} |y(t, x)| \left(\int_0^L |y_x(t, x)|^2\right)^{\frac{1}{2}} \\[6pt]
\le   C\int_0^T \left(\int_0^L |y(t, x)|^2 + |y_x(t, x)|^2\right)^{\frac{1}{2}}\left(\int_0^L |y_x(t, x)|^2\right)^{\frac{1}{2}} \le C \|y \|_{X_T}^2. 
\end{multline*}
The details of the proof are left to the reader. A sharper estimate can be derived from \Cref{lem-interpolationL1L2}

\medskip 
The rest of this section is organized as follows. The proof of 
\Cref{pro-kdv1}, \Cref{pro-kdv2}, and \Cref{pro-kdv3} are given in \Cref{sect-pro-kdv1}, \Cref{sect-pro-kdv2}, and \Cref{sect-pro-kdv3}, respectively.

\subsection{Proof of \Cref{pro-kdv1}} \label{sect-pro-kdv1}

We first give the meaning of the solutions considered in \Cref{pro-kdv1}. 

\begin{definition}\label{def-pro-kdv1}  Let $L>0$,  $T>0$, $(h_1, h_2, h_3) \in H^{1/3} (0, T) \times
H^{1/3}(0, T) \times L^2 (0, T)$,  $ f \in L^1\big((0, T); L^2(0, L)\big)$, and $y_0 \in L^2(0, L)$. A  solution $y \in X_T$  of the system \eqref{sys-y-LKdV} is a function $y \in X_T$ such that 
\begin{multline}
\int_0^T \int_0^L f(t, x) \varphi (t, x) \, dx \, dt + \int_0^L y_0 (x) \varphi(0, x) \, dx   + \int_0^T  h_3 (t) \varphi_x (t, L) \, dt \\[6pt]
 - \int_0^T  h_2(t) \varphi_{xx}(t, L) \, dt  + \int_0^T  h_1(t) \varphi_{xx}(t, 0) \, dt  = - \int_0^T \int_0^L y (\varphi_t + \varphi_x + \varphi_{xxx}) \, dx \, dt  
\end{multline}
for all $\varphi \in C^3([0, T] \times [0, L])$ with $\varphi(T, \cdot) = 0$ and  $\varphi(\cdot, 0) = \varphi(\cdot, L) = \varphi_x(\cdot, 0) =0$. 
\end{definition}

\begin{remark} \rm
One can check that if $y$ is a smooth solution of \eqref{sys-y-LKdV} then $y$ is a solution of \eqref{sys-y-LKdV} in the sense of \Cref{def-pro-kdv1} by standard integration by parts arguments. In fact, \Cref{def-pro-kdv1} is motivated by such arguments.    
\end{remark}

The proof of \Cref{pro-kdv1} is divided into two parts given in the following two subsections. The first part is on the existence and the estimates for a constructed solution. The second part is on the uniqueness.

\subsubsection{Existence and estimates}\label{sect-WP-estimate}

The proof of \eqref{pro-kdv1-cl1} for a solution constructed below is given in two steps.

\medskip 
\noindent{\it Step 1:} We first consider the case where $y_0 = 0$ and $f = 0$. 
By the linearity, it suffices to
consider the three cases $(h_1, h_2, h_3) = (0, 0, h_3)$, $(h_1, h_2, h_3)  = (h_1, 0,
0)$, and $(h_1, h_2, h_3)  = (0, h_2, 0)$ separately. In what follows, we extend $h_1, h_2, h_3$ by $0$ for $t > T$ and still denote $y$ the corresponding solution and these extensions by $h_1, h_2, h_3$.

In what follows in this proof, for an appropriate function $v$ defined
on $\mR \times (0, L)$,  we denote by
$\hat v$ its Fourier transform with respect to $t$, i.e., for $z \in \mC$,
\begin{equation*}
\hat v(z, x) = \frac{1}{\sqrt{2 \pi} }\int_0^{+\infty} v(t, x) e^{- i z t} \diff t.
\end{equation*}
Extend $y$ and $h_1, h_2, h_3$ by $0$ for $t < 0$ and still denote these extensions by $y$,  and $h_1, h_2, h_3$. Then 
\be
y_t + y_x + y_{xxx} = 0 \mbox{ in } \mR \times (0, L). 
\ee
Taking the Fourier transform with respect to $t$, we obtain, for $z \in \mR$,  
\be
i z \hat y + \hat y_x + \hat y_{xxx} = 0 \mbox{ in } (0, L). 
\ee

For $z \in \mC$,  let $\lambda_j = \lambda_j(z)$ with $j=1, 2, 3$ be the three solutions of the equation $\lambda^3 + \lambda + i z = 0$. Set 
$$
Q (z) : = 
 \begin{pmatrix}
  1&1&1 \\
  e^{\lambda_1L}&e^{\lambda_2L}&e^{\lambda_3L}\\
  \lambda_1 e^{\lambda_1  L}& \lambda_2 e^{\lambda_2  L}& \lambda_3 e^{\lambda_3  L}
 \end{pmatrix},
$$
and 
\begin{equation*}
\Xi = \Xi(z) :=  \det \begin{pmatrix}1&1&1\\ \lambda_1&\lambda_2&\lambda_3\\
\lambda_1^2&\lambda_2^2&\lambda_3^2\end{pmatrix}, 
\end{equation*}
with the convention $\lambda_{j+3} = \lambda_{j}$ for $j \ge 1$. It is useful to note that 
\be\label{def-H2}
H(z) : = \det Q (z) \Xi(z) \mbox{ in } \mC. 
\ee
is an analytic function, see, e.g., \cite[Lemma A1]{CKN20}. Moreover,  $H$ only has a finite number of zeros on the real line since $H (z) \neq 0$ for $z \in \mR$ with large $|z|$.

We first consider the case $(h_1, h_2, h_3)  = (0, 0, h_3)$. Taking into account the equation of $\hy$, we search for the solution of the form
\begin{equation}\label{lem-kdv2-p0}
\hy(z, \cdot) = \sum_{j=1}^3 a_j  e^{\lambda_j x},
\end{equation}
where  $a_j = a_j(z)$ for $j = 1, 2, 3$. Taking the boundary condition, we then have 
\begin{equation*}
\begin{array}{c}
\sum_{j=1}^3 a_j  =0, \\[6pt]
\sum_{j=1}^3 a_j e^{\lambda_j L}  =0, \\[6pt]
\sum_{j=1}^3 a_j \lambda_j e^{\lambda_j L}  = \hh_3, \\[6pt]
\end{array}
\end{equation*}
This implies, with the convention $\lambda_{j+3} = \lambda_j$,  
\begin{equation}\label{lem-kdv1-p1}
a_j =  \frac{e^{\lambda_{j+2}L} - e^{\lambda_{j+1} L}}{\det Q} \hh_3 \mbox{ for } j=1, 2, 3. 
\end{equation}

To estimate the solution, we proceed as follows.  By \Cref{lem-zeros} in the appendix, there exists $g_{3} \in C^\infty(\mR)$ with $\supp g_{3} \subset [T, 3 T]$ such that if
$z$ is a real solution of the equation $H(z) = 0$  of order $m$ then $z$ is also a real solution of order $m$ of $\hat h_3(z) -
\hat g_{3}(z)$, and, for $k \ge 1$,  
\begin{equation}\label{lem-kdv1-g3}
\|g_3 \|_{H^k(\mR)} \le C_{k} \| h_3\|_{H^{-1/3}(\mR)}.
\end{equation}

We now establish \eqref{pro-kdv1-cl1}.  Let $y_3$ be the solution of \eqref{sys-y-LKdV} where $(h_1, h_2, h_3)$ are  $(0, 0, h_3 - g_3)$, and  $f=0$, and $y_0 = 0$.  We have, by applying \eqref{lem-kdv1-p1} to $y_3$, 
\begin{equation}\label{lem-kdv1-y}
\hy_3(z, x) = \frac{\hh_3(z) - \hg_3(z) }{\det Q(z)}  \sum_{j=1}^3 \big(e^{\lambda_{j+2} L } -
e^{\lambda_{j+1} L } \big) e^{\lambda_j x} \mbox{ for a.e. } x \in (0, L).
\end{equation}
We derive that, by the choice of $g_3$, for $z \in \mR$ and $|z| \le \gamma$,
\begin{equation}\label{lem-kdv-1-p1}
\left| \frac{\hh_3(z) - \hg_3(z)}{H(z)}  \right| \left| \Xi(z) \sum_{j=1}^3 \big(e^{\lambda_{j+2} L } -
e^{\lambda_{j+1} L } \big) e^{\lambda_j x} \right| \le C_{T, \gamma} \|
h_3 - g_3\|_{H^{-1/3}(\mR)},
\end{equation}
and, by \Cref{lem-lambda} below on the behaviors of $\lambda_j(z)$ for large $z$ (the case large negative $z$ can be obtained by considering the conjugate), we obtain,  for $z \in \mR$, $|z| \ge \gamma$ with sufficiently large
$\gamma$,
\begin{equation}\label{lem-kdv-1-p2}
\left| \frac{1}{\det Q}  \sum_{j=1}^3 \big(e^{\lambda_{j+2} L } - e^{\lambda_{j+1} L }
\big) e^{\lambda_j x} \right| \le \frac{C}{ (1 + |z|)^{1/3}}.
\end{equation}
Combining \eqref{lem-kdv-1-p1} and \eqref{lem-kdv-1-p2} yields
\[
\|y_3(\cdot, x)\|_{H^{1/3}(\mR)}  \le C_{T, L} \| h_3 - g_3\|_{L^2(\mR)}
\]
and
\[
\|y_3(\cdot, x)\|_{L^2(\mR)}  \le C_{T, L} \| h_3 - g_3\|_{H^{-1/3}(\mR)}. 
\]
Similarly, we have
\[
\|y_{3, x} (\cdot, x)\|_{L^2(\mR)} + 
\|y_{3, xx} (\cdot, x)\|_{H^{-1/3}(\mR)}  \le C_{T, L} \| h_3 - g_3\|_{L^2(\mR)} 
\]
and 
\[
\|y_{3, x} (\cdot, x)\|_{H^{-2/3}(\mR)}  \le C_{T, L} \| h_3 - g_3\|_{H^{-1/3}(\mR)}. 
\]
The estimates for $\|y(\cdot, x)\|_{H^{1/3}(0, T)}$, $\|y_{x} (\cdot, x)\|_{L^2(0, T)}$, and  $\|y_{xx} (\cdot, x)\|_{[H^{1/3}(0, T)]^*}$  and the estimate for  $\|y(\cdot, x)\|_{L^2(0, T)}$, $\|y_{x} (\cdot, x)\|_{[H^{1/3}(0, T)]^*}$ follow by noting that $y = y_3$ in $(0, T) \times (0, L)$ \footnote{More precisely, one can take $y= y_3$ in $(0, T) \times (0, L)$.}. 

We also have, by integration by parts,  for $0 \le \tau_1 < \tau_2 \le T$, 
\begin{multline}\label{pro-kdv-XT}
\frac{1}{2}\int_{0}^L |y(\tau_2, x)|^2 \, dx - \frac{1}{2}\int_{0}^L |y(\tau_1, x)|^2 \, dx  + \frac{1}{2}\int_{\tau_1}^{\tau_2} \Big(  |y(t, L)|^2 - |y(t, 0)|^2 \Big) \, dt \\[6pt]
+ \int_{\tau_1}^{\tau_2} \Big( y_{xx} (t, L) y(t, L) - y_{xx} (t, 0) y(t, 0) \Big) \, dt -  \frac{1}{2}\int_{\tau_1}^{\tau_2} \Big( |y_x(t, L)|^2 - |y_x(t, 0)|^2 \Big) \, dt  =0. 
\end{multline}
Using the estimates for $\|y(\cdot, x)\|_{H^{1/3}(0, T)}$, $\|y_{x} (\cdot, x)\|_{L^2(0, T)}$, and  $\|y_{xx} (\cdot, x)\|_{[H^{1/3}(0, T)]^*}$, we obtain the one for $\| y \|_{X_T}$. 
 
\medskip 

The proof in the case $(h_1, h_2, h_3) = (h_1, 0, 0)$ or in the case $(h_1, h_2, h_3) =
(0, h_2, 0)$ is similar after noting \Cref{lemH1/3} in the appendix. We mention here that  the solution corresponding to the
triple $(h_1, 0, 0)$ is given by
\[
\hy(z, x) = \frac{\hh_1(z) }{\det Q}  \sum_{j=1}^3 (\lambda_{j+2} e^{-\lambda_j L } - \lambda_{j+1}e^{-\lambda_j L })
e^{\lambda_j x}
\mbox{ for a.e. } x \in (0, L),
\]
and the solution corresponding to the triple $(0, h_2, 0)$ is given by 
\[
\hy(z, x) = \frac{\hh_2(z) }{\det Q}  \sum_{j=1}^3 (\lambda_{j+1} e^{\lambda_{j+1} L } -
\lambda_{j+2} e^{\lambda_{j+2} L }) e^{\lambda_j x}
\mbox{ for a.e. } x \in (0, L).
\]
The details are left to the reader.

\begin{remark} \label{rem-pro-kdv1-Step1} \rm If $h_1, h_2, h_3 \in C^\infty_c(\mR)$, the constructed solution is also smooth. 
\end{remark}

\medskip 
\noindent{\it Step 2:} We now deal with the general case.  The starting point of the proof is  a connection between  the linearized  KdV equation and the linear 
KdV-Burgers equation.  Set $v(t, x) = e^{-2 t + x} y (t, x)$, which is equivalent to
$y(t, x)  = e^{2t - x} v(t, x)$. One can check that  if $y$ satisfies the equation
\[
y_t  + y_x + y_{xxx}  = f \mbox{ in } \mR_+ \times  (0, L),
\]
if and only if 
\[
v_t  + 4 v_x   + v_{xxx} - 3 v_{xx}  = f  e^{-2 t + x}
\mbox{ in } \mR_+ \times  (0, L).
\]

Set, in $\mR_+ \times (0, L)$,
\begin{equation}\label{lem-kdv3-def-hg}
\psi(t, x) =  \psi(t) : =  \frac{1}{L} \int_0^L f(t, \xi) e^{-2 t + \xi} \diff  \xi \quad
\mbox{ and } \quad
g(t, x) : = f(t, x) e^{-2 t + x} - \psi(t, x).
\end{equation}
Then
\[
\int_0^L g(t, x) \diff x = 0 \mbox{ for } t > 0. 
\]
Let $y_1 \in  C\big([0, + \infty); L^2(0, L) \big) \cap L^2_{\loc}\big([0, + \infty);
H^1(0, L) \big)$ be the unique solution periodic in space  of the system
\begin{equation}\label{pro-kdv1-sys-y1}
y_{1, t}  + 4 y_{1, x}  + y_{1, xxx}  - 3 y_{1, xx} = g 
\mbox{ in } (0, +\infty) \times (0, L),
\end{equation}
and
\begin{equation}
y_1(0, \cdot)  = y_0  e^x  \mbox{ in } (0, L).
\end{equation}
Set 
$$
\alpha = \frac{1}{L} \int_0^L y_1(0, x) \, dx. 
$$

By \Cref{pro-kdv-B}, we have, for $x \in [0, L]$, 
\begin{multline}\label{lem-kdv1-y1}
\| y_{1}(\cdot, x) - \alpha\|_{H^{1/3}(\mR_+)} + \| y_{1, x} (\cdot, x) \|_{L^2(\mR_+)} +  \| y_{1, xx} (\cdot, x) \|_{[H^{1/3}(\mR_+)]^*} \\[6pt]
\le C \Big( \|f \|_{L^1(\mR_+; L^2(0, L))} + \| y_0 \|_{L^2(0, L)} \Big)
\end{multline}
and 
\begin{multline}\label{lem-kdv1-y1-2}
\| y_{1}(\cdot, x) - \alpha\|_{L^2(\mR_+)} + \| y_{1, x} (\cdot, x) \|_{[H^{1/3}(\mR_+)]^*} 
\le C \Big(\| y_0 \|_{[H^{1}(0, L)]^*}  + \|f \|_{L^1(\mR_+; [H^{1}(0, L)]^*)} \Big).
\end{multline}

Let $y_2 \in X_{T}$ be the unique solution of 
\begin{equation}\label{pro-kdv1-sys-y2}\left\{
\begin{array}{cl}
y_{2, t}  + y_{2, x}  + y_{2, xxx}  = 0 &  \mbox{
in } (0, T) \times (0, L), \\[6pt]
y_2(t, 0) = h_1(t)  - e^{2t} \Big( y_1(t, 0) + \int_0^t \psi(s) \, ds  \Big)   & \mbox{ in } (0, T), \\[6pt]
y_2(t, L) = h_2 (t) -  e^{2t - L} \Big( y_1(t, L) + \int_0^t \psi(s) \, ds  \Big)  & \mbox{ in } (0, T),
\\[6pt]
y_{2, x}(t, L) = h_3(t) - \big(e^{2t - \cdot} (y_1(t, \cdot) + \int_0^t \psi(s) \, ds) \big)_x (t, L)
& \mbox{ in } (0, T),\\[6pt]
y_2(t = 0, \cdot)  = 0 &  \mbox{ in } (0, L).
\end{array}\right.
\end{equation}
Applying  the results of Step 1 to $y_2$,  and using \eqref{lem-kdv1-y1} and \eqref{lem-kdv1-y1}, we derive that, for $x \in [0, L]$,  
\begin{multline*}
\| y_2\|_{X_T}  +  \| y_{2}(\cdot, x) \|_{H^{1/3}(0, T)}  + \| y_{2, x}(\cdot, x) \|_{L^2(0, T)} + \| y_{2, xx}(\cdot, x) \|_{[H^{1/3}(0, T)]^*}  \\[6pt]
\le C_T  \Big( \| (h_1, h_2) \|_{H^{1/3}(\mR_+)}
+ \|h_3 \|_{L^2(\mR_+)} + \| f\|_{L^1([0, T]; L^2 (0, L))} \Big) 
\end{multline*}
and
\begin{multline*}
\| y_{2}(\cdot, x) \|_{L^2(0, T)}  + \| y_{2, x}(\cdot, x) \|_{[H^{1/3}(0, T)]^*} \\[6pt]
\le C_T  \Big( \| (h_1, h_2) \|_{L^2(\mR_+)}
+ \|h_3 \|_{[H^{1/3}(\mR_+)]^*} + \| f\|_{L^1([0 T]; [H^{1} (0, L)]^*)} \Big). 
\end{multline*}
The conclusion follows by noting that $y = e^{2t -x} \Big(y_1 + \int_0^t \psi(s) \, ds \Big) + y_2$ in $(0, T) \times (0, L)$.

\medskip 
The proof of the existence of one solution and its  estimates is complete if one can show that using this process, one can construct
a solution which verifies \Cref{def-pro-kdv1}. To this end, one first notes that if $h_1$, $h_2$, $h_3$, $y_0$, $g$ are smooth and $\supp h_1, \supp h_2, \supp h_3 \Subset (0, T)$, $\supp y_0 \Subset (0, L)$ and $\supp g \Subset (0, T) \times (0, L)$ then one can construct a solution with the desired estimates. Indeed,  the solution $y_1$ given by \eqref{pro-kdv1-sys-y1} is smooth and has the property  that $y_1(t, 0)$ and $y_1(t, L)$ are 0 for $t$ close to $0$ (see \Cref{rem-pro-kdv-B1} and \Cref{rem-pro-kdv-B2}). One can then construct a smooth solution $y_2$ of \eqref{pro-kdv1-sys-y2} in the interval $(0, 2T)$ where the the expression of $y_2(t, 0)$, $y_2(t, L)$, and $y_{2, x}(t, L)$ are replaced by the expressions given in \eqref{pro-kdv1-sys-y2} multiplied by a cutoff function $\chi$, which is 1 in $(0, T)$ and is $0$ for $t > 3T/2$. The solution $y_2$ is also smooth (see \Cref{rem-pro-kdv1-Step1}). 
Then $e^{2t -x} \Big(y_1 + \int_0^t \psi(s) \, ds \Big) + y_2$ is still a solution in the time interval $(0, T)$ with the desired estimates.  Using this observation, by standard approximation arguments, one can construct a solution in the sense of \Cref{def-pro-kdv1} with the required estimates.  \qed

\medskip

In the proof of \Cref{pro-kdv1},  we used the following elementary result.  
\begin{lemma}\label{lem-lambda}
For  $p \in \mC$ and $z$ in a sufficiently small conic neighborhood of $\R_+$, let $\lambda_j$ with $j=1, \, 2, \, 3$ be the three solutions of the equation $\lambda^3 + \lambda + iz =0 $.  Consider the convention $\Re(\lambda_1) < \Re(\lambda_2) < \Re(\lambda_3)$ and similarly
for $\tlambda_j$. We have,  in the
limit $|z|\to \infty$,
\begin{equation}
\lambda_j = \mu_jz^{1/3} - \frac1{3\mu_j}z^{-1/3} + O(z^{-2/3}) \quad \text{ with } \mu_j
= e^{-i\pi/6-2ji\pi/3},
\end{equation}
Here $z^{1/3}$ denotes the cube root of $z$ with the real part positive.  
\end{lemma}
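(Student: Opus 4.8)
The plan is to strip out the large parameter by a rescaling that turns the cubic into a regular (fixed-polynomial) perturbation problem. Writing $w = z^{1/3}$ for the branch of the cube root with positive real part, which is well defined and analytic on a sufficiently narrow cone around $\R_+$, and substituting $\lambda = w\kappa$, the equation $\lambda^3 + \lambda + i z = 0$ becomes, after dividing by $w^3 = z$,
\[
\kappa^3 + \eps \kappa + i = 0, \qquad \eps := w^{-2} = z^{-2/3}.
\]
As $|z| \to \infty$ within the cone one has $\eps \to 0$, so this is a small perturbation of the limiting equation $\kappa^3 = -i$.

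First I would analyse the unperturbed equation. Its three roots are $\mu_j = e^{-i\pi/6 - 2 j i\pi/3}$, $j=1,2,3$, which one checks satisfy $\mu_j^3 = e^{-i\pi/2} = -i$ and are pairwise distinct. Since these are \emph{simple} roots of $P(\kappa,\eps) := \kappa^3 + \eps\kappa + i$ at $\eps=0$ (indeed $\partial_\kappa P(\mu_j,0) = 3\mu_j^2 \neq 0$), the implicit function theorem furnishes, for $|\eps|$ small, three analytic branches $\kappa_j(\eps)$ with $\kappa_j(0)=\mu_j$. Differentiating $\kappa_j(\eps)^3 + \eps\kappa_j(\eps) + i = 0$ at $\eps=0$ gives $3\mu_j^2\,\kappa_j'(0) + \mu_j = 0$, hence $\kappa_j'(0) = -1/(3\mu_j)$ and $\kappa_j(\eps) = \mu_j - \eps/(3\mu_j) + O(\eps^2)$. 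Undoing the rescaling with $w\eps = z^{-1/3}$ and $w\eps^2 = z^{-1}$ then yields
\[
\lambda_j = w\,\kappa_j(\eps) = \mu_j z^{1/3} - \frac{1}{3\mu_j}\,z^{-1/3} + O(z^{-1}),
\]
which is the asserted expansion since $O(z^{-1}) \subset O(z^{-2/3})$ for $|z|$ large.

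It then remains to reconcile the labeling with the convention $\Re(\lambda_1) < \Re(\lambda_2) < \Re(\lambda_3)$. For $z>0$ real one has $w>0$ and $\Re(\mu_1),\Re(\mu_2),\Re(\mu_3) = -\tfrac{\sqrt3}{2},\,0,\,\tfrac{\sqrt3}{2}$, so $\Re(\mu_1 w) < \Re(\mu_2 w) < \Re(\mu_3 w)$; and since $\Re(\lambda_j) = \Re(\mu_j)\Re(w) - \Im(\mu_j)\Im(w) + O(|z|^{-1/3})$ is dominated by its leading term of size $\sim|z|^{1/3}$, this ordering persists for $|z|$ large as long as $\arg w$ (equivalently $\arg z$) stays small. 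Thus the indices $\mu_1,\mu_2,\mu_3$ are exactly those ordered by increasing real part, as required. I expect the only genuinely delicate point to be this bookkeeping: fixing the branch of $z^{1/3}$ and choosing the opening angle of the cone small enough that the three leading real parts remain separated, so that the perturbation cannot interchange the branches. The analytic continuation of the roots and the computation of the first two coefficients are entirely routine.
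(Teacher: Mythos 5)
Your proof is correct, and in fact slightly stronger than needed: the rescaling $\lambda = z^{1/3}\kappa$ reduces the cubic to $\kappa^3 + z^{-2/3}\kappa + i = 0$, the implicit function theorem applies at the simple roots $\mu_j$ of $\kappa^3 = -i$, and undoing the scaling gives a remainder $O(z^{-1})$, which is contained in the stated $O(z^{-2/3})$. The paper states this lemma without proof, as an elementary fact, so there is no argument to compare against; your treatment of the two points that actually require care --- fixing the branch of $z^{1/3}$ on a narrow cone about $\R_+$, and checking that the ordering $\Re(\mu_1) = -\tfrac{\sqrt3}{2} < \Re(\mu_2) = 0 < \Re(\mu_3) = \tfrac{\sqrt3}{2}$ of the leading terms, which are of size $|z|^{1/3}$, forces the labeling convention for large $|z|$ --- is exactly what a complete write-up should contain.
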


 Here and in what follows, for $s \in \mR$, $O(z^s)$ denotes a quantity bounded by $C z^s$ for large positive $z$. Similar convention is used for $O(|z|^s)$ for $z \in \mC$.

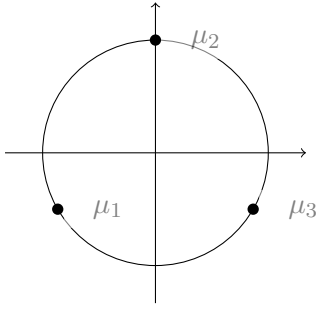
\begin{figure}[htp]
 \begin{minipage}[c]{0.4\textwidth}
  {\centering
  \begin{tikzpicture}[scale=0.5]
  \draw[->] (-4,0) -- (4,0);
  \draw[->] (0,-4) -- (0,4);
  
  \draw (0,0) circle[radius=3];
  
  \foreach \j in {1,2,3} {
   \fill (-30-120*\j:3) circle[fill, radius=0.15] 
     ++(0.15,0) node[right, fill=white, fill opacity=0.5, text opacity = 1]{$\mu_\j$};
         }
\end{tikzpicture}}
 \end{minipage}\hfill%
 \begin{minipage}[c]{0.6\textwidth}
 
\caption{The roots $\lambda_j$ of $\lambda^3 + \lambda + i z = 0$ satisfy, when
$z>0$ is large, $\lambda_j \sim \mu_j z^{1/3}$ where $\mu_j^3 = -i$.}
\label{fig:mu}
\end{minipage}
\end{figure}

\medskip

The same proof as in Step 1 of the proof of \Cref{pro-kdv1} gives the following result. 
\begin{lemma}\label{lem-kdv1} Let $T>0$ and $(h_1, h_2, h_3) \in H^{1/3} (0, T) \times
H^{1/3}(0, T) \times L^2 (0, T)$. Then the unique solution $y \in X_T$  of the system \eqref{sys-y-LKdV} with $y_0= 0 $ and $f = 0$ satisfy, for $x \in [0, L]$,
\be\label{lem-kdv1-cl2}
\| y(\cdot, x)\|_{[H^{2/3}(0, T)]^*}  \le C_{T, L} \Big(  \| (h_1, h_2) \|_{[H^{2/3}(0, T)]^*} + \|
h_3 \|_{[H^{1}(0, T)]^*}\Big),   
\ee
where  $C_{T, L}$ denotes a positive constant  independent of $x$,  $y_0$,  and $h_1, \, h_2, \,  h_3$. 
\end{lemma}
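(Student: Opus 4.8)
The plan is to mimic Step 1 of the proof of \Cref{pro-kdv1} almost verbatim, the only difference being the Sobolev index one tracks. By linearity it suffices to treat the three cases $(h_1,h_2,h_3)=(0,0,h_3)$, $(h_1,0,0)$, and $(0,h_2,0)$ separately, and by Plancherel's theorem (after the extension/zero-subtraction procedure of Step 1) the quantity $\|y(\cdot,x)\|_{[H^{2/3}(0,T)]^*}^2$ is controlled by
\[
\int_\mR (1+|z|)^{-4/3}\,|\hat y(z,x)|^2\,dz .
\]
In each case $\hat y(z,x)$ is an explicit Fourier multiplier applied to $\hat h_j(z)$: for $(0,0,h_3)$ it is given by \eqref{lem-kdv1-y}, while for $(h_1,0,0)$ and $(0,h_2,0)$ it is given by the two displayed formulas at the end of Step 1. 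Denote the corresponding multipliers by $M_1,M_2,M_3$.

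The heart of the matter is the large-$|z|$ behaviour of $M_1,M_2,M_3$, which I would read off from \Cref{lem-lambda}. The bound $|M_3(z,x)|\le C(1+|z|)^{-1/3}$ is exactly \eqref{lem-kdv-1-p2}. Since the numerators defining $M_1$ and $M_2$ are larger than that of $M_3$ by one power of $z^{1/3}$ (an extra factor $\lambda_j\sim\mu_j z^{1/3}$), the same asymptotics give $|M_1(z,x)|+|M_2(z,x)|\le C$, uniformly in $x\in[0,L]$; this boundedness is precisely what underlies the $H^{1/3}$ estimates for $h_1,h_2$ in \eqref{pro-kdv1-cl1}. Feeding these into the weighted integral yields, for the $h_3$ contribution,
\[
\int_\mR (1+|z|)^{-4/3}|M_3|^2|\hat h_3|^2\,dz \le C\int_\mR (1+|z|)^{-2}|\hat h_3|^2\,dz = C\|h_3\|_{[H^1(0,T)]^*}^2,
\]
and for the $h_1,h_2$ contributions
\[
\int_\mR (1+|z|)^{-4/3}\big(|M_1|^2|\hat h_1|^2+|M_2|^2|\hat h_2|^2\big)\,dz \le C\,\|(h_1,h_2)\|_{[H^{2/3}(0,T)]^*}^2 ,
\]
which together give the claimed estimate.

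The one point that needs genuine re-checking — and the step I expect to be the main obstacle — is the treatment of the (finitely many) real zeros of $H=\det Q\,\Xi$, where the multipliers have poles. As in Step 1 one removes them by subtracting a smooth $g_j$ supported in $[T,3T]$ (so that $y$ is unchanged on $(0,T)$) chosen, via \Cref{lem-zeros}, so that $\hat h_j-\hat g_j$ vanishes to the right order at each zero. In Step 1 the resulting $g_j$ was controlled by $\|h_j\|_{H^{-1/3}}$; here I must instead control $g_3$ by $\|h_3\|_{[H^1(0,T)]^*}$ and $g_1,g_2$ by $\|h_j\|_{[H^{2/3}(0,T)]^*}$. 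This is available because the Taylor coefficients of $\hat h_j$ at a zero $z_k$ are the pairings $\langle h_j,\, t^m e^{-iz_k t}\rangle$, and the test functions $t^m e^{-iz_k t}$ restricted to $(0,T)$ lie in $H^1(0,T)$ (resp.\ $H^{2/3}(0,T)$) with norms bounded independently of the data; hence these coefficients, and therefore $g_j$, are dominated by the corresponding dual norms. Once this dependence in \Cref{lem-zeros} is confirmed, the low-frequency region $|z|\le\gamma$ is handled exactly as in \eqref{lem-kdv-1-p1}, and the passage from the full-line estimates back to the dual norms on $(0,T)$ is identical to Step 1.
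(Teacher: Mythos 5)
Your proposal is correct and is essentially the paper's own proof: the paper disposes of this lemma with the single remark that the same proof as in Step~1 of the proof of \Cref{pro-kdv1} applies, and your multiplier bookkeeping ($|M_3|\lesssim (1+|z|)^{-1/3}$ from \eqref{lem-kdv-1-p2}, $|M_1|,|M_2|\lesssim 1$ via \Cref{lem-lambda}) together with the observation that \Cref{lem-zeros} is already stated for every $s\le 0$ is exactly what that remark presupposes. The only point at which the argument is not literally ``identical to Step~1'' is the final passage from the full-line bound on $\|y_3(\cdot,x)\|_{H^{-2/3}(\mR)}$ to the norm $\|y(\cdot,x)\|_{[H^{2/3}(0,T)]^*}$, since the zero-extension device of \Cref{lemH1/3} is only available for $s<1/2$ and hence not for the $H^{2/3}$ and $H^{1}$ test functions needed here --- but the paper glosses over this in exactly the same way, so your attempt faithfully reproduces its argument.
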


\subsubsection{Uniqueness} \label{sect-WP-uniqueness} In this section, we establish the uniqueness of the solutions given in \Cref{pro-kdv1}. The uniqueness is somehow known for a close definition of the solution which requires $\varphi$ less regular. The definition considered here is somehow more advantageous, see \Cref{rem-uniqueness}. We shall highlight the analysis in such a way that one can extend it to other settings considered in this paper; thus we do not need to repeat the arguments as much as possible.  
Let $y \in X_T$ be a solution with the zero data, i.e., $f=0$, $y_0 = 0$, and $(h_1, h_2, h_3) = (0, 0, 0)$. Fix $\psi \in C^\infty_c \big((0, T) \times (0, L) \big)$ (arbitrarily). Let $\ty \in X_T$ be a solution of the backward system  
\begin{equation*}\left\{
\begin{array}{cl}
\ty_{t} + \ty_{x}  + y_{xxx}  = \psi &  \mbox{ in } (0, T) \times  (0, L),
\\[6pt]
\ty(\cdot, 0) = 0,  \;  \ty_{x}(\cdot, 0) = 0, \;  \ty(\cdot, L)   = 0 & \mbox{ in } (0,
T), \\[6pt]
\ty(T, \cdot)  = 0 & \mbox{ in } (0, L).
\end{array}\right.
\end{equation*}
Using the construction given in \Cref{sect-WP-estimate}, one can assume that $\ty$ is smooth (see the last part of Step 2 of \Cref{sect-WP-estimate}). Taking $\varphi = \ty$ in the definition, we derive that 
$$
\int_{0}^T \int_0^L \psi (t, x) y(t, x) \, dt \, dx =0. 
$$
Since $\psi \in C^\infty_c \big((0, T) \times (0, L) \big)$ is arbitrary, we deduce that 
$$
y = 0 \mbox{ in } (0, T) \times (0, L). 
$$
The uniqueness is proved. \qed 

\begin{remark} \rm \label{rem-uniqueness}The proof of the uniqueness also gives the uniqueness of the solutions in  $L^1 \big((0, T) \times (0, L) \big)$, i.e., one requires $y \in L^1 \big((0, T) \times (0, L) \big)$ instead of $y \in X_T$ in \Cref{def-pro-kdv1}. 
\end{remark}

\begin{remark} \rm \label{rem-RW}
We end this section with some comments on \Cref{pro-kdv1} and its proof. 

\smallskip 
 
\quad i) The well-posedness of \eqref{sys-y-LKdV} in $X_T$ is  proved in \cite[Theorem 2.10 and Proposition 2.16]{Bona03} for $L=1$ when  $(0, T) = \mR_+$ and  the estimate  for $\| y_x(\cdot, x)\|_{L^2(0, T)}$ in this case is a  consequence of their results. For $L=1$, their results imply that, when  $(h_1, h_2, h_3) \equiv (0, 0, 0)$,  
\be \label{Bona03}
\| y_x(\cdot, x)\|_{L^2(\mR_+)} \le C \| y_0 \|_{L^2(0, 1)} \mbox{ for } x  \in [0, 1]. 
\ee
Note that $L=1$ is smaller than the smallest critical length in $\cN_N$ which is $2 \pi$. The estimate as \eqref{Bona03}  cannot hold for arbitrary $L$. Such an estimate is not valid for any critical length by considering a non-zero initial datum in  $\cM_N$. 

\smallskip 
\quad ii) Similar estimates for  $\| y(\cdot, x)\|_{H^{1/3}(0, T)}$ and $\| y_x(\cdot, x)\|_{L^2(0, T)}$ as in \eqref{pro-kdv1-cl1}  in the real line space setting can be found in \cite{CK02, Holmer06}. 
To our knowledge, variants of the estimate for  $\| y_{xx}(\cdot, x)\|_{[H^{1/3}(0, T)]^*}$ in \eqref{pro-kdv1-cl1} are not known even in the real line space setting.    Our proof of \Cref{pro-kdv1} is in the spirit of \cite{CKN20}, which involves the Fourier transform with respect to time of the solution,  as in \cite{Bona03}, and a connection between the linearized KdV and the linear KdV-Burger equations. However, in the study of the linear KdV-Burger equation with periodic boundary conditions, the singularity of the kernel is appropriately {\it compensated} (see the proof of \Cref{pro-kdv-B}), which is one of the novelties of the analysis. The proof given here is self-contained and is different from the ones in \cite{CK02, Holmer06} for the whole line setting, which are based on the Riemann-Liouville fractional integrals and the theory of Airy functions. 
\end{remark}

\subsection{Proof of \Cref{pro-kdv2}} \label{sect-pro-kdv2} We first give the definition of the solutions considered  in \Cref{pro-kdv2}.

\begin{definition}\label{def-pro-kdv2}  Let $L>0$,  $T>0$, $(h_1, h_2, h_3) \in H^{1/3} (0, T) \times
H^{1/3}(0, T) \times [H^{1/3} (0, T)]^*$,  $ f \in L^1\big((0, T); L^2(0, L)\big)$, and $y_0 \in L^2(0, L)$. A  solution $y \in X_T$  of the system \eqref{sys-y-LKdV2} is a function $y \in X_T$ such that 
\begin{multline}
\int_0^T \int_0^L f(t, x) \varphi (t, x) + \int_0^L y_0 (x) \varphi(0, x) \, dx   - \int_0^T  h_3 (t) \varphi (t, L) \, dt \\[6pt]
 - \int_0^T  h_2 (t) \varphi_{xx}(t, L) \, dt +  \int_0^T  h_1 (t) \varphi_{xx}(t, 0) \, dt = - \int_0^T \int_0^L y (\varphi_t + \varphi_x + \varphi_{xxx}) \, dx \, dt   
\end{multline}
for all $\varphi \in C^3([0, T] \times [0, L])$ with $\varphi(T, \cdot) = 0$, $\varphi(\cdot, 0) = \varphi_x(\cdot, 0) = \varphi_x(\cdot, L) = 0$. 
\end{definition}

The proof of \Cref{pro-kdv1}
is similar to the one of \Cref{pro-kdv1}. The details are left to the reader. We just present here the formula of the solution of the system when  $y_0 = 0$,   $h_1 =0$,  $h_2 =0$, and $f=0$ and discuss the uniqueness. 

Extend $h_3$ by $0$ for $t \not \in [0, T]$ and still denote this extension by $h_3$. Then $h_3 \in H^{-1/3}(\mR)$. Denote $y$ be the corresponding solution for $t \ge 0$ and extend $y$ by 0 for $t < 0$. Still denote this extension by $y$. One then has 
\begin{equation}\left\{
\begin{array}{cl}
y_t + y_x  + y_{xxx}  = 0 &  \mbox{ in } \mR \times  (0, L),
\\[6pt]
y(\cdot, 0) = 0,  \;  y(\cdot, L) = 0, \;  y_{xx}(\cdot , L)   = h_3 & \mbox{ in } \mR. 
\end{array}\right.
\end{equation}

Taking the Fourier transform with respect to $t$, from the system of $y$, we have
\begin{equation}\label{sys-hy}
\left\{\begin{array}{cl}
i z \hy(z, x) + \hy_x (z, x) + \hy_{xxx} (z, x) = 0 &  \mbox{ in } \mR \times  (0, L),
\\[6pt]
\hy(z, 0) = \hy(z, L) = 0 & \mbox{ in }  \mR, \\[6pt]
\hy_{xx}(z, L) = \hh_3(z) & \mbox{ in } \mR.
\end{array}\right.
\end{equation}
Taking into account the equation of $\hy$, we search for the solution of the form
\begin{equation}
\hy(z, \cdot) = \sum_{j=1}^3 a_j  e^{\lambda_j x},
\end{equation}
where $\lambda_j = \lambda_j(z) $ with $j=1, 2, 3$ are the three solutions of the equation $\lambda^3 + \lambda + i z = 0$, and  $a_j = a_j(z)$ for $j = 1, 2, 3$.  We then have 
\begin{equation*}
\begin{array}{c}
\sum_{j=1}^3 a_j  =0, \\[6pt]
\sum_{j=1}^3 a_j e^{\lambda_j L}  =0, \\[6pt]
\sum_{j=1}^3 a_j \lambda_j^2 e^{\lambda_j L}  = \hh_3, \\[6pt]
\end{array}
\end{equation*}
This implies, with the convention $\lambda_{j+3} = \lambda_j$,  
\begin{equation}\label{lem-kdv2-p1}
a_j =  \frac{e^{\lambda_{j+2}L} - e^{\lambda_{j+1} L}}{\sum_{k=1}^3 e^{-\lambda_k L} (\lambda_{k+2}^2 - \lambda_{k+1}^2)} \hh_3 \mbox{ for } j=1, 2, 3. 
\end{equation}

We now deal with the uniqueness. Let $y \in X_T$ be a solution with the zero data, i.e., $f=0$, $y_0 = 0$, and $(h_1, h_2, h_3) = (0, 0, 0)$. Let $\psi \in C^\infty_c \big((0, T) \times (0, L) \big)$. Let $\ty \in X_T$ be a solution of the backward system  
\begin{equation*}\left\{
\begin{array}{cl}
\ty_{t} + \ty_{x}  + \ty_{xxx}  = \psi &  \mbox{ in } (0, T) \times  (0, L),
\\[6pt]
\ty(\cdot, 0) = 0,  \;  \ty_{x}(\cdot, 0) = 0, \;  \ty_x(\cdot, L)   = 0 & \mbox{ in } (0,
T), \\[6pt]
\ty(T, \cdot)  = 0 & \mbox{ in } (0, L).
\end{array}\right.
\end{equation*}
One can show that $\ty$ is smooth (see the construction of the solutions given in \Cref{pro-kdv3} and the last part of Step 2 of \Cref{sect-WP-estimate}; in fact, the arguments are simpler since the data on the boundary are 0 and the initial datum is 0).  Taking $\varphi = \ty$ in \Cref{def-pro-kdv1}, we derive that 
$$
\int_{0}^T \int_0^L \psi (t, x) y(t, x) \, dt \, dx =0. 
$$
Since $\psi \in C^\infty_c \big((0, T) \times (0, L) \big)$ is arbitrary, we deduce that 
$$
y = 0 \mbox{ in } (0, T) \times (0, L). 
$$
The uniqueness is proved.  \qed

\begin{remark} \rm The proof of the uniqueness also gives the uniqueness of the solutions in  $L^1 \big((0, T) \times (0, L) \big)$, i.e., one requires $y \in L^1 \big((0, T) \times (0, L) \big)$ instead of $y \in X_T$ in \Cref{def-pro-kdv2}. 
\end{remark}

\subsection{Proof of \Cref{pro-kdv3}} \label{sect-pro-kdv3}

We first introduce the notion of the solutions considered in \Cref{pro-kdv3}. \begin{definition}\label{def-pro-kdv3}  Let $L>0$,  $T>0$, $(h_1, h_2, h_3) \in L^2(0, T) \times H^{1/3} (0, T) \times
 L^2 (0, T)$, \\ $ f \in L^1\big((0, T); L^2(0, L)\big)$, and $y_0 \in L^2(0, L)$. A  solution $y \in X_T$  of the system \eqref{sys-y-LKdV3} is a function $y \in X_T$ such that 
\begin{multline}
\int_0^T \int_0^L f(t, x) \varphi (t, x) + \int_0^L y_0 (x) \varphi(0, x) \, dx   + \int_0^T  h_3 (t) \varphi_x (t, L) \, dt \\[6pt]
 - \int_0^T  h_1(t) \varphi_{x}(t, 0) \, dt  - \int_0^T  h_2(t) \varphi_{xx}(t, L) \, dt  = - \int_0^T \int_0^L y (\varphi_t + \varphi_x + \varphi_{xxx}) \, dx \, dt  
\end{multline}
for all $\varphi \in C^3([0, T] \times [0, L])$ with $\varphi(T, \cdot) = 0$, $\varphi(\cdot, 0) = \varphi(\cdot, L) = \varphi_{xx}(\cdot, 0) =0$. 
\end{definition}

The proof of \Cref{pro-kdv3} is similar to the one of \Cref{pro-kdv1}. We only give here the formula of the solutions in the case $f=0$, $y_0 = 0$, and  $(h_1, h_2, h_3) = (0, 0, h_3)$. Taking the Fourier transform with respect to $t$, from the system of $y$, we have
\begin{equation}\label{sys-hy-kdv3}
\left\{\begin{array}{cl}
i z \hy(z, x) + \hy_x (z, x) + \hy_{xxx} (z, x) = 0 &  \mbox{ in } \mR \times  (0, L),
\\[6pt]
\hy_x(z, 0) = \hy(z, L) = 0 & \mbox{ in }  \mR, \\[6pt]
\hy_{x}(z, L) = \hh_3(z) & \mbox{ in } \mR.
\end{array}\right.
\end{equation}
Taking into account the equation of $\hy$, we search for the solution of the form
\begin{equation}\label{pro-kdv3-p0}
\hy(z, \cdot) = \sum_{j=1}^3 a_j  e^{\lambda_j x},
\end{equation}
where $\lambda_j = \lambda_j(z) $ with $j=1, 2, 3$ are the three solutions of the equation $\lambda^3 + \lambda + i z = 0$, and  $a_j = a_j(z)$ for $j = 1, 2, 3$.  We then have 
\begin{equation*}
\begin{array}{c}
\sum_{j=1}^3 \lambda_j a_j  =0, \\[6pt]
\sum_{j=1}^3 a_j e^{\lambda_j L}  =0, \\[6pt]
\sum_{j=1}^3 a_j \lambda_j e^{\lambda_j L}  = \hh_3, \\[6pt]
\end{array}
\end{equation*}
This implies, with the convention $\lambda_{j+3} = \lambda_j$,  
\begin{equation}\label{pro-kdv3-p1}
a_j =  \frac{\lambda_{j+1}e^{\lambda_{j+2}L} - \lambda_{j+2} e^{\lambda_{j+1} L}}{\sum_{k=1}^3 \lambda_k e^{\lambda_k L}(\lambda_{k+1}e^{\lambda_{k+2}L} - \lambda_{k+2} e^{\lambda_{k+1} L})} \hh_3 \mbox{ for } j=1, 2, 3. 
\end{equation}
The rest of the proof is almost the same as the one of \Cref{pro-kdv1} and is left to the reader. \qed

\section{The set of the critical lengths} \label{sect-CL}

The main goal of this section is to establish \Cref{thm-MD}, except for \eqref{def-MD} and its consequence \eqref{dim-MD}, and \Cref{corCha-L}. The results in this section, in particular in \eqref{thmCha-L} will be complemented with the analysis in the next section (\Cref{sect-US}) to derive \eqref{def-MD}  and \eqref{dim-MD}.  We begin with 

\begin{proposition} \label{thmCha-L} Assume that $L \in \cN_D$. There exists $a < 0$ and $b > 0$ such that 
\be \label{thmCha-L-st1}
(a+ ib) e^{(a + ib)} = (a - ib) e^{(a - i b)}  = - 2 a e^{ - 2a}
\ee
and 
\be \label{thmCha-L-st2}
(a+ ib)^2 + (a - ib)^2 + (a + i b) (a - ib ) = - L^2.  
\ee
These conditions are equivalent to   
\be \label{thmCha-L-st3}
b^2 = L^2 + 3 a^2, \quad a^2 + b^2 = 4 a^2 e^{-6 a},  \quad  b \cos b + a \sin b =0, 
\ee
\be \label{thmCha-L-st4}
a \cos b - b \sin b > 0. 
\ee
Moreover, 
\be \label{thmCha-L-st5}
\mbox{$a$ and $b$ are uniquely determined by $L$.} 
\ee
\end{proposition}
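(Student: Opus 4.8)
The plan is to reduce the transcendental system \eqref{cond-z1z2} to a statement about the three roots of a single cubic, and then to extract the conjugate-pair structure. Given $z_1,z_2$ as in \eqref{cond-z1z2}, set $z_3=-(z_1+z_2)$ and $c:=z_1e^{z_1}=z_2e^{z_2}=z_3e^{z_3}$, so $z_1+z_2+z_3=0$. First I would compute the elementary symmetric functions: $e_1=0$ by construction; since $z_1^2+z_2^2+z_3^2=2(z_1^2+z_1z_2+z_2^2)=-2L^2$ one gets $e_2=L^2$; and multiplying the three identities $z_je^{z_j}=c$ and using $\sum_j z_j=0$ gives $z_1z_2z_3\,e^{z_1+z_2+z_3}=c^3$, i.e.\ $e_3=c^3$. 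Hence $z_1,z_2,z_3$ are exactly the three roots of $t^3+L^2t-c^3$. Since $c\neq0$ (else $L=0$), all $z_j\neq0$, and dividing $z_j^3+L^2z_j=c^3=z_j^3e^{3z_j}$ by $z_j$ yields the key scalar equation $z_j^2\big(e^{3z_j}-1\big)=L^2$ satisfied by each root.

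The main obstacle is to prove that $c^3\in\mR$, equivalently that this cubic has real coefficients; once this is granted the conclusion is quick. The depressed cubic $t^3+L^2t-c^3$ with $L^2>0$ then has discriminant $-4L^6-27c^6<0$, so it has exactly one real root and a pair of complex conjugates; writing the conjugate pair as $z_1=a+ib$, $z_2=a-ib$ (choosing the label with $b>0$) and the real root as $z_3=-2a$ gives the sought form. To establish realness I would exploit the conjugation symmetry of the scalar equation $F(w):=w^2(e^{3w}-1)=L^2$ (its right-hand side is real, so $w$ solves it iff $\bar w$ does), together with the fact that on the real axis $F$ is negative for $w<0$, vanishes at $0$, and is strictly increasing on $(0,\infty)$; hence $F=L^2$ has a unique real solution and it is positive. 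Thus at most one of $z_1,z_2,z_3$ is real, while they cannot all be non-real once one tracks $\arg z_j+\Im z_j\equiv\arg c\ (\mathrm{mod}\ 2\pi)$ and $|z_j|e^{\Re z_j}=|c|$ under the constraints $\sum_j z_j=0$ and $\sum_j z_j^2=-2L^2<0$. This rigidity is the technical heart and the part I expect to be hardest, since a priori a complex cubic may have three non-real roots; ruling that configuration out forces the root set to be conjugation-invariant, whence $e_3=z_1z_2z_3$ and therefore $c^3$ are real.

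The signs and the equivalence of the two systems of conditions are then routine. With the real root $z_3=-2a>0$ we get $c=z_3e^{z_3}=-2a\,e^{-2a}>0$, and inserting $z_1=a+ib$ into $z_1e^{z_1}=e^{a}\big[(a\cos b-b\sin b)+i(a\sin b+b\cos b)\big]=c$ gives, from the imaginary part, $b\cos b+a\sin b=0$, and, from the positive real part, $a\cos b-b\sin b>0$; these are the third relation of \eqref{thmCha-L-st3} and exactly \eqref{thmCha-L-st4}. Taking moduli, $\sqrt{a^2+b^2}\,e^{a}=-2a\,e^{-2a}$ squares to $a^2+b^2=4a^2e^{-6a}$, and $L^2=-(z_1^2+z_1z_2+z_2^2)=b^2-3a^2$ gives $b^2=L^2+3a^2$; reversing these steps recovers \eqref{thmCha-L-st1}--\eqref{thmCha-L-st2}. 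Finally $a<0$ is forced: if $a\ge0$ then $4a^2e^{-6a}\le4a^2$, whereas $a^2+b^2=L^2+4a^2>4a^2$, a contradiction.

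For uniqueness \eqref{thmCha-L-st5}, substituting $b^2=L^2+3a^2$ into $a^2+b^2=4a^2e^{-6a}$ eliminates $b$ and shows that any admissible $a$ solves $g(a):=4a^2\big(e^{-6a}-1\big)=L^2$. I would then verify that $g$ is a strictly decreasing bijection of $(-\infty,0)$ onto $(0,\infty)$: one computes $g'(a)=8a\big[e^{-6a}(1-3a)-1\big]$, and for $a<0$ the bracket is positive (since $e^{-6a}>1$ and $1-3a>1$) while $8a<0$, so $g'<0$; moreover $g\to0^+$ as $a\to0^-$ and $g\to+\infty$ as $a\to-\infty$. Hence $a$ is determined uniquely by $L$, and then $b=\sqrt{L^2+3a^2}>0$ as well. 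This same computation underlies \Cref{corCha-L}, in which the remaining quantization $b\cos b+a\sin b=0$ singles out the discrete set of admissible lengths.
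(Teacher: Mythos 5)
Your reduction to the cubic $t^3+L^2t-c^3$ (with $e_1=0$, $e_2=L^2$, $e_3=c^3$) is correct and is a cleaner packaging than the paper's parametrization, and the routine parts all check out: the scalar equation $z_j^2(e^{3z_j}-1)=L^2$, the derivation of the sign of $a$, the equivalence of \eqref{thmCha-L-st1}--\eqref{thmCha-L-st2} with \eqref{thmCha-L-st3}--\eqref{thmCha-L-st4}, and the uniqueness via strict monotonicity of $a\mapsto 4a^2(e^{-6a}-1)$ agree with what the paper does in its Step 2. However, there is a genuine gap exactly where you flag ``the technical heart'': you never prove that the root set $\{z_1,z_2,z_3\}$ is conjugation-invariant, i.e., that $c^3\in\mR$. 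The conjugation symmetry of $w^2(e^{3w}-1)=L^2$ only tells you that each $\bar z_j$ is one of the infinitely many solutions of that scalar equation, not that it lies in $\{z_1,z_2,z_3\}$; and the assertion that the three roots ``cannot all be non-real once one tracks $\arg z_j+\Im z_j$ and $|z_j|e^{\Re z_j}$'' is precisely the system of modulus and phase equations \eqref{thmCha-L-cond-modulus}--\eqref{thmCha-L-cond-phase} whose analysis occupies the entirety of the paper's Step 1. The paper writes $z_1=a+ib$, $z_2=c+id$, normalizes so that $ac\ge0$, $|c|\ge|a|$, $b\ge0$, and then eliminates every configuration other than $c=a$, $d=-b$ through four cases and three subcases relying on quantitative estimates (e.g.\ $0<b\le 0.425$, $|a|+|c|\le 0.32$, the inequality $\sin x\ge x\cos\theta_0$ for $0\le x\le\theta_0\le1$, and the monotonicity of $t^2e^{-2t}$ on $[1,+\infty)$). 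Nothing in your sketch substitutes for this; as written, a ``rogue'' solution with $z_2\ne\bar z_1$ is not excluded, and there is no indication that this step is any easier in your formulation than in the paper's.

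A secondary point worth a sentence in a complete write-up: granting ``at most one real root'' and ``not all three non-real,'' you should say explicitly why exactly one real root forces the other two to be conjugate. This does follow cheaply in your setup, since $z_1+z_2=-z_3$ and $z_1z_2=L^2+z_3^2$ are then real, so $z_1,z_2$ are the roots of a real quadratic; but it is part of the chain leading to $c^3\in\mR$ and should not be left implicit. Everything downstream of the conjugation-invariance claim is fine; the claim itself is the missing proof.
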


\begin{proof} The proof is divided into two steps. 

\medskip 
\noindent \underline{Step 1}: We prove \eqref{thmCha-L-st1} - \eqref{thmCha-L-st4}. 

Let $z_1 = a + i b$ and $z_2 = c + i d$ where $a, b, c, d \in \mR$. 
Set  
$$
z_3 = - (z_1 + z_2) = - (a + c) - i (b + d). 
$$
Note that, if $z_1 + z_2 + z_3 = 0$, then 
\be \label{thmCha-L-z1z2z3}
z_1^2 + z_2^2 + z_1 z_2 = - (z_1 z_2 + z_2 z_3 + z_3 z_1). 
\ee
Condition \eqref{cond-z1z2} can be then written as 
\begin{equation}\label{cond-z1z2z3}
z_1 + z_2 + z_3 =0, \quad z_1 e^{z_1} = z_2 e^{z_2} = z_3 e^{z_3}, \quad \mbox{ and } \quad  z_1 z_2 + z_2 z_3 + z_3 z_1 = L^2. 
\end{equation}

In the three numbers $a$, $c$, and $- (a + c)$, there are two numbers of the same sign. Note also that if $(z_1, z_2, z_3)$ is a solution of \eqref{cond-z1z2z3}, then $(\bar z_1, \bar z_2, \bar z_3)$ is also a solution.  Without loss of generality, one might assume from now on that 
\be\label{thmCha-L-sign-ac}
a c \ge 0,  \quad  |c| \ge |a|,  
\ee
and 
\be\label{thmCha-L-sign-b1}
b \ge 0.   
\ee

From \eqref{cond-z1z2}, we have 
$$
(a + i b)^2 + (c + i d)^2 + (a + i b) (c + i d) = - L^2. 
$$
This yields 
$$
a^2 - b^2 + c^2 - d^2  + a c - b d  + i (2 ab + 2 cd + ad + bc) = - L^2.  
$$
We  thus obtain 
\be \label{thmCha-L-cond1}
a^2 + ac + c^2 + L^2 = b^2 + bd + d^2
\ee
and 
\be \label{thmCha-L-cond2}
2 ab + 2 cd + ad + bc = 0. 
\ee

From \eqref{cond-z1z2}, we have 
\be \label{thmCha-L-cond-PP1}
(a + i b) e^{a + i b} = (c + i d) e^{c + i d} = - \big(a + c + i (b + d) \big) e^{- (a + c) - i (b + d)}.
\ee
Considering the modulus in \eqref{thmCha-L-cond-PP1}, we get
\be\label{thmCha-L-cond-modulus}
(a^2 + b^2) e^{2 a} = (c^2 + d^2) e^{2 c} = \big( (a+ c)^2 + (b+d)^2 \big) e^{ - 2 (a + c)}. 
\ee
Considering  the imaginary parts in \eqref{thmCha-L-cond-PP1}, we obtain 
\be\label{thmCha-L-cond-phase}
(b \cos b + a \sin b) e^a = (d \cos d + c \sin d) e^c =  \big( -(b+d) \cos (b+d) + (a + c) \sin (b +d) \big) e^{- (a +c)}. 
\ee

This implies that $c  \neq 0$ since otherwise $a$ is also 0 (since $|a| \le |c|$) and one reaches, by \eqref{thmCha-L-cond-modulus},  
$$
|b| = |d| = |b +d|, 
$$
which yields $b = d = 0$ and $L = 0$. This is impossible since $L>0$.

Using \eqref{thmCha-L-cond2} and the facts $ac \ge 0$ and $c \neq 0$, we have 
\be\label{thmCha-L-cond-d-tb}
d=  -  \frac{2 a + c}{2 c + a} b. 
\ee
This implies
\be \label{thmCha-L-cond-b+d}
b + d = \frac{c - a}{2 c + a} b.  
\ee
Combining \eqref{thmCha-L-sign-b1} and \eqref{thmCha-L-cond-d-tb} yields that 
\be\label{thmCha-L-sign-b2}
b > 0 
\ee
since $b^2 + bd + d^2 \ge L^2$. 

\medskip 
The proof now is divided into four cases.  

\medskip 

\noindent $\bullet$ Case 1: $a \ge 0$. Thus  $c \ge a \ge 0$. Since   $ (a+ c)^2 e^{ - 2 (a + c)} \le e^{-2}$ for $a + c \ge 0$,  it follows that 
\begin{multline*}
(b+d)^2 + e^{- 2}  \ge (b+d)^2 + (a+ c)^2  e^{ - 2 (a + c)} \\[6pt] \ge
 \big( (a+ c)^2 + (b+d)^2 \big) e^{ - 2 (a + c)}   \mathop{=}^{\eqref{thmCha-L-cond-modulus}}  (a^2 + b^2) e^{2 a} \ge b^2 e^{2 a} \ge b^2. 
\end{multline*}
In summary, one reaches
\be \label{thmCha-L-1.1-bd}
(b+d)^2 + e^{- 2}  \ge b^2. 
\ee

One has, for $c \ge a \ge 0$, 
\be \label{thmCha-L-bd-p1}
(b+d)^2 \mathop{=}^{\eqref{thmCha-L-cond-b+d}}  \left( \frac{c-a}{2 c + a} \right)^2 b^2 \le b^2/4. 
\ee
Combining \eqref{thmCha-L-1.1-bd} and \eqref{thmCha-L-bd-p1} yields 
\be\label{thmCha-L-b-p1}
e^{-2} \ge \frac{3}{4} b^2; \mbox{ thus, by \eqref{thmCha-L-sign-b2},  } 0 <  b \le 0.425. 
\ee
Since $a \ge 0$,  it follows from \eqref{thmCha-L-b-p1} that 
\be \label{thmCha-L-p1}
b \cos b + a \sin b > 0.   
\ee

We have
\begin{multline*}
a+c \le (2 a^2 + 2 c^2 + 2 ac )^{1/2} \mathop{\le}^{\eqref{thmCha-L-cond1}} 
\left( 2 ( b^2 + d^2 + bd) \right)^{1/2} = \left( b^2 + d^2 + (b+d)^2 \right)^{1/2} \\[6pt]
 \mathop{\le}^{\eqref{thmCha-L-cond-d-tb}, \eqref{thmCha-L-bd-p1}}  \left(2 b^2 + b^2/4 \right)^{1/2} 
= (9 b^2/4)^{1/2} \mathop{\le}^{\eqref{thmCha-L-b-p1}} (3 e^{-2})^{1/2} \le 0.638.
\end{multline*} 
Since $0 \le b+d \le b/2 \le 0.225$ by \eqref{thmCha-L-cond-b+d}, \eqref{thmCha-L-bd-p1},  and \eqref{thmCha-L-b-p1},  it follows that  
\begin{multline}\label{thmCha-L-p1-2}
 -(b+d) \cos (b+d) + (a + c) \sin (b +d) \le - (b+d)\cos (0.225) +  (a + c) (b+d)  \\[6pt]
  \le - 0.97 (b + d) + 0.638 (b+d) \le 0. 
\end{multline}

Combining \eqref{thmCha-L-cond-phase}, \eqref{thmCha-L-p1}, and \eqref{thmCha-L-p1-2}, we derive that the case $a \ge 0$ does not happen.

\medskip 

\noindent $\bullet$ Case 2: $a < 0$ and $a \neq c$. Thus $c <  a < 0$ since $|c| \ge |a|$.

\medskip 
$\ast$ Case 2.1: $a < 0$ and $ c \le 2 a $. We have 
$$
d^2 \mathop{=}^{\eqref{thmCha-L-cond-d-tb}} \left( \frac{2a + c}{ 2 c + a} \right)^2 b^2  \le (4/5)^2 b^2. 
$$ 
It follows that  
$$
b^2 \le (a^2 + b^2) \mathop{=}^{ \eqref{thmCha-L-cond-modulus} } e^{2  (c - a )} (c^2 + d^2)  \le c^2 e^{c} + (4/5)^2 b^2. 
$$
This yields, since $c \le 0$,  
$$
(9/25) b^2 \le c^2 e^{c} \le 2^2 e^{-2}. 
$$
We obtain 
\be\label{thmCha-L-p2}
b^2  \le \frac{100}{9} e^{-2} \le 1.51, \mbox{ which yields, by \eqref{thmCha-L-sign-b2},  } 0 < b \le 1.23.
\ee

We have
\be
a^2 + b^2  \mathop{=}^{\eqref{thmCha-L-cond-modulus}}  \big( (a+ c)^2 + (b+d)^2 \big) e^{ - 4 a - 2 c} \ge (a+c)^2 e^{ - 2 a - 2 c}
\ee
since $a<0$.  It follows from \eqref{thmCha-L-p2} that 
$$
1.51 \ge b^2 \ge  (a+c)^2 e^{ - 2 a - 2 c} - a^2 \ge  (a+c)^2 \Big( e^{ - 2 a - 2 c} - \frac{1}{9} \Big).
$$
Since $f_1(t) =  t^2 (e^{2 t} - 1/9) $ is an strictly increasing function for $t >  0$ and 
$$
f_1(0.66) \ge 1.55, 
$$
it follows that 
\be\label{thmCha-L-p3}
0.66 \ge  |a+c| = |c| + |a| \ge 3|a|. 
\ee

Using \eqref{thmCha-L-p2} and \eqref{thmCha-L-p3}, we obtain 
\be\label{thmCha-L-p4}
b \cos b + a \sin b  \ge b \cos  1.23 + a b > 0.33 b + a b > 0.  
\ee
and, since $0 <  b+d \le b/2 \le 0.64$ by \eqref{thmCha-L-cond-b+d} and \eqref{thmCha-L-p2},  
\begin{multline}
\label{thmCha-L-p5}
 -(b+d) \cos (b+d) + (a + c) \sin (b +d)  \\[6pt]
 \le - (b + d) \cos 0.64 +  (a+c) (b+d) \cos 0.64  \mathop{<}^{\eqref{thmCha-L-p3}}  0. 
\end{multline}
Here we used the fact $\sin x \ge x \cos \theta_0  $ for $0 \le x \le \theta_0 \le 1$. 

Combining \eqref{thmCha-L-cond-phase}, \eqref{thmCha-L-p4}, and \eqref{thmCha-L-p5}, we derive that Case 2.1 does not happen. 

\medskip 
$\ast$ Case 2.2: $a < 0$, $ 2 a \le c  <  a$ with $|a| \le 1$.  We have 
$$
a^2 + b^2 \mathop{=}^{\eqref{thmCha-L-cond-modulus}} (c^2 + d^2) e^{2 (c-a)} \mathop{\le}^{\eqref{thmCha-L-cond-d-tb}} ( c^2 + b^2) e^{2 (c-a)}. 
$$
This implies, with $s = c -a < 0$,  
\begin{multline*}
b^2 (1 - e^{2 s}) = b^2 (1 - e^{2 (c-a)}) \le  c^2 e^{2 (c-a)} - a^2  \\[6pt]
= (a + s)^2 e^{2 s} - a^2 = a^2 e^{2 s} + 2 a s e^{2 s} + s^2 e^{2s} - a^2  \le (2 a s + s^2) e^{2 s}. 
\end{multline*}
We derive that 
\be \label{thmCha-L-2.2-p1}
b^2 \le \frac{ |2 a s + s^2|}{|1 - e^{2 s}|} e^{2 s}. 
\ee

Set 
$$
g(t) = 2 t e^{-2t} - 1 + e^{-2 t} \mbox{ for } t \ge 0.  
$$
Then 
$$
g'(t) =( 2 - 4t - 2) e^{-2 t} = - 4 t e^{-2 t} <  0 \mbox{ for } t > 0,  
$$
which yields  $g(t) < g(0) = 0$ for $t>0$. This implies 
\be\label{thmCha-L-2.2-claim1}
|s| e^{2s} \le \frac{1}{2}(1 - e^{2s}). 
\ee

Combining \eqref{thmCha-L-2.2-p1} and \eqref{thmCha-L-2.2-claim1} yields 
\be\label{thmCha-L-2.2-p2}
b^2 \le \frac{1}{2} (2 |a| + |s|) = \frac{1}{2} (|a| + |c|) \le 3/2, 
\ee
where in the last inequality we used the fact $|c| \le 2 |a|$ and $|a| \le 1$. 

We have
\be
a^2 + b^2  \mathop{=}^{\eqref{thmCha-L-cond-modulus}}  \big( (a+ c)^2 + (b+d)^2 \big) e^{ - 4 a - 2 c} \ge (a+c)^2 e^{ - 2 a - 2 c}. 
\ee
It follows from \eqref{thmCha-L-2.2-p2} that 
$$
\frac{1}{2} (|a| + |c|) \ge  b^2 \ge (a+c)^2 e^{ - 2 a - 2 c} - a^2
\ge (a+c)^2 (e^{ - 2 a - 2 c}   - \frac{1}{4}), 
$$
\be\label{thmCha-L-2.2-p3}
\frac{1}{2} \ge |a+c| (e^{ - 2 a - 2 c}   - \frac{1}{4}), 
\ee
Since $f_2(t) =  t (e^{2 t} - 1/4) $ is an increasing function for $t \ge 0$ and 
$$
f_2(0.32) \ge 0.52, 
$$
it follows from \eqref{thmCha-L-2.2-p3} that 
\be \label{thmCha-L-2.2-p4}
|a| + |c| \le 0.32, \mbox{ which yields } |a| \le 0.16. 
\ee

Using \eqref{thmCha-L-2.2-p2} and \eqref{thmCha-L-2.2-p3}, we obtain 
\be\label{thmCha-L-2.2-p4-*}
b \cos b + a \sin b  \ge b \cos  \sqrt{3/2} + a b > 0.33 b + a b > 0,   
\ee
and, since $0 <  b+d \le b/2 \le \sqrt{3/2}/ 2 \le 0.62$ by \eqref{thmCha-L-cond-b+d} and \eqref{thmCha-L-2.2-p2},  
\begin{multline}
\label{thmCha-L-2.2-p5}
 -(b+d) \cos (b+d) + (a + c) \sin (b +d) \\[6pt]
 \le - (b + d) \cos 0.62 +  (a+c) (b+d) \cos 0.62 \mathop{<}^{\eqref{thmCha-L-2.2-p4}} 0. 
\end{multline}
Here we used the fact $\sin x \ge x \cos \theta_0  $ for $0 \le x \le  \theta_0 \le 1$. 

 Combining \eqref{thmCha-L-cond-phase}, \eqref{thmCha-L-2.2-p4-*}, and \eqref{thmCha-L-2.2-p5}, we derive that Case 2.2 does not happen. 

\medskip
$\ast$ Case 2.3: $a < 0$ and $ 2a \le c <  a$ with $|a| >  1$. We have, by \eqref{thmCha-L-cond-modulus},  
$$
(a^2 + b^2)e^{2a} = (c^2 + d^2) e^{2 c}  
$$
Since $|c| \ge |a| \ge 1$ and $d^2 \le b^2$ by \eqref{thmCha-L-cond-d-tb}, and the function $f_3(t) = t^2 e^{-2 t}$ is strictly decreasing in $[1, + \infty)$, we derive that 
$$
(a^2 + b^2)e^{2a} >   (c^2 + d^2) e^{2 c}.  
$$
Thus Case 2.3 does not happen.

\medskip 
\noindent $\bullet$ Case 3: $a < 0$ and $a = c$.  We derive from \eqref{thmCha-L-cond-b+d} that 
$$
b + d = 0. 
$$
Assertion \eqref{thmCha-L-st1}-\eqref{thmCha-L-st4} then follow from \eqref{thmCha-L-cond-modulus}, \eqref{thmCha-L-cond-phase} and the fact that
$$
\Re (a+ ib) e^{(a + ib)} = e^{a} (a \cos b - b \sin b) > 0. 
$$

\medskip 
\noindent $\bullet$ Case 4: $a = 0$ and $c<0$. It follows that 
$$
d = - b/2. 
$$
This case does not happen by \eqref{thmCha-L-cond-modulus}. 

\medskip 
\underline{Step 2}: We prove \eqref{thmCha-L-st5}. Since 
$$
L^2 = b^2 - 3 a^2  = 4 a^2 (e^{- 6 a} - 1) 
$$
and the function $h(t) = 4 t^2 (e^{6t} - 1)$ is strictly increasing in $(0, + \infty)$, it follows that  
$$
a \mbox{ is uniquely determined by } L. 
$$
This in turn implies that  
$$
b \mbox{ is uniquely determined by } L 
$$
since $L^2 = b^2 - 3 a^2$ and $b > 0$. 

\medskip 
The proof is complete. 
\end{proof} 

We are ready to give 

\begin{proof}[Proof of \Cref{corCha-L}] By \eqref{thmCha-L-st3} and \eqref{thmCha-L-st4}, we have 
$$
L^2 = 4 a^2 (e^{-6a} - 1), 
$$
where $a< 0$ is determined by, with $b^2 = 4a^2 (e^{-6a} - 1/4)$, $b>0$,  
$$
b \cos b + a \sin b = 0 \quad \mbox{ and } \quad  a \cos b  - b \sin b > 0. 
$$
Since $b > 0$ and $a < 0$, it follows that 
$$
\cos b< 0 \quad \mbox{ and } \quad  \sin b < 0. 
$$
Hence $b = b_n$ for some $\pi + 2 n \pi <  b_n <    3 \pi/ 2 + n 2 \pi$ with $n \in \N$. 
Since  $b_n \ge \pi$ and $4 a_n^2 (e^{- 6 a_n} - 1/4) = b_n^2$, it follows that 
$$
|a_n| \ge 0.43. 
$$
This yields 
\be
|\cot b_n| = |a_n/b_n| = \frac{1}{4 |a_n| (e^{6 |a_n|} - 1/4)} \le 0.045. 
\ee
We thus derive that
$$
|b_n - 3 \pi/2| \le 0.045. 
$$

Set 
$$
t_n = b_n - \pi - n 2 \pi. 
$$
Since 
$$
|b_n \cos b_n| = |a_n \sin b_n|,  
$$
it follows that 
\be \label{corCha-L-p1}
(t_n + \pi + n 2 \pi) \cos t_n = |a_n| \sin t_n. 
\ee
For $t_n \in (\pi/4, \pi/2)$, the LHS of \eqref{corCha-L-p1} is a strictly decreasing function of $t_n$ and the RHS of \eqref{corCha-L-p1} is a strictly increasing function of $t_n$ since $|a_n|$ is a  increasing function of $t_n$. Hence there exists a unique $t_n \in (\pi/4,  \pi/2)$ such that the identity holds. One can check that the corresponding $a_n$ fulfills all the requirements. 
\end{proof}

\begin{remark}\rm It is clear, as $n \to + \infty$, that $a_n / b_n \to 0$ so $b_n - 3 \pi/2 - 2 \pi n \to 0$ as $n \to + \infty$.  
\end{remark}

We end this section with the following result, which yields \eqref{thm-MD-phi} and \eqref{sys-Phi}. Recall that $\alpha$ and $\beta$ are defined by \eqref{thm-MD-alphabeta} and $q$ is defined by \eqref{thm-MD-q}. 

\begin{lemma} \label{lem-Phi-Psi} Let $L \in \cN_D$. 
Set
\be \label{def-eta}
\eta_1= \alpha + i \beta, \quad \eta_2 = \alpha - i \beta, \quad \mbox{ and } \quad  \eta_3 = - 2 \alpha, 
\ee
\be
\varphi (x) =  (\eta_3 - \eta_2) e^{\eta_1 x} + (\eta_1 - \eta_3) e^{\eta_2 x} + (\eta_2 - \eta_1) e^{\eta_3 x} \mbox{ in } [0, L],  
\ee
and 
$$
\Psi (t, x) = e^{qt} \psi(x) \mbox{ in } \mR \times [0, L]. 
$$
Then $\varphi (x) =  - 2i \phi$
and $\Psi$ satisfies \eqref{sys-Phi}. Consequently, \eqref{thm-MD-phi} and \eqref{sys-Phi} hold. 
\end{lemma}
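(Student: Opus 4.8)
The plan is to prove the two assertions in turn: the algebraic identity $\varphi=-2i\phi$, and the claim that $\Psi=e^{qt}\varphi$ solves \eqref{sys-Phi} (the conclusion for $\Phi=e^{qt}\phi$ then being immediate from the identity). The conceptual point on which everything rests is that $\eta_1,\eta_2,\eta_3$ are exactly the three roots of $\lambda^3+\lambda+q=0$; granting this, both the evolution equation and all four boundary conditions collapse to elementary symmetric-function identities. Throughout I would use the relation $\eta_j=-z_j/L$, where $z_1=a+ib$, $z_2=a-ib$, $z_3=-2a$, which follows at once from \eqref{thm-MD-alphabeta} and \eqref{def-eta}.

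For $\varphi=-2i\phi$ I would simply expand. The coefficients are $\eta_3-\eta_2=-3\alpha+i\beta$, $\eta_1-\eta_3=3\alpha+i\beta$, and $\eta_2-\eta_1=-2i\beta$; inserting $e^{(\alpha\pm i\beta)x}=e^{\alpha x}(\cos\beta x\pm i\sin\beta x)$ makes the real parts of the first two terms cancel and their imaginary parts combine to $2i\,e^{\alpha x}(\beta\cos\beta x-3\alpha\sin\beta x)$, so that adding the third term $-2i\beta e^{-2\alpha x}$ reproduces $-2i\phi$ with $\phi$ as in \eqref{thm-MD-phi}. This is routine and uses nothing beyond the definitions.

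To identify the $\eta_j$ as the roots of $\lambda^3+\lambda+q$, I would match the three elementary symmetric functions. One has $\sum_j\eta_j=0$ directly from \eqref{def-eta}, and $\eta_1\eta_2\eta_3=-2\alpha(\alpha^2+\beta^2)=2a(a^2+b^2)/L^3=-q$ by \eqref{thm-MD-q}. For the pairwise sum I would use $\eta_i\eta_j=z_iz_j/L^2$ together with the critical-length data from \Cref{thmCha-L}: putting \eqref{thmCha-L-st2} into the symmetric form \eqref{thmCha-L-z1z2z3} gives $z_1z_2+z_2z_3+z_3z_1=L^2$, whence $\sum_{i<j}\eta_i\eta_j=1$. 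The symmetric functions being $0,1,-q$, each $\eta_j$ satisfies $\eta_j^3+\eta_j+q=0$; applying $\partial_t+\partial_x+\partial_x^3$ to $e^{qt}e^{\eta_j x}$ then yields the factor $q+\eta_j+\eta_j^3=0$, and by linearity $\Psi_t+\Psi_x+\Psi_{xxx}=0$.

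Finally I would check the boundary conditions, i.e.\ $\varphi(0)=\varphi'(0)=\varphi(L)=\varphi''(L)=0$. Writing $\varphi(x)=\sum_j(\eta_{j+2}-\eta_{j+1})e^{\eta_j x}$ with cyclic indices, the two conditions at $0$ are the telescoping sums $\sum_j(\eta_{j+2}-\eta_{j+1})\eta_j^k$ for $k=0,1$, which vanish. At $x=L$ I would invoke the transcendental part of \eqref{thm-MD-ab1}: the common value $\kappa:=z_je^{z_j}$ is nonzero (since $z_1\neq0$), and $z_j=-\eta_jL$ gives $e^{\eta_jL}=-\tfrac{L}{\kappa}\eta_j$. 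Hence $\varphi(L)=-\tfrac{L}{\kappa}\sum_j(\eta_{j+2}-\eta_{j+1})\eta_j$, a difference of two copies of $\sum_{i<j}\eta_i\eta_j$ and so zero, while $\varphi''(L)=-\tfrac{L}{\kappa}\sum_j(\eta_{j+2}-\eta_{j+1})\eta_j^3$ vanishes after substituting $\eta_j^3=-\eta_j-q$ and using the two telescoping identities just noted. The only non-formal inputs are the two halves of \Cref{thmCha-L} — the algebraic relation feeding the computation of $\sum_{i<j}\eta_i\eta_j$, and the transcendental relation feeding the conditions at $x=L$ — so the sole points requiring care are routing these two inputs into the respective identities; everything else is formal symmetric-function bookkeeping, which is the only place a (modest) obstacle can arise.
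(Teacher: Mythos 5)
Your proposal is correct and follows essentially the same route as the paper: identify $\eta_1,\eta_2,\eta_3$ as the roots of $\lambda^3+\lambda+q=0$ via the elementary symmetric functions $0$, $1$, $-q$, use the common value of $\eta_j e^{-\eta_j L}$ (equivalently of $z_je^{z_j}$) to verify the boundary conditions at $x=L$, and expand $\varphi$ in trigonometric form to get $\varphi=-2i\phi$. The paper merely asserts these deductions, so your write-up supplies the same argument with the bookkeeping made explicit; all of your computations check out.
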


\begin{proof}
Since 
$$
\eta_1 e^{- \eta_1 L} = \eta_2 e^{ - \eta_2 L } = \eta_3 e^{ - \eta_3 L}
$$
$$
\eta_1 + \eta_2 + \eta_3 = 0, \quad \eta_1 \eta_2 + \eta_2 \eta_3 + \eta_3 \eta_1 = 1,  \quad \mbox{ and } \quad \eta_1 \eta_2 \eta_3 = - q, 
$$
it follows that 
$$
\left\{\begin{array}{cl}
\varphi_{xxx} + \varphi_x + q \varphi = 0 &  \mbox{ in } (0, L), \\[6pt]
\varphi(0) = \varphi_x (0) = \varphi(L) = \varphi_{xx} (L) = 0 & \mbox{ in } [0, L]. 
\end{array} \right. 
$$
We derive that $\Psi$ is a solution of \eqref{sys-Phi}. A computation gives
\begin{align*}
\varphi (x)  = & (- 3 \alpha + i \beta) e^{(\alpha + i \beta ) x} + (3 \alpha + i \beta) e^{(\alpha - i \beta ) x} - 2 i \beta e^{- 2 \alpha x} \\[6pt]
= &  - 6 i \alpha e^{\alpha x} \sin (\beta x) + 2 i \beta e^{\alpha x} \cos (\beta x) - 2 i \beta e^{- 2 \alpha x} \\[6pt]
= &  2i \Big(\beta e^{\alpha x} \cos (\beta x) -  \beta e^{- 2 \alpha x} - 3 \alpha e^{\alpha x} \sin (\beta x) \Big) = - 2 i \phi.
\end{align*}
The proof is complete. 
\end{proof}

\section{The unreachable space of the linearized system for a critical length} \label{sect-US}

In this section, we prove that  the unreachable space is given by \eqref{def-MD} and study its controllability properties of  the linearized KdV system:
\begin{equation}\label{LKdV1}\left\{
\begin{array}{cl}
y_t  + y_x  + y_{xxx}  = 0 &  \mbox{ in }  (0, T) \times (0, L), \\[6pt]
y(\cdot, 0) = y_x(\cdot, L) = 0 & \mbox{ in }  (0, T), \\[6pt]
y(\cdot , L) = u & \mbox{ in }  (0, T), 
\end{array}\right.
\end{equation}
for a critical length. The main result of this section is \Cref{pro-C} which is based on an observability inequality with initial data and final data in $\cM_D^\perp$, where $\cM_D$ is defined by \eqref{def-MD}.  In comparison with the right Neumann boundary control system, this part in the Dirichlet setting is more complex and technical.  The proof of \Cref{pro-C} involves the well-posedness and the estimates for solutions in $X_T$ given in \Cref{pro-kdv1} and \Cref{pro-kdv2}. 

\medskip 
We begin with the following simple but useful result. 

\begin{lemma} \label{lem-projection} Let $L \in \cN_D$ and $T> 0$. For $y_0 \in L^2(0, L)$ and $u \in H^{1/3}(0, T)$, let $y \in X_T$ be the unique solution of the system 
\begin{equation*}\left\{
\begin{array}{cl}
y_t  + y_x  + y_{xxx}  = 0 &  \mbox{ in }  (0, T) \times (0, L), \\[6pt]
y(\cdot, 0) = y_x(\cdot, L) = 0 & \mbox{ in }  (0, T), \\[6pt]
y(\cdot , L) = u & \mbox{ in }  (0, T), \\[6pt]
y(0, \cdot) = y_0 & \mbox{ in } (0, L). 
\end{array}\right.
\end{equation*}
Then 
$$
\int_0^L y(t, x) \Phi(t, x) \, dx \mbox{ is constant on } [0, T]. 
$$
\end{lemma}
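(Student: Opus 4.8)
The plan is to show that $t \mapsto \int_0^L y(t,x)\Phi(t,x)\,dx$ has vanishing derivative, by differentiating this pairing and eliminating the time derivatives through the two evolution equations, then integrating by parts in $x$. Since $y$ solves $y_t = -y_x - y_{xxx}$ and, by \eqref{sys-Phi}, $\Phi$ solves $\Phi_t = -\Phi_x - \Phi_{xxx}$, at the formal level one has
\[
\frac{d}{dt}\int_0^L y\Phi\,dx = -\int_0^L \partial_x(y\Phi)\,dx - \int_0^L\big(y_{xxx}\Phi + y\Phi_{xxx}\big)\,dx.
\]
The first integral equals $-\big[y\Phi\big]_0^L$, while three successive integrations by parts identify the second with $-\big[y_{xx}\Phi - y_x\Phi_x + y\Phi_{xx}\big]_0^L$. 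Thus the whole computation reduces to checking that the boundary contributions at $x=0$ and $x=L$ cancel.

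Each of these boundary terms is killed by exactly one of the complementary boundary conditions. At $x=L$ the relations $\Phi(t,L)=\Phi_{xx}(t,L)=0$ (from \eqref{sys-Phi}) and $y_x(t,L)=0$ (the boundary condition on $y$) annihilate $[y\Phi]_L$, $[y_{xx}\Phi]_L$, $[y\Phi_{xx}]_L$, and $[y_x\Phi_x]_L$. At $x=0$ the relations $y(t,0)=0$, $\Phi(t,0)=0$, and $\Phi_x(t,0)=0$ annihilate the remaining terms $[y\Phi]_0$, $[y_{xx}\Phi]_0$, $[y\Phi_{xx}]_0$, and $[y_x\Phi_x]_0$. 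Hence the derivative is identically $0$ and the pairing is constant on $[0,T]$. The matching of the boundary conditions of \eqref{sys-Phi} against those imposed on $y$ is precisely what makes $\Phi$ play the role of the adjoint state, and is the structural reason behind the lemma.

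The one point requiring care is that $y \in X_T$ only, so the traces $y_{xx}(\cdot,0)$ and $y_{xx}(\cdot,L)$ exist a priori only in the weak sense $[H^{1/3}(0,T)]^*$ provided by \Cref{pro-kdv1}, and the integration by parts above is merely formal. The cleanest way to make it rigorous is by density: one approximates $(y_0,u)$ by smooth compatible data, for which \Cref{pro-kdv1} (see \Cref{rem-pro-kdv1-Step1}) produces smooth solutions $y^n$ for which the computation above is classical, so that $\int_0^L y^n(t,x)\Phi(t,x)\,dx$ is constant in $t$; since $\Phi$ is smooth and $y^n \to y$ in $C([0,T];L^2(0,L))$ by \eqref{pro-kdv1-cl1}, the pairing $\int_0^L y^n\Phi\,dx$ converges uniformly on $[0,T]$ to $\int_0^L y\Phi\,dx$, which is therefore constant as well. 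Alternatively, one may integrate the identity over an arbitrary subinterval $[\tau_1,\tau_2]$ and read it off from the bilinear analogue of the energy identity \eqref{pro-kdv-XT}, pairing the weak traces of $y$ in $[H^{1/3}(0,T)]^*$ against the (vanishing) smooth traces of $\Phi$ in $H^{1/3}(0,T)$. The main obstacle is thus purely one of trace regularity, resolved by approximation; the algebraic heart of the argument is the total cancellation of the boundary terms, which is forced by the duality between the boundary conditions of \eqref{sys-Phi} and those of the system satisfied by $y$.
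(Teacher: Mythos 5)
Your proof is correct and follows exactly the paper's (very terse) argument: the paper simply asserts that $\frac{d}{dt}\int_0^L y\,\Phi\,dx=0$ by integration by parts, which is precisely the computation you carry out, with the boundary cancellations correctly matched to the complementary conditions on $y$ and $\Phi$. Your additional care about the weak traces of $y_{xx}$ and the density argument is a legitimate filling-in of a detail the paper leaves implicit.
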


Recall that $\Phi$ is defined by \eqref{thm-MD-Phi}.

\begin{proof} Using the fact, by integration by parts, 
$$
\frac{d}{dt} \int_0^L y(t, x) \Phi(t, x) \, dx = 0, 
$$
the conclusion follows.  
\end{proof}

We are ready to state the main result of this section on the unreachable space for the linearized KdV system \eqref{LKdV1}. 

\begin{proposition}\label{pro-C} Let $L \in \cN_D$ and $T > 0$. We have

$i)$ for $\varphi  \in \cM_D \setminus \{0 \}$, there does not exist $u \in H^{1/3}(0, T)$ such that $y(T, \cdot) = \varphi$ where $y \in X_T$ is the unique solution of  \eqref{LKdV1} with $y(0, \cdot) = 0$. 

$ii)$ there exists a linear continuous operator $\cL:  \cM_D^\perp \to H^{1/3}(0, T)$ such that  $y(T, \cdot) = \varphi$ where $y \in X_T$ is the unique solution of \eqref{LKdV1}
with $y(0, \cdot) = 0$ and $u = \cL(\varphi)$.  

$iii)$ There exists a linear continuous operator $\hat \cL:  \cM_D^\perp \to H^{1/3}(0, T)$ such that  $y(T, \cdot) = 0$ where $y \in X_T$ is the unique solution of \eqref{LKdV1} with $y(0, \cdot) = \varphi$ and $u = \hat \cL(\varphi)$. 
\end{proposition}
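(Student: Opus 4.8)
The plan is to dispatch Assertion $i)$ directly from \Cref{lem-projection}, and to obtain Assertions $ii)$ and $iii)$ by the Hilbert uniqueness method, whose engine is an observability inequality on $\cM_D^\perp$. For $i)$, suppose $y \in X_T$ solved \eqref{LKdV1} with $y(0,\cdot)=0$ and $y(T,\cdot)=\varphi=c\phi$, $c\neq 0$. By \Cref{lem-projection} the quantity $\int_0^L y(t,x)\Phi(t,x)\,dx$ is constant on $[0,T]$; its value at $t=0$ is $0$, while at $t=T$, using $\Phi(T,x)=e^{qT}\phi(x)$, it equals $c\,e^{qT}\|\phi\|_{L^2(0,L)}^2$. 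Hence $c=0$, a contradiction, which proves $i)$.

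For $ii)$ and $iii)$ I would first record the duality identity. If $y$ solves \eqref{LKdV1} with control $u$ and $\varphi$ solves the backward adjoint problem
\begin{equation*}
\varphi_t + \varphi_x + \varphi_{xxx} = 0 \ \text{ in } (0,T)\times(0,L), \qquad \varphi(\cdot,0)=\varphi(\cdot,L)=\varphi_x(\cdot,0)=0,
\end{equation*}
then integration by parts (justified in $X_T$ by the usual density argument, the traces being controlled by \Cref{pro-kdv1} and \Cref{pro-kdv2}) gives $\frac{d}{dt}\int_0^L y\varphi\,dx = -u(t)\varphi_{xx}(t,L)$, whence
\begin{equation*}
\int_0^L y(T,x)\varphi(T,x)\,dx - \int_0^L y(0,x)\varphi(0,x)\,dx = -\int_0^T u(t)\varphi_{xx}(t,L)\,dt.
\end{equation*}
Note that these adjoint boundary conditions are exactly those appearing in \eqref{sys-Phi}, so $\Phi$ is itself such an adjoint solution, and the observed quantity $\varphi_{xx}(\cdot,L)$ lies, by \Cref{pro-kdv2}, in $[H^{1/3}(0,T)]^*$, dual to the control space $H^{1/3}(0,T)$.

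The main step is the observability inequality: there is $C>0$ so that for every $\varphi_T\in\cM_D^\perp$ the adjoint solution satisfies $\|\varphi_T\|_{L^2(0,L)}\le C\,\|\varphi_{xx}(\cdot,L)\|_{[H^{1/3}(0,T)]^*}$. I would argue by contradiction and compactness: take $\varphi_T^n\in\cM_D^\perp$ with $\|\varphi_T^n\|_{L^2}=1$ and $\|\varphi^n_{xx}(\cdot,L)\|_{[H^{1/3}]^*}\to 0$, extract a weak limit using the uniform bounds of \Cref{pro-kdv2}, and pass to the limit to produce a nonzero $\varphi^*$ solving the \emph{overdetermined} system
\begin{equation*}
\varphi_t + \varphi_x + \varphi_{xxx} = 0, \qquad \varphi(\cdot,0)=\varphi(\cdot,L)=\varphi_x(\cdot,0)=\varphi_{xx}(\cdot,L)=0.
\end{equation*}
A Fourier/separation-of-variables analysis of this overdetermined system, which is precisely the setting of \Cref{thmCha-L} and \Cref{lem-Phi-Psi}, identifies its solution space as one-dimensional and spanned by $\Phi=e^{qt}\phi$; the uniqueness \eqref{thmCha-L-st5} of the pair $(a,b)$ is what forces dimension one. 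Thus $\varphi_T^*\in\cM_D$, and being also in $\cM_D^\perp$ it vanishes, contradicting $\|\varphi_T^*\|_{L^2}=1$ once weak convergence is upgraded to strong by the compactness of the data-to-state map relative to the $L^2$ norm together with the energy identity for the adjoint flow. This unique-continuation/spectral identification, combined with the delicate fractional-space bookkeeping, is where I expect the genuine difficulty; en passant it also yields \eqref{def-MD}, namely $\cM_D=\operatorname{span}\{\phi\}$.

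Granting observability, I would close by a Lax--Milgram/minimization argument. For $ii)$, fix a target $\varphi\in\cM_D^\perp$ and minimize over $\psi_T\in\cM_D^\perp$ the functional $J(\psi_T)=\tfrac12\|\psi_{xx}(\cdot,L)\|_{[H^{1/3}]^*}^2-\int_0^L \varphi\,\psi(T,\cdot)\,dx$, which is coercive by observability and continuous; its minimizer $\psi_T^*$ produces, through the Riesz isomorphism $R\colon [H^{1/3}]^*\to H^{1/3}$ applied to $\psi^*_{xx}(\cdot,L)$, a control $u=\cL(\varphi)$ with $y(T,\cdot)=\varphi$ and $\|u\|_{H^{1/3}}\le C\|\varphi\|_{L^2}$, the assignment $\varphi\mapsto\cL(\varphi)$ being linear and continuous. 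Assertion $iii)$ follows identically after replacing the linear form by $\int_0^L \varphi\,\psi(0,\cdot)\,dx$ and using $y(T,\cdot)=0$ in the duality identity, or equivalently by running $ii)$ backward through the symmetry $t\mapsto T-t$, $x\mapsto L-x$, under which the equation and the boundary conditions of \eqref{LKdV1} are preserved and $\cM_D=\operatorname{span}\{\phi\}$ is invariant.
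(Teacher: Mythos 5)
Your proposal is correct and follows essentially the same route as the paper: assertion $i)$ via the conserved projection of \Cref{lem-projection}, and assertions $ii)$--$iii)$ via HUM built on the observability inequality on $\cM_D^\perp$ (the paper's \Cref{lem-obs}), which is itself proved by the compactness--uniqueness scheme you sketch, using the smoothing effect to upgrade to strong convergence and identifying the kernel of the overdetermined adjoint system with $\operatorname{span}\{\Phi\}$ (\Cref{lem-Phi}); the closing Lax--Milgram step with a coercive map $[H^{1/3}(0,T)]^*\to H^{1/3}(0,T)$ matches the paper's use of $\cL_1$. One caveat: your ``or equivalently'' derivation of $iii)$ via the symmetry $t\mapsto T-t$, $x\mapsto L-x$ does not work --- that reflection sends the controlled Dirichlet condition and the homogeneous Neumann condition to the \emph{left} endpoint, so the boundary conditions of \eqref{LKdV1} are not preserved (this lack of symmetry is emphasized in the paper); your first option for $iii)$, replacing the linear form by $\int_0^L\varphi\,\psi(0,\cdot)\,dx$, is the correct one and is what the paper does.
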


\begin{proof} Assertion $i)$ is just a consequence of \Cref{lem-projection}. 

We next deal with assertions $ii)$ and $iii)$.   Let $\psi \in \M_D^\perp$. Let  $y^* \in X_T$ be the unique solution  of the backward linear KdV system
\begin{equation}\label{Sys-KdV-BW}\left\{
\begin{array}{cl}
y^*_t   + y^*_x  + y^*_{xxx}  = 0 &  \mbox{ in } (0, T) \times (0, L), \\[6pt]
y^*(\cdot, 0)  = y^*_x(\cdot, 0) = y^*(\cdot, L) = 0 & \mbox{ in } (0, T), \\[6pt]
y^*(T, \cdot) = \psi & \mbox{ in } (0, L).
\end{array}\right.
\end{equation}
Applying the observability inequality in \Cref{lem-obs} below to $y^*(T - \cdot, L - \cdot)$, we obtain 
\begin{equation}\label{pro-C-HUM}
\lambda^{-1} \|\psi \|_{L^2(0, L)}  \le   \| y^*_{xx} (\cdot, L) \|_{[H^{1/3}(0, T)]^*} \le \lambda \|\psi \|_{L^2(0, L)} 
\end{equation}
for some constant $\lambda \ge 1$.  Fix a continuous linear mapping 
\be \label{def-L1}
\cL_1: [H^{1/3}(0, T)]^* \to H^{1/3}(0, T)
\ee
such that 
\be \label{property-L1}
\langle \cL_1(h), h \rangle_{H^{1/3}(0, T); [H^{1/3}(0, T)]^*} \ge C \| h\|_{[H^{1/3}(0, T)]^*}^2 \quad  \mbox{ for all } h \in [H^{1/3}(0, T)]^*,  
\ee 
for some positive constant $C$.  This can be done using the Fourier series or the Fourier transform appropriately.  

We first prove assertion $ii)$. Equipped $\cM_D^\perp$ with the $L^2(0, L)$-scalar product. Define 
$$
\cA: \cM_D^\perp \to \cM_D^\perp
$$
by 
$$
\cA(\psi) = y(T, \cdot), 
$$
where $y \in X_T$ is the unique solution of the following system 
\begin{equation}\label{Sys-KdV-FW}\left\{
\begin{array}{cl}
y_t   + y_x  + y_{xxx}  = 0 &  \mbox{ in } (0, T) \times (0, L), \\[6pt]
y(\cdot, 0)  = y_x(\cdot, L) =0  & \mbox{ in } (0, T), \\[6pt]
y(\cdot, L)  = h & \mbox{ in } (0, T), \\[6pt]
y(0, \cdot) = 0 & \mbox{ in } (0, L),
\end{array}\right.
\end{equation}
with $h = \cL_1 (y_{xx}^*(\cdot, L))$ where $y^*$ is determined by \eqref{Sys-KdV-BW} ($y(T, \cdot)\in \cM_D^\perp$ since $y(0, \cdot) = 0$ by \Cref{lem-projection}). An integration by part yields 
 \begin{equation}\label{pro-C-LF}
\int_0^L  y(T, x) z^*(T, x)\, dx = - \int_0^T y(t, L) z^*_{xx} (t, L) \, dt, 
\end{equation}
for all solutions $z^* \in X_T$ of \eqref{Sys-KdV-BW} with $z^*(T, \cdot) \in \cM_D^\perp$. 

Using \eqref{pro-C-HUM} and applying the Lax-Milgram theory, we derive that $\cA$ is linear continuous and its inverse is also linear continuous.  The conclusion of $ii)$ follows by taking $\cL(\psi) = \cL_1(y^*_{xx}(\cdot, L))$ and $y^*$ is the solution of \eqref{Sys-KdV-BW} with $\psi$ being replaced by $\cA^{-1} (\psi)$.

We now deal with assertion $iii)$. 
Set 
$$
H = \Big\{ y^{*}_{xx} (\cdot, L) \in [H^{1/3}(0, T)]^*; \mbox{ where $y^*$ is determined by \eqref{Sys-KdV-BW} with $\psi \in \cM_D^\perp $} \Big\}.  
$$
It follows from \eqref{pro-C-HUM} that $H$ is a closed subspace of $[H^{1/3}(0, T)]^*$ so is a Hilbert space.  We next consider the following bilinear form on $H$: 
$$
\hat a( u, v) = \langle \cL_1(u), v \rangle_{H^{1/3}(0, T); [H^{1/3}(0, T)]^*}. 
$$
Using \eqref{pro-C-HUM}, we derive  from the Lax-Milgram theorem  that there exists a continuous linear application $\hat \cA: \cM_D^\perp \to H$ such that
 \begin{equation}\label{pro-C-LF-iii}
\int_0^L  \varphi (x) y^*(0, x)\, dx = \hat a( \hat \cA \varphi, y^*_{xx}(\cdot, L)) 
\end{equation}
for all solution $y^* \in X_T$ of \eqref{Sys-KdV-BW} with $\psi \in \cM_D^\perp$. 

Set 
$$
\hat \cL = \cL_1 \circ \hat \cA. 
$$
The conclusion of $iii)$ then follows after noting that if $y(0, \cdot) \in \cM_D^\perp$ then $y(T, \cdot) \in \cM_D^\perp$ by \Cref{lem-projection},  and the fact
\begin{equation*}
\int_0^L  y(T, x) y^*(T, x)\, dx - \int_0^L  y(0, x) y^*(0, x)\, dx = - \int_0^T y(t, L) y^*_{xx} (t, L) \, dt. 
\end{equation*}

The proof is complete. 
\end{proof}

Here is the observability inequality used in the proof of \Cref{pro-C}. 

\begin{lemma}\label{lem-obs} Let $L \in \cN_D$, $T>0$,  and let  $y \in X_T$ be a solution of the linearized KdV equation in $(0, T) \times (0, L)$ with $y(0, L - \cdot) \in \cM_D^\perp$ and with $y(\cdot, 0) = y(\cdot, L) = y_x( \cdot, L) = 0$. Then there exists $\Lambda \ge 1$ depending only on $L$ and $T$ such that 
\begin{equation}\label{lem-obs-st}
\Lambda^{-1} \| y(0, \cdot) \|_{L^2(0, L)} \le \| y_{xx}(\cdot, 0) \|_{[H^{1/3}(0, T)]^*} \le \Lambda \| y(0, \cdot) \|_{L^2(0, L)}. 
\end{equation}
\end{lemma}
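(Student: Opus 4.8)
The plan is to establish the two inequalities in \eqref{lem-obs-st} separately: the right-hand (upper) bound is a direct consequence of the well-posedness theory of \Cref{sect-Pro-KdV}, whereas the left-hand (lower) bound is the genuine observability estimate and will be proved by a compactness--uniqueness argument. For the upper bound, I would simply regard $y$ as the solution of \eqref{sys-y-LKdV} with $f=0$, $y_0=y(0,\cdot)$ and homogeneous boundary data $h_1=h_2=h_3=0$, which is exactly the system satisfied here, and evaluate the trace estimate \eqref{pro-kdv1-cl1} of \Cref{pro-kdv1} at $x=0$; this yields $\| y_{xx}(\cdot,0)\|_{[H^{1/3}(0,T)]^*}\le C_{T,L}\|y(0,\cdot)\|_{L^2(0,L)}$.

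For the lower bound I would argue by contradiction. If it fails, there is a sequence $(y^n)$ of solutions with $y^n(0,L-\cdot)\in\cM_D^\perp$, $\|y^n(0,\cdot)\|_{L^2(0,L)}=1$ and $\|y^n_{xx}(\cdot,0)\|_{[H^{1/3}(0,T)]^*}\to 0$. By \Cref{pro-kdv1} the sequence is bounded in $X_T$ together with all its boundary traces, so after extraction $y^n\rightharpoonup y$ in $X_T$ and $y^n(0,\cdot)\rightharpoonup y(0,\cdot)$ weakly in $L^2(0,L)$; by the Rellich compact embedding the latter also converges strongly in $[H^{1}(0,L)]^*$. Since $\cM_D^\perp$ is weakly closed and $v\mapsto v(L-\cdot)$ is an isometry of $L^2(0,L)$, the limit still satisfies $y(0,L-\cdot)\in\cM_D^\perp$, and by weak continuity of the traces $y$ solves the linearized KdV equation with $y(\cdot,0)=y(\cdot,L)=y_x(\cdot,L)=0$ and, in addition, $y_{xx}(\cdot,0)=0$. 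The key reduction is then the reflection $w(t,x):=y(T-t,L-x)$, which solves the same equation and, owing to these four homogeneous conditions, satisfies exactly the boundary conditions of \eqref{sys-Phi}. Any nonzero such $w$ produces, by separation of variables, an eigenpair of the over-determined spatial problem; by the uniqueness of $(a,b)$ in \Cref{thmCha-L} and \Cref{lem-Phi-Psi} this eigenpair is $(q,\phi)$ up to a scalar, so $w=c\,\Phi=c\,e^{qt}\phi$. Unwinding the reflection gives $y(s,\xi)=c\,e^{q(T-s)}\phi(L-\xi)$, hence $y(0,L-\cdot)=c\,e^{qT}\phi\in\cM_D$; combined with $y(0,L-\cdot)\in\cM_D^\perp$ this forces $c=0$, so $y\equiv 0$ and in particular $y(0,\cdot)=0$.

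To close the contradiction I would use a quantitative a priori estimate with a compact remainder, valid for every solution of the homogeneous boundary value problem,
\[
\|y(0,\cdot)\|_{L^2(0,L)}\le C\Big(\|y_{xx}(\cdot,0)\|_{[H^{1/3}(0,T)]^*}+\|y(0,\cdot)\|_{[H^{1}(0,L)]^*}\Big).
\]
Applying it to $y^n$, the first term on the right tends to $0$ by hypothesis, while the second tends to $0$ because $y^n(0,\cdot)\to 0$ strongly in $[H^{1}(0,L)]^*$; this contradicts $\|y^n(0,\cdot)\|_{L^2(0,L)}=1$ and proves the lemma.

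I expect this compact-remainder estimate to be the main obstacle. In contrast with the Neumann setting, the observed quantity is the \emph{second}-derivative trace $y_{xx}(\cdot,0)$ measured in the negative-order space $[H^{1/3}(0,T)]^*$ rather than the energy flux $y_x(\cdot,0)$, so the elementary multiplier identity (which only sees $y_x(\cdot,0)$) is of no direct help. I would instead derive the inequality from the sharp two-sided trace bounds \eqref{pro-kdv1-cl1}--\eqref{pro-kdv1-cl2} of \Cref{pro-kdv1} and the companion \Cref{pro-kdv2}, combined with a Fourier/Laplace analysis in time of the homogeneous problem: the symbol governing the map $y(0,\cdot)\mapsto y_{xx}(\cdot,0)$ degenerates only at the finitely many real zeros of $H(z)=\det Q(z)\,\Xi(z)$, and this finite-dimensional degeneracy, associated precisely with the critical mode $\phi(L-\cdot)$, is exactly what the compact $[H^{1}(0,L)]^*$-remainder absorbs. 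The non-self-adjointness of the KdV operator and the fractional dual norm make the quantitative lower bound on the non-degenerate part the delicate point of the whole argument.
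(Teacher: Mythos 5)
Your overall architecture is the right one — the upper bound from \Cref{pro-kdv1} at $x=0$, and a compactness--uniqueness argument for the lower bound in which the reflected solution $w(t,x)=y(T-t,L-x)$ satisfies the over-determined boundary conditions of \eqref{sys-Phi}, is rigid ($w=c\Phi$), and is killed by the orthogonality $y(0,L-\cdot)\in\cM_D^\perp$. This unique-continuation step matches the paper's \Cref{lem-Phi} (note that the reduction to an eigenpair is not quite ``separation of variables'': one must first show the relevant solution space is finite-dimensional and invariant under the generator, which is what \Cref{lem-Phi} does). The genuine gap is the quantitative estimate you need to close the contradiction,
\[
\|y(0,\cdot)\|_{L^2(0,L)}\le C\Big(\|y_{xx}(\cdot,0)\|_{[H^{1/3}(0,T)]^*}+\|y(0,\cdot)\|_{[H^{1}(0,L)]^*}\Big),
\]
which you explicitly leave unproved and describe as ``the main obstacle.'' It does not follow from \eqref{pro-kdv1-cl1}--\eqref{pro-kdv1-cl2} or \Cref{pro-kdv2} as stated, and the Fourier-symbol route you sketch (lower-bounding the map $y(0,\cdot)\mapsto y_{xx}(\cdot,0)$ away from the zeros of $H$) is precisely the hard analysis the lemma is meant to encapsulate; asserting it is circular. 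As written, the proof is therefore incomplete at its decisive step.

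The paper closes exactly this gap by a backward--forward trick that you should adopt: the reflected function $\by(t,x)=y(T-t,L-x)$ solves the linearized KdV system with boundary data $\by(\cdot,0)=\by(\cdot,L)=0$ and $\by_{xx}(\cdot,L)=y_{xx}(T-\cdot,0)$, i.e.\ precisely the system of \Cref{pro-kdv2}. Estimate \eqref{pro-kdv2-cl1} then gives, with no further work,
\[
\|y(0,\cdot)\|_{L^2(0,L)}=\|\by(T,\cdot)\|_{L^2(0,L)}\le C\Big(\|y(T,\cdot)\|_{L^2(0,L)}+\|y_{xx}(\cdot,0)\|_{[H^{1/3}(0,T)]^*}\Big),
\]
which is the weak observability inequality with the compact remainder $\|y(T,\cdot)\|_{L^2(0,L)}$ (compact by the regularizing effect of the forward flow), rather than your remainder $\|y(0,\cdot)\|_{[H^{1}(0,L)]^*}$ (whose absorption would require a fixed-time gain of one derivative that is not established anywhere in the paper). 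With this replacement — extract strong $L^2$ convergence of $y^n(T,\cdot)$ by smoothing, then run \Cref{pro-kdv2} on $\by_n$ to get strong $L^2$ convergence of $y^n(0,\cdot)$ — the rest of your argument goes through and coincides with the paper's proof.
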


\begin{proof} By \Cref{pro-kdv1}, we have 
\begin{equation}\label{lem-obs-st00}
 \| y_{xx}(\cdot, 0) \|_{[H^{1/3}(0, T)]^*} \le C \| y(0, \cdot) \|_{L^2(0, L)}. 
\end{equation}

We next prove 
\begin{equation}\label{lem-obs-st01}
\| y(0, \cdot) \|_{L^2(0, L)} \le C \| y_{xx}(\cdot, 0) \|_{[H^{1/3}(0, T)]^*}
\end{equation}
by contradiction. Assume that there exists a sequence $(\varphi_n)$ such that  $\varphi_n(L - \cdot) \in  \cM_D^\perp$ and  
\begin{equation}\label{lem-obs-p1}
\| y_{n, xx}(\cdot, 0) \|_{[H^{1/3}(0, T)]^*} \le  \frac{1}{n} \| y_n(0, \cdot) \|_{L^2(0, L)} = \frac{1}{n},  
\end{equation}
where $y_n \in X_T$  is the unique solution of the linearized KdV equation in $(0, T) \times (0, L)$ with  $y_n(0, \cdot) = \varphi_n$ and $y_n(\cdot, 0) = y_n(\cdot, L) = y_{n, x}( \cdot, L) = 0$. 
Set 
\be
\by_n(t, x) = y_n(T-t, L-x). 
\ee
Then $\by_n \in X_T$ is a solution of the equation $\by_{n, t}   + \by_{n, x}  + \by_{n, xxx}  = 0$ in $(0, T) \times (0, L)$.  
By the regularizing effect of the linearized KdV equation,  without loss of generality, one might assume that  $\by_n(0,  \cdot) = y_n(T, L - \cdot)$ converges in $L^2(0, L)$.  Applying \Cref{pro-kdv2} to $\by_n$, one derives that  $y_n(0, L - \cdot) = \by_n(T,\cdot)$ is a Cauchy sequence in $L^2(0, L)$. In other words, $\varphi_n = y_n(0, \cdot)$ is a Cauchy sequence in $L^2(0, L)$. Let $\varphi$ be the limit of $\varphi_n$ in $L^2(0, L)$ and denote $y \in X_T$ be the corresponding solution of the linearized KdV system. Then $\| \varphi\|_{L^2(0, L)} = 1 $ and $\varphi(L - \cdot ) \in \cM_D^\perp$. 

Set $\by(t, x) = y(T-t,  L - x)$. 
Then $\by \in X_T$ is a solution of the system 
\begin{equation}\label{lem-obs-p3}
\by_t + \by_x + \by_{xxx} = 0 \mbox{ in } (0, T) \times (0, L), 
\end{equation}
\begin{equation}\label{lem-obs-p4}
\by(\cdot, 0) = \by_x(\cdot, 0) = \by(\cdot, L) = \by_{xx}( \cdot , L)= 0 \mbox{ in } (0, T),
\end{equation}
and 
$$
\by(0, \cdot) \in \cM_D^\perp  \; (\mbox{by \Cref{lem-projection}}) \quad \mbox{ and } \quad  \quad \| \by(0, \cdot) \|_{L^2(0, L)} \mathop{\ge}^{\Cref{pro-kdv2}} C \| \varphi\|_{L^2(0, L)} = C. 
$$
By \Cref{lem-Phi} below, $\by (0, \cdot)= c \phi$ in $(0, L)$ for some $c \in \mR$ where $\phi$ is defined by \eqref{thm-MD-phi}. Since $\by (0, \cdot) \in \cM_D^\perp$, it follows that $c =0$ and hence $\by (0, \cdot)= 0$ in $(0, L)$
We obtain a contradiction. Thus \eqref{lem-obs-st01} is proved. 

The conclusion now follows from \eqref{lem-obs-st00} and \eqref{lem-obs-st01}. 
\end{proof}

In \Cref{lem-obs}, we used the following result, which is also helpful in the proof of Assertion ii) of \Cref{thm2}.

\begin{lemma} \label{lem-Phi}Let $L \in \cN_D$ and  $T > 0$. Assume that $y \in X_T$ is a solution of the system 
\begin{equation}\label{lem-Phi-st1}
\left\{\begin{array}{cl}
y_t + y_x + y_{xxx} = 0  & \mbox{ in } (0, T) \times (0, L), \\[6pt]
y(\cdot, 0) = y_x(\cdot, 0) = y(\cdot, L) = y_{xx}( \cdot , L)= 0 & \mbox{ in } (0, T).
\end{array} \right. 
\end{equation}
There exists some $c \in \mR$ such that 
\be \label{lem-Phi-cl1}
y = c \Phi \mbox{ in } (0, T)  \times (0, L). 
\ee
\end{lemma}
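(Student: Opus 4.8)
The plan is to take the Fourier transform in time and reduce the overdetermined boundary value problem \eqref{lem-Phi-st1} to a meromorphic functional identity whose only admissible solutions correspond to the single eigenvalue $q$ isolated in \Cref{thmCha-L}. First I would extend $y$ by $0$ to $t\in\mR\setminus(0,T)$ and set $\hat y(z,x)=\int_0^T y(t,x)e^{-izt}\,dt$, which, since $y$ has compact support in $t$, is entire in $z$ for each $x$ by the Paley--Wiener theorem. Writing the equation distributionally on $\mR$ produces jump contributions at $t=0$ and $t=T$, so that $\hat y$ solves, for every $z\in\mC$, the spatial ODE $\hat y_{xxx}+\hat y_x+iz\,\hat y=R(z,x)$ with $R(z,x):=y(0,x)-e^{-izT}y(T,x)$, together with the four homogeneous boundary conditions $\hat y(z,0)=\hat y_x(z,0)=\hat y(z,L)=\hat y_{xx}(z,L)=0$ inherited from \eqref{lem-Phi-st1}.

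Next I would view the three conditions $\hat y(z,0)=\hat y(z,L)=\hat y_{xx}(z,L)=0$ as defining the domain of the spatial operator $Aw=-w'''-w'$, whose well-posedness and resolvent are controlled by \Cref{pro-kdv2}; since $D(A)$ embeds compactly into $L^2(0,L)$, the resolvent $(s-A)^{-1}$ is meromorphic in $s$ with poles only at the discrete spectrum of $A$. For $z$ outside the corresponding exceptional discrete set one then has $\hat y(z,\cdot)=(iz-A)^{-1}R(z,\cdot)$, and the last boundary condition $\hat y_x(z,0)=0$ turns into the identity $G_1(z)=e^{-izT}G_2(z)$, valid for all $z$ by analytic continuation, where $G_1(z):=\partial_x\big[(iz-A)^{-1}y(0,\cdot)\big](0)$ and $G_2(z):=\partial_x\big[(iz-A)^{-1}y(T,\cdot)\big](0)$ are meromorphic with poles confined to $\{z:\,iz\in\operatorname{spec}(A)\}$.

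The heart of the argument is the analysis of this identity. Using the decay of the resolvent as $\operatorname{Im}z\to\pm\infty$ along rays avoiding the spectrum, together with the growth/decay of the factor $e^{-izT}$, a Phragm\'en--Lindel\"of / Paley--Wiener type argument forces all poles of $G_1$ to cancel except at an eigenvalue $s$ of $A$ whose eigenfunction $w$ additionally satisfies $w'(0)=0$. Any such $w$ solves the fully overdetermined spatial problem $w'''+w'+sw=0$ with $w(0)=w'(0)=w(L)=w''(L)=0$, which by \Cref{thmCha-L} (in particular the uniqueness \eqref{thmCha-L-st5} of the pair $(a,b)$, hence of $s$) occurs for $L\in\cN_D$ only at $s=q$, with eigenfunction $\phi$ by \Cref{lem-Phi-Psi}. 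Consequently the spectral content of $y(0,\cdot)$ lies entirely along $\phi$, that is $y(0,\cdot)=c\,\phi$ for some $c\in\mR$; since $c\,\Phi$ solves $y_t+y_x+y_{xxx}=0$ with $y(\cdot,0)=y(\cdot,L)=y_{xx}(\cdot,L)=0$ and the same initial datum, the uniqueness part of \Cref{pro-kdv2} gives $y=c\,\Phi$ in $(0,T)\times(0,L)$, which is \eqref{lem-Phi-cl1}.

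The main obstacle is precisely this complex-analytic step: converting the meromorphic identity $G_1(z)=e^{-izT}G_2(z)$ into the statement that only the unstable mode $s=q$ (equivalently $z=-iq$, since $q>0$) can contribute. All the spectral and growth estimates it requires rest on the resolvent bounds furnished by \Cref{pro-kdv2} and on the sharp location of the exponential sums governing $\cN_D$ established in \Cref{thmCha-L} and \Cref{thm-MD}; the residue matching at the poles is tautological, so it is the global growth behaviour, not the pointwise data at the poles, that carries the rigidity.
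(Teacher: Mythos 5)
Your overall architecture (Fourier transform in time, reduction to an overdetermined spatial eigenvalue problem, and the final identification of the admissible eigenfunction with $\phi$ via the uniqueness statement \eqref{thmCha-L-st5} of \Cref{thmCha-L}) is reasonable, and the last step of your reduction --- once $y(0,\cdot)=c\phi$ is known, uniqueness for \Cref{pro-kdv2} gives $y=c\Phi$ --- is exactly right. But the central step is a genuine gap. You assert that the identity $G_1(z)=e^{-izT}G_2(z)$, combined with resolvent decay and a Phragm\'en--Lindel\"of argument, ``forces all poles of $G_1$ to cancel except at an eigenvalue whose eigenfunction satisfies $w'(0)=0$.'' As you yourself observe, the residue matching at each pole is tautologically satisfied: it merely re-expresses the fact that the spectral coefficient of $y(t,\cdot)$ along a generalized eigenfunction of $A$ with eigenvalue $s$ evolves like $e^{st}$, so the identity of meromorphic functions carries no pointwise information. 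The growth comparison does not produce a contradiction either: in the half-plane where $e^{-izT}$ decays, $iz$ has large positive real part, so $G_1$ is controlled by the forward semigroup and the two sides of the identity are perfectly compatible without any constraint on $y(0,\cdot)$. No mechanism is exhibited that kills the spectral components of $y(0,\cdot)$ along eigenfunctions with $w'(0)\neq 0$. Moreover, even granting that step, passing from ``all spectral coefficients vanish except along $\phi$'' to ``$y(0,\cdot)=c\phi$'' requires completeness (e.g.\ a Riesz basis property) of the generalized eigenfunctions of the non-self-adjoint operator $A$ in $L^2(0,L)$, which you neither state nor prove.

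The paper circumvents both difficulties with a softer argument: it introduces the space $\cV$ of initial data in $\cM_D^\perp$ admitting a solution of \eqref{lem-Phi-st1}, shows via the smoothing effect and the estimates of \Cref{pro-kdv2} that $\cV$ consists of smooth functions and is finite dimensional, observes that $\cV$ is invariant under $\psi\mapsto -(\psi_x+\psi_{xxx})$, and extracts by linear algebra a single eigenfunction in $\cV$; that eigenfunction solves your overdetermined ODE problem, hence is a multiple of $\phi$ by the argument you correctly sketch, contradicting $\cV\subset\cM_D^\perp$. To salvage your route you would have to either prove the missing rigidity statement for $G_1$ (which amounts to an observability or unique-continuation estimate and is essentially the content of the lemma itself) or establish the spectral completeness of $A$; as written, the proof does not close.
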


\begin{proof} Set 
\be
\cV = \Big\{\varphi \in L^2(0, T) \cap \cM_D^\perp;  \exists \, y \in X_T \mbox{ satisfying }  \eqref{lem-Phi-st1}   \mbox{ and } y(0, \cdot) = \varphi \Big\} \subset L^2(0, L). 
\ee

We claim that 
\be \label{lem-Phi-claim}
\cV = \{ 0 \}. 
\ee
Admitting the claim, the conclusion now follows from the claim as follows. Set 
$$
\varphi = \proj_{\cM_D} y(0, \cdot).    
$$
Then 
$$
\varphi = c \phi \mbox{ for some $c \in \mR$}. 
$$ 
Set 
$$
\ty = y - c \Phi \mbox{ in } (0, T) \times (0, L), 
$$
where $\Phi$ is defined by \eqref{thm-MD-Phi}. Then $\ty \in X_T$ is a solution of \eqref{lem-Phi-st1} and $\ty(0, \cdot) \in \cM_D^\perp$. By \eqref{lem-Phi-claim}, one has 
$$
\ty(0, \cdot) =0. 
$$
It follows that 
$$
y = c \Phi \mbox{ in } (0, T) \times (0, L),  
$$
which is the conclusion.

It remains to prove \eqref{lem-Phi-claim}. We will prove that $\cV = \{0 \}$ by contradiction.  Assume that 
$\cV \neq \{0 \}$. Using the regularity theory for the linear KdV equation and \Cref{pro-kdv2}, one can show that 
$$
\cV \subset C^\infty([0, L]). 
$$
For the same reason, one can show that any bounded sequence in $\cV$ (equipped $L^2(0, L)$-norm) has a subsequence converging in $\cV$. Thus $\cV$ is of finite dimension and is not $\{0 \}$.  

We can now involve the arguments via spectral theory  in the spirit of \cite{GG10} or even simpler (see also \cite{BLR92, Rosier97}) to show that there exists $\varphi \in \cV \setminus \{0 \}$  and $\lambda \in \mC$ such that 
\begin{equation}\label{lem-Phi-p1-1}
\lambda \varphi + \varphi_x + \varphi_{xxx} = 0 \mbox{ in } (0, L), 
\end{equation}
\begin{equation}\label{lem-Phi-p2-1}
\varphi(\cdot, 0) = \varphi(\cdot, 0) = \varphi_x(\cdot, L) = \varphi_{xx}( \cdot , L)= 0. 
\end{equation}
Indeed, this can be done by considering
\begin{equation}
\begin{array}{ccc}
\cA: \cV & \to & \cV \\[6pt]
\psi & \mapsto & -(\psi_x + \psi_{xxx} )
\end{array}
\end{equation}
and taking $\lambda \in \mC$ and $\varphi \in \cV \setminus \{0 \}$ such that $\cA \varphi =  \lambda \varphi$. 
The only point required to be checked is the fact that $\psi_x + \psi_{xxx} \in \cV$ for $\psi \in \cV$. To this end,  one just notes that $-(\psi_x + \psi_{xxx}) =  y_t(0, \cdot)$ where $y \in X_T$ is the corresponding solution (thus $ y(0, \cdot) = \psi$). 

Let $\eta_j$ with $j=1, 2, 3$ be the three solutions of the equations 
$$
\eta^3 + \eta + \lambda = 0.  
$$
Then  $\varphi$ has the form 
$$
\varphi = \gamma_1 e^{\eta_1 x} + \gamma_2 e^{\eta_2 x} + \gamma_3 e^{\eta_3 x} \mbox{ in } [0, L]. 
$$
Using the boundary conditions of $\varphi$, we obtain 
$$
\left\{\begin{array}{c}
\gamma_1 + \gamma_2 + \gamma_3 = 0, \\[6pt]
\gamma_1 \eta_1 + \gamma_2 \eta_2 + \gamma_3 \eta_3 = 0, \\[6pt]
\gamma_1 e^{\eta_1 L} + \gamma_2 e^{\eta_2 L} + \gamma_3 e^{\eta_3 L} =0,  \\[6pt]
\gamma_1 \eta_1^2 e^{\eta_1 L} + \gamma_2 \eta_2^2 e^{\eta_2 L} + \gamma_3 \eta_3^2 e^{\eta_3 L} = 0.  
\end{array} \right. 
$$
As in \cite[(30)]{GG10}, we then derive that 
$$
\eta_1 e^{-\eta_1 L} = \eta_2 e^{-\eta_2 L} = \eta_3 e^{-\eta_3 L}.  
$$
Without loss of generality, one can assume that  
$$
- \eta_1 L  = z_1 \quad \mbox{ and } \quad - \eta_2 L = z_2, 
$$
(here $z_1$ and $z_2$ are the complex numbers in the definition of $\cN_D$) which yields 
$$
\varphi = c \phi
$$
for some constant $c \in \mC$.  In other words, 
$$
\varphi \in \cM_D. 
$$
We have a contradiction since $\varphi \neq 0$ and $\varphi \in \cM_D^\perp$. 

\medskip
The proof is complete. 
\end{proof}

By the same arguments used in the proof of \Cref{lem-obs}, we also have the following observability inequality in the case $L \not \in \cN_D$, which is the key point of the proof of \Cref{thm3}.  

\begin{lemma}\label{lem-obs2} Let $L \not \in \cN_D$, $T>0$,  and  let  $y \in X_T$ be a solution of the linearized KdV equation with $y(0, \cdot) \in L^2(0, L)$ and $y(\cdot, 0) = y(\cdot, L) = y_x( \cdot, 0) = 0$. Then, for some $\Lambda \ge 1$,  
\begin{equation}\label{lem-obs2-st}
\Lambda^{-1} \| y(0, \cdot) \|_{L^2(0, L)} \le \| y_{xx}(\cdot, 0) \|_{[H^{1/3}(0, T)]^*} \le \Lambda \| y(0, \cdot) \|_{L^2(0, L)}. 
\end{equation}
\end{lemma}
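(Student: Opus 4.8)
The plan is to mirror the proof of \Cref{lem-obs} essentially line by line, the only structural change being that the orthogonality constraint $\cM_D^\perp$ is now replaced by the hypothesis $L\notin\cN_D$, under which $\cM_D=\{0\}$. As there, I pass freely to the reflected function $\bar y(t,x):=y(T-t,L-x)$, which again solves the linearized KdV equation and whose boundary traces are of the homogeneous type covered by \Cref{pro-kdv1}, \Cref{pro-kdv2}, and \Cref{pro-kdv3}. The upper bound in \eqref{lem-obs2-st} is the easy half and needs no hypothesis on $L$: the trace estimate \eqref{pro-kdv1-cl1} of \Cref{pro-kdv1}, applied with zero source, zero boundary data and initial datum $y(0,\cdot)\in L^2(0,L)$, gives $\|y_{xx}(\cdot,0)\|_{[H^{1/3}(0,T)]^*}\le C\|y(0,\cdot)\|_{L^2(0,L)}$, exactly as \eqref{lem-obs-st00}.

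For the lower bound I argue by contradiction, as in \eqref{lem-obs-p1}. Suppose there is a sequence of admissible data $\varphi_n$ with $\|\varphi_n\|_{L^2(0,L)}=1$ whose solutions $y_n$ satisfy $\|y_{n,xx}(\cdot,0)\|_{[H^{1/3}(0,T)]^*}\le 1/n$. Working with $\bar y_n$ and using the regularizing effect of the equation, I extract a subsequence along which $\bar y_n(0,\cdot)$ converges in $L^2(0,L)$; then \Cref{pro-kdv2}, taken with the datum $h_3=\bar y_{n,xx}(\cdot,L)=y_{n,xx}(T-\cdot,0)\to 0$ in $[H^{1/3}(0,T)]^*$ and $h_1=h_2=0$, upgrades this via \eqref{pro-kdv2-cl2} to convergence of $\bar y_n(T,\cdot)$, that is of $\varphi_n=y_n(0,\cdot)$, in $L^2(0,L)$. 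Calling $\varphi$ the limit one has $\|\varphi\|_{L^2(0,L)}=1$, and the associated solution $y$ satisfies its boundary conditions together with the extra relation $y_{xx}(\cdot,0)=0$ obtained in the limit; equivalently $\bar y$ solves a homogeneous overdetermined system whose boundary traces all vanish, of the type treated in \Cref{lem-Phi}.

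It remains to see that such a system has only the zero solution when $L\notin\cN_D$, which is the non-critical analog of \Cref{lem-Phi}. I reproduce its spectral step: the solution space is finite-dimensional and invariant under $\psi\mapsto-(\psi_x+\psi_{xxx})$, so a nonzero element would yield an eigenfunction $\varphi=\sum_{j=1}^3\gamma_j e^{\eta_j x}$ with the $\eta_j$ the roots of $\eta^3+\eta+\lambda=0$; imposing the four boundary conditions and eliminating the $\gamma_j$ forces $\eta_1 e^{-\eta_1 L}=\eta_2 e^{-\eta_2 L}=\eta_3 e^{-\eta_3 L}$, which after the substitution $z_1=-\eta_1 L$, $z_2=-\eta_2 L$ is precisely \eqref{cond-z1z2}. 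This would place $L$ in $\cN_D$, contradicting the hypothesis, so the system has only the trivial solution. Hence $y\equiv 0$ and $\varphi=0$, contradicting $\|\varphi\|_{L^2(0,L)}=1$, and \eqref{lem-obs2-st} follows by combining the two bounds.

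The main obstacle is exactly this last, spectral step: one must verify that the elimination of the coefficients $\gamma_j$ collapses to the defining condition of $\cN_D$, so that non-criticality of $L$ is \emph{precisely} what rules out a nontrivial kernel. The second delicate point is securing the compactness used to pass to the limit, which relies on the sharp $X_T$ estimates of \Cref{pro-kdv1} and \Cref{pro-kdv2} (and the regularizing effect) rather than on a crude energy bound.
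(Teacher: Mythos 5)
Your proposal is correct and follows essentially the same route as the paper, whose entire proof of this lemma is the remark that it follows ``by the same arguments used in the proof of \Cref{lem-obs}'': you reproduce exactly those arguments (the trace estimate of \Cref{pro-kdv1} for the easy bound, the compactness--contradiction scheme via the reflection and \Cref{pro-kdv2}, and the finite-dimensional spectral reduction in the style of \Cref{lem-Phi}), with the hypothesis $L\notin\cN_D$ replacing the projection onto $\cM_D^\perp$ at the final step, where the elimination of the coefficients $\gamma_j$ would otherwise reproduce the defining relations \eqref{cond-z1z2} of $\cN_D$. This is precisely the intended argument.
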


Here is a variant of \Cref{pro-C} in the case $L \not \in \cN_D$. 
\begin{proposition}\label{pro-C-n} Let $L \not \in \cN_D$ and $T>0$. Then 

a)  There exists a linear continuous operator $\cL:  L^2(0, L) \to H^{1/3}(0, T)$ such that  $y(T, \cdot) = \varphi$ where $y \in X_T$ is the unique solution of \eqref{LKdV1}
with $y(0, \cdot) = 0$ and $u = \cL(\varphi)$. 

b)  There exists a linear continuous operator $\hat \cL:  L^2(0, L) \to H^{1/3}(0, T)$ such that  $y(T, \cdot) = 0$ where $y \in X_T$ is the unique solution of \eqref{LKdV1} with $y(0, \cdot) = \varphi$ and $u = \hat  \cL(\varphi)$. 

\end{proposition}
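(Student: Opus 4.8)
The plan is to repeat the Hilbert uniqueness argument used in the proof of \Cref{pro-C}, with the two changes dictated by the hypothesis $L \notin \cN_D$: the subspace $\cM_D^\perp$ is replaced throughout by the full space $L^2(0, L)$, and the observability estimate \eqref{pro-C-HUM} coming from \Cref{lem-obs} is replaced by the one furnished by \Cref{lem-obs2}. Concretely, for $\psi \in L^2(0, L)$ let $y^* \in X_T$ be the solution (well posed by \Cref{pro-kdv1} and a reflection) of the backward system \eqref{Sys-KdV-BW} with $y^*(T, \cdot) = \psi$; applying \Cref{lem-obs2} to the reflected solution $y^*(T - \cdot, L - \cdot)$, exactly as in the proof of \Cref{lem-obs}, yields a constant $\lambda \ge 1$ independent of $\psi$ with
\[
\lambda^{-1} \| \psi\|_{L^2(0, L)} \le \| y^*_{xx}(\cdot, L)\|_{[H^{1/3}(0, T)]^*} \le \lambda \| \psi\|_{L^2(0, L)},
\]
now valid for \emph{every} $\psi \in L^2(0, L)$ and not merely on $\cM_D^\perp$. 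Since $L \notin \cN_D$, there is no nontrivial stationary solution obeying the boundary conditions of \eqref{lem-Phi-st1}, so \Cref{lem-Phi} and the projection onto $\cM_D$ no longer enter the argument.

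For assertion a), I would fix the coercive map $\cL_1 : [H^{1/3}(0, T)]^* \to H^{1/3}(0, T)$ of \eqref{def-L1}--\eqref{property-L1} and define $\cA : L^2(0, L) \to L^2(0, L)$ by $\cA(\psi) = y(T, \cdot)$, where $y \in X_T$ solves the forward system \eqref{Sys-KdV-FW} with control $h = \cL_1\big(y^*_{xx}(\cdot, L)\big)$ and $y^*$ is given by \eqref{Sys-KdV-BW}. The integration-by-parts identity \eqref{pro-C-LF}, which here holds for all final data in $L^2(0, L)$ since no projection is needed, identifies $\int_0^L \cA(\psi)\, \psi\, dx$ with $-\langle \cL_1(y^*_{xx}(\cdot, L)), y^*_{xx}(\cdot, L)\rangle_{H^{1/3}(0, T); [H^{1/3}(0, T)]^*}$; by \eqref{property-L1} and the two-sided bound above, this is, up to sign, bounded below by $c\,\|\psi\|_{L^2(0, L)}^2$, so the associated bilinear form on $L^2(0, L)$ is bounded and coercive. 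The Lax--Milgram theorem then yields that $\cA$ is a continuous linear isomorphism of $L^2(0, L)$ with continuous inverse, and setting $\cL(\varphi) = \cL_1\big(y^*_{xx}(\cdot, L)\big)$ with $y^*$ the solution of \eqref{Sys-KdV-BW} for final datum $\cA^{-1}(\varphi)$ gives the required control, continuous from $L^2(0, L)$ to $H^{1/3}(0, T)$.

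For assertion b), I would mirror the proof of assertion $iii)$ of \Cref{pro-C}: let $H \subset [H^{1/3}(0, T)]^*$ be the space of traces $y^*_{xx}(\cdot, L)$ as $\psi$ ranges over $L^2(0, L)$, which is closed (hence a Hilbert space) by the lower bound above, and apply Lax--Milgram to $\hat a(u, v) = \langle \cL_1(u), v\rangle_{H^{1/3}(0, T); [H^{1/3}(0, T)]^*}$ to produce $\hat \cA : L^2(0, L) \to H$ satisfying the analogue of \eqref{pro-C-LF-iii}. Then $\hat \cL = \cL_1 \circ \hat \cA$ is continuous, and the integration-by-parts identity linking $y(T, \cdot)$, $y(0, \cdot)$, and the boundary term $\int_0^T y(t, L)\, y^*_{xx}(t, L)\, dt$ shows that the control $u = \hat \cL(\varphi)$ steers $\varphi$ at time $0$ to $0$ at time $T$.

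The only genuinely new input relative to \Cref{pro-C} is the observability on all of $L^2(0, L)$ recorded in \Cref{lem-obs2}; once \eqref{lem-obs2-st} is in hand, the remainder is the standard HUM/Lax--Milgram scheme and I anticipate no essential obstacle. The one point deserving care is matching, via the reflection $t \mapsto T - t$, $x \mapsto L - x$, the boundary conditions of the relevant adjoint system with those under which \Cref{lem-obs2} is stated, and checking that all the constants ($\lambda$ and the norm of $\cL_1$) are uniform in $\psi$; this is the same routine verification carried out in the proof of \Cref{lem-obs}.
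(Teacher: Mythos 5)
Your proposal is correct and coincides with what the paper intends: \Cref{pro-C-n} is stated there without a separate proof, precisely as the variant of \Cref{pro-C} obtained by replacing $\cM_D^\perp$ with $L^2(0,L)$ and the observability inequality of \Cref{lem-obs} with that of \Cref{lem-obs2}, which is exactly the substitution you carry out through the HUM/Lax--Milgram scheme. Your closing caveat about matching the boundary conditions of \Cref{lem-obs2} with those of the reflected adjoint solution $y^*(T-\cdot,L-\cdot)$ is the only point of care, and it is the same routine verification already performed in the critical case.
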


We are ready to give 

\begin{proof}[Proof of \Cref{thm3}] \Cref{thm3} follows from \Cref{pro-WP} and \Cref{lem-obs2} as usual. The details are omitted. 
\end{proof}


\section{Properties of  controls which steer 0 at time $0$ to $0$ at time $T$} \label{sect-0-0}

In this section, we study controls that steer 0 at time $0$ to $0$ at time $T$ for the linearized KdV system of \eqref{sys-KdV}.  To this end, it is convenient to introduce the following quantities. 

\begin{definition}\label{def:Q-P}
For $z \in \mC$,  let $(\lambda_j)_{1\leq j \leq 3} =  \big(\lambda_j(z) \big)_{1\leq j \leq 3}$ be the three solutions  of
 \begin{equation}\label{eq-lambda}
  \lambda^3 + \lambda + i z = 0.
 \end{equation}
Set
 \begin{equation}\label{eq-defQ}
  Q = Q (z) : = 
 \begin{pmatrix}
 1 & 1 & 1\\
 e^{\lambda_1 L } & e^{\lambda_2 L } & e^{\lambda_3 L }\\
 \lambda_1 e^{\lambda_1 L } & \lambda_2 e^{\lambda_2 L } & \lambda_3 e^{\lambda_3 L }
 \end{pmatrix},
\end{equation}
\begin{equation}\label{def-P}
P_D =  P_D(z) : =  \sum_{j=1}^3  \lambda_j^2 \big(\lambda_{j+1}e^{\lambda_{j+1} L } - \lambda_{j+2} e^{\lambda_{j+2} L } \big), 
\end{equation}
 and
\begin{equation}\label{def-Xi}
\Xi = \Xi(z) :=  \det \begin{pmatrix}1&1&1\\ \lambda_1&\lambda_2&\lambda_3\\
\lambda_1^2&\lambda_2^2&\lambda_3^2\end{pmatrix}, 
\end{equation}with the convention $\lambda_{j+3} = \lambda_{j}$ for $j \ge 1$. 
\end{definition}

\begin{remark}\rm
The matrix $Q$ and the quantities $P_D$ and $\Xi$  are antisymmetric with respect
to $\lambda_j$ ($j=1, 2, 3$), and their definitions depend on a choice of the order of  $(\lambda_1, \lambda_2, \lambda_3)$. Nevertheless, we later consider a product  of either $P_D$, $\Xi$,  or $\det Q$ with another antisymmetric function of
$(\lambda_j)$,  or deal with $|\det Q|$,  and these quantities therefore make sense. The definitions of $P$, $\Xi$,  and $Q$ are only understood in these contexts.
\end{remark}

Given $u \in H^{1/3}(0, + \infty)$, let $y \in C([0, + \infty); L^2(0, L)) \cap L^2_{\loc}([0, + \infty); H^1(0, L))$ be the unique solution of the system 
\begin{equation}\left\{
\begin{array}{cl}
y_t + y_x  + y_{xxx} = 0 &  \mbox{ in } (0, +\infty) \times (0, L), \\[6pt]
y(\cdot, 0) = y_x(\cdot, L) = 0 & \mbox{ in } (0, +\infty), \\[6pt]
y(\cdot ,L) = u & \mbox{ in } (0, +\infty), \\[6pt]
y(0, \cdot)  = 0 &  \mbox{ in } (0, L). 
\end{array}\right.
\end{equation}
In what follows in this section, we extend $y$  and $u$ by $0$ for $t < 0$  and still denote these extensions by $y$ and $u$, respectively.  For an appropriate function $v$ defined on $\mR \times (0, L)$, let $\hat v$ denote its Fourier transform with respect to $t$, i.e., 
\begin{equation*}
\hat v(z, x) = \frac{1}{\sqrt{2 \pi} }\int_{\mR} v(t, x) e^{- i z t} \, dt. 
\end{equation*}
From the system of $y$, we have 
\begin{equation}
\left\{\begin{array}{cl}
i z \hy(z, x) + \hy_x (z, x) + \hy_{xxx} (z, x) = 0 &  \mbox{ in } \mR \times  (0, L), \\[6pt]
\hy(z, 0) = \hy_x(z, L) = 0 & \mbox{ in }  \mR, \\[6pt]
\hy(z, L) = \hu(z) & \mbox{ in } \mR.  
\end{array}\right.
\end{equation}
This system has a unique solution outside a discrete set of $z$ in $\mC$, see \cite[Lemma 2.1]{CKN20}. 

Taking into account the equation of $\hy$, we search for the solution of the form 
\begin{equation*}
\hy(z, \cdot) = \sum_{j=1}^3 a_j e^{\lambda_j x}, 
\end{equation*}
where $\lambda_j = \lambda_j(z)$ with $j=1, 2, 3$ determined by \eqref{eq-lambda}. 

Using the boundary conditions for $\hy$,  we require that 
\begin{equation*}
\left\{\begin{array}{cl}
\sum_{j=1}^3 a_j = 0, \\[6pt]
\sum_{j=1}^3 e^{\lambda_j L } a_j = \hu, \\[6pt]
\sum_{j=1}^3 \lambda_j e^{\lambda_j L } a_j = 0. 
\end{array}\right.
\end{equation*}
This implies 
\begin{equation*}
Q (a_1, a_2, a_3)\tr = (0,  \hu, 0)\tr, 
\end{equation*}
where $Q = Q(z)$ is given in \Cref{def:Q-P}. We thus obtain 
\begin{equation*}
a_j = \frac{\hu }{\det Q} \big(\lambda_{j+1}e^{\lambda_{j+1} L } - \lambda_{j+2} e^{\lambda_{j+2} L } \big). 
\end{equation*}
This yields 
\begin{equation}\label{form-hy}
\hy(z, x) =  \frac{\hu }{\det Q}  \sum_{j=1}^3 \big(\lambda_{j+1}e^{\lambda_{j+1} L } - \lambda_{j+2} e^{\lambda_{j+2} L } \big) e^{\lambda_j x}. 
\end{equation}
From \eqref{form-hy}, we derive that   
\begin{equation}
\partial_{xx} \hy(z, 0) =  \frac{\hu (z) P_D(z) }{\det Q (z)}, 
\end{equation}
where $P_D(z)$ is given in \Cref{def:Q-P}. 

Set 
\begin{equation}\label{def-GH}
G(z) = P_D(z) /  \Xi (z)  \quad \mbox{ and } \quad H(z) = \det Q(z) /  \Xi (z) , 
\end{equation}
where $\Xi(z)$ is given in \Cref{def:Q-P}.  It  is convenient to consider $\partial_{xx} \hy(z, 0)$ under the form 
\begin{equation}
\partial_{xx} \hy(z, 0) = \frac{\hu (z) G(z) }{H(z)}, 
\end{equation}

By \cite[Lemmas A1 and B1]{CKN20}, $\partial_{xx} \hy(z, 0) $ is a meromorphic function,  and  
\begin{equation}\label{GH}
\mbox{$G(z)$ and $H(z)$ are entire functions. }
\end{equation}

\medskip 
We thus have just established the following result. 

\begin{lemma} \label{lem-form-sol} Let $u \in H^{1/3}(\mR_+)$  and let $y \in C \big([0, + \infty); L^2(0, L) \big) \cap L^2_{\loc}\big( [0, + \infty); H^1(0, L) \big)$ be the unique solution of 
\begin{equation}\label{sys-y}\left\{
\begin{array}{cl}
y_t  + y_x  + y_{xxx}  = 0 &  \mbox{ in } (0, +\infty) \times (0, L), \\[6pt]
y(\cdot, 0) = y_x(\cdot, L) = 0 & \mbox{ in }  (0, +\infty), \\[6pt]
y(\cdot,  L) = u & \mbox{ in } (0, +\infty), 
\end{array}\right.
\end{equation}
with  
\begin{equation}\label{IC-y}
y(0, \cdot) =  0 \mbox{ in } (0, L). 
\end{equation}
Outside a discrete set $z \in \mR$, we have 
\begin{equation}
\hy (z, x) =  \frac{\hu }{\det Q}  \sum_{j=1}^3 \big(\lambda_{j+1}e^{\lambda_{j+1} L } - \lambda_{j+2} e^{\lambda_{j+2} L } \big)e^{\lambda_j x} \mbox{ for a.e. } x \in (0, L).
\end{equation}
\end{lemma}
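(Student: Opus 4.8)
The plan is to reduce the PDE to a linear ODE in the space variable by Fourier transforming in time, and then to read off the coefficients by Cramer's rule. First I would extend $y$ and $u$ by zero for $t<0$; because $y(0,\cdot)=0$ and $y\in C([0,+\infty);L^2(0,L))$, the extension still satisfies $y_t+y_x+y_{xxx}=0$ in $\mR\times(0,L)$ in the distributional sense, with no Dirac mass at $t=0$ (this is precisely where the hypothesis $y(0,\cdot)=0$ enters, since the time derivative of the extension is $\partial_t y + y(0^+,\cdot)\delta_0 = \partial_t y$). The estimates behind \Cref{pro-kdv1}, transferred to the half-line, guarantee that $y(\cdot,x)\in H^{1/3}(\mR)$, $y_x(\cdot,x)\in L^2(\mR)$, and $y_{xx}(\cdot,x)\in[H^{1/3}(\mR)]^*$, so that $\hy(z,x)$ is a well-defined object and transforming the equation in $t$ is legitimate. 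This yields, for real $z$, the boundary value problem $iz\,\hy+\hy_x+\hy_{xxx}=0$ on $(0,L)$ with $\hy(z,0)=0$, $\hy(z,L)=\hu(z)$, and $\hy_x(z,L)=0$.

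Next I would fix a real $z$ outside the discrete exceptional set and solve this constant-coefficient ODE. Its characteristic equation is exactly \eqref{eq-lambda}, i.e.\ $\lambda^3+\lambda+iz=0$; away from the discrete set where two roots collide, the roots $\lambda_1,\lambda_2,\lambda_3$ are distinct, so $e^{\lambda_1 x},e^{\lambda_2 x},e^{\lambda_3 x}$ form a basis of solutions and $\hy(z,\cdot)=\sum_{j=1}^3 a_j e^{\lambda_j x}$. Imposing the three boundary conditions converts the problem into the $3\times3$ linear system $Q(a_1,a_2,a_3)\tr=(0,\hu,0)\tr$ with $Q=Q(z)$ as in \eqref{eq-defQ}. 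Here I would invoke \cite[Lemma 2.1]{CKN20} to ensure that $\det Q(z)\neq 0$, and hence that the system is uniquely solvable, outside a discrete set of $z\in\mR$; the same source locates the degeneracies.

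Solving by Cramer's rule then gives $a_j=\dfrac{\hu}{\det Q}\big(\lambda_{j+1}e^{\lambda_{j+1}L}-\lambda_{j+2}e^{\lambda_{j+2}L}\big)$ with the cyclic convention $\lambda_{j+3}=\lambda_j$, and substituting back produces the claimed representation \eqref{form-hy}. The bulk of the statement is thus a direct computation; the genuinely delicate point is the first step, namely justifying that the time Fourier transform of the half-line solution really solves the transformed ODE with the stated boundary traces — that the extension by zero creates no spurious source at $t=0$ and that $\hy$, $\hy_x$, $\hy_{xx}$ carry enough integrability for the traces at $x=0,L$ to make sense. I expect this to be the main obstacle, and I would resolve it exactly as in the construction underlying \Cref{pro-kdv1}, establishing the half-line regularity and decay estimates on the Fourier side before identifying the coefficients.
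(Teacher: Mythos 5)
Your proposal is correct and follows essentially the same route as the paper: extend $y$ and $u$ by zero for $t<0$, take the Fourier transform in time, solve the resulting third-order constant-coefficient ODE with the ansatz $\hy(z,\cdot)=\sum_{j=1}^3 a_j e^{\lambda_j x}$, and apply Cramer's rule to the system $Q(a_1,a_2,a_3)\tr=(0,\hu,0)\tr$, invoking \cite[Lemma 2.1]{CKN20} for solvability outside a discrete set. The additional care you devote to justifying the transform step (absence of a Dirac mass at $t=0$ and integrability of the traces) is sound but is treated as routine in the paper, which passes directly from the extended system to its Fourier transform.
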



\begin{remark}\label{rem-form-sol} \rm Assume that $\hu(z, \cdot)$ is well-defined for $z
\in \mC$ (e.g. when $u$ has a compact support). Then the conclusions of
\Cref{lem-form-sol} hold  outside of a discrete set $z\in \mC$.
\end{remark}

We end this section with the following result, which is the starting point of our approach, and follows from \Cref{lem-form-sol} and Paley-Wiener's theorem, see e.g., \cite{Rudin-RC}. 

\begin{proposition} \label{pro-Gen} Let $L>0$, $T>0$, and $u \in H^{1/3}(\mR_+)$. Assume that   $u$ has a compact support in $[0, T]$,  and $u$  steers $0$ at the time $0$ to $0$ at the time $T$, i.e., the unique solution $y$ of \eqref{sys-y} and \eqref{IC-y} satisfies $y(T, \cdot) = 0$ in $(0, L)$.  Then $\hu$ and  $\hu G/ H$  satisfy the assumption of Paley-Wiener's theorem concerning the support in $[-T, T]$, i.e., 
\be\label{pro-Gen-cl1}
\hu \mbox{ and } \hu G/H \mbox{ are entire functions}, 
\ee
and 
\be\label{pro-Gen-cl2}
|\hu(z)|  + \left|\frac{\hu G(z)}{H(z)} \right| \le C e^{T|z|}, 
\ee
for some positive constant $C$. 
\end{proposition}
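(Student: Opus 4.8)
The plan is to treat $\hu$ and $\hu G/H$ separately; the first is essentially classical, while the second carries all the content and is where the hypothesis $y(T,\cdot)=0$ enters.

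\textbf{The function $\hu$.} Since $u\in L^2(\mR)$ is supported in $[0,T]$, it belongs to $L^1(\mR)$, so $\hu(z)=\frac{1}{\sqrt{2\pi}}\int_0^T u(t)e^{-izt}\,dt$ is an entire function, and for $z=x+iy$ one has $|e^{-izt}|=e^{yt}\le e^{T|y|}$ on $[0,T]$, whence $|\hu(z)|\le C e^{T|\Im z|}\le Ce^{T|z|}$. This is the classical Paley-Wiener bound for data supported in $[0,T]\subset[-T,T]$.

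\textbf{The function $\hu G/H$.} First I would show that $y$ vanishes for $t\ge T$. Indeed, $u$ vanishes on $(T,+\infty)$ and, by hypothesis, $y(T,\cdot)=0$; thus the restriction of $y$ to $(T,+\infty)\times(0,L)$ solves the linearized KdV system with zero source, homogeneous boundary data, and zero initial datum at time $T$. By the uniqueness part of \Cref{pro-kdv1}, $y\equiv 0$ on $(T,+\infty)$. Combined with the extension $y\equiv 0$ for $t<0$, this shows that $y(\cdot,x)$ --- and in particular $y_{xx}(\cdot,0)$, which by \Cref{pro-kdv1} lies in $[H^{1/3}(\mR)]^*$ --- is supported in $[0,T]$. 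Consequently $y_{xx}(\cdot,0)$ is a compactly supported distribution, so by the Paley-Wiener-Schwartz theorem (see \cite{Rudin-RC}) its Fourier transform $\partial_{xx}\hy(\cdot,0)$ extends to an entire function of exponential type $T$. It then remains to identify this entire function with $\hu G/H$: by \Cref{lem-form-sol} together with \eqref{form-hy} and \eqref{def-GH}, one has $\partial_{xx}\hy(z,0)=\hu(z)G(z)/H(z)$ outside a discrete subset of $\mR$, while $\partial_{xx}\hy(\cdot,0)$ is entire by the previous step and $\hu G/H$ is a priori only meromorphic, its possible poles being the zeros of $H$ (recall that $G,H$ are entire by \eqref{GH}). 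Since the two functions agree on a set with an accumulation point, the identity theorem forces $\hu G/H$ to have only removable singularities; hence $\hu G/H$ is entire, equals $\partial_{xx}\hy(\cdot,0)$ everywhere, and inherits its exponential type $T$, which yields \eqref{pro-Gen-cl1} and \eqref{pro-Gen-cl2}.

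\textbf{Main obstacle.} The delicate point is precisely the passage from the meromorphic formula \eqref{form-hy} to an entire function of the correct type: a priori $\hu G/H$ could have poles at the zeros of $H$, and one controls it only through the explicit expression. The vanishing of $y$ after time $T$ is what converts $y_{xx}(\cdot,0)$ into a compactly supported object and thereby both removes these singularities and pins down the exponential type as exactly $T$ rather than merely finite. A secondary, purely technical issue is that the $[H^{1/3}]^*=H^{-1/3}$ regularity of $y_{xx}(\cdot,0)$ introduces a polynomial factor $(1+|z|)^{1/3}$ in the Paley-Wiener-Schwartz estimate; this is harmless, since what is actually needed for the later application is the exponential type $T$ together with the real-axis integrability, which is the form of the $H^s$-version of Paley-Wiener used to read off the support in $[-T,T]$.
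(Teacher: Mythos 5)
Your proof is correct and follows essentially the same route as the paper, which disposes of \Cref{pro-Gen} in one line by invoking \Cref{lem-form-sol} together with Paley--Wiener's theorem: the support of $u$ and the vanishing of $y$ for $t\ge T$ (hence of $y_{xx}(\cdot,0)$, whose Fourier transform is identified with $\hu G/H$ via \eqref{form-hy}) give the entire extensions and the exponential-type bound. Your additional remarks on removing the apparent poles at the zeros of $H$ and on the harmless polynomial factor correctly fill in the details the paper leaves implicit.
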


\begin{remark} \label{rem-Gen} \rm The computations in this section are in the spirit of the ones \cite{CKN20}. Nevertheless, in the conclusions of \Cref{pro-Gen}, we have/require that
$$
\hy(z, L) \mbox{ and } \partial_{xx} \hy(z, 0)  \mbox{ are entire functions satisfying \eqref{pro-Gen-cl2}}. 
$$
This is different with the one used in \cite[Proposition 3.1]{CKN20} where one obtains that  
$$
\partial_x \hy(z, L)  \mbox{ and } \partial_{x} \hy(z, 0)  \mbox{ are entire functions satisfying a variant of \eqref{pro-Gen-cl2}}. 
$$
These differences are important to take into account different boundary conditions, see the proof of Assertion $ii)$ of \Cref{thm2} in \Cref{sec-Assertion-ii}. 
\end{remark}

\section{Attainable directions in the unreachable space in  small time}\label{sect-dir}

In this section, we investigate whether or not directions in $\cM_D$, defined in \eqref{def-MD},  can be reached in small time. The starting point comes from the power series expansion approach.  Let $y_1$ and $y_2$ be the solutions of 
\begin{equation}\label{eq:first_order}\left\{
\begin{array}{cl}
y_{1, t}  + y_{1, x}  + y_{1, xxx}  = 0 &  \mbox{ in } 
(0, T) \times (0, L), \\[6pt]
y_1(\cdot, 0) = y_{1, x} (\cdot, L) = 0 & \mbox{ in }  (0, T), \\[6pt]
y_{1}(\cdot, L) = u_1 & \mbox{ in } (0, T), \\[6pt]
y_1(0, \cdot) = 0 & \mbox{ in } (0, L), 
\end{array}\right.
\end{equation}
\begin{equation} \label{eq:second_order}\left\{
\begin{array}{cl}
y_{2, t} + y_{2, x}  + y_{2, xxx}  + y_1  y_{1, x}   = 0 &
\mbox{ in } (0, T) \times (0, L), \\[6pt]
y_2(\cdot, 0) = y_{2}(\cdot, L) =  y_{2, x}(\cdot, L)  = 0 & \mbox{ in }  (0, T), \\[6pt]
y_2(0, \cdot) = 0 & \mbox{ in } (0, L), 
\end{array}\right.
\end{equation}
for some control $u_1$. The key point of this approach is to first understand how one can choose $u_1$ so that 
$$
y_1(T, \cdot) = 0
$$
and then analyse what the behavior of $ \proj_{\cM_D} y_2 (T, \cdot)$ is. 
To this end, we compute the quantity 
\be
\int_0^L y_2(T, x) \Phi(t, x) \, dx, 
\ee
where $\Phi$ is defined in \eqref{thm-MD-Phi}. Multiplying the equation of $y_2$ by $\Phi$, integrating by parts in $(0, T) \times (0, L)$, we obtain, after using the boundary conditions and the initial conditions,   
\be\label{motivation1}
\int_0^L y_2(T, x) \Phi(t, x) \, dx =  \frac{1}{2} \int_0^T \int_0^L y_1^2(t, x)  \Phi_{x}(t, x) \, dx. 
\ee
The goal is then to understand the value of the RHS of \eqref{motivation1}. 

We will study the value of the RHS of \eqref{motivation1} in a more general setting.  Motivated by the definition of $y_1$,  we consider the unique solution  $y \in X_T$ of the system,  for $u \in H^{1/3}(0, T)$,  
\begin{equation}\left\{
\begin{array}{cl}
y_{t}  + y_{x}  + y_{xxx}  = 0 &  \mbox{ in } 
(0, T) \times (0, L), \\[6pt]
y(\cdot, 0) = y_{x} (\cdot, L) = 0 & \mbox{ in }  (0, T), \\[6pt]
y(\cdot, L) = u & \mbox{ in } (0, T),  \\[6pt]
y(0, \cdot) = 0 & \mbox{ in } (0, L). 
\end{array}\right.
\end{equation}
Guided  by the definition of $\phi$, as suggested in \cite{CKN20,Ng-Decay21},  for $\eta_1, \, \eta_2, \,  \eta_3 \in \mC \setminus \{0 \}$,  we set
\begin{equation}\label{def-vp}
\varphi(x) = \sum_{j=1}^3 (\eta_{j+1} - \eta_{j}) e^{\eta_{j+2} x} \mbox{ for } x \in [0, L],
\end{equation}
with the convention $\eta_{j+3} = \eta_j$ for $j \ge 1$. The following assumptions on $\eta_j$ are used repeatedly  throughout this section:
\begin{equation}\label{pro-eta0}
\eta_{1} + \eta_{2} + \eta_{3} = 0, \quad \eta_{1} \eta_{2} + \eta_{1} \eta_{3} + \eta_{2} \eta_{3} = 1, 
\end{equation}
and
\begin{equation}\label{pro-eta1}
\eta_1 e^{-\eta_1  L} = \eta_2 e^{-\eta_2 L} = \eta_3 e^{-\eta_3 L}. 
\end{equation}

Extend $y$ and $u$ by $0$ for $t > T$ and still denote these extensions by $y$ and $u$, respectively. Then, by \Cref{lemH1/3} in the appendix, 
$$
\| u\|_{H^{1/3}(\mR_+)} \le C \| u \|_{H^{1/3}(0, T)}. 
$$
Assume that 
$$
y(T, \cdot) =0.
$$
Then the extension $y \in C([0, + \infty); L^2(0, L)) \cap L^2((0, + \infty); H^1(0, L))$ is also a solution of the linearized KdV system in $[0, +\infty) \times (0, L)$ using the control which is  the extension of $u$ (by $0$ outside $(0, T)$), i.e.,  
\begin{equation}\label{sys-y-dir}\left\{
\begin{array}{cl}
y_t  + y_x  + y_{xxx}  = 0 &  \mbox{ in } (0, +\infty) \times (0, L), \\[6pt]
y(\cdot, 0) = y_x(\cdot, L) = 0 & \mbox{ in }  (0, +\infty), \\[6pt]
y(\cdot, L) = u & \mbox{ in } (0, +\infty), \\[6pt]
y(0, \cdot) =  0 &  \mbox{ in } (0, L). 
\end{array}\right.
\end{equation}
In what follows in this section, we study this quantity, for $p \in \mR$: 
\be\label{motivation1-1}
\int_0^T \int_0^L y^2(t, x)  \varphi_x(x) e^{p t} \, dt \, dx \quad \left(=  \int_0^{+\infty} \int_0^L y^2(t, x)  \varphi_x(x) e^{p t} \, dt \, dx \right). 
\ee

We have, by \Cref{lem-form-sol} (see also \Cref{rem-form-sol}), for $z \in \mC$ outside  a discrete
set,
\begin{equation}\label{def-y}
\hat y(z, x) = \hat u(z)\frac{\sum_{j=1}^3 \big(\lambda_{j}e^{\lambda_{j} L } - \lambda_{j+1} e^{\lambda_{j+1} L } \big) e^{\lambda_{j+2} x} }{\sum_{j=1}^3 (\lambda_{j+1} - \lambda_{j}) e^{-\lambda_{j+2} L }}, 
\end{equation}
where   $\lambda_j = \lambda_j(z)$ for $j=1, \, 2, \, 3$  are determined by \eqref{eq-lambda}.

\medskip 
We begin with

\begin{lemma}\label{lem-1} Let $p \in \mR$,  $\eta_1, \, \eta_2, \,  \eta_3 \in \mC \setminus \{0 \}$, and let $\varphi$ be defined by \eqref{def-vp}.  Set, for $(z, x) \in \mR \times (0, L)$,  
\begin{equation*}
B_D(z, x) = \left| \frac{\sum_{j=1}^3 \big(\lambda_{j}e^{\lambda_{j} L } - \lambda_{j+1} e^{\lambda_{j+1} L } \big) e^{\lambda_{j+2} x} }{\sum_{j=1}^3 (\lambda_{j+1} - \lambda_{j}) e^{-\lambda_{j+2} L }} \right|^2 \varphi_x(x), 
\end{equation*}
where $\lambda_j = \lambda_j(z)$ with $j=1, 2, 3$ are the three solutions of $\lambda^3 + \lambda + i z = 0$. 
Let $u \in H^{1/3}(0, + \infty)$ with compact support in $[0, + \infty)$ and let $y \in C \big([0, + \infty); L^2(0, L) \big) \cap L^2_{\loc}\big( [0, + \infty); H^1(0, L) \big)$ be the unique solution of \eqref{sys-y-dir}.  Assume that $y(t, \cdot) =0$ for large $t$.  We have 
\begin{equation}\label{identity-1}
\int_0^L \int_{0}^{+ \infty} |y(t, x)|^2 \varphi_x(x) e^{p t} \, dt \, dx  = \int_{\mR}  |\hu(z + i p/ 2)|^2 \int_0^L B_D(z + i p/2, x) \, dx \, dz . 
\end{equation}
\end{lemma}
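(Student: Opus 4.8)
The plan is to remove the exponential weight $e^{pt}$ by absorbing half of it into the field, reduce the left-hand side of \eqref{identity-1} to an $L^2_t$-norm via Plancherel, and then recognize the resulting Fourier transform as the value of the entire function $\hy(\cdot,x)$ on the shifted line $\{\Im z = p/2\}$, where the explicit representation \eqref{def-y} applies. For notational ease I would set
\begin{equation*}
N(z,x) = \frac{\sum_{j=1}^3 \big(\lambda_{j}e^{\lambda_{j} L } - \lambda_{j+1} e^{\lambda_{j+1} L } \big) e^{\lambda_{j+2} x} }{\sum_{j=1}^3 (\lambda_{j+1} - \lambda_{j}) e^{-\lambda_{j+2} L }},
\end{equation*}
so that $\hy(z,x) = \hu(z) N(z,x)$ by \Cref{lem-form-sol} and $B_D(z,x) = |N(z,x)|^2 \varphi_x(x)$.

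First I would put $w(t,x) = e^{pt/2} y(t,x)$. Since $p \in \mR$ one has $|w(t,x)|^2 = e^{pt}|y(t,x)|^2$, and because $y(\cdot,x)$ is supported in $\{t \ge 0\}$ (the extension by $0$) and vanishes for large $t$ by hypothesis, $w(\cdot,x)$ is compactly supported and lies in $L^2(\mR)$. Hence
\begin{equation*}
\int_0^L \int_0^{+\infty} |y(t,x)|^2 \varphi_x(x) e^{pt}\, dt\, dx = \int_0^L \varphi_x(x) \int_{\mR} |w(t,x)|^2\, dt\, dx = \int_0^L \varphi_x(x) \int_{\mR} |\hat w(z,x)|^2\, dz\, dx,
\end{equation*}
the last equality being Plancherel's identity in $t$ (with the normalization of $\hat{\cdot}$ fixed in this section, which makes the Fourier transform an $L^2$-isometry). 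Next, since $w(\cdot,x)$ has compact support, $\hat w(\cdot,x)$ is entire and, for $z \in \mR$,
\begin{equation*}
\hat w(z,x) = \frac{1}{\sqrt{2\pi}}\int_{\mR} y(t,x)\, e^{pt/2} e^{-izt}\, dt = \frac{1}{\sqrt{2\pi}}\int_{\mR} y(t,x)\, e^{-i(z + ip/2)t}\, dt = \hy(z + ip/2, x),
\end{equation*}
where the right-hand side is the value at the complex argument $z + ip/2$ of the entire function $\hy(\cdot,x)$.

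By \Cref{lem-form-sol} and \Cref{rem-form-sol}, the factorization $\hy(\zeta,x) = \hu(\zeta) N(\zeta,x)$ persists for $\zeta \in \mC$ outside a discrete set, so for all but countably many $z \in \mR$ one has $|\hat w(z,x)|^2 = |\hu(z+ip/2)|^2\, |N(z+ip/2,x)|^2$. Substituting this into the displayed identity and applying Fubini's theorem — justified by Tonelli, since $\int_0^L |\varphi_x(x)|\, \|w(\cdot,x)\|_{L^2(\mR)}^2\, dx < +\infty$ as $\varphi_x$ is bounded on $[0,L]$ and $x \mapsto \|w(\cdot,x)\|_{L^2}$ is bounded — I may interchange the $x$- and $z$-integrals to obtain
\begin{equation*}
\int_{\mR} |\hu(z+ip/2)|^2 \int_0^L |N(z+ip/2,x)|^2 \varphi_x(x)\, dx\, dz = \int_{\mR} |\hu(z+ip/2)|^2 \int_0^L B_D(z+ip/2,x)\, dx\, dz,
\end{equation*}
which is \eqref{identity-1}.

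The only genuinely delicate point is the passage to the complex argument $z + ip/2$. One must know that $\hy(\cdot,x)$ is entire and that the factorization $\hy = \hu N$ survives off the real axis; this is exactly what the compact-support (Paley-Wiener) framework and the explicit formula of \Cref{lem-form-sol} supply. In particular, the finitely many zeros of the denominator of $N$ on the line $\Im \zeta = p/2$ are poles of $N$ that are cancelled by the corresponding zeros of $\hu$ (because the product $\hy(\cdot,x)$ is entire), so the integrand $|\hu(z+ip/2)|^2 B_D(z+ip/2,x)$ is well-defined almost everywhere and integrable, and no residue or boundary contributions arise when one regards the computation as a contour shift from $\mR$ to $\mR + ip/2$.
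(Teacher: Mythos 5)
Your argument is correct and is exactly the route the paper takes: its proof of this lemma is the one-line remark that the identity is "a direct consequence of Parseval's theorem and \eqref{def-y}", and your proposal simply spells out that computation (absorbing $e^{pt/2}$ into the field, applying Plancherel, identifying $\hat w(z,x)$ with $\hy(z+ip/2,x)$ via the compact support, and invoking the factorization of \Cref{lem-form-sol} together with \Cref{rem-form-sol} off the real axis). The additional care you take with the discrete exceptional set and with Fubini is sound and consistent with the paper's framework.
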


\begin{proof}  The conclusion is a direct consequence of Parseval's theorem and \eqref{def-y}. 
\end{proof}

We next investigate the behavior of  
$$
\int_0^L B_D(zn + i p/2, x) \, dx 
$$
for $z \in \mR$ with  large $|z|$.  We have

\begin{lemma} \label{lem-B}  Let   $p \in \mR$  and $\eta_1, \eta_2, \eta_3 \in \mC \setminus \{0 \}$.  Assume \eqref{pro-eta1}.    We have  
\begin{equation}
 \int_0^L B_D(z + i p /2, x) \, dx  = E_D |z|^{-1/3} + O(|z|^{-2/3}) \mbox{ for $z\in \mR$ with large $|z|$}, 
\end{equation}
where $E_D = E_D (\eta_1, \eta_2, \eta_3)$ is  defined by 
\begin{equation}\label{def-D}
E_D = \frac{1}{\sqrt{3} A}   \sum_{j=1}^3 \eta_{j+2}^2(\eta_{j+1} - \eta_{j}), 
\end{equation}
with  \footnote{$A$ does not depend on $j$ by \eqref{pro-eta1}.}
$$
A = A(\eta_1, \eta_2, \eta_3) : =  \eta_j e^{-\eta_j L}.  
$$
\end{lemma}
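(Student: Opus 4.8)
\textbf{Proof proposal for \Cref{lem-B}.} The plan is to substitute the asymptotic expansion of the roots $\lambda_j(z+ip/2)$ from \Cref{lem-lambda} into the explicit expression for $\int_0^L B_D(z+ip/2,x)\,dx$ and extract the leading order term. Writing $w = z + ip/2$, the three roots satisfy $\lambda_j = \mu_j w^{1/3} - \tfrac{1}{3\mu_j} w^{-1/3} + O(w^{-2/3})$ with $\mu_j = e^{-i\pi/6 - 2ji\pi/3}$, so for large $|z|$ one root (say $\lambda_2$, corresponding to $\mu_2$ with negative real part) has $\Re \lambda_2 \to -\infty$, while $\lambda_1, \lambda_3$ have $\Re \lambda_1, \Re\lambda_3 \to +\infty$. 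The first thing I would do is identify, in both the numerator $\sum_j (\lambda_j e^{\lambda_j L} - \lambda_{j+1} e^{\lambda_{j+1}L}) e^{\lambda_{j+2}x}$ and the denominator $\sum_j (\lambda_{j+1} - \lambda_j) e^{-\lambda_{j+2}L}$, which exponential terms dominate, being careful that $B_D$ involves the modulus squared so that the relevant quantity is $\Re(\lambda_j)$ and the sign of $\Re(\lambda_{j+2})$ governs whether $e^{\lambda_{j+2}x}$ grows or decays on $[0,L]$.

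Second, I would perform the $x$-integration. Since $\varphi_x(x)$ is a finite sum of exponentials $e^{\eta_k x}$ (from differentiating \eqref{def-vp}) and $|e^{\lambda_{j+2}x}|^2 = e^{2\Re(\lambda_{j+2})x}$, the integrand is a linear combination of exponentials in $x$, and $\int_0^L$ can be computed term-by-term. The dominant contribution comes from the term whose exponent has the most positive real part; after dividing by $|$denominator$|^2$ and evaluating $\int_0^L$, the boundary term at $x=L$ combined with the structure of $\varphi$ should produce the constant $E_D$. Here the relations \eqref{pro-eta1}, namely $\eta_j e^{-\eta_j L} = A$ independent of $j$, are exactly what make the coefficient $A$ appear cleanly and force cancellations among the competing exponentials; I would use \eqref{pro-eta1} to replace $e^{-\eta_j L}$ by $A/\eta_j$ wherever it occurs. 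The factor $\sqrt{3}$ and the power $|z|^{-1/3}$ should emerge from $|\Xi(z)| \sim \sqrt{3}\,|z|^{2/3}$-type considerations and from the gap $\Re(\lambda_1) - \Re(\lambda_2)$ scaling like $|z|^{1/3}$, which sets the rate of the dominant exponential.

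The main obstacle I anticipate is the careful bookkeeping of which exponential dominates and ensuring that the naively-leading terms actually cancel, leaving a contribution of the claimed order $|z|^{-1/3}$ rather than something larger. Because the modulus-squared mixes the three roots pairwise (producing cross terms $e^{(\lambda_j + \bar\lambda_k)x}$ after expanding $|\cdot|^2$), I would need to track the real parts of all nine pairings $\Re(\lambda_j) + \Re(\lambda_k)$ and integrate each against each exponential in $\varphi_x$, then collect the surviving leading term. The expectation is that the $x$-integral concentrates the mass near $x=L$ (the endpoint where the growing exponentials are largest), so that the leading behavior is controlled by the values of the relevant quantities at $x = L$; evaluating $\varphi_x$ and the numerator structure at $x=L$ using $\eta_1 + \eta_2 + \eta_3 = 0$ and the antisymmetry in $(\eta_1,\eta_2,\eta_3)$ should collapse the sum to the stated $\sum_{j} \eta_{j+2}^2(\eta_{j+1} - \eta_j)$. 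Throughout I would suppress terms of order $O(|z|^{-2/3})$, which arise both from the $w^{-1/3}$ correction in $\lambda_j$ and from the subdominant exponentials in the $x$-integration.
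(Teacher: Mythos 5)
Your overall strategy is the same as the paper's: substitute the asymptotics of $\lambda_j$ from \Cref{lem-lambda}, identify the dominant exponentials in the numerator and the denominator, expand the modulus squared into cross terms, integrate each against the exponentials of $\varphi_x$ on $[0,L]$, and use \eqref{pro-eta1} to convert $e^{\eta_j L}$ into $\eta_j/A$. You also correctly anticipate the two delicate points: that some naively-leading terms must cancel (in the paper these are the quantities $S_2$ and $S_3$, which are only $O(1)$ because of the identities $\sum_j(\eta_{j+1}-\eta_j)=0$, $\sum_j\eta_{j+2}(\eta_{j+1}-\eta_j)=0$ and because $\lambda_2+\bar\lambda_2=O(|z|^{-2/3})$), and that the surviving contribution concentrates at $x=L$.

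However, there is a concrete error at the very start that would derail exactly the bookkeeping you identify as the crux. You assert that for large $|z|$ one root has $\Re\lambda\to-\infty$ while the other two have $\Re\lambda\to+\infty$. This is false: the $\mu_j=e^{-i\pi/6-2ji\pi/3}$ are the three cube roots of $-i$, with real parts $-\sqrt{3}/2$, $0$, $+\sqrt{3}/2$. So (in the paper's ordering $\Re\lambda_1<\Re\lambda_2<\Re\lambda_3$) one root has $\Re\lambda_1\to-\infty$, one has $\Re\lambda_3\to+\infty$, and the middle root is asymptotically purely imaginary, with $\lambda_2+\bar\lambda_2=O(|z|^{-2/3})$ since the $z^{-1/3}$ correction $-\frac{1}{3\mu_2}z^{-1/3}$ is also purely imaginary. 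This neutral root is not a technicality: it is the reason the denominator is dominated by the single term $e^{-\lambda_1 L}(\lambda_3-\lambda_2)$ with $(\lambda_3-\lambda_2)(\bar\lambda_3-\bar\lambda_2)=3z^{2/3}(1+O(z^{-1/3}))$, the reason $|e^{\lambda_2 L}|^2$ is bounded above and below so that the cross terms pairing $\lambda_2$ with $\lambda_3$ survive and produce the $|z|^{1/3}$-sized quantity $S_1$ (whence, after dividing by $3z^{2/3}$, the claimed $E_D|z|^{-1/3}$ with the factor $\sqrt{3}$ arising from summing the three pairings $\lambda_2\bar\lambda_2$, $\lambda_2\bar\lambda_3$, $\lambda_3\bar\lambda_2$, not from $|\Xi|$ alone), and the reason $S_3$ does not blow up like $z^{2/3}$. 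With two roots of growing positive real part, the identification of dominant terms in both numerator and denominator comes out wrong, so the proof as sketched cannot be completed without first correcting this. A smaller omission: you should also treat $z\to-\infty$ separately; the paper does this via the symmetry $M_D(-z,x)=\overline{M_D(\bar z,x)}$, which reduces it to the positive case with $p$ replaced by $-p$.
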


Here and in what follows, for $s \in \mR$, $O(z^s)$ denotes a quantity bounded by $C z^s$ for large positive $z$. Similar convention is used for $O(|z|^s)$ for $z \in \mC$.  

\begin{proof}  We first consider the case where $z$ is positive and large.  
We use the following convention  $\Re(\lambda_1) < \Re(\lambda_2) < \Re(\lambda_3)$.

We first look at the denominator of $B_D(z + i p/2, x)$. We have, by \Cref{lem-lambda}, at $(z + ip/2, x)$, 
\begin{multline}\label{den-B}
\frac{1}{\sum_{j=1}^3 (\lambda_{j+1} - \lambda_{j}) e^{-\lambda_{j+2} L }} \cdot  \frac{1}{ \sum_{j=1}^3 (\tlambda_{j+1} - \tlambda_{j}) e^{-\tlambda_{j+2} L }} \\[6pt]
= \frac{e^{\lambda_1 L } e^{\tlambda_1 L}}{(\lambda_3 - \lambda_2)  (\tlambda_3 - \tlambda_2)  } \Big( 1 + O \big(e^{-C |z|^{1/3}} \big) \Big). 
\end{multline}

We next deal with the numerator of $B_D(z + i p/2, x)$. Set, for $(z, x) \in \mR \times (0, L)$,  
\begin{equation}\label{def-fg}
f(z, x) = \sum_{j=1}^3(\lambda_j e^{\lambda_{j} L } - \lambda_{j+1}e^{\lambda_{j+1} L })e^{\lambda_{j+2} x}, \quad g (z, x) = \sum_{j=1}^3(\tlambda_j e^{\tlambda_{j} L } - \tlambda_{j+1} e^{\tlambda_{j+1} L })e^{\tlambda_{j+2} x}, 
\end{equation}
and \footnote{The index $m$ stands the main part.} 
$$
f_m(z, x) =   \lambda_3 e^{\lambda_3 L} e^{\lambda_2 x} - \lambda_2 e^{\lambda_2 L}  e^{\lambda_3 x} - \lambda_3 e^{\lambda_3 L } e^{\lambda_1 x}, \quad 
g_m (z, x) =   \tlambda_3 e^{\tlambda_3 L} e^{\tlambda_2 x} - \tlambda_2 e^{\tlambda_2 L}  e^{\tlambda_3 x} - \tlambda_3 e^{\tlambda_3 L } e^{\tlambda_1 x}. 
$$
We have 
\begin{multline*}
 \int_0^L f(z + i p/2, x) g(z + i p/2, x) \varphi_x(x) \, dx \\[6pt]
  = \int_0^L f_m(z + i p/2, x) g_m(z + i p/2, x) \varphi_x(x) \, dx + \int_0^L (f- f_m)(z + i p/2, x) g_m(z + i p/2, x) \varphi_x(x) \, dx \\[6pt]  + \int_0^L f_m(z + i p/2, x) (g -g_m) (z + i p/2, x) \varphi_x(x) \, dx +  \int_0^L (f- f_m)(z + i p/2, x) (g -g_m) (z + i p/2, x) \varphi_x(x) \, dx. 
\end{multline*}

By \Cref{lem-lambda}, we have  
\begin{multline}\label{B-p0}
 \int_0^L |(f- f_m)(z + i p/2, x) g_m(z + i p/2, x) \varphi_x(x)| \, dx \\[6pt]
 + \int_0^L |(f- f_m)(z+ i p/2, x) (g -g_m) (z+ i p/2, x) \varphi_x(x)| \, dx  \\[6pt]  + \int_0^L |f_m(z+ i p/2, x) (g -g_m) (z+ i p/2, x) \varphi_x(x)| \, dx \le C |e^{(\lambda_3 + \tlambda_3)L}| e^{- C |z|^{1/3}}. 
\end{multline}

We next estimate
\begin{multline}
\int_0^L f_m(z + i p/2, x) g_m(z + i p/2, x) \varphi_x(x) \\[6pt]
= \int_0^L f_m(z+ i p/2, x) g_m(z+ i p/2, x) \left(  \sum_{j=1}^3 \eta_{j+2} (\eta_{j+1} - \eta_{j}) e^{\eta_{j+2} x}\right) \, dx. 
\end{multline}

We first have,  by \Cref{lem-lambda}, at $z + i p/2$, 
\begin{multline}\label{B-p1}
\int_0^L \Big(  - \lambda_3 e^{\lambda_3 L} e^{\lambda_2 x}  \tlambda_2 e^{\tlambda_2 L}  e^{\tlambda_3 x} - \lambda_2 e^{\lambda_2 L}  e^{\lambda_3 x} \tlambda_3 e^{\tlambda_3 L} e^{\tlambda_2 x}
+ \lambda_2 e^{\lambda_2 L}  e^{\lambda_3 x} \tlambda_2 e^{\tlambda_2 L}  e^{\tlambda_3 x} \Big)   \\[6pt]
\times  \left( \sum_{j=1}^3 \eta_{j+2} (\eta_{j+1} - \eta_{j}) e^{\eta_{j+2} x} \right) \, dx
\mathop{=}^{\eqref{pro-eta1}}  A^{-1} e^{ (\lambda_3 +   \tlambda_3 + \lambda_2 +  \tlambda_2) L } \Big(S_1(z + i p/2) + O\big(e^{-C|z|^{1/3}} \big) \Big), 
\end{multline}
where 
\begin{equation}\label{def-T1}
S_1(z) : =  \sum_{j=1}^3  \eta_{j+2}^2 (\eta_{j+1} - \eta_{j}) \left( \frac{\lambda_2 \tlambda_2}{  \lambda_3   +   \tlambda_3   + \eta_{j+2}} - \frac{\lambda_2 \tlambda_3}{ \lambda_3  +   \tlambda_2   + \eta_{j+2}} - \frac{\lambda_3 \tlambda_2}{ \lambda_2  + \tlambda_3  + \eta_{j+2}}  \right)(z). 
\end{equation}

 We next obtain, by \Cref{lem-lambda}, at $z + i p/2$, 
\begin{multline}\label{B-p2}
\int_0^L \Big( \lambda_3 e^{\lambda_3 L } e^{\lambda_1 x}\tlambda_3 e^{\tlambda_3 L } e^{\tlambda_1 x}  
- \lambda_3 e^{\lambda_3 L } e^{\lambda_1 x} \tlambda_3 e^{\tlambda_3 L} e^{\tlambda_2 x} - \lambda_3 e^{\lambda_3 L} e^{\lambda_2 x} \tlambda_3 e^{\tlambda_3 L } e^{\tlambda_1 x} \Big)\\[6pt]
\times   \left(  \sum_{j=1}^3 \eta_{j+2} (\eta_{j+1} - \eta_{j}) e^{\eta_{j+2} x} \right) \, dx  =  e^{(\lambda_3 + \tlambda_3) L }\Big( S_2(z + i p/2) + O(e^{-C|z|^{1/3}}) \Big), 
\end{multline}
where 
\begin{equation}\label{def-T2}
S_2 (z): = \sum_{j=1}^3  \eta_{j+2} (\eta_{j+1} - \eta_{j}) \left( - \frac{\lambda_3 \tlambda_3}{\lambda_1   +  \tlambda_1  + \eta_{j+2}} + \frac{\lambda_3 \tlambda_3}{\lambda_1   +  \tlambda_2   + \eta_{j+2}} + \frac{\lambda_3 \tlambda_3}{ \lambda_2  +   \tlambda_1   + \eta_{j+2}}\right)(z). 
\end{equation}

We  have, at $z + i p/2$, 
\begin{equation}\label{B-p3}
\int_0^L \lambda_3 e^{\lambda_3 L} e^{\lambda_2 x} \tlambda_3 e^{\tlambda_3 L} e^{\tlambda_2 x}  \left( \sum_{j=1}^3  \eta_{j+2} (\eta_{j+1} - \eta_{j}) e^{\eta_{j+2} x} \right) \, dx 
=  e^{(\lambda_3 + \tlambda_3) L } S_3(z + ip/2), 
\end{equation}
where 
\begin{equation}\label{def-T3}
S_3 (z): = \sum_{j=1}^3 \frac{ \eta_{j+2} (\eta_{j+1} - \eta_{j}) \lambda_3 \tlambda_3 \Big( e^{\lambda_2 L  + \tlambda_2 L  + \eta_{j+2} L } - 1\Big) }{\lambda_2   + \tlambda_2   + \eta_{j+2}}(z).  
\end{equation}

We finally get, by \Cref{lem-lambda},  at $z + ip/2$, 
\begin{multline}\label{B-p3-1}
\left| \int_0^L \Big( \lambda_3  e^{\lambda_3 L } e^{\lambda_1 x} \tlambda_2 e^{\tlambda_2 L} e^{\tlambda_3 x} + \lambda_2 e^{\lambda_2 L} e^{\lambda_3 x} \tlambda_3 e^{\tlambda_3 L} e^{\tlambda_1 x}  \Big) \Big(\sum_{j=1}^3  \eta_{j+2} (\eta_{j+1} - \eta_{j}) e^{\eta_{j+2} x}  \Big) \, dx \right| \\[6pt]
= |e^{(\lambda_3 + \tlambda_3)L} | O(e^{-C z^{1/3}}). 
\end{multline}

By \Cref{lem-lambda}, we have, at $z + ip/2$,  
\begin{equation}\label{B-p3-2}
\left\{\begin{array}{c}
\lambda_1 + \tlambda_1 + \lambda_2 + \tlambda_2 + \lambda_3 + \tlambda_3 = O(z^{-1/3}),  \\[6pt]
\lambda_1 + \tlambda_1 + \lambda_3 + \tlambda_3 = O(z^{-1/3}), \\[6pt]
(\lambda_3 - \lambda_2)(\tlambda_3 - \tlambda_2) = 3 z^{2/3} ( 1 + O(z^{-1/3}) ). 
\end{array}\right.
\end{equation}

We claim that 
\begin{equation}\label{claim-T}
|z|^{-1/3} |S_1(z + i p/2)| + |S_2(z+ i p/2)|  + 
 |S_3(z+ i p/2)|  = O(1) \mbox{ for large positive $z$}. 
\end{equation}
Admitting this, by  combining \eqref{den-B}, \eqref{B-p1}, \eqref{B-p2}, \eqref{B-p3}, \eqref{B-p3-1}, and \eqref{B-p3-2}, and using \eqref{claim-T}, we obtain 
\begin{equation}\label{B-p4}
\int_{0}^L B_D(z + i p/2, x) \, d x  =  \frac{A^{-1} S_1(z+ i p/2)}{3 z^{2/3}} + O(|z|^{-2/3}). 
\end{equation}

We first derive the  the asymptotic behavior of $S_1(z+ i p/2)$. We have, by \Cref{lem-lambda},  at $z + i p/2$, 
\be
\lambda_2 \tlambda_2 = z^{2/3} + O(1), \quad \lambda_2 \tlambda_3 = z^{2/3} e^{i \pi/3} + O(1), \quad \lambda_3 \tlambda_2 = z^{2/3} e^{-i \pi/3} + O(1), 
\ee
and 
\begin{multline}
\frac{1}{\lambda_3 + \tlambda_3 + \eta_{j+2}} = \frac{1 + O(z^{-1/3})}{\sqrt{3}z^{1/3}}, \quad \frac{1}{\lambda_2 + \tlambda_3 + \eta_{j+2}} = \frac{1 + O(z^{-1/3})}{(e^{i \pi/6} + i)z^{1/3}}, \\[6pt]
\quad \frac{1}{\lambda_3 + \tlambda_2 + \eta_{j+2}} = \frac{1 + O(z^{-1/3})}{(e^{- i \pi/6} - i)z^{1/3}}. 
\end{multline}
It follows that 
\begin{multline}\label{B-S1}
S_1(z + i p/ 2) 
= \left(\frac{1}{\sqrt{3}} + 2 \Re \frac{e^{i \pi /3}}{e^{ i \pi /6} + i} \right)\sum_{j=1}^3 \eta_{j+2}^2 (\eta_{j+1} - \eta_j) |z|^{1/3} + O(1) \\[6pt]
= \sqrt{3} \sum_{j=1}^3 \eta_{j+2}^2 (\eta_{j+1} - \eta_j) |z|^{1/3} + O(1). 
\end{multline}

We next deal with $S_2(z + i p/2)$. Since 
$$
 \sum_{j=1}^3  \eta_{j+2} (\eta_{j+1} - \eta_{j})  = 0, 
$$
it follows that 
\begin{multline*}
S_2(z+ i p/2) = \sum_{j=1}^3  \eta_{j+2} (\eta_{j+1} - \eta_{j}) \left( - \frac{\lambda_3 \tlambda_3}{\lambda_1   +  \tlambda_1  + \eta_{j+2}} + \frac{\lambda_3 \tlambda_3}{\lambda_1   +  \tlambda_1} \right)(z+ i p/2) \\[6pt]
+ \sum_{j=1}^3  \eta_{j+2} (\eta_{j+1} - \eta_{j}) \left(  \frac{\lambda_3 \tlambda_3}{\lambda_1   +  \tlambda_2   + \eta_{j+2}} -  \frac{\lambda_3 \tlambda_3}{\lambda_1   +  \tlambda_2  }\right)(z+ i p/2) \\[6pt]
+ \sum_{j=1}^3  \eta_{j+2} (\eta_{j+1} - \eta_{j}) \left( \frac{\lambda_3 \tlambda_3}{ \lambda_2  +   \tlambda_1   + \eta_{j+2}} - \frac{\lambda_3 \tlambda_3}{ \lambda_2  +   \tlambda_1}\right)(z+ i p/2). 
\end{multline*}
Using \Cref{lem-lambda}, we derive that  
\begin{equation}
S_2(z + i p/2) = O(1). 
\end{equation}

We next derive the asymptotic behavior of $S_3(z + i p/2)$. From \Cref{lem-lambda}, we have, at $z + i p/2$, 
\be\label{T3-cc}
\lambda_2 + \tlambda_2 = O( z^{-2/3}).
\ee
Using  \eqref{pro-eta1}, we derive from \eqref{def-T3} that 
\begin{multline*}
S_3 (z + i p/2) = z^{2/3} \sum_{j=1}^3 \frac{ (\eta_{j+1} - \eta_{j})  \eta_{j+2}  }{\eta_{j+2}} \Big(A^{-1} \eta_{j+2}  - 1\Big) + O(1) \\[6pt]
=  z^{2/3} \sum_{j=1}^3 (\eta_{j+1} - \eta_{j})   \Big(A^{-1} \eta_{j+2}  - 1\Big) + O(1).  
\end{multline*}
Since  $ \sum_{j=1}^3 (\eta_{j+1} - \eta_{j})= 0 $ and $\sum_{j=1}^3 (\eta_{j+1} - \eta_{j})  \eta_{j+2}  =0 $,  it follows  that  
\begin{equation}\label{T3-final}
S_3 (z + i p/2)=   O(1). 
\end{equation}
 
Combing \eqref{B-p4} and  \eqref{B-S1} yields  
\begin{equation*}
\int_{0}^L B_D(z + i p/2, x) \, dz  = E_D |z|^{-1/3}   + O(z^{-2/3}), 
\end{equation*}
where 
\begin{equation}
E_D = \frac{1}{\sqrt{3}A } \sum_{j=1}^3 \eta_{j+2}^2(\eta_{j+1} - \eta_{j}). 
\end{equation}

\medskip 
The conclusion in the case where $z$ is large and negative can be derived from the case where $z$ is positive and large as follows. 
Define, for $(z, x) \in \mR \times (0, L)$, with large $|z|$,  
$$
M_D(z, x) =  \frac{\sum_{j=1}^3(\lambda_j e^{\lambda_{j} L } - \lambda_{j+1} e^{\lambda_{j+1} L })e^{\lambda_{j+2} x} }{\sum_{j=1}^3 (\lambda_{j+1} - \lambda_{j}) e^{-\lambda_{j+2} L }}.   
$$
Then 
$$
B_D(z + ip/2, x) = |M_D(z + i p/2, x)|^2 \varphi_x(x). 
$$
It is clear from the definition of $M_D$ that
$$
M_D(-z, x) = \overline{M_D(\bar z, x)}.  
$$
We then have 
$$
B_D(-z + ip/2, x) = |M_D(-z + i p/2, x)|^2 \varphi_x(x)= |M_D(z - ip/2, x)|^2 \varphi_x(x). 
$$
We thus obtain the result in the case where $z$ is negative and large by the corresponding expression for large positive $z$ in which $p$  is replaced by $-p$.  The conclusion follows. 
\end{proof}

As a consequence of  \Cref{lem-1,lem-B}, we obtain

\begin{proposition}\label{pro-dir} Let  $L \in \cN_D$.  Let $u \in H^{1/3}(0, + \infty)$ with compact support in $[0, +\infty)$,  and let $y \in C([0, + \infty); L^2(0, L))
\cap L^2_{\loc}\big([0, +\infty); H^1(0, L) \big)$ be the unique solution of
\eqref{sys-y-dir}. Assume that $y(t, \cdot) =0$ for large $t$. We have
\begin{equation}
\int_{0}^{+\infty} \int_0^L  |y(t, x)|^2 \Phi_x(t, x) \diff x \diff t  =
\int_{\mR}  |\hu(z + i q/2)|^2  \int_0^L B(z + i q/2, x) \, dx \diff z, 
\end{equation}
where 
$$
B(z, x) = \left| \frac{\sum_{j=1}^3 \big(\lambda_{j}e^{\lambda_{j} L } - \lambda_{j+1} e^{\lambda_{j+1} L } \big) e^{\lambda_{j+2} x} }{\sum_{j=1}^3 (\lambda_{j+1} - \lambda_{j}) e^{-\lambda_{j+2} L }} \right|^2 \phi_x(x), 
$$
with $\lambda_j = \lambda_j(z)$ ($j=1, 2, 3$) being the three solutions of $\lambda^3 + \lambda + i z = 0$. Moreover, 
\be\label{pro-dir-B}
\int_0^L B(z + i q/2, x) \, dx = E |z|^{-1/3}  + O(|z|^{-2/3}), 
\ee
where 
$$
E = - \frac{b e^{2 a}}{2 \sqrt{3} a L^2} (b^2 + 9a^2) > 0. 
$$
\end{proposition}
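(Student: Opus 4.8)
The plan is to read off both assertions from the two lemmas already in hand, \Cref{lem-1} and \Cref{lem-B}, the only genuine work being the bookkeeping of a purely imaginary normalization together with one explicit algebraic simplification.

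First I would establish the identity. Since $\Phi(t,x) = e^{qt}\phi(x)$ by \eqref{thm-MD-Phi}, we have $\Phi_x(t,x) = e^{qt}\phi_x(x)$, so the left-hand side is precisely the quantity \eqref{motivation1-1} with $p = q$ and weight $\phi_x$. The proof of \Cref{lem-1} is merely Parseval's theorem applied to the explicit formula \eqref{def-y} for $\hat y$, and it does not use the particular form \eqref{def-vp} of the weight; hence it applies verbatim with $\phi_x$ in place of $\varphi_x$, yielding the stated identity with $B(z,x)$ obtained from $B_D(z,x)$ by replacing $\varphi_x$ with $\phi_x$, the modulus-squared factor being identical.

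Next, for the asymptotic \eqref{pro-dir-B}, I would apply \Cref{lem-B} with $p = q$ and with $\eta_1, \eta_2, \eta_3$ as in \eqref{def-eta}; by \Cref{lem-Phi-Psi} these satisfy \eqref{pro-eta0} and \eqref{pro-eta1}, so the hypotheses are met and $\int_0^L B_D(z+iq/2,x)\,dx = E_D|z|^{-1/3} + O(|z|^{-2/3})$. Since \Cref{lem-Phi-Psi} gives $\varphi = -2i\phi$, we have $\phi_x = \tfrac{i}{2}\varphi_x$, hence $B = \tfrac{i}{2}B_D$ and $E = \tfrac{i}{2}E_D$. It then remains to evaluate $E_D$ from \eqref{def-D}. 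I would note that the numerator $\sum_{j=1}^3 \eta_{j+2}^2(\eta_{j+1}-\eta_j)$ is the cofactor expansion of the Vandermonde determinant, so it equals $(\eta_2-\eta_1)(\eta_3-\eta_1)(\eta_3-\eta_2)$; substituting $\eta_1 = \alpha+i\beta$, $\eta_2=\alpha-i\beta$, $\eta_3=-2\alpha$ collapses this to $-2i\beta(9\alpha^2+\beta^2)$. For the denominator, $A = \eta_3 e^{-\eta_3 L} = -2\alpha e^{2\alpha L} = (2a/L)e^{-2a}$ by \eqref{thm-MD-alphabeta}. Combining these and using $\alpha = -a/L$, $\beta = -b/L$, a short computation gives $E_D = i\,b(b^2+9a^2)e^{2a}/(\sqrt 3\, a L^2)$, so that $E = \tfrac{i}{2}E_D = -b e^{2a}(b^2+9a^2)/(2\sqrt 3\, a L^2)$; finally $E>0$ because $a<0$ and $b>0$ make $-b/a>0$ while all remaining factors are positive.

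The hard part is essentially absent from the present argument, since \Cref{lem-B} already contains the substantive analysis: the asymptotics of the roots $\lambda_j(z+iq/2)$ furnished by \Cref{lem-lambda} and the delicate cancellations that leave only the $|z|^{-1/3}$ contribution. What requires care here is tracking the purely imaginary normalization $\varphi = -2i\phi$, which forces the factor $\tfrac{i}{2}$ and is exactly what turns the a priori imaginary $E_D$ into a real, positive $E$, and the clean evaluation of the numerator through the Vandermonde factorization, which makes the final sign manifest.
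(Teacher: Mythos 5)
Your proposal is correct and follows essentially the same route as the paper: both apply \Cref{lem-1} and \Cref{lem-B} with $p=q$ and $\eta_1=\alpha+i\beta$, $\eta_2=\alpha-i\beta$, $\eta_3=-2\alpha$, then convert $E_D$ to $E$ via the relation $\varphi=-2i\phi$ from \Cref{lem-Phi-Psi}. The only (cosmetic) difference is that you evaluate $\sum_j\eta_{j+2}^2(\eta_{j+1}-\eta_j)$ through the Vandermonde factorization while the paper expands it directly; both give $-2i\beta(9\alpha^2+\beta^2)$ and the same positive constant $E$.
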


Recall that $q$, $a$, $b$, $\phi$, and $\Phi$ are defined in \Cref{thm-MD}. 

\begin{proof} We will apply \Cref{lem-1,lem-B} with  $p = q$, 
$$ 
\eta_1 = \alpha + i \beta, \quad \eta_2 = \alpha - i \beta, \quad \mbox{ and } \quad \eta_3 = - 2 \alpha, 
$$
where $\alpha$ and $\beta$ are defined in \eqref{thm-MD-alphabeta}.  Thus 
\be \label{def-AAA}
A  = \eta_3 e^{- \eta_3 L} = - 2 \alpha e^{2 \alpha L} = \frac{2 a}{L} e^{-2 a}. 
\ee

We have 
\begin{align*}
\sqrt{3} A E_D = &  \eta_1^2 (\eta_3 - \eta_2) + \eta_2^2 (\eta_1 - \eta_3) + \eta_3^2 (\eta_2 - \eta_1) \\[6pt]
= &  (\alpha + i \beta)^2 (- 3 \alpha + i \beta) + (\alpha - i \beta)^2 (3 \alpha + i \beta) - 8 i  \alpha^2  \beta \\[6pt]
= & - 12 i  \alpha^2 \beta + 2 i \beta (\alpha^2 - \beta^2) - 8 i \alpha^2 \beta. 
\end{align*}
This implies 
\be
\sqrt{3} A E_D = -2 i (\beta^2 + 9 \alpha^2) \beta =  \frac{2 i b}{L^3} (b^2 + 9 a^2). 
\ee
We  obtain 
$$
E_D =  \frac{ i b e^{2 a}}{\sqrt{3} a L^2} (b^2 + 9 a^2). 
$$
Since, by \Cref{lem-Phi-Psi},  
$$
\varphi = - 2 i \phi, 
$$
the conclusion now follows  from \Cref{lem-1,lem-B}.  
\end{proof}

Using \Cref{pro-dir}, we derive the following result which is the key
ingredient for the analysis of the local controllability of the KdV system \eqref{sys-KdV} in small time.  

\begin{proposition} \label{pro-monotone}  Let $L \in \cN_D$ and $T < 1$.   Let $u \in H^{1/3}(0, + \infty)$ and let $y \in C([0, + \infty);
L^2(0, L)) \cap L^2_{\loc}\big([0, +\infty); H^1(0, L) \big)$ be the unique solution
of \eqref{sys-y-dir}.  Assume that $u(t) = 0$ for $t
> T$, and  $y(t, \cdot) = 0$ for large $t$. Then
\begin{equation}\label{pro-monotone-dir}
\int_{0}^{\infty} \int_0^L |y (t, x)|^2 \Phi_x(t, x) \diff x \diff t =
E \| u e^{q \cdot /2}\|_{[H^{1/6}(0, + \infty)]^*}^2  \Big(1 + O (T^{2/9}) \Big). 
\end{equation}
\end{proposition}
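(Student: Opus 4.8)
The plan is to start from the exact spectral identity and the asymptotic expansion furnished by \Cref{pro-dir}, and then to reinterpret the weight $|\hu(z+iq/2)|^2$ as the squared Fourier transform of a function with short support. Writing $w(t) = u(t)e^{qt/2}$ and recalling that $u$ is extended by $0$ outside $[0,T]$, the shift of the spectral variable corresponds to multiplication by an exponential in time, so that $\hu(z+iq/2) = \hat w(z)$ for $z\in\mR$, with $w$ supported in $[0,T]$. By \Cref{pro-dir}, the left-hand side of \eqref{pro-monotone-dir} equals $\int_{\mR}|\hat w(z)|^2\,\Theta(z)\,dz$ with $\Theta(z):=\int_0^L B(z+iq/2,x)\,dx$, and \eqref{pro-dir-B} gives $\Theta(z)=E|z|^{-1/3}+O(|z|^{-2/3})$ as $|z|\to\infty$. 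Thus everything reduces to comparing this integral with $E\|w\|_{[H^{1/6}(0,+\infty)]^*}^2$, for which I would invoke the Fourier characterisation $\|w\|_{[H^{1/6}(0,+\infty)]^*}^2\simeq\int_{\mR}(1+|z|^2)^{-1/6}|\hat w(z)|^2\,dz$, valid for $w$ supported in $[0,+\infty)$ since $1/6<1/2$.

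Next I would isolate the leading term by writing $\Theta(z)=E(1+|z|^2)^{-1/6}+\rho(z)$. Since $\Theta$ is continuous and bounded on compact sets, while at infinity $E|z|^{-1/3}-E(1+|z|^2)^{-1/6}=O(|z|^{-4/3})$ and the remainder in \eqref{pro-dir-B} is $O(|z|^{-2/3})$, the function $\rho$ obeys the global bound $|\rho(z)|\le C(1+|z|)^{-2/3}$. Consequently
\[
\int_{\mR}|\hat w|^2\Theta\,dz = E\|w\|_{[H^{1/6}]^*}^2 + \int_{\mR}\rho\,|\hat w|^2\,dz, \qquad \Big|\int_{\mR}\rho\,|\hat w|^2\,dz\Big|\le C\|w\|_{[H^{1/3}]^*}^2 .
\]
Hence it remains only to establish the interpolation estimate $\|w\|_{[H^{1/3}]^*}^2\le C\,T^{2/9}\|w\|_{[H^{1/6}]^*}^2$, which is the heart of the matter and which quantifies the fact that a function supported on a short interval carries little low-frequency content.

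For this last inequality — the main obstacle — I would argue by Fourier splitting at a threshold $R$ to be optimised, using two consequences of the short support of $w$. First, $\|\hat w\|_{L^\infty(\mR)}\le (2\pi)^{-1/2}\|w\|_{L^1}\le (2\pi)^{-1/2}T^{1/2}\|w\|_{L^2}$ by Cauchy--Schwarz on $[0,T]$. Second, splitting $\|w\|_{L^2}^2$ at a threshold of order $T^{-1}$, bounding the low part through $\|\hat w\|_{L^\infty}$ and absorbing it, and controlling the high part by a weight comparison, one gets $\|w\|_{L^2}^2\le C\,T^{-1/3}\|w\|_{[H^{1/6}]^*}^2$. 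Keeping the weight in the low part then yields $\int_{|z|\le R}(1+|z|)^{-2/3}|\hat w|^2\le C R^{1/3}T\|w\|_{L^2}^2\le C R^{1/3}T^{2/3}\|w\|_{[H^{1/6}]^*}^2$, while $\int_{|z|>R}(1+|z|)^{-2/3}|\hat w|^2\le R^{-1/3}\|w\|_{[H^{1/6}]^*}^2$; optimising over $R$ (the balance occurs at $R\sim T^{-1}$) produces the required power of $T$ — in fact this route even gives the sharper exponent $1/3$, so the stated $O(T^{2/9})$ is comfortably attained for $T<1$. I expect the only genuinely delicate points to be the bookkeeping of these dual fractional norms and the verification that $\|\cdot\|_{[H^{1/6}(0,+\infty)]^*}$ is indeed equivalent to the weighted $L^2$ norm of $\hat w$ for compactly supported $w$; everything else is Parseval (already packaged in \Cref{pro-dir}) together with the elementary weight comparisons above. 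Combining the three steps yields \eqref{pro-monotone-dir}.
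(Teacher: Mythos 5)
Your overall architecture is the right one and in fact matches the paper's: pass to the Fourier side via \Cref{pro-dir}, peel off the leading weight $E(1+|z|^2)^{-1/6}$, and show that the remainder weight $\rho(z)=O\big((1+|z|)^{-2/3}\big)$ contributes only $O(T^{2/9})\|w\|_{[H^{1/6}]^*}^2$ by splitting frequencies at a threshold $R$ and exploiting the short support of $w=ue^{q\cdot/2}$. (The global bound $|\rho(z)|\le C(1+|z|)^{-2/3}$ does require the continuity of $\Theta$ on compact sets, i.e.\ that $\det Q(z+iq/2)$ has no real zeros; that is \Cref{lem-Bp} and should be cited.) The genuine gap is the quantitative lemma you use for the low-frequency block. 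The inequality $\|w\|_{L^2}^2\le C\,T^{-1/3}\|w\|_{[H^{1/6}(0,+\infty)]^*}^2$ is false: take $w(t)=e^{iNt}\chi(t)$ with $\chi$ a fixed bump supported in $(0,T)$; then $\|w\|_{L^2}$ is independent of $N$ while $\|w\|_{[H^{1/6}]^*}^2\simeq N^{-1/3}\|\chi\|_{L^2}^2\to 0$ as $N\to\infty$. Your sketch breaks precisely at the ``weight comparison'' for the high part: on $\{|z|>S\}$ you would need $\int_{|z|>S}|\hat w|^2\le C(S)\int_{|z|>S}(1+|z|^2)^{-1/6}|\hat w|^2$, but the comparison factor $(1+|z|^2)^{1/6}$ is unbounded there, so the inequality goes the wrong way. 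Consequently the bound $\int_{|z|\le R}(1+|z|)^{-2/3}|\hat w|^2\le CR^{1/3}T^{2/3}\|w\|_{[H^{1/6}]^*}^2$ is unproved and the optimization over $R$ collapses.

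The repair is not to pass through $\|w\|_{L^2}$ at all, but to bound $\hat w(z)$ pointwise on the low-frequency region directly against the dual norm: writing $\hat w(z)=\langle w, e^{-iz\cdot}\chi\rangle$ with $\chi$ a cutoff equal to $1$ on $[0,T]$ and supported in $[0,2T]$, one gets $|\hat w(z)|\le \|w\|_{[H^{1/6}]^*}\,\|e^{-iz\cdot}\chi\|_{H^{1/6}}\le C\big(|z|T^{4/3}+T^{1/3}\big)\|w\|_{[H^{1/6}]^*}$. This is exactly \Cref{lem-Fourier1} with $s=1/6$, and it is the one genuinely new ingredient of the paper's proof that your write-up is missing. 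With that replacement your splitting closes: the low part is bounded by $C\big(R^{7/3}T^{8/3}+R^{1/3}T^{2/3}\big)\|w\|_{[H^{1/6}]^*}^2$, the high part by $R^{-1/3}\|w\|_{[H^{1/6}]^*}^2$, and the paper's choice $R=T^{-2/3}$ already yields $O(T^{2/9})$; your sharper weight $(1+|z|)^{-2/3}$ on the low part even permits $R=T^{-1}$ and the exponent $1/3$. The remaining steps of your argument (the identification $\hu(z+iq/2)=\hat w(z)$ and the equivalence of $\|\cdot\|_{[H^{1/6}(0,+\infty)]^*}$ with the weighted Fourier norm, valid since $1/6<1/2$) are consistent with what the paper does.
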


\begin{proof}
We have, by \Cref{pro-dir} and \Cref{lem-Bp},   
\begin{equation*}
|\hu (z + i q/2)|^2  \left| \int_0^L B(z + i q/2, x) \, d x \right|    \le   \frac{C |\hat u (z + i q/2)|^2}{1 + |z|^{1/3}}. 
\end{equation*}
Applying \Cref{pro-dir} again and using  the fact $\int_{\mR} \, dz = \int_{z \in \mR; |z| < m} \, dz + \int_{z \in \mR; |z| \ge  m} \, dz$,  we
derive  that
\begin{multline}\label{pro-monotone-p0}
\left|\int_{\mR} |\hu (z + i q/ 2)|^2 \int_0^L B(z + i q/ 2, x) \diff x \diff z  - E 
\int_{\mR} \frac{|\hu (z + i q/ 2)|^2}{(1 + |z|^{2})^{1/6}} \diff z  \right| \\[6pt]
\le  C \int_{|z| <  m} \frac{|\hu (z + i q/ 2)|^2}{(1 + |z|^{2})^{1/6}}  + C m^{-1/3}
\int_{|z| > m} \frac{|\hu (z + i q/ 2)|^2}{(1 + |z|^{2})^{1/6}} \diff z.  
\end{multline}
Applying \Cref{lem-Fourier1} below with $s = 1/6$, we have
\be \label{pro-monotone-p1}
|\hu (z + i q/ 2)| \le C ( |z| T^{4/3} + T^{1/3} ) \| u e^{q \cdot/2}\|_{[H^{1/6}(0, + \infty)]^*}. 
\ee
Combining \eqref{pro-monotone-p0} and  \eqref{pro-monotone-p1} yields 
\begin{multline}
\left|\int_{\mR} |\hu (z + i q/ 2)|^2 \int_0^L B(z + i q/ 2, x) \diff x \diff z  - E 
\int_{\mR} \frac{|\hu (z + i q/ 2)|^2}{(1 + |z|^{2})^{1/6}} \diff z  \right| \\[6pt]
 \le C \| u e^{q \cdot/2} \|_{[H^{1/6}(0, + \infty)]^*}^2 (T^{8/3} m^{8/3} + T^{2/3} m^{2/3}+ m^{- \frac{1}{3}}) . 
\end{multline}
By choosing $m = T^{- 2/3}$, we obtain 
$$
\left|\int_{\mR} |\hu (z + i q/ 2)|^2 \int_0^L B(z + i q/ 2, x) \diff x \diff z  - E 
\int_{\mR} \frac{|\hu (z + i q/ 2)|^2}{(1 + |z|^{2})^{1/6}} \diff z  \right| \le C \| u e^{q \cdot/2} \|_{H^{-1/6}(0, + \infty)}^2 T^{2/9}. 
$$
The conclusion follows. 
\end{proof}

In the proof of \Cref{pro-monotone}, we have used the following lemma.

\begin{lemma} \label{lem-Fourier1} Let $0< T < 1$, $0 < s < 1/2$ and let $v \in [H^{s} (0, + \infty)]^*$ with $\supp v \subset [0, T]$. Extend $v$ by $0$ for $t < 0$ and still denote this extension by $v$. There exists a positive constant $C$ independent of $T$ and $v$ such that, for $z \in \mR$,
\be
| \hv (z) | \le C (|z| T^{\frac{3}{2} - s} + T^{\frac{1}{2} - s} ) \| v\|_{[H^{s}(0, +\infty)]^*}. 
\ee
\end{lemma}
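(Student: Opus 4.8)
The plan is to realize $\hv(z)$ as a duality pairing of $v$ against a localized exponential, and then to bound the $H^{s}$–norm of that localized exponential with explicit dependence on $T$ and $z$. First I would fix once and for all a cutoff $\chi\in C_c^\infty(\mR)$ with $\chi\equiv 1$ on $[-1,2]$ and $\supp\chi\subset(-2,3)$, and set $\chi_T(t)=\chi(t/T)$ and $\psi_z(t)=\chi_T(t)e^{-izt}$. Then $\chi_T\equiv 1$ on $[-T,2T]$, which is a neighbourhood of $\supp v\subset[0,T]$, so $e^{-iz\cdot}-\psi_z=(1-\chi_T)e^{-iz\cdot}$ vanishes near $\supp v$ and hence
\be
\hv(z) = \frac{1}{\sqrt{2\pi}}\langle v, e^{-iz\cdot}\rangle = \frac{1}{\sqrt{2\pi}}\langle v, \psi_z\rangle,
\ee
the pairing being the continuous extension of $\int_0^{+\infty} v\,\psi\,dt$ to $[H^{s}(0,+\infty)]^*\times H^{s}(0,+\infty)$. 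Consequently
\be
|\hv(z)|\le C\,\|v\|_{[H^{s}(0,+\infty)]^*}\,\|\psi_z\|_{H^{s}(0,+\infty)},
\ee
and it remains to estimate $\|\psi_z\|_{H^{s}(0,+\infty)}$ uniformly in $T$ and $z$.

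Since $\psi_z$ is the restriction to $(0,+\infty)$ of $\chi_T e^{-iz\cdot}\in C_c^\infty(\mR)$ (supported in $(-2T,3T)$), we have $\|\psi_z\|_{H^{s}(0,+\infty)}\le C\|\chi_T e^{-iz\cdot}\|_{H^{s}(\mR)}$, so I would compute on the full line. The substitution $t=T\tau$ gives $\|\chi_T e^{-iz\cdot}\|_{L^2(\mR)}\le C\,T^{1/2}$, while from $(\chi_T e^{-iz\cdot})'=\big(T^{-1}\chi'(\cdot/T)-iz\,\chi(\cdot/T)\big)e^{-iz\cdot}$ one gets $\|(\chi_T e^{-iz\cdot})'\|_{L^2(\mR)}\le C\big(T^{-1/2}+|z|T^{1/2}\big)$; using $T<1$ to absorb the $L^2$ part this yields $\|\chi_T e^{-iz\cdot}\|_{H^{1}(\mR)}\le C\big(T^{-1/2}+|z|T^{1/2}\big)$. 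Interpolating between $L^2$ and $H^{1}$ through the Fourier characterization — i.e. the elementary Hölder bound $\int_{\mR}(1+\xi^2)^{s}|\widehat f|^2\le\big(\int_{\mR}|\widehat f|^2\big)^{1-s}\big(\int_{\mR}(1+\xi^2)|\widehat f|^2\big)^{s}$, whose constant is $1$ and hence independent of $T$ — produces
\be
\|\psi_z\|_{H^{s}(0,+\infty)}\le C\,\big(T^{1/2}\big)^{1-s}\big(T^{-1/2}+|z|T^{1/2}\big)^{s}\le C\big(T^{1/2-s}+|z|^{s}T^{1/2}\big),
\ee
where the last step uses $(a+b)^{s}\le a^{s}+b^{s}$ for $0<s<1$.

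To finish I would pass from this (in fact slightly stronger) bound to the stated one by a single application of Young's inequality: the exponents of $T$ are arranged so that $|z|^{s}T^{1/2}=\big(|z|T^{3/2-s}\big)^{s}\big(T^{1/2-s}\big)^{1-s}$ (the two $T$–exponents sum to $1/2$), and therefore $a^{s}b^{1-s}\le s\,a+(1-s)\,b\le a+b$ with $a=|z|T^{3/2-s}$, $b=T^{1/2-s}$ gives $|z|^{s}T^{1/2}\le |z|T^{3/2-s}+T^{1/2-s}$; combined with the previous display this is exactly the claim. I expect the only genuinely delicate points to be, first, giving rigorous meaning to $\langle v,\psi_z\rangle$ and to $\hv(z)$ when $v$ lies only in the dual space $[H^{s}(0,+\infty)]^*$ — which is precisely why one uses the support of $v$ together with a cutoff whose support stays controlled — and second, verifying that \emph{every} constant (the cutoff, the extension/restriction, and above all the interpolation) is independent of $T$ and $z$, for which the rescaling $t=T\tau$ is the clean device. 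The remaining estimates are routine.
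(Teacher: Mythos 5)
Your proof is correct, and its overall strategy coincides with the paper's: both realize $\hv(z)$ as a duality pairing of $v$ against a cutoff times $e^{-iz\cdot}$ and then estimate the $H^{s}(0,+\infty)$-norm of that test function. The only genuine difference is in how that norm is estimated. The paper works directly with the Gagliardo--Nirenberg (Slobodeckij) seminorm of $\chi e^{-iz\cdot}$, splitting the double integral into three pieces and using $|e^{ix}-1|\le |x|$ to land immediately on $C\big(z^{2}T^{3-2s}+T^{1-2s}\big)$, i.e.\ exactly the stated bound after taking square roots. You instead compute the $L^{2}$ and $H^{1}$ norms of $\chi_{T}e^{-iz\cdot}$ by the rescaling $t=T\tau$ and interpolate on the Fourier side, which gives the slightly sharper intermediate bound $C\big(T^{1/2-s}+|z|^{s}T^{1/2}\big)$ and then requires the extra Young step to recover the form $|z|T^{3/2-s}+T^{1/2-s}$; the exponent bookkeeping in that step is correct. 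Your route buys a cleaner separation of the $T$- and $z$-dependence (all constants manifestly scale-invariant, interpolation constant equal to $1$) at the cost of one more elementary inequality; the paper's route is more direct but requires checking the three Slobodeckij integrals by hand. Both are complete; the minor points you flag (giving meaning to $\langle v,\psi_z\rangle$ via the support condition, and restriction from $\mR$ to $(0,+\infty)$) are handled the same way implicitly in the paper.
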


\begin{proof} Fix $\chi \in C^1(\mR)$ such that $\chi = 1$ in $[0, T]$, $\chi = 0$ for $t \ge 2 T$,  and $0 \le \chi \le 1$ and  $\chi' \le C/T$ in $\mR$.  Then 
\be \label{lem-Fourier-p1}
|\hv (z)| \le \| v \|_{[H^{s}(0, + \infty)]^*} \|e^{ - i z \cdot } \chi \|_{H^{s}(0, + \infty)}. 
\ee
We have 
\begin{multline} \label{lem-Fourier1-p1}
C \|e^{ - i z \cdot } \chi \|_{H^{s}(0, + \infty)}^2 
\le \int_{0}^{4T} \int_{0}^{4T} 
\frac{|\chi(x)|^2 |e^{ - i z x } - e^{- i z y}|^2}{|x-y|^{1 + 2 s}} \, dx \, dy \\[6pt]
+ \int_{0}^{4T}\int_{0}^{4T}  \frac{|\chi(x) - \chi(y)|^2}{|x-y|^{1 + 2 s}} \, dx \, dy + \int_{0}^{2T} \int_{4T}^\infty \frac{1}{|x-y|^{1 + 2s}} \, dx \, dy. 
\end{multline}
Using the fact, for $x \in \mR$,  
$$
|e^{ i x} - 1|^2 =  4 \sin^2 (x/2) \le x^2, 
$$
we derive that the RHS of \eqref{lem-Fourier1-p1} is bounded by 
\be \label{lem-Fourier-p2}
C z^2 T^{3 - 2s} + C T^{1-2s} + C T^{1- 2 s}. 
\ee
The conclusion now follows from \eqref{lem-Fourier-p1} and \eqref{lem-Fourier-p2}. 
\end{proof}

\section{Small time local controllability properties  of the KdV system - Proof of \Cref{thm1}}\label{sect-thm1}

The main result of this section is the following, which implies, in particular, \Cref{thm1} after applying \Cref{pro-monotone} and \Cref{lem-multiplier} at the end of this section. This result is also the key ingredient in the proof of Assertion i) of \Cref{thm2}. 

\begin{theorem}\label{thm-NL} Let $L \in \cN_D$, $T>0$,  and $\alpha > 0$. Assume that 
\be \label{thm-NL-coercivity}
\int_0^{T} \int_0^L |\xi(t, x)|^2 \Phi(t, x) \, dx \, dt  \ge \alpha \| v \|_{[H^{1/6}(0, T)]^*}^2 
\ee
for all $v \in H^{1/3}(0, T)$ such that $\xi(T, \cdot) = 0$ in $(0, L)$ where $\xi \in X_T$ is the unique solution of the system 
\begin{equation}\left\{
\begin{array}{cl}
\xi_{t}  + \xi_{x}  + \xi_{xxx}  = 0 &  \mbox{ in } 
(0, T) \times (0, L), \\[6pt]
\xi(\cdot, 0) = \xi_{x} (\cdot, L) = 0 & \mbox{ in }  (0, T), \\[6pt]
\xi(\cdot, L) = v & \mbox{ in } (0, T),  \\[6pt]
\xi(0, \cdot) = 0 & \mbox{ in } (0, L). 
\end{array}\right.
\end{equation}
There exists $\eps_0 > 0$ depending only on $\alpha$, $T$, and $L$ such that for all $0< \eps <
\eps_0$,   and for all  solutions $y \in X_{T/2}$  of the system 
\begin{equation}\label{thm-NL-y}\left\{
\begin{array}{cl}
y_t  + y_x  + y_{xxx}  + y y_x  = 0 &  \mbox{ in } (0, T/2)
\times  (0, L), \\[6pt]
y(\cdot, 0) = y_x(\cdot, L) = 0 & \mbox{ in }  (0, T/2), \\[6pt]
y(\cdot,L) = u & \mbox{ in } (0, T/2),  \\[6pt]
y(0, \cdot)  = \eps \phi  & \mbox{ in } (0, L),
\end{array}\right.
\end{equation}
with $\| u \|_{H^{1/2}(0, T/2)} < \eps_0$, we have
\[
y(T/2, \cdot) \neq 0.
\]
\end{theorem}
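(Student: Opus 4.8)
\emph{Proof strategy.} The plan is to argue by contradiction: I assume that the solution $y$ of \eqref{thm-NL-y} satisfies $y(T/2,\cdot)=0$ and derive a contradiction with the coercivity assumption \eqref{thm-NL-coercivity}. The starting point is an energy identity obtained by testing the nonlinear equation against $\Phi$. Since $\Phi$ solves \eqref{sys-Phi} and $y$ solves \eqref{thm-NL-y}, an integration by parts in which \emph{every} boundary term cancels, thanks to the complementary boundary conditions of $y$ (namely $y(\cdot,0)=y_x(\cdot,L)=0$) and of $\Phi$ (namely $\Phi(\cdot,0)=\Phi_x(\cdot,0)=\Phi(\cdot,L)=\Phi_{xx}(\cdot,L)=0$), yields
\[
\frac{d}{dt}\int_0^L y(t,x)\Phi(t,x)\,dx = \frac12\int_0^L y^2(t,x)\Phi_x(t,x)\,dx .
\]
Integrating over $(0,T/2)$ and using $y(0,\cdot)=\eps\phi$, $\Phi(0,\cdot)=\phi$, and the contradiction hypothesis $y(T/2,\cdot)=0$, I obtain
\[
-\eps\|\phi\|_{L^2(0,L)}^2 = \frac12\int_0^{T/2}\int_0^L y^2(t,x)\,\Phi_x(t,x)\,dx\,dt \qquad (\star).
\]
The left-hand side is negative and of order $\eps$; the whole point is to show that the quadratic functional on the right is, up to lower-order terms, \emph{nonnegative}, which is impossible for small $\eps$.

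\emph{Decomposition.} By \Cref{pro-WP} the solution exists with $\|y\|_{X_{T/2}}\le C(\eps+\|u\|_{H^{1/3}(0,T/2)})$, and I split $y=a+b+w$, where $a$ solves the linearized system \eqref{LKdV1} with initial datum $\eps\phi$ and control $0$, $b$ solves \eqref{LKdV1} with initial datum $0$ and control $u$, and $w:=y-a-b$ solves the linearized system with zero initial and boundary data and source $-yy_x$. The estimates of \Cref{pro-kdv1} give $\|a\|_{X_{T/2}}\le C\eps$, $\|b\|_{X_{T/2}}\le C\|u\|_{H^{1/3}(0,T/2)}$, and $\|w\|_{X_{T/2}}\le C\|y\|_{X_{T/2}}^2\le C(\eps+\|u\|)^2$. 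Expanding the right-hand side of $(\star)$ exhibits $\int_0^{T/2}\int_0^L b^2\Phi_x$ as the leading term, together with cross terms $\int ab\,\Phi_x$, $\int bw\,\Phi_x$ and remainders involving $a$ and $w$.

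\emph{Positivity of the leading term.} The term $\int_0^{T/2}\int_0^L b^2\Phi_x$ is the one governed by the coercivity assumption \eqref{thm-NL-coercivity} (whose quadratic functional is precisely the one occurring in $(\star)$); however \eqref{thm-NL-coercivity} concerns controls on $(0,T)$ that steer $0$ to $0$, while $b$ lives on $(0,T/2)$ and need not vanish at $T/2$. This is where the factor $2$ in the horizon is used. By \Cref{lem-projection}, $\int_0^L b(t,x)\Phi(t,x)\,dx$ is constant and vanishes at $t=0$, so $\int_0^L b(T/2,x)\Phi(T/2,x)\,dx=0$; since $\cM_D=\mbox{span}\{\phi\}$ by \eqref{def-MD}, this means $b(T/2,\cdot)\in\cM_D^\perp$. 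Moreover $b(T/2,\cdot)=-a(T/2,\cdot)-w(T/2,\cdot)$ is small, of size $O(\eps+(\eps+\|u\|)^2)$. I then invoke \Cref{pro-C}$(iii)$, applied on $(T/2,T)$, to produce a control $v$ steering $b(T/2,\cdot)\in\cM_D^\perp$ to $0$ at time $T$ with $\|v\|_{H^{1/3}}\le C\|b(T/2,\cdot)\|_{L^2}$. Concatenating $u$ and $v$ gives a control $\tilde u\in H^{1/3}(0,T)$ that steers $0$ to $0$ and whose associated linearized solution $\tilde b$ coincides with $b$ on $(0,T/2)$. Applying \eqref{thm-NL-coercivity} to $\tilde u$ gives $\int_0^T\int_0^L \tilde b^2\Phi_x\ge \alpha\|\tilde u\|_{[H^{1/6}(0,T)]^*}^2\ge 0$, whence
\[
\int_0^{T/2}\!\!\int_0^L b^2\Phi_x \;=\; \int_0^{T}\!\!\int_0^L \tilde b^2\Phi_x \;-\; \int_{T/2}^{T}\!\!\int_0^L \tilde b^2\Phi_x \;\ge\; \alpha\|\tilde u\|_{[H^{1/6}(0,T)]^*}^2 - C\|b(T/2,\cdot)\|_{L^2}^2 .
\]

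\emph{Main obstacle and conclusion.} Inserting this lower bound into $(\star)$ shows that $-2\eps\|\phi\|^2$ would have to dominate the nonnegative quantity $\alpha\|\tilde u\|_{[H^{1/6}]^*}^2$ plus cross and remainder terms; once the latter are shown negligible, the sign contradiction is immediate, giving $\tfrac\alpha2\|\tilde u\|_{[H^{1/6}]^*}^2\le C\eps^2-2\eps\|\phi\|^2<0$ for $\eps<\eps_0(\alpha,T,L)$. The hard part is exactly the control of the error terms. Coercivity furnishes positivity only in the \emph{weak} dual norm $[H^{1/6}(0,T)]^*$, whereas the remainder $w$ and the cross terms $\int ab\,\Phi_x$, $\int bw\,\Phi_x$ are naturally bounded in the stronger norms of $X_{T/2}$ and $H^{1/2}(0,T/2)$; a crude Cauchy--Schwarz against the weight $|\Phi_x|$ produces the strong $L^2$ norm of $u$, which the weak-norm positivity cannot absorb. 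The way I expect to overcome this is to exploit the oscillation/cancellation already visible in \Cref{pro-dir} and \Cref{pro-monotone}: one proves a \emph{bilinear} refinement of those asymptotics, bounding $\int b c\,\Phi_x$ for $c\in\{a,w\}$ by $C\|u\|_{[H^{1/6}]^*}\,\|c\|_{\mathrm{weak}}$ in terms of weak norms of the data and source of $c$, and bounding the purely nonlinear contribution at strictly higher order. Young's inequality then absorbs the cross terms into $\tfrac\alpha2\|\tilde u\|_{[H^{1/6}]^*}^2$ at the cost of an $O(\eps^2)$ term, closing the contradiction. Establishing these weak-norm bilinear estimates, together with the sharp well-posedness bounds of \Cref{sect-Pro-KdV}, is the technical heart of the argument.
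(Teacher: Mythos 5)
Your skeleton coincides with the paper's: argue by contradiction, test the nonlinear equation against $\Phi$ to get $\eps\|\phi\|_{L^2(0,L)}^2+\tfrac12\int_0^{T/2}\int_0^L y^2\Phi_x\,dx\,dt=0$, isolate the linear controlled part, extend its control to one on $(0,T)$ steering $0$ to $0$ so that the coercivity hypothesis applies (you concatenate in time using \Cref{pro-C}$(iii)$; the paper instead adds a corrector control $u_1$ defined on all of $(0,T)$ via \Cref{pro-C} — these are interchangeable), and then try to absorb the error terms. (Both you and the paper read the hypothesis \eqref{thm-NL-coercivity} with weight $\Phi_x$ rather than the $\Phi$ printed in the statement; that is evidently a typo there.) You have also correctly diagnosed the crux: the coercive term is only quadratic in the weak norm $[H^{1/6}(0,T)]^*$, while crude Cauchy--Schwarz bounds the errors by strong norms of $u$. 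But the proposal stops precisely at that point, and the mechanism you sketch for closing the gap is both unproved and not the one that works.

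Two concrete problems. First, your term $C\|b(T/2,\cdot)\|_{L^2}^2$ is controlled through $\|w\|_{X_{T/2}}\le C\|yy_x\|_{L^1((0,T/2);L^2(0,L))}\le C(\eps+\|u\|_{L^2})^{1/2}(\eps+\|u\|_{H^{1/3}})^{3/2}$, so its square contributes $\|u\|_{L^2}\|u\|_{H^{1/3}}^3\lesssim\|u\|_{[H^{1/6}]^*}^{3/2}\|u\|_{H^{1/2}}^{5/2}$, which is \emph{sub}quadratic in the weak norm; Young's inequality then leaves a residue of size $O(\eps_0^{10})$ that is independent of $\eps$, and the sign contradiction evaporates whenever $\eps$ is much smaller than that residue (nothing in the hypotheses ties $\eps$ to $\|u\|$). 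Second, the fix you propose — bilinear weak-norm refinements of \Cref{pro-dir} — is left entirely to be "established" and is not what the paper does. The paper's absorption works as follows: (i) it measures $y-y_1$ in the \emph{weak} space--time norm $L^2((0,T)\times(0,L))$ using the dual well-posedness estimate \eqref{pro-kdv1-cl2}, the divergence form $yy_x=(y^2/2)_x$, and \Cref{lem-interpolationL1L2}, which yields the product $C\|u\|_{L^2}^{3/2}\|u\|_{H^{1/3}}^{1/2}+C\eps$ (exponent $3/2$ on the $L^2$ norm, not $1/2$ — this is why the explicit second-order term $y_2$ is split off); (ii) it passes from this space--time bound to $\|y_1(T,\cdot)\|_{L^2(0,L)}$ via the regularizing effect of the linear equation; (iii) it bounds all errors by $C\|u\|_{L^2}^{5/2}\|u\|_{H^{1/3}}^{1/2}+C\eps_0\eps$ and uses the interpolation inequalities $\|u\|_{L^2}\le C\|u\|_{[H^{1/6}]^*}^{3/4}\|u\|_{H^{1/2}}^{1/4}$ and $\|u\|_{H^{1/3}}\le C\|u\|_{[H^{1/6}]^*}^{1/4}\|u\|_{H^{1/2}}^{3/4}$ to rewrite this as $C\|u\|_{[H^{1/6}]^*}^{2}\|u\|_{H^{1/2}}$; the hypothesis $\|u\|_{H^{1/2}}<\eps_0$ — which your argument never actually uses — is exactly what makes the coefficient of $\|u\|_{[H^{1/6}]^*}^2$ small enough to be absorbed by $\alpha$, forcing $u=0$ and hence $y(T/2,\cdot)\neq0$. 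Without steps (i)--(iii), or a genuine substitute for them, the proof does not close.
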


Recall that $\phi$ is defined in \eqref{thm-MD-phi}. 

\begin{proof}    Let
$\eps_0$ be a small positive constant, which depends only on $\alpha$, $T$,  and $L$,   and is determined
later.  We prove \Cref{thm-NL} by contradiction.    Assume that there exists a solution $y
\in C\big([0, + \infty); L^2(0, L) \big) \cap L^2_{\loc}\big([0, + \infty); H^1(0, L)
\big) $ of
\eqref{thm-NL-y}  with $y(t, \cdot) =0$ for $t \ge T/2$,  for some $0< \eps < \eps_0$, for some $u \in H^{1/3}(0, +
\infty)$  with   $\supp u \subset [0, T/2]$ and $\| u
\|_{H^{1/2}(0, T)} < \eps_0$.

Using the fact $y(t, \cdot) = 0$ for $t \ge T/2$,  from \Cref{pro-WP}, we have, for $\eps_0$ small, 
\begin{equation}\label{thm-NL-yyy}
\| y\|_{L^2 \big(\mR_+; H^1(0, L) \big)} \le C \Big( \| y_0 \|_{L^2(0, L)} + \| u
\|_{H^{1/3}(\mR_+)} \Big), 
\end{equation}
which in turn implies,  by \Cref{pro-kdv1} and \Cref{lem-interpolationL1L2} below, 
\begin{equation}\label{thm-NL-yyy-1}
\| y\|_{L^2 \big( \mR_+ \times (0, L) \big)} \le C \Big( \| y_0 \|_{L^2(0, L)} + \| u
\|_{L^2(\mR_+)} \Big).
\end{equation}
Here and in what follows, $C$ denotes a positive constant depending only on $T$ and $L$ ($C$ thus does not depend on $\alpha$).

Let $y_1$ and  $y_2$ be the solution of the following systems
\begin{equation} \label{thm-NL-y1}\left\{
\begin{array}{cl}
y_{1, t}  + y_{1, x}  + y_{1, xxx}  = 0 &  \mbox{ in } 
\mR_+ \times (0, L), \\[6pt]
y_1( \cdot, 0) =   y_{1, x}(\cdot, L) = 0 & \mbox{ in }  \mR_+,\\[6pt]
y_{1}(\cdot, L) = u& \mbox{ in }  \mR_+,\\[6pt]
y_{1}(0 , \cdot) = 0  & \mbox{ in }  (0, L), 
\end{array}\right.
\end{equation}
\begin{equation}\label{thm-NL-y2}\left\{
\begin{array}{cl}
y_{2, t}  + y_{2, x}  + y_{2, xxx}  + y_{1} y_{1, x}= 0 &  \mbox{ in } 
\mR_+ \times (0, L), \\[6pt]
y_2( \cdot, 0) =   y_{2, x}(\cdot, L) = y_{2}(\cdot, L)= 0 & \mbox{ in }  \mR_+,\\[6pt]
y_{2}(0 , \cdot) = 0 & \mbox{ in }  (0, L). 
\end{array}\right.
\end{equation}

Then, by \Cref{pro-kdv1},   
\be \label{thm-NL-y1-1}
\|y_1\|_{X_T} \le C \| u\|_{H^{1/3}(\mR_+)} 
\ee
and
\be\label{thm-NL-y1-2}
\| y_1\|_{L^2 \big((0, T) \times (0, L) \big)} \le C    \| u\|_{L^2(\mR_+)},  
\ee
which in turn imply, by \Cref{pro-kdv1} and \Cref{lem-interpolationL1L2} below, 
\be \label{thm-NL-y2-1}
\|y_2\|_{X_T} \le C \| y_{1} y_{1, x} \|_{L^1((0, T); L^2(0, L))} \le C \| u\|_{L^2(\mR_+)}^{1/2} \| u\|_{H^{1/3}(\mR_+)}^{3/2}, 
\ee
and
\be\label{thm-NL-y2-2}
\| y_2\|_{L^2 \big((0, T) \times (0, L) \big)} \le C \| y_1^2 \|_{L^1((0, T); L^2(0, L))}   \le C   \| u\|_{L^2(\mR_+)}^{3/2} \| u\|_{H^{1/3}(\mR_+)}^{1/2}. 
\ee

Set 
$$
\dy = y - y_1 - y_2 \mbox{ in } \mR_+ \times (0, L).  
$$
We have 
\begin{equation*}\left\{
\begin{array}{cl}
\dy_t  + \dy_x  + \dy_{xxx}  + y \dy_x + \dy (y_1 + y_2)_x  + (y_1 y_2)_x + y_2 y_{2, x}  = 0&  \mbox{ in } 
\mR_+ \times (0, L), \\[6pt]
\dy(\cdot, 0) =  \dy_{x} (\cdot, L) = \dy(\cdot, L)  = 0 & \mbox{ in } \mR_+, \\[6pt]
\dy(0, \cdot) = \eps \phi & \mbox{ in } (0, L).
\end{array}\right.
\end{equation*}
Applying   \Cref{pro-kdv1} and using \Cref{lem-interpolationL1L2} below,  we derive  that
\begin{multline}\label{thm-NL-dy-1}
\|\dy\|_{X_T} \le C \Big( \| y_{1} y_{2, x} \|_{L^1((0, T); L^2(0, L))} + \| y_{2} y_{1, x} \|_{L^1((0, T); L^2(0, L))}  + \| y_{2} y_{2, x} \|_{L^1((0, T); L^2(0, L))}  \Big)\\[6pt]
  \mathop{\le}^{\eqref{thm-NL-y1-1}-\eqref{thm-NL-y2-2}} C  \| u\|_{L^2(\mR_+)} \| u\|_{H^{1/3}(\mR_+)}^{2} + C \eps,
\end{multline}
and 
\begin{multline}\label{thm-NL-dy-2}
\|\dy\|_{L^2\big( (0, T) \times (0, L) \big)} \le C \Big( \|y \dy_x \|_{L^1((0, T); L^2(0, L))} + \|(y_1 + y_2) \dy_x \|_{L^1((0, T); L^2(0, L))}  \Big) \\[6pt]
+ C \Big(\|(y_1 + y_2) \dy \|_{L^1((0, T); L^2(0, L))} +  \| y_1 y_2 \|_{L^1((0, T); L^2(0, L))} +  \|y_2^2\|_{L^1((0, T); L^2(0, L))} \Big) \\[6pt]\mathop{\le}^{\eqref{thm-NL-yyy}-\eqref{thm-NL-yyy-1}, \eqref{thm-NL-y1-1}-\eqref{thm-NL-dy-1}}  C  \| u\|_{L^2(\mR_+)}^{3/2} \| u\|_{H^{1/3}(\mR_+)}^{5/2} + C \| u\|_{L^2(\mR_+)}^{2} \| u\|_{H^{1/3}(\mR_+)} 
+ C \eps 
 \\[6pt]
\le C    \| u\|_{L^2(\mR_+)}^{3/2} \| u\|_{H^{1/3}(\mR_+)}^{3/2} + C \eps. 
\end{multline}
In \eqref{thm-NL-dy-1}, we absorbed the contribution in the RHS of 
$$
\| y \dy_x\|_{L^1((0, T); L^2(0, L))} + \|\dy (y_1 + y_2)_x \|_{L^1((0, T); L^2(0, L))},
$$ 
which is bounded by $C \eps_0 \| \dy \|_{X_T}$. Combining  \eqref{thm-NL-y2-2} and \eqref{thm-NL-dy-2} yields
\be\label{thm-NL-y1-0}
\|y - y_1 \|_{L^2\big((0, T) \times (0, L) \big)} \le C    \| u\|_{L^2(\mR_+)}^{3/2} \| u\|_{H^{1/3}(\mR_+)}^{1/2} + C \eps. 
\ee
Since $y =0$ for $t \ge T/2$ and $u = 0$ for $t \ge T/2$, after using the regularizing effect of the linear KdV equation, 
and considering the projection into $\cM_D^\perp$,  we derive that
\be\label{thm-NL-y1}
\|y_1 (T, \cdot) \|_{L^2(0, L)} \le C    \| u\|_{L^2(\mR_+)}^{3/2} \| u\|_{H^{1/3}(\mR_+)}^{1/2} + C \eps. 
\ee

Since $y_1(T, \cdot) \in \cM_{D}^\perp$ by \Cref{lem-projection}, it follows from  \Cref{pro-C} that  there exists  $u_1 \in H^{1/3}(0, T)$ such that 
\be\label{thm-NL-yyy1}
\| u_1 \|_{H^{1/3}(0, T)} \le C \| y_1(T, \cdot) \|_{L^2(0, L)}
\ee
and  the solution $\ty_1 \in X_{T} $ of the  system 
\begin{equation*}\left\{
\begin{array}{cl}
\ty_{1, t}  + \ty_{1, x}  + \ty_{1, xxx}   = 0 &
\mbox{ in } (0, T) \times (0, L), \\[6pt]
\ty_1(\cdot, 0) = \ty_{1, x}(\cdot, L)  = 0 & \mbox{ in }  (0, T), \\[6pt]
\ty_{1}(\cdot, L) =u_1 & \mbox{ in } (0, T), \\[6pt]
\ty_1(0 , \cdot) = 0 & \mbox{ in } (0, L),
\end{array}\right.
\end{equation*}
satisfies 
$$
\ty_1(T, \cdot) = - y_1(T, \cdot).
$$
Using  \eqref{thm-NL-y1}, we derive from \eqref{thm-NL-yyy1} that 
\be \label{thm-NL-u1}
\| u_1 \|_{H^{1/3}(0, T)} \le C   \| u\|_{L^2(\mR_+)}^{3/2} \| u\|_{H^{1/3}(\mR_+)}^{1/2} + C \eps, 
\ee
which in turn implies, by \Cref{pro-kdv1}, 
\be \label{thm-NL-ty1}
\| \ty_1 \|_{X_T} \le C   \| u\|_{L^2(\mR_+)}^{3/2} \| u\|_{H^{1/3}(\mR_+)}^{1/2} + C \eps. 
\ee

Set 
$$
\ty = y_1 + \ty_1 \mbox{ in } (0, T) \times (0, L). 
$$
Then 
\begin{equation}\label{thm-NL-ty}\left\{
\begin{array}{cl}
\ty_{t}  + \ty_{x}  + \ty_{xxx}   = 0 &
\mbox{ in } (0, T) \times (0, L), \\[6pt]
\ty(\cdot, 0) = \ty_{x}(\cdot, L)  = 0 & \mbox{ in }  (0, T), \\[6pt]
\ty(\cdot, L) = u + u_1 & \mbox{ in } (0, T), \\[6pt]
\ty(0 , \cdot) =  \ty(T, \cdot) =0  & \mbox{ in } (0, L). 
\end{array}\right.
\end{equation}
We have 
\begin{multline}\label{thm-NL-yhy}
\| y - \ty \|_{L^2\big( (0, T) \times (0, L)\big)} \le \| y - y_1 \|_{L^2\big( (0, T) \times (0, L)\big)} + \| \ty_1 \|_{L^2\big( (0, T) \times (0, L)\big)} \\[6pt]
\mathop{\le}^{\eqref{thm-NL-y1-0}, \eqref{thm-NL-ty1}} C   \| u\|_{L^2(\mR_+)}^{3/2} \| u\|_{H^{1/3}(\mR_+)}^{1/2} + C \eps.
\end{multline}

Multiplying the equation of $y$ with $\Phi(t, x)$, integrating by parts on $[0, L]$, and
using \eqref{sys-Phi}, we have
\begin{equation}\label{thm-NL-B}
\frac{d}{dt} \int_{0}^L y (t, x) \Phi(t, x)\diff x - \frac{1}{2} \int_0^L y^2 (t, x)
\Phi_x (t, x) \diff x = 0.
\end{equation}
Integrating \eqref{thm-NL-B} from 0 to $T$ and using the fact $y(T, \cdot) = 0$ yield
\begin{equation}\label{thm-NL-id}
\int_{0}^L y_0 (x)  \Phi(0, x) \diff x  + \frac{1}{2} \int_0^{T} \int_0^L y^2 (t, x)
\Phi_x(t, x) \diff x \diff t  = 0.
\end{equation}
We have
\begin{multline}\label{thm-NL-id1}
 \|  y^2 - \ty^2 \|_{L^1\big( (0, T) \times (0, L) \big)} \le  \|  y - \ty\|_{L^2\big( (0, T) \times (0, L) \big)}\|  y + \ty\|_{L^2\big( (0, T) \times (0, L) \big)}\\[6pt] \mathop{\le}^{\eqref{thm-NL-yyy-1}, \eqref{thm-NL-yhy}} C   \| u\|_{L^2(\mR_+)}^{5/2} \| u\|_{H^{1/3}(\mR_+)}^{1/2} + C \eps_0 \eps.
\end{multline}
Combining   \eqref{thm-NL-id} and \eqref{thm-NL-id1} yields 
\be\label{thm-NL-key} 
\int_{0}^L y_0 (x)  \Phi(0, x) \diff x   + \frac{1}{2} \int_0^{T} \int_0^L \ty^2 (t, x)
\Phi_x(t, x) \diff x \diff t \\[6pt]  \le C   \| u\|_{L^2(\mR_+)}^{5/2} \| u\|_{H^{1/3}(\mR_+)}^{1/2} + C \eps_0 \eps. 
\ee
Applying \eqref{thm-NL-coercivity} to $\ty$  after noting \eqref{thm-NL-ty}, we derive from \eqref{thm-NL-key} that, for $\eps_0$ sufficiently small,   
$$
\eps \|\phi \|_{L^2(0, L)}^2 + \frac{1}{2} \alpha \| u + u_1 \|_{[H^{1/6}(\mR_+)]^*}^2 \le C   \| u\|_{L^2(\mR_+)}^{5/2} \| u\|_{H^{1/3}(\mR_+)}^{1/2} + C \eps_0 \eps. 
$$
Using \eqref{thm-NL-u1}, it follows that, for  sufficiently small $\eps_0$,  
$$
\alpha \| u  \|_{[H^{1/6}(\mR_+)]^*}^2 \le C \| u
\|_{L^2(\mR_+)}^{5/2}  \| u
\|_{H^{1/3}(\mR_+)}^{1/2}.
$$
We derive from \Cref{lemH1/3} that   
$$
\alpha \| u  \|_{[H^{1/6}(0, T/2)]^*}^2 \le C\| u
\|_{L^2(0, T/2)}^{5/2}  \| u
\|_{H^{1/3}(0, T/2)}^{1/2}.
$$

Note that 
$$
\| u\|_{L^2(0, T/2)} \le C \| u\|_{[H^{1/6}(0, T/2)]^*}^{3/4} \| u\|_{H^{1/2}(0, T/2)}^{1/4}
$$
and 
$$
\| u\|_{H^{1/3}(0, T/2)} \le C \| u\|_{[H^{1/6}(0, T/2)]^*}^{1/4} \| u\|_{H^{1/2}(0, T/2)}^{3/4}.
$$
This yields 
$$
\| u\|_{L^2(0, T/2)}^{5/2}\| u\|_{H^{1/3}(0, T/2)}^{1/2} \le C  \| u\|_{[H^{1/6}(0, T/2)]^*}^2 \| u \|_{H^{1/2}(0, T/2)}.  
$$

So, for fixed sufficiently small $\eps_0$,
\[
u =0. 
\]
Hence $y(t, \cdot) = \eps \Psi(t, \cdot) \not \equiv 0$ for all $t > 0$. We have a contradiction.

\medskip 
The proof is complete.
\end{proof}

In the proof of \Cref{thm-NL}, we repeatedly used the following useful result. 
\begin{lemma} \label{lem-interpolationL1L2} Let $L > 0$ and $T > 0$ and let  $f \in L^2 \big( (0, T); H^1(0, L) \big)$ and $g \in L^2\big((0, T) \times (0, L) \big)$. Then 
\be\label{lem-interpolationL1L2-st1}
\| f g \|_{L^1\big((0, T); L^2(0, L)\big)} \le C \| f\|_{L^2 \big((0, T) \times (0, L) \big)}^{\frac{1}{2}}  \| f\|_{L^2 \big((0, T); H^1 (0, L) \big)}^{\frac{1}{2}}  \| g\|_{L^2 \big((0, T) \times (0, L) \big)} 
\ee
and 
\begin{multline}\label{lem-interpolationL1L2-st11}
\| (fg)_x\|_{L^1((0, T); [H^1(0, L)]^*)} \\[6pt]
\le C \Big(\| f g \|_{L^1((0, T); L^2(0, T))} + \| (f g)(\cdot, 0) \|_{L^1(0, T)} + \| (f g)(\cdot, L) \|_{L^1(0, T)} \Big),  
\end{multline}
for some positive constant $C$ depending only on $L$. Consequently, for $f, g \in L^2 \big( (0, T); H^1(0, L) \big)$, 
\be\label{lem-interpolationL1L2-st2}
\| f g_x \|_{L^1\big((0, T); L^2(0, L)\big)} \le C \| f\|_{L^2 \big((0, T) \times (0, L) \big)}^{\frac{1}{2}}  \| f\|_{L^2 \big((0, T); H^1 (0, L) \big)}^{\frac{1}{2}}  \| g_x\|_{L^2 \big((0, T) \times (0, L) \big)}.
\ee
\end{lemma}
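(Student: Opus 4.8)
The plan is to reduce all three estimates to a single pointwise-in-time one-dimensional Sobolev inequality followed by a Hölder argument in the time variable. The crucial ingredient is the Agmon-type inequality on the interval: for every $w \in H^1(0, L)$,
\be
\| w \|_{L^\infty(0, L)}^2 \le C_L \| w\|_{L^2(0, L)} \| w\|_{H^1(0, L)},
\ee
where $C_L$ depends only on $L$. This follows from averaging the identity $w(x)^2 = w(\xi)^2 + 2 \int_\xi^x w\, w_x \, d\tau$ over $\xi \in (0, L)$ and applying Cauchy--Schwarz, so that $|w(x)|^2 \le \frac{1}{L}\| w\|_{L^2(0,L)}^2 + 2 \| w\|_{L^2(0,L)} \| w_x\|_{L^2(0,L)}$. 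The same inequality also provides the one-dimensional embedding $\| w\|_{L^\infty(0,L)} \le C_L \| w\|_{H^1(0,L)}$ used below for test functions.

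To prove \eqref{lem-interpolationL1L2-st1}, I would fix $t$ and estimate $\| f(t, \cdot) g(t, \cdot)\|_{L^2(0, L)} \le \| f(t, \cdot)\|_{L^\infty(0, L)} \| g(t, \cdot)\|_{L^2(0, L)}$, then insert the Agmon inequality to obtain
\be
\| f(t, \cdot) g(t, \cdot)\|_{L^2(0, L)} \le C_L \| f(t, \cdot)\|_{L^2(0, L)}^{1/2} \| f(t, \cdot)\|_{H^1(0, L)}^{1/2} \| g(t, \cdot)\|_{L^2(0, L)}.
\ee
Integrating in $t \in (0, T)$ and applying Hölder's inequality with exponents $(4, 4, 2)$, which is admissible since $1/4 + 1/4 + 1/2 = 1$, converts the three time integrals into exactly the three norms on the right-hand side of \eqref{lem-interpolationL1L2-st1}. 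Statement \eqref{lem-interpolationL1L2-st2} then requires no further work: it is \eqref{lem-interpolationL1L2-st1} applied with $g$ replaced by $g_x$, which lies in $L^2\big((0,T)\times(0,L)\big)$ precisely because $g \in L^2\big((0,T); H^1(0,L)\big)$.

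For \eqref{lem-interpolationL1L2-st11}, I would argue by duality. For a.e.\ $t$ and every $\varphi \in H^1(0, L)$, integration by parts gives
\be
\langle (fg)_x(t, \cdot), \varphi \rangle = -\int_0^L (fg)(t, x)\, \varphi_x(x) \, dx + (fg)(t, L)\varphi(L) - (fg)(t, 0)\varphi(0).
\ee
Bounding the interior term by $\| (fg)(t, \cdot)\|_{L^2(0, L)} \|\varphi\|_{H^1(0, L)}$ and the two boundary terms via $\| \varphi\|_{L^\infty(0, L)} \le C_L \|\varphi\|_{H^1(0, L)}$, then taking the supremum over $\|\varphi\|_{H^1(0, L)} \le 1$, yields the pointwise-in-time bound $\| (fg)_x(t, \cdot)\|_{[H^1(0, L)]^*} \le C_L\big(\| (fg)(t, \cdot)\|_{L^2(0, L)} + |(fg)(t, 0)| + |(fg)(t, L)|\big)$; integrating in $t$ gives \eqref{lem-interpolationL1L2-st11}.

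The only genuinely delicate point is making the boundary contributions in the duality step meaningful: one must verify that $f(t, \cdot) g(t, \cdot)$ has well-defined traces at $x = 0, L$ for a.e.\ $t$ and that $t \mapsto (fg)(t, 0)$ and $t \mapsto (fg)(t, L)$ are measurable, so that the right-hand side of \eqref{lem-interpolationL1L2-st11} makes sense. This is ensured because $f(t, \cdot) \in H^1(0, L)$ admits a continuous representative with $\| f(t, \cdot)\|_{L^\infty(0,L)} \le C_L \| f(t, \cdot)\|_{H^1(0,L)}$, so the traces are controlled and the integration by parts is justified by density of smooth functions in $H^1(0,L)$. Apart from this measurability bookkeeping, the argument is a routine combination of the one-dimensional Sobolev inequality and Hölder's inequality.
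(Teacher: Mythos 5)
Your proof is correct and follows essentially the same route as the paper: the interpolation estimate \eqref{lem-interpolationL1L2-st1} via the one-dimensional Agmon inequality $\|w\|_{L^\infty(0,L)}^2 \le C_L\|w\|_{L^2(0,L)}\|w\|_{H^1(0,L)}$ followed by H\"older in time, the dual estimate \eqref{lem-interpolationL1L2-st11} via integration by parts against $\varphi \in H^1(0,L)$ with the trace bound, and \eqref{lem-interpolationL1L2-st2} as the special case $g \mapsto g_x$. Your explicit $(4,4,2)$ H\"older exponents and the remark on traces are just slightly more detailed versions of what the paper leaves implicit.
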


\begin{proof}  We first establish \eqref{lem-interpolationL1L2-st1}. We have  
\begin{multline} \label{lem-interpolationL1L2-p1}
\| f g \|_{L^1 \big( (0, T);  L_2 (0, L) \big)}^2 = \int_{0}^T \left(\int_0^L |f g|^2 (s, x) \, dx  \right)^{\frac{1}{2}} \, ds   \\[6pt] \le   \int_{0}^T \sup_{x \in [0, L]} |f(s, x)| \left(\int_0^L |g|^2 (s, x) \, dx  \right)^{\frac{1}{2}} \, ds. 
\end{multline}
Using the fact 
\begin{multline}\label{lem-interpolationL1L2-coucou}
\sup_{x \in [0, L]} |f(s, x)| \le C \left( \int_0^L |f(s, x)|^2 \, dx + \int_0^L |f(s, x)| |f_{x}(s, x)|^2 \, d x  \right)^{\frac{1}{2}} \\[6pt]
\le C \| f(s, \cdot)\|_{L^2(0, L)}^{\frac{1}{2}} \| f(s, \cdot)\|_{H^1(0, L)}^{\frac{1}{2}} , 
\end{multline}
we derive from \eqref{lem-interpolationL1L2-p1} that 
\begin{multline*}
\| f g_x \|_{L^1 \big( (0, T);  L_2 (0, L) \big)}^2 \le C \int_0^T \| f(s, \cdot)\|_{L^2(0, L)}^{\frac{1}{2}}  
\| f(s, \cdot)\|_{H^1(0, L)}^{\frac{1}{2}} \| g(s, \cdot)\|_{L^2(0, L)} \, ds 
\\[6pt]
\le C \| f\|_{L^2 \big((0, T) \times (0, L) \big)}^{\frac{1}{2}}  \| f\|_{L^2 \big((0, T); H^1 (0, L) \big)}^{\frac{1}{2}}  \| g\|_{L^2 \big((0, T) \times (0, L) \big)} \quad (\mbox{by H\"older's inequality}), 
\end{multline*}
which is \eqref{lem-interpolationL1L2-st1}.

We next derive \eqref{lem-interpolationL1L2-st11}.  We have, for $0 \le t \le T$,  
$$
\int_{0}^L (fg)_x(t, x) \varphi(x)\, dx = -  \int_{0}^L f g(t, x) \varphi_x(x)\, dx + (fg)(t, L) \varphi(L) - (fg)(t, 0) \varphi(0) \mbox{ for } \varphi \in H^1(0, L). 
$$
This implies, by the trace theory applied to $\varphi \in H^1(0, L)$, for $0 \le t \le T$, 
$$
\| (fg)_x(t, \cdot) \|_{[H^1(0, L)]^*} \le C \Big( \| (fg) (t, \cdot) \|_{L^2(0, L)} +   \| (f g)(t, 0) \|_{L^1(0, T)} + \| (f g)(t, L) \|_{L^1(0, T)} \Big). 
$$
Estimate \eqref{lem-interpolationL1L2-st11} follows. 

\medskip
The proof is complete.
\end{proof}

To derive \Cref{thm1} from \Cref{thm-NL}, we also need to use the following simple lemma. 
\begin{lemma} \label{lem-multiplier} Let $T> 0$, $0 < s <1$,  $\varphi \in C^1([0, +\infty])$. Then 
$$
\| \varphi v\|_{[H^{s}(\mR_+)]^*} \le C \| v\|_{[H^{s}(\mR_+)]^*} \mbox{ for } v \in [H^{s} (\mR_+)]^* \mbox{ with } \supp v \subset [0, T],  
$$
where $C$ is a positive constant independent of $v$.
\end{lemma}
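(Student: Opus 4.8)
The plan is to argue by duality and reduce the statement to the boundedness of the multiplication operator $\psi \mapsto \varphi \psi$ on the space $H^{s}(\mR_+)$. First I would write, by the definition of the dual norm,
$$
\| \varphi v\|_{[H^{s}(\mR_+)]^*} = \sup_{\| \psi\|_{H^{s}(\mR_+)} \le 1} \big| \langle v, \varphi \psi \rangle \big|,
$$
so that it suffices to produce a constant $C$, independent of $v$, such that $\| \varphi \psi\|_{H^{s}(\mR_+)} \le C \| \psi\|_{H^{s}(\mR_+)}$ for all admissible test functions $\psi$. The support hypothesis $\supp v \subset [0, T]$ enters precisely here: since only the values of $\varphi \psi$ on $[0, T]$ contribute to the pairing, I would fix a cutoff $\chi$ of class $C^1$ with compact support in $[0, +\infty)$, equal to $1$ on $[0, T]$, and replace $\varphi$ by $\chi \varphi$. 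This replacement does not change the pairing, while $\chi \varphi$ is of class $C^1$ with compact support, so that $\| \chi \varphi\|_{L^\infty}$ and $\| (\chi \varphi)'\|_{L^\infty}$ are finite. The problem is thereby reduced to a clean multiplier estimate by a fixed $C^1$ function with compact support.

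Next I would establish that multiplier bound using the Gagliardo characterization of $H^{s}$ valid for $0 < s < 1$, namely $\| \psi\|_{H^{s}}^2 \simeq \| \psi\|_{L^2}^2 + \int\!\!\int \frac{|\psi(x) - \psi(y)|^2}{|x-y|^{1+2s}}\, dx\, dy$. Writing $w = \chi \varphi$ and splitting
$$
w(x) \psi(x) - w(y) \psi(y) = w(x) \big(\psi(x) - \psi(y)\big) + \psi(y) \big(w(x) - w(y)\big),
$$
the first term contributes at most $\| w\|_{L^\infty}^2$ times the Gagliardo seminorm of $\psi$. For the second term I would estimate $\int\!\!\int \frac{|\psi(y)|^2 |w(x) - w(y)|^2}{|x-y|^{1+2s}}\, dx\, dy$ by integrating in $x$ for each fixed $y$ and splitting the region into $|x-y| \le 1$ and $|x-y| > 1$: on the near-diagonal part I use $|w(x) - w(y)| \le \| w'\|_{L^\infty} |x-y|$, leaving the integrable factor $|x-y|^{1-2s}$ (integrable near the diagonal exactly because $s < 1$); on the far part I use $|w(x) - w(y)| \le 2 \| w\|_{L^\infty}$, leaving $|x-y|^{-1-2s}$ (integrable at infinity exactly because $s > 0$). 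Both inner integrals are then bounded uniformly in $y$, so this term is controlled by $C \| \psi\|_{L^2}^2$; the $L^2$ part of the norm is trivially bounded by $\| w\|_{L^\infty} \| \psi\|_{L^2}$.

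Combining these estimates yields $\| \chi \varphi \psi\|_{H^{s}(\mR_+)} \le C \| \psi\|_{H^{s}(\mR_+)}$ with $C$ depending only on $s$, $T$, and $\| w\|_{C^1}$, and substituting back into the duality identity gives the conclusion. The genuinely delicate point, and the step I expect to be the main obstacle, is the boundedness of the multiplication operator on the fractional space $H^{s}$; within it the two integrability thresholds $s < 1$ (near the diagonal) and $s > 0$ (at infinity) are what force the restriction $0 < s < 1$, and all the real work lies in the double-integral estimate above. A secondary, but routine, technical care is the identification of $[H^{s}(\mR_+)]^*$ with the distributions in $H^{-s}$ supported in $\overline{\mR_+}$, which legitimizes the pairing manipulations; this is standard and I would simply invoke it.
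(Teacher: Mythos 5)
Your proposal is correct and follows essentially the same route as the paper: reduce by duality (using the support of $v$ to replace $\varphi$ by a compactly supported $C^1$ cutoff version) to the boundedness of multiplication by a fixed $C^1$ function on $H^{s}(\mR_+)$, which is then obtained from the Gagliardo--Nirenberg characterization of the norm. The only difference is that you spell out the double-integral estimate (near-diagonal using $\|w'\|_{L^\infty}$, far region using $\|w\|_{L^\infty}$) that the paper dismisses as immediate, and that computation is carried out correctly.
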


\begin{proof} Without loss of generality, one might assume that $\supp \varphi \subset [0, 2T]$. Using the duality between $[H^{s}(\mR_+)]^*$ and $H^s(\mR_+)$, it suffices  to prove that 
$$
\| \varphi V\|_{H^{s}(\mR_+)} \le C \| V\|_{H^{s}(\mR_+)} \mbox{ for } V \in H^{s} (\mR_+),  
$$
for some positive constant $C$ independent of $V$. This follows immediately from the Gagliardo-Nirenberg characterization of the norm of $H^s(\mR_+)$. 
\end{proof}

\section{Local, exact controllability of the KdV equation in a positive time - Proof of \Cref{thm2}} \label{sect-thm2}

This section consists of two subsections. In the first one, we give the proof of Assertion $i)$. The proof of Assertion $ii)$ is given in the second subsection. 

\subsection{Proof of Assertion $i)$ of  \Cref{thm2}}  \label{proof-thm2-1}

We first consider the case $\omega =0$. We have
\begin{multline*}
\left|\sum_{j=1}^3 \big(\lambda_{j}e^{\lambda_{j} L } - \lambda_{j+1} e^{\lambda_{j+1} L } \big) e^{\lambda_{j+2} x} \right|^2 (z) \\[6pt]
=  \sum_{j=1}^3 \big(\lambda_{j}e^{\lambda_{j} L } - \lambda_{j+1} e^{\lambda_{j+1} L } e^{\lambda_{j+2} x} \big) (z)  \sum_{j=1}^3 \big(\lambda_{j}e^{\lambda_{j} L } - \lambda_{j+1} e^{\lambda_{j+1} L } e^{\lambda_{j+2} x} \big) (- \bar z)
\end{multline*}
and similarly, 
$$
|\Xi (z)|^2 = \Xi (z) \Xi (- \bar z), 
$$
where $\Xi$ is defined in \eqref{def-Xi}. 
It follows that 
$$
|\Xi(\cdot + i q/2)|^2|\Omega (\cdot + i q/2)|^2 \mbox{ is an analytic function in $z \in \mR$}.  
$$
 There thus exists a finite set $\{z_j; 1 \le j \le  m \} \subset \mR$ such that 
$$
\Omega(z_j + i q / 2) = 0 \mbox{ for } 1 \le j \le m \quad \mbox{ and } \quad \Omega(z + i q/2) \neq 0 \mbox{ for } z \in \mR \setminus \{z_j; 1 \le j \le  m \}. 
$$
Since $\Omega (\cdot + i q/2)$ is continuous in  $\mR$ and $\Omega (z + i q/2) \to + \infty$ as $|z| \to + \infty$ by \eqref{pro-dir-B} of \Cref{pro-dir} and \Cref{lem-lambda}, we derive that for each $\delta > 0$ (small),  there exists $\rho > 0$ (depending  on $\delta$) such that 
$$
\Omega (z + i q /2) \ge \rho \mbox{ in } J_\delta  : =  \mR \setminus \cup_{j=1}^m (z_j - \delta, z_j + \delta). 
$$

Let $T>0$ be arbitrary and let $y$ be a solution of \eqref{sys-y-dir} such that 
$y(t, \cdot) = 0$ for $t \ge T$.  Applying \Cref{pro-dir} and taking into account the fact $\omega \ge 0$, we get
\be \int_0^{+\infty}\int_{0}^L y^2(t, x) \Phi_x(t, x) \diff x \diff t  = 
\int_{\mR} \int_{0}^L  |\hu(z + i q / 2)|^2 B(z + i q/2, x) \, dx \, dz, 
\ee
where we extend $u$ by $0$ for $t < 0$ and $t > T$. This implies 
\begin{multline} \int_0^{+\infty}\int_{0}^L y^2(t, x) \Phi_x(t, x) \diff x \diff t \\[6pt]
=  \int_{J_\delta } \int_{0}^L  |\hu(z + i q / 2)|^2 B(z + i q/2, x) \, dx \, dz + \int_{\mR \setminus J_\delta} \int_{0}^L  |\hu(z + i q / 2)|^2 B(z + i q/2, x) \, dx \, dz \\[6pt]
\ge C_1 \rho \| u e^{q \cdot /2} \|_{[H^{1/6}(0, T)]^*}^2 - C_2   \rho \int_{J_\delta} |\hu(z + i q/2)|^2  \, dz \\[6pt]
\mathop{\ge}^{\Cref{lem-multiplier}}  C_1 \rho  \| u
\|_{[H^{1/6}(0, T)]^*}^2 - C_T \rho  \delta \| u
\|_{[H^{1/6}(0, T)]^*}^2. 
\end{multline}

By choosing $\delta$ sufficiently small, we arrive that 
\be \label{thm2-key-i}
\int_{0}^\infty \int_0^{+\infty} y^2(t, x) \Phi_x(t, x) \diff x \diff t  \ge C \rho  \| u
\|_{[H^{1/6}(0, T)]^*}^2. 
\ee

Applying \Cref{thm-NL}, we derive that system \eqref{sys-KdV} is not locally null controllable at time $T/2$. Since $T$ is arbitrary, the conclusion follows. 

We next deal with the case $\omega > 0$. It is clear that \eqref{thm2-key-i} holds with $\rho = \omega$. The conclusion follows by  \Cref{thm-NL} as before since $T$ is arbitrary. \qed

\subsection{Proof of Assertion $ii)$ of \Cref{thm2}} \label{sec-Assertion-ii}

Set 
\be
\sigma(t) = e^{-\frac{1}{1 - t^2}} \mathds{1}_{(-1, 1)}, 
\ee
where $\mathds{1}_{(-1, 1)}$ denotes the characteristic function of the interval $(-1, 1)$.  Then $\sigma \in C^\infty_c(\mR)$ and 
\be \label{thm2-p2-sigma}
|\hat \sigma (z)| \le c_1 e^{- c_2 \sqrt{|z|}} \mbox{ for } z \in \mR, 
\ee
see e.g., \cite{TT07}.  Let $m \in \N$ be large and denote 
$$
\sigma_m (t) = \frac{1}{m}\sigma(t/m) \mbox{ for } t \in \mR. 
$$ 

Let $z_0 \in \mR$ be such that 
$$
\Omega(z_0 + i q/2) = \omega < 0. 
$$
Consider $u_m$ defined by   
\be \label{thm2-p2-um}
\hat u_m(z) = e^{i m z} \hat \sigma_m(z - i q / 2 - z_0) \det Q(z) \Xi (z). 
\ee
Applying Paley-Wiener's theorem, see e.g., \cite{Rudin-RC},  and using \eqref{thm2-p2-sigma}, one can check that the function $v_m$ which is defined by  $\hat v_m (z) = \hat \sigma_m(z - i q / 2 - z_0) \det Q(z) \Xi (z)$ belongs to  $L^2(\mR)$ with support in $[-m, m]$. We then derive that $u_m \in L^2(\mR)$ with support in $[0, 2m]$ since $u_m(t) = v_m(t + m)$ for $t \in \mR$.  Using \eqref{thm2-p2-sigma}, one can also derive that $u_m \in C^\infty(\mR)$.

Let $y = y_m$ be the unique solution of \eqref{sys-y} with $u = u_m$. From the definition of $u_m$, we deduce that 
$$
\supp y \subset [0, m] \times [0, L], 
$$
since, by \Cref{lem-form-sol}, 
$$
\hat y_{xx} (z, 0) = \frac{\hat u_m (z) G(z)}{H(z)}
$$
is an entire function with modulus is bounded by $C e^{(m+\eps) |z|}$ for all $\eps > 0$, and $\hat y_{xx} (\cdot, 0) \in L^2(\mR)$. It follows that 
$$
\supp y_{xx}(\cdot, 0) \subset [0, m]. 
$$
Since $y(m, x) \in \cM_D^\perp$, applying \Cref{lem-Phi}, we derive that 
$$
y(t, x) = 0 \mbox{ for } (t, x) \in [m, + \infty) \times [0, L]. 
$$
We thus can apply \Cref{pro-dir} to $y$. 

Since, by \Cref{pro-dir}, 
$$
\int_{0}^\infty \int_0^{+\infty} y^2(t, x) \Phi_x(t, x) \diff x \diff t =  \int_{\mR} \frac{|\hat u_m(z + i q/2)|^2 }{|H(z + i q/2)|^2} \Omega(z + i q/2) \, dz,    
$$
it follows from \eqref{thm2-p2-um} that 
\be\label{thm2-p2-I1}
\int_{0}^\infty \int_0^{+\infty} y^2(t, x) \Phi_x(t, x) \diff x \diff t  = e^{- m q} \int_{\mR}  |\hat \sigma_m (z - z_0)|^2 |\Xi (z + i q/2)|^2  \Omega(z + i q/2) \, dz. 
\ee

We have, by \eqref{thm2-p2-sigma} 
\be\label{thm2-p2-sigmam-1} 
|\hat \sigma_m (z)| \le c_1 e^{- c_2 \sqrt{m |z|}} \mbox{ for } z \in \mR.  
\ee
\be\label{thm2-p2-sigmam-2} 
|\hat \sigma_m (z)| \ge c_3 \mbox{ for } z \in \mR \mbox{ with }  |z| < c_4/ m, 
\ee
\be \label{thm2-p2-Xi1} 
\inf_{ z \in \mR} |\Xi (z + i q/2)| > 0, \quad |\Xi (z + i q/2)| \le C (|z| + 1) \mbox{ for } z \in \mR, 
\ee
\be \label{thm2-p2-Omega1}
|\Omega (z + i q/2)| \le c_5 e^{c_6 |z|^{1/3}},  
\ee
for some positive constant $c_1$, \dots, $c_6$, independent of $m$. 

Using \eqref{thm2-p2-sigmam-1}, \eqref{thm2-p2-sigmam-2}, \eqref{thm2-p2-Xi1}, \eqref{thm2-p2-Omega1}, and the fact $\Omega(z_0 + i q/2) = \omega < 0$,  it follows from \eqref{thm2-p2-I1} that 
$$
\int_{0}^\infty \int_0^{+\infty} y^2(t, x) \Phi_x(t, x) \diff x \diff t  < 0,  
$$
for large $m$. Fix such an $m$ and set 
$$
T = 2m. 
$$

We thus have just proved,  after scaling, that  there exists $U_1 \in H^{1/3}(0, T)$ such that  
\be\label{thm2-Y1Y2}
Y_1 (T, \cdot ) = 0 \quad \mbox{ and } \quad \proj_{\cM_D} Y_2(T, \cdot) = - \phi, 
\ee
where $Y_1, Y_2 \in X_T$ are the unique solution of the systems 
\begin{equation}\label{thm2-Y1}\left\{
\begin{array}{cl}
Y_{1, t}  + Y_{1, x}  + Y_{1, xxx}  = 0 &  \mbox{ in } 
(0, T) \times (0, L), \\[6pt]
Y_1(\cdot, 0) = Y_{1, x} (\cdot, L) = 0 & \mbox{ in }  (0, T), \\[6pt]
Y_{1}(\cdot, L) = U_1 & \mbox{ in } (0, T),\\[6pt]
Y_1(0, \cdot) = 0 & \mbox{ in } (0, L), 
\end{array}\right.
\end{equation}
\begin{equation} \label{thm2-Y2}\left\{
\begin{array}{cl}
Y_{2, t} + Y_{2, x}  + Y_{2, xxx}  + Y_1  Y_{1, x}   = 0 &
\mbox{ in } (0, T) \times (0, L), \\[6pt]
Y_2(\cdot, 0) = Y_{2, x}(\cdot, L)  = 0 & \mbox{ in }  (0, T), \\[6pt]
Y_{2}(\cdot, L) = 0 & \mbox{ in } (0, T). 
\end{array}\right.
\end{equation}

We now establish the local controllability for the time $T$ which is $2m$.   In what follows, we consider $T = 2m$. 
Fix $y_0, y_T \in L^2(0, L)$ with small $L^2(0, L)$-norms. We first consider the case 
$$
\rho = \| y_0\|_{L^2(0, L)} > 0, 
$$
\begin{equation}\label{CP-rem1}
\int_0^L y_0 \phi_1 \, dx \ge   {\bf c}_1 \rho \int_0^L \phi^2 \, dx , 
\end{equation}
and 
\begin{equation}\label{CP-rem2}
\|y_T \|_{L^2(0, L)} \le \frac{{\bf c}_1 e^{qT}  \rho}{10}.    
\end{equation}
for some fixed constant ${\bf c}_1$ independent of $\rho$.

For $r > 0$ and $y_0 \in L^2(0, L)$, denote $B_{r} (y_0)$ the open ball centered at 0 and of radius $r$ in $L^2(0, L)$. We also denote   $\overline{B_{r} (y_0)}$ its closure in $L^2(0, L)$.

Let $c$ be a small positive constant determined later. 
For $\varphi \in \overline{B_{c \rho}(y_0)}$, let   $u_1$ and $u_2$ be  controls in $H^{1/3}(0, T)$ for which the solutions $y_1$ and $y_2$ in $X_T$ of the systems 
\begin{equation}\label{Sys-y1-thm2}\left\{
\begin{array}{cl}
y_{1, t}  + y_{1, x}  + y_{1, xxx}  = 0 &  \mbox{ in } 
(0, T) \times (0, L), \\[6pt]
y_1(\cdot, 0) = y_1(\cdot, L) = 0 & \mbox{ in } (0, T), \\[6pt]
y_{1, x}(\cdot,L) = u_1 & \mbox{ in } (0, T),\\[6pt]
y_{1}(0, \cdot) = 0 & \mbox{ in } (0, L),
\end{array}\right.
\end{equation}
\begin{equation}\label{Sys-y2-thm2} \left\{
\begin{array}{cl}
y_{2, t}  + y_{2, x}  + y_{2, xxx} + y_1  y_{1, x}   = 0 &
\mbox{ in } (0, T) \times (0, L), \\[6pt]
y_2(\cdot, 0) = y_2(\cdot, L)  = 0 & \mbox{ in }  (0, T), \\[6pt]
y_{2, x}(\cdot, L) = u_2 & \mbox{ in } (0, T), \\[6pt]
y_{2}(0 , \cdot) = \proj_{\M_D} \varphi / \rho & \mbox{ in } (0, L),
\end{array}\right.
\end{equation}
satisfy
\[
y_1(T, \cdot) = 0 \quad \mbox{ and } \quad y_2(T, \cdot) = y_T/ \rho.
\]
Moreover, one can choose $u_1$ and  $u_2$ as  Lipschitz functions of $\proj_{\M_D} \varphi / \rho$ with the Lipschitz constants bounded by positive constants independent of $\rho$. For example, using \Cref{lem-projection} and \eqref{thm2-Y1Y2}, one can take \footnote{It is useful to note that $e^{-q T} \alpha_0 - \alpha_T > 0$ by \eqref{CP-rem1} and \eqref{CP-rem2} if $c$ is sufficiently small.}
$$
u_1 = (e^{-q T} \alpha_0 - \alpha_T) U_1,
$$
where $\alpha_0 \phi = \proj_{\cM_D} \varphi / \rho$ and $\alpha_T \phi  = \proj_{\cM_D} y_T/\rho$, and 
$$
u_2 = \cL \big(- \proj_{\cM_D^\perp}y_3(T, \cdot) + \proj_{\cM_D^\perp} y_T/ \rho \big) + \hat \cL(\proj_{\cM_D} \varphi/ \rho),
$$
where $y_3 \in X_T$ is the unique solution of the system 
\begin{equation} \left\{
\begin{array}{cl}
y_{3, t}  + y_{3, x}  + y_{3, xxx} + y_1  y_{1, x}   = 0 &
\mbox{ in } (0, T) \times (0, L), \\[6pt]
y_3(\cdot, 0) = y_3(\cdot, L)  = y_{3, x}(\cdot, L)= 0 & \mbox{ in }  (0, T), \\[6pt]
y_{3}(0 , \cdot) = \proj_{\M_D^\perp} \varphi / \rho & \mbox{ in } (0, T).
\end{array}\right.
\end{equation}
Here $\cL$ and $\hat \cL$ are the operators given in \Cref{pro-C}.

For $\varphi \in \overline{B_{c \rho}(y_0)}$, let  $y  \in X_T$ be the unique solution of the {\it backward} linear  KdV system 
\begin{equation}\label{sys-KdV-B}\left\{
\begin{array}{cl}
y_t  + y_x  + y_{xxx}  + y  y_x  = 0 &  \mbox{ in } (0, T) \times (0, L), \\[6pt]
y(\cdot, 0) = y_x(\cdot, L) = 0 & \mbox{ in }   (0, T), \\[6pt]
y_x(\cdot, 0) = v & \mbox{ in }  (0, T), \\[6pt]
y(T, \cdot)  = y_T  &  \mbox{ in }  (0, L). 
\end{array}\right.
\end{equation}
where $v (t) = \rho^{1/2} y_{1, x}(t, 0) + \rho y_{2, x}(t, 0)$ with $y_1$ and $y_2$ being defined by \eqref{Sys-y1-thm2} and \eqref{Sys-y2-thm2}. 

Note that $y \in X_T$ is well-defined if $\rho$ is sufficiently small by \Cref{pro-kdv3-NL}. We will denote 
\be
H_0(\varphi) = y(0, \cdot) \mbox{ in } (0, L) \mbox{ and }  H(\varphi) = y \mbox{ in } (0, T) \times (0, L).  
\ee 

 Consider the map
\[
\begin{array}{cccc}
\Lambda\colon  &  \overline{B_{c \rho} (y_0)} & \to & L^2(0, L) \\[6pt]
& \varphi & \mapsto & \varphi -  H_0(\varphi) +  y_0. 
\end{array}
\]
We will prove that
\begin{equation}\label{CP-claim1}
\Lambda (\varphi) \in \overline{B_{c \rho} (y_0)},
\end{equation}
and
\begin{equation}\label{CP-claim2}
\|\Lambda (\varphi) - \Lambda (\tvarphi) \|_{L^2(0, L)} \le \lambda \| \varphi - \tvarphi\|_{L^2(0, L)},
\end{equation}
for some $\lambda \in (0, 1)$. Assuming this, one derives from the contraction mapping theorem that there exists a unique $\varphi_0 \in \overline{B_{c \rho} (y_T)} $ such that $\Lambda (\varphi_0) = \varphi_0$.  As a consequence, with $y = H(\varphi_0)$, 
\[
y(0, \cdot) = y_0,
\]
and $y(\cdot, L)$ is hence a required control.

We next establish \eqref{CP-claim1} and \eqref{CP-claim2}. Indeed, assertion \eqref{CP-claim1} follows from the fact
\[
\|\varphi -  H_0 (\varphi) \|_{L^2(0, L)} \le C \| \varphi \|_{L^2(0, L)}^{3/2} \mbox{ for } \varphi \in \overline{B_{c\rho} (y_0)}.
\]
This can be proved using the approximation via the power series method as follows. Applying \Cref{pro-kdv3} to $y $,  we derive from \eqref{sys-KdV-B}  that 
\be
\| y\|_{X_T} \le C \rho^{1/2}. 
\ee

Set \footnote{The index $a$ stands the
approximation.}
\be\label{CP-decomposition1}
y_a = \rho^{1/2} y_1 + \rho y_2 \mbox{ in } (0, T) \times (0, L). 
\ee
We have 
\be \label{CP-y-ya}
(y - y_a)_t +(y - y_a)_x + (y - y_a)_{xxx} + (y-y_a) y_x + 
y_a( y - y_a)_x = h(t, x),  
\ee
where 
$$
h(t, x) = - y_a y_{a, x}  + \rho y_1 y_{1, x} = - \rho^{3/2} (y_1 y_2)_x  - \rho^2 y_2 y_{2, x}. 
$$

Applying  \Cref{pro-kdv3} to $y-y_a$, for small $\rho$, one can ignore (absorb) the contribution from the last two terms in the LHS of \eqref{CP-y-ya}, to obtain 
\begin{equation}\label{CP-yya}
\|y- y_a\|_{X_T} \le C \|  h   \|_{L^{1} \big( (0, T); L^2(0, L) \big)} \le  C \rho^{3/2}.
\end{equation}
Assertion~\eqref{CP-claim1} follows since $H_0 (\varphi) = y(0, \cdot)$ and $\varphi = y_a(0, \cdot)$.

\medskip
We next establish \eqref{CP-claim2}. To this end, we estimate
\[
 \Big( \varphi -  H_0(\varphi) \Big) - \Big( \tvarphi - H_0 (\tvarphi) \Big).
\]
For $\tvarphi \in \overline{B_{c \rho}(y_0)}$, denote $\tu_{1}, \tu_{2}$ and $\ty_{1}, \ty_{2}, \ty_a, \ty$ the corresponding functions which are defined in the same way as the functions  $u_{1}, u_{2}$,  and $y_{1}, y_{2}, y_a, y$ considered for  $\varphi$.

We have
\be\label{CP-yty}
(y - \ty)_{t} + (y - \ty)_{x} + (y - \ty)_{xxx} + y y_x - \ty \ty_x = 0,
\ee
\be \label{CP-yatya}
(y_a - \ty_a)_{t} + (y_a - \ty_a)_{x} + (y_a - \ty_a)_{xxx} + y_a y_{a, x} - \ty_a \ty_{a,x} =  g (t, x),
\ee
where
\be\label{CP-def-g}
 g (t, x) = \rho^{3/2}  \Big( (y_1 y_2)_x - (\ty_1 \ty_2)_x \Big)  + \rho^2 \Big( y_2 y_{2, x} - \ty_2 \ty_{2, x} \Big).
\ee

Applying \Cref{pro-kdv3-NL} to $y - \ty$, for $\rho$ small, one can ignore (absorb) the contribution of the last two terms in the LHS of \eqref{CP-yty}, to obtain 
\begin{equation}\label{CP-pp0}
\| y-\ty \|_{X_T} \le C \| y_{a, x} (\cdot, 0) - \ty_{a, x} (\cdot, 0)  \|_{H^{1/3}(0, T)} \le C \rho^{-1/2} \|\varphi- \tvarphi \|_{L^2(0, L)}. 
\end{equation}
Using  \eqref{CP-yya} for $y - y_a$ and similar fact for $\ty - \ty_a$, we obtain 
\begin{equation}\label{CP-pp1}
\| (y-y_a, \ty - \ty_a) \|_{X_T} \le  C \rho^{3/2}.  
\end{equation}
From the definition of $g$ in  \eqref{CP-def-g}, we deduce that 
\be\label{CP-pp3}
\| g \|_{L^{1}\big((0, T); L^2(0, L) \big)}  \le C \rho^{1/2} \| \varphi - \tvarphi \|_{L^2(0, L)}.
\ee
Applying \Cref{pro-kdv1} to $y_a - \ty_a$ after ignoring (absorbing) the contribution of $y_a y_{a, x} - \ty_a \ty_{a,x} $,  we derive from \eqref{CP-yatya} that 
\begin{equation}\label{CP-pp2}
 \|y_a - \ty_a\|_{X_T} \le C \rho^{-1/2} \|\varphi - \tvarphi \|_{L^2(0, L)} \le C \rho^{1/2}. 
\end{equation}

Set 
$$
Y = y -y_a - (\ty - \ty_a) \mbox{ in } (0, T) \times (0, L). 
$$
Using \eqref{CP-yty} and  \eqref{CP-yatya}, we have 
\be\label{CP-Y}
\partial_t Y  + \partial_x Y + \partial_{xxx} Y =  f(t, x) \mbox{ in } (0, T) \times (0, L), 
\ee
where 
$$
f(t, x) = - g(t, x) - \Big(y y_x - \ty \ty_x - (y_a y_{a, x} - \ty_a \ty_{a,x}) \Big).
$$
From \eqref{CP-pp0},  \eqref{CP-pp1}, \eqref{CP-pp2},  and \eqref{CP-pp3}, we obtain  
\be\label{CP-pp4}
\| y y_x - \ty \ty_x - (y_a y_{a, x} - \ty_a \ty_{a,x}) \|_{L^{1}\big((0, T); L^2(0, L) \big)} \le C \rho^{1/2}  \| \varphi - \tvarphi \|_{L^2(0, L)}.
\ee

Using \eqref{CP-pp3} and \eqref{CP-pp4},  and applying \Cref{pro-kdv3-NL} to $Y$, we derive from \eqref{CP-Y} that 
\[
\| (y - y_a - \ty + \ty_a) (T, \cdot) \|_{L^2(0, L)} \le 
C \rho^{1/2} \| \varphi - \phi \|_{L^2(0, L)}. 
\]
Assertion~\eqref{CP-claim2} follows.

We next consider the general case. One can bring this case to the previous case as follows. 
Set 
$$
\rho = \| y_0 \|_{L^2(0, L)} + \| y_T \|_{L^2(0, L)}. 
$$
Without loss of generality, one might assume that $\rho > 0$ since otherwise, one just takes zero as a control.

Fix $\eps > 0$ small. By \Cref{pro-dir},  there exists  $v_1 \in H^{1/3}(0, \eps)$ such that if $y_1 \in X_\eps$  is the solution of \eqref{Sys-y1-thm2} with $y_{1}(\cdot, L) = v_1$ and $y_2 \in X_\eps$ is the solution of \eqref{Sys-y2-thm2} with $y_2 (\cdot, L) = 0$ then
\[
y_1 (\eps, \cdot) =0,   \quad \mbox{ and } \quad   \int_{0}^{L} y_2(\eps, \cdot) \phi  \ge c. 
\]
Set 
$$
u =  \rho^{1/2} \gamma^{1/2} v_1 \mbox{ in } (0, \eps).  
 $$
Then the unique solution $y \in X_\eps$ of the KdV system \eqref{sys-KdV} verifies the properties of the previous case, for large positive $\gamma$. 
We are now in the position to apply the previous case with the initial datum $y(\eps, \cdot)$. The proof is complete.  \qed

\appendix

\section{Hardy type inequalities}

This section is devoted to the following result related to the Hardy inequality. 

\begin{lemma}\label{lemH1/3} Let $- \infty < a < b < c \le + \infty$, and $0< s < 1/2$ and let $u \in H^{s}(a, b)$. Set 
$$
v = \left\{ \begin{array}{cl} u & \mbox{ in } (a, b), \\[6pt]
0 & \mbox{ in } (b, c). 
\end{array} \right.
$$
Then $v \in H^{s}(a, c)$ and 
$$
\| v \|_{H^{s}(a, c)} \le C \| u\|_{H^{s}(a, b)}, 
$$
for some positive constant $C$ depending only on $a, b, c$, and $s$. 
\end{lemma}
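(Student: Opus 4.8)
The plan is to work with the Gagliardo (Sobolev--Slobodeckij) description of the $H^s$ norm that the paper already uses: for an interval $I$ and $0<s<1$,
\[
\|w\|_{H^s(I)}^2 = \|w\|_{L^2(I)}^2 + [w]_{H^s(I)}^2, \qquad [w]_{H^s(I)}^2 := \int_I\int_I \frac{|w(\xi)-w(\zeta)|^2}{|\xi-\zeta|^{1+2s}}\,d\xi\,d\zeta.
\]
Since $v=u$ on $(a,b)$ and $v=0$ on $(b,c)$, we have $\|v\|_{L^2(a,c)}=\|u\|_{L^2(a,b)}$, so it suffices to bound $[v]_{H^s(a,c)}^2$. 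First I would split $(a,c)^2$ into $(a,b)^2$, $(b,c)^2$, $(a,b)\times(b,c)$ and its transpose. On $(a,b)^2$ the integrand is exactly that of $[u]_{H^s(a,b)}^2$; on $(b,c)^2$ it vanishes because $v\equiv0$; and the two mixed pieces are equal by symmetry. On the mixed piece $v(\xi)=u(\xi)$, $v(\zeta)=0$ with $\zeta>b>\xi$, so carrying out the explicit inner integration gives
\[
\int_a^b\int_b^c \frac{|u(\xi)|^2}{(\zeta-\xi)^{1+2s}}\,d\zeta\,d\xi = \frac{1}{2s}\int_a^b |u(\xi)|^2\Big((b-\xi)^{-2s}-(c-\xi)^{-2s}\Big)\,d\xi \le \frac{1}{2s}\int_a^b \frac{|u(\xi)|^2}{(b-\xi)^{2s}}\,d\xi,
\]
the final bound being valid for every $c\le+\infty$. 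Thus the whole lemma reduces to the one-dimensional fractional Hardy inequality (which I will call the Hardy inequality below):
\[
\int_a^b \frac{|u(\xi)|^2}{(b-\xi)^{2s}}\,d\xi \le C\,\|u\|_{H^s(a,b)}^2, \qquad 0<s<1/2.
\]

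To prove the Hardy inequality I would first discard the harmless far region $(a,(a+b)/2)$, where the weight $(b-\xi)^{-2s}$ is bounded and the contribution is controlled by $\|u\|_{L^2(a,b)}^2$. Near the endpoint $b$ I would use a dyadic decomposition $I_k=\{\xi:\ 2^{-k-1}<b-\xi<2^{-k}\}$ together with the fractional Poincaré inequality $\|u-m_k\|_{L^2(I_k)}^2\le C\,|I_k|^{2s}[u]_{H^s(I_k)}^2$, where $m_k$ denotes the mean of $u$ over $I_k$. Since $(b-\xi)^{-2s}\sim 2^{2sk}$ on $I_k$, the splitting $|u|^2\le 2|u-m_k|^2+2|m_k|^2$ turns the left-hand side into $C\sum_k [u]_{H^s(I_k)}^2 + C\sum_k 2^{(2s-1)k}|m_k|^2$. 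The first sum is $\le C[u]_{H^s(a,b)}^2$ by disjointness of the $I_k$. For the second I would estimate the increments $|m_k-m_{k+1}|^2\le C\,2^{(1-2s)k}[u]_{H^s(I_k\cup I_{k+1})}^2$ (again via Poincaré on adjacent intervals), telescope down to $m_0$ with $|m_0|\le C\|u\|_{L^2(a,b)}$, and close the estimate with a weighted discrete Hardy inequality of the form $\sum_k 2^{-2\beta k}\big(\sum_{j<k}2^{\beta j}c_j\big)^2\le C\sum_j c_j^2$, where $\beta=(1-2s)/2>0$ and $c_j=[u]_{H^s(I_j\cup I_{j+1})}$; the bounded overlap of the $I_j\cup I_{j+1}$ gives $\sum_j c_j^2\le C[u]_{H^s(a,b)}^2$.

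The main obstacle is exactly this Hardy inequality. The naive approach of averaging $u(\xi)$ against a comparison set at the same scale $\sim(b-\xi)$ reproduces the left-hand side of the inequality and is therefore circular, which is why the genuinely multiscale (dyadic) argument above is needed. The hypothesis $s<1/2$ enters in two essential places: it makes $\int_a^b (b-\xi)^{-2s}\,d\xi$ convergent (so the mean part is controlled by $\|u\|_{L^2}$) and it forces $2s-1<0$, which is precisely what renders the geometric weights summable and the discrete Hardy step valid; it is also the regime in which $u$ has no boundary trace at $b$, so nothing obstructs the zero extension. (Alternatively, the Hardy inequality is the classical one-dimensional fractional Hardy inequality and may simply be quoted; only the reduction steps in the first paragraph are specific to the present statement.)
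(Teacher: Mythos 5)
Your reduction is exactly the paper's: both split the Gagliardo seminorm of $v$ over $(a,c)^2$ into the $(a,b)^2$ block (which reproduces $[u]_{H^s(a,b)}^2$), the vanishing $(b,c)^2$ block, and the mixed blocks, which after the inner integration in $\zeta$ reduce the whole lemma to the one-dimensional fractional Hardy inequality $\int_a^b |u(\xi)|^2 (b-\xi)^{-2s}\,d\xi \le C\|u\|_{H^s(a,b)}^2$ for $0<s<1/2$. Where you diverge is in how this Hardy inequality is obtained. The paper extends $u$ to a function $V\in H^s(\mR)$ with controlled norm and invokes a fractional Caffarelli--Kohn--Nirenberg inequality from an earlier work of the author (with $\gamma=-s$, $\tau=p=2$), which yields $\||x|^{-s}V\|_{L^2(\mR)}\le C\|V\|_{H^s(\mR)}$ precisely under the condition $s<1/2$; the Hardy bound on $(a,b)$ follows by restriction. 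You instead give a self-contained multiscale proof: dyadic shells $I_k$ near $b$, the fractional Poincar\'e inequality $\|u-m_k\|_{L^2(I_k)}^2\le C|I_k|^{2s}[u]_{H^s(I_k)}^2$, a telescoping estimate on the means via adjacent shells, and a weighted discrete Hardy inequality with exponent $\beta=(1-2s)/2>0$. Your argument is correct (the bounded overlap of the $I_j\cup I_{j+1}$ and the summability of the geometric weights are exactly where $s<1/2$ enters, matching the role it plays in the cited CKN inequality), and it buys independence from the external reference at the cost of about a page of standard but nontrivial bookkeeping; the paper's route is shorter but outsources the only hard step.
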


\begin{proof} For notational ease, we assume that $a=-1$ and $b=0$. Without loss of generality, we then can assume that $c = + \infty$. Let $V \in H^s(\mR)$ be an extension of $u$ such that 
$$
\|V\|_{H^s(\mR)} \le C \| u \|_{H^s(-1, 0)}. 
$$
Applying \cite[Theorem 1.1]{Ng-Squa1} with $\gamma = - s$, $\tau =2$, $p=2$ to $V$, one obtains 
$$
\| |x|^{-s}V  \|_{L^2(\mR)} \le C \| V\|_{H^s(\mR)}. 
$$
The condition $s < 1/2$ is required here.   This yields 
\be\label{lemH1/3-p1}
\| |x|^{-s} u  \|_{L^2(-1, 0)} \le C \| u\|_{H^s(-1, 0)}. 
\ee
Using the equivalent Gagliardo-Nirenberg definition of the semi-norm $H^s$, we have 
$$
\| v\|_{H^s(-1, + \infty)}^2 \sim \int_{-1}^\infty \int_{-1}^\infty \frac{|v(x) - v(y)|^2}{|x-y|^{1 + 2 s}} \, d x \, dy + \int_{-1}^\infty |v|^2 \, dx.  
$$
Since
$$
\int_{-1}^\infty \int_{-1}^\infty \frac{|v(x) - v(y)|^2}{|x-y|^{1 + 2 s}} \, d x \, dy \le  \int_{-1}^0 \int_{-1}^0 \frac{|u(x) - u(y)|^2}{|x-y|^{1 + 2 s}} \, d x \, dy + C_s \int_{-1}^0 \frac{|u(x)|^2}{|x|^{2s}} \, dx, 
$$
and 
$$
\int_{-1}^\infty |v|^2 \, dx = \int_{-1}^0 |u|^2 \, dx, 
$$
we derive from \eqref{lemH1/3-p1} that 
$$
\| v\|_{H^s(-1, + \infty)} \le C \| u\|_{H^s(-1, 0)}. 
$$
The proof is complete. 
\end{proof}

\section{On the zeros of $\det_Q$}

We begin this section with 

\begin{lemma}\label{lem-p} Let $L>0$,  $z \in \mC$, and $\varphi \in C^\infty([0, L])$ be such that 
$$
\varphi_{xxx} + \varphi_x + i z \varphi = 0 \mbox{ in } [0, L], 
$$
and 
$$
\varphi(0) = \varphi(L) = \varphi_x (L) = 0. 
$$
Then 
$$
\Im (z) \ge 0. 
$$
\end{lemma}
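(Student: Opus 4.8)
The plan is to run the standard energy/multiplier argument: multiply the ODE by $\bar\varphi$, integrate over $[0,L]$, integrate by parts to move the $x$-derivatives off $\varphi$, and then extract the real part of the resulting scalar identity. The three boundary conditions $\varphi(0)=\varphi(L)=\varphi_x(L)=0$ are exactly what is needed to make all boundary contributions collapse into a single nonnegative term, after which the sign of $\Im(z)$ is forced.

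\emph{Key steps.} First I would multiply the equation $\varphi_{xxx}+\varphi_x+iz\varphi=0$ by $\bar\varphi$ and integrate to get
\[
\int_0^L \varphi_{xxx}\bar\varphi \, dx + \int_0^L \varphi_x \bar\varphi \, dx + iz \int_0^L |\varphi|^2 \, dx = 0.
\]
For the first-order term, writing $\varphi_x\bar\varphi + \overline{\varphi_x\bar\varphi} = \frac{d}{dx}|\varphi|^2$ and using $\varphi(0)=\varphi(L)=0$ shows that $\int_0^L \varphi_x\bar\varphi\,dx$ is purely imaginary, so it drops out on taking real parts. For the third-order term I would integrate by parts once to obtain $\int_0^L \varphi_{xxx}\bar\varphi\,dx = [\varphi_{xx}\bar\varphi]_0^L - \int_0^L \varphi_{xx}\bar\varphi_x\,dx$; the boundary bracket vanishes because $\varphi(0)=\varphi(L)=0$, and then, using $\varphi_{xx}\bar\varphi_x + \overline{\varphi_{xx}\bar\varphi_x} = \frac{d}{dx}|\varphi_x|^2$ together with $\varphi_x(L)=0$, the real part of $\int_0^L \varphi_{xx}\bar\varphi_x\,dx$ equals $-\tfrac12|\varphi_x(0)|^2$. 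Hence $\Re\int_0^L \varphi_{xxx}\bar\varphi\,dx = \tfrac12|\varphi_x(0)|^2$.

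\emph{Conclusion.} Taking the real part of the integrated identity and using $\Re(iz) = -\Im(z)$ then yields
\[
\Im(z) \int_0^L |\varphi(x)|^2 \, dx = \frac{1}{2}\,|\varphi_x(0)|^2 \ge 0.
\]
If $\varphi\not\equiv 0$ then $\int_0^L|\varphi|^2\,dx>0$, so dividing gives $\Im(z)\ge 0$; the trivial solution $\varphi\equiv 0$ imposes no constraint and is understood to be excluded. The argument involves no genuine difficulty beyond bookkeeping: the only point to watch is keeping track of which boundary values are real versus complex, so that exactly the prescribed conditions $\varphi(0)=\varphi(L)=\varphi_x(L)=0$ kill every boundary term except the surviving $\tfrac12|\varphi_x(0)|^2$. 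That matching of the boundary data to the integration-by-parts output is the crux of the computation.
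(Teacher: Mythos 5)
Your proof is correct: with the boundary data $\varphi(0)=\varphi(L)=\varphi_x(L)=0$ the only surviving boundary contribution after integration by parts is indeed $\tfrac12|\varphi_x(0)|^2$, and taking real parts yields the identity $\Im(z)\int_0^L|\varphi|^2\,dx=\tfrac12|\varphi_x(0)|^2\ge 0$. The paper reaches the same conclusion by a slightly different route: it sets $\Psi(t,x)=\Re\big\{e^{izt}\varphi(x)\big\}$, observes that $\Psi$ solves the linearized KdV system with the homogeneous boundary conditions $\Psi(\cdot,0)=\Psi(\cdot,L)=\Psi_x(\cdot,L)=0$, and invokes the monotone decay of $t\mapsto\int_0^L|\Psi(t,x)|^2\,dx$ for that system; since $|e^{izt}|=e^{-\Im(z)t}$, a non-increasing energy forces $\Im(z)\ge 0$. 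The underlying computation is the same --- the KdV energy decay is proved by exactly the integration by parts you perform --- but your stationary, complex-valued version is more self-contained: it avoids the (implicit in the paper) step of arguing that exponential growth of the amplitude $e^{-\Im(z)t}$ is incompatible with monotone decay of a possibly oscillating energy, and it delivers the exact identity rather than only the sign. As you note, one must tacitly assume $\varphi\not\equiv 0$ in order to divide by $\int_0^L|\varphi|^2\,dx$; this is harmless, since in the paper's application (\Cref{lem-Bp}) $\varphi$ arises as a nontrivial element of a kernel.
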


\begin{proof}  Set 
$$
\Psi (t, x) = \Re \Big\{ e^{i z t} \varphi(x) \Big\} \mbox{ in } \mR_+ \times [0, L]. 
$$
Then $\Psi$ is a solution of the linearized KdV system 
\be
\left\{\begin{array}{cl}
\Psi_t + \Psi_x + \Psi_{xxx} = 0 & \mbox{ in } \mR_+ \times [0, L], \\[6pt]
\Psi(\cdot, 0) = \Psi(\cdot, L) = \Psi_x(\cdot, L) = 0 & \mbox{ in } \mR_+. 
\end{array}\right.
\ee
We then derive that 
$$
\frac{d}{dt} \int_0^L |\Psi(t, x)|^2 \, dx \le 0. 
$$
This implies 
$$
\Im(z) \ge 0. 
$$
The proof is complete. 
\end{proof}

The following result is useful. 
 
%
%

\begin{lemma} \label{lem-Bp} Let $L \in \cN_D$. Then $\det Q (z + i q / 2) \neq 0$  for  $z \in \mR$. 
\end{lemma}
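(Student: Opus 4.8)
The plan is to show that $\det Q(z + iq/2) \neq 0$ for all real $z$ by connecting the vanishing of $\det Q$ to the existence of a nontrivial eigenfunction of the spatial operator, and then invoking the spectral localization provided by \Cref{lem-p}. Recall from \Cref{def:Q-P} that
\[
Q(z) = \begin{pmatrix} 1 & 1 & 1 \\ e^{\lambda_1 L} & e^{\lambda_2 L} & e^{\lambda_3 L} \\ \lambda_1 e^{\lambda_1 L} & \lambda_2 e^{\lambda_2 L} & \lambda_3 e^{\lambda_3 L} \end{pmatrix},
\]
where $\lambda_j = \lambda_j(w)$ are the three roots of $\lambda^3 + \lambda + iw = 0$. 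The key observation is that $\det Q(w) = 0$ precisely when the homogeneous boundary value problem associated with the frequency $w$ admits a nontrivial solution.

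First I would make this correspondence explicit. Suppose $\det Q(w) = 0$ at $w = z + iq/2$. Then there exist coefficients $(c_1, c_2, c_3) \neq (0,0,0)$ with $Q(w)(c_1, c_2, c_3)^{\mathsf T} = 0$, and the function $\varphi(x) = \sum_{j=1}^3 c_j e^{\lambda_j x}$ solves the ODE $\varphi_{xxx} + \varphi_x + iw\,\varphi = 0$ on $[0,L]$ while satisfying the three boundary conditions $\varphi(0) = 0$, $\varphi(L) = 0$, $\varphi_x(L) = 0$ (these being exactly the three rows of $Q$). This $\varphi$ is nonzero, since the $\lambda_j$ are distinct for $w$ away from the branch points and the exponentials $e^{\lambda_j x}$ are linearly independent; the finitely many exceptional $w$ can be handled separately or absorbed, as the roots remain distinct for $w = z + iq/2$ with $z \in \mathbb{R}$ because $q > 0$ keeps us off the real axis. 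Thus the hypotheses of \Cref{lem-p} are met with this $\varphi$ and $z$ replaced by $w$.

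Next I would apply \Cref{lem-p}, which asserts that any such nontrivial $\varphi$ forces $\Im(w) \ge 0$. Writing $w = z + iq/2$, this gives $\Im(w) = q/2 \ge 0$, which is consistent and therefore does not immediately produce a contradiction. This is the main obstacle: \Cref{lem-p} as stated only gives the weak inequality $\Im(w) \ge 0$, so I need a strict version to rule out $w$ on the line $\Im(w) = q/2$. The natural fix is to apply \Cref{lem-p} instead to the \emph{conjugated} or \emph{reflected} problem. Specifically, I would consider $\bar\varphi(L - \cdot)$ or use the reflection $y(t,x) \mapsto y(T-t, L-x)$ that appears throughout \Cref{sect-US}; the reflected eigenfunction satisfies the boundary conditions $\varphi(0) = \varphi_x(0) = \varphi(L) = 0$ associated with frequency $-\bar w$, and applying \Cref{lem-p} to this second problem yields $\Im(-\bar w) \ge 0$, i.e. $\Im(w) \le 0$. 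Combining the two inequalities forces $\Im(w) = 0$, contradicting $\Im(w) = q/2 > 0$.

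The cleanest route, which I would adopt to avoid checking whether the reflected boundary conditions match the precise hypotheses of \Cref{lem-p}, is to observe that the eigenvalue problem $\lambda\varphi + \varphi_x + \varphi_{xxx} = 0$ with the boundary data forced by $\det Q = 0$ is exactly the problem analyzed in \Cref{lem-Phi}: by the arguments there (via \Cref{thmCha-L} and \eqref{thm-MD-ab1}), the only frequencies $w$ at which a nontrivial solution with the relevant boundary conditions exists are the critical ones, whose eigenvalue is $\lambda = q$ real, corresponding to $w$ purely imaginary equal to $iq$, not $z + iq/2$. Since $z + iq/2$ with $z \in \mathbb{R}$ is never equal to such an admissible critical frequency (the imaginary part $q/2$ differs from $q$, and more precisely any nonzero real $z$ already excludes purely imaginary $w$), we conclude $\det Q(z + iq/2) \neq 0$. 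I expect the delicate point to be pinning down exactly which boundary triple the rows of $Q$ encode and matching it to the sign convention in \Cref{lem-p}, so I would carry out that identification carefully before invoking the reflection argument.
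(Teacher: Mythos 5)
Your opening step---identifying $\det Q(z+iq/2)=0$ with the existence of a nontrivial $\varphi$ satisfying $\varphi_{xxx}+\varphi_x+i(z+iq/2)\varphi=0$ and $\varphi(0)=\varphi(L)=\varphi_x(L)=0$---is exactly the paper's first step; the paper then finishes in one line by invoking \Cref{lem-p} and reading off $\Im(z+iq/2)=q/2\le 0$, a contradiction. Your observation that \Cref{lem-p}, as stated and as proved (the $L^2$ norm of $\Re\{e^{izt}\varphi\}$ is non-increasing, hence $\Im(z)\ge 0$), actually delivers $\Im(z+iq/2)=q/2\ge 0$, which is consistent with $q>0$ rather than contradictory, is a substantive and correct diagnosis of where the difficulty sits. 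The problem is that neither of your two proposed repairs closes that gap. The reflection fails for a concrete reason: setting $\psi(x)=\varphi(L-x)$ gives $\psi_{xxx}+\psi_x+i(-w)\psi=0$ with $\psi(0)=\psi_x(0)=\psi(L)=0$ (similarly with $\bar w$ if you also conjugate), and these are not the hypotheses of \Cref{lem-p} since the derivative condition now sits at the left endpoint. Worse, the energy identity for $\Psi_t+\Psi_x+\Psi_{xxx}=0$ with $\Psi(\cdot,0)=\Psi_x(\cdot,0)=\Psi(\cdot,L)=0$ reads $\frac{1}{2}\frac{d}{dt}\|\Psi(t,\cdot)\|_{L^2(0,L)}^2=+\frac{1}{2}|\Psi_x(t,L)|^2\ge 0$, so the norm is non-decreasing and the correct conclusion is $\Im(-w)\le 0$, i.e.\ $\Im(w)\ge 0$ once more---the same one-sided bound, not its reverse. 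The operator $\partial_x+\partial_x^3$ is genuinely asymmetric under $x\mapsto L-x$ (one orientation dissipates $|\Psi_x(t,0)|^2$, the other produces $|\Psi_x(t,L)|^2$), so reflection cannot manufacture the opposite inequality.

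The fallback via \Cref{lem-Phi} conflates two different spectral problems. \Cref{lem-Phi} concerns the over-determined system with the four boundary conditions $\varphi(0)=\varphi_x(0)=\varphi(L)=\varphi_{xx}(L)=0$, whose nontrivial solutions exist only at the critical frequency; the vanishing of $\det Q$ encodes only the three conditions $\varphi(0)=\varphi(L)=\varphi_x(L)=0$, a well-posed non-self-adjoint eigenvalue problem whose eigenvalues $w$ form an infinite discrete set spread through the closed half-plane $\{\Im w\ge 0\}$. Nothing in \Cref{lem-Phi} or in the characterization of $\cN_D$ confines those eigenvalues to the imaginary axis or excludes them from the line $\Im w=q/2$; moreover $\Phi=e^{qt}\phi$ corresponds to $w=-iq$ (not $iq$), lies in the lower half-plane, and $\phi$ does not satisfy $\phi_x(L)=0$, so it is not a zero of $\det Q$ at all. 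In sum, the soft eigenfunction/energy argument only places the zeros of $\det Q$ in $\{\Im w\ge 0\}$; ruling them out on the specific line $\Im w=q/2$ requires an input beyond \Cref{lem-p}, and your proposal does not supply one.
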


\begin{proof}  We prove the assertion by contradiction. 
Assume that  $\det Q(z  + i q / 2) = 0$ for some $z \in \mR$.   Then  there exists $\varphi \in C^\infty[0, L]$ such that 
$$
\varphi_{xxx} + \varphi_x + i (z + i q/2) \varphi = 0 \mbox{ in } [0, L], 
$$
and 
$$
\varphi(0) = \varphi(L) = \varphi_x (L) = 0. 
$$
Applying \Cref{lem-p}, we have 
$$
\Im  (z + i q/ 2) = q/2 \le 0. 
$$
We have a contradiction. 
\end{proof}

\section{A lemma related to the moment method}

The following lemma from \cite{CKN20} is used in the proof of \Cref{pro-kdv1}. 

\begin{lemma}\label{lem-zeros} Let $\varphi$ be an analytic  function in $\mC$ such that $\varphi$ has a finite number of zeros on the real line, and 
\be\label{lem-zeros-varphi}
|\varphi (z)| \le c_1 e^{c_2 |z|^\alpha} \mbox{ in } \mC,  
\ee
for some $0 < \alpha < 1$, and $c_1, c_2 > 0$. 
Let $T_1, T_2 > 0$, and $h \in H^{s}(\mR)$ for some $s \le 0$  with support in $(0, T_1)$.  There exists $g \in C^\infty(\mR)$ with support in $[T_1, T_1 + T_2]$ such that if $z$ is a real solution  of order $m$ of the equation $\varphi (z) = 0$, then $z$ is a also a real solution of order $m$ of the equation $\hat h - \hat g = 0$, and 
\be
\| g\|_{H^k(\mR)} \le C_k \| h \|_{H^{s}(\mR)} \mbox{ for } k \in \N, 
\ee
for some positive constant $C_k$ depending only on $k$,  $T_1$, $T_2$, $s$,  and real zeros and their multiplicity of $\varphi$. 
\end{lemma}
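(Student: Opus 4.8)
The plan is to convert the requirement on $\hat h - \hat g$ into a finite system of moment conditions and to solve it by elementary linear algebra. Denote by $z_1, \dots, z_N$ the finitely many real zeros of $\varphi$, with respective orders $m_1, \dots, m_N$, and set $M = \sum_{j=1}^N m_j$. Since $h$ is a distribution with compact support in $(0, T_1)$, its Fourier transform $\hat h$ extends to an entire function, and the requirement that $\hat h - \hat g$ vanish to order $m_j$ at $z_j$ is equivalent to the $M$ scalar identities
\be
\partial_z^\ell \hat h(z_j) = \partial_z^\ell \hat g(z_j) =: c_{j, \ell}, \quad 0 \le \ell \le m_j - 1, \ 1 \le j \le N.
\ee
Writing the Fourier transform explicitly, each such identity prescribes an exponential moment of $g$ of the form $\int_{\mR} g(t)\, t^\ell e^{- i z_j t} \, dt$, so the whole problem reduces to producing a smooth $g$ supported in $[T_1, T_1 + T_2]$ realizing $M$ prescribed moments.

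First I would record the control of the data. Because $s \le 0$ and $h$ has compact support, I fix a cutoff $\chi \in C^\infty_c(\mR)$ equal to $1$ on a neighborhood of $\supp h$; then $\partial_z^\ell \hat h(z_j)$ is, up to a universal constant, the pairing of $h$ with the fixed test function $\chi(t)\, (-it)^\ell e^{-i z_j t} \in H^{-s}(\mR)$. The duality between $H^s$ and $H^{-s}$ then yields
\be
|c_{j, \ell}| \le C \| h\|_{H^s(\mR)},
\ee
with $C$ depending only on the $z_j$, on $T_1$, and on $s$. Thus the data $(c_{j,\ell})$ depends linearly on $h$ and is bounded by $\|h\|_{H^s(\mR)}$.

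Next I would solve the moment problem on the fixed interval. The decisive point is that the $M$ functionals $L_{j, \ell}(g) = \int_{T_1}^{T_1 + T_2} g(t)\, t^\ell e^{-i z_j t} \, dt$ are linearly independent on $C^\infty_c\big((T_1, T_1 + T_2)\big)$: a nontrivial relation $\sum_{j, \ell} a_{j, \ell} L_{j, \ell} = 0$ would force the exponential--polynomial $\sum_{j, \ell} a_{j, \ell}\, t^\ell e^{-i z_j t}$ to vanish identically on the interval, which is impossible since the functions $t^\ell e^{-i z_j t}$ attached to distinct frequencies $z_j$ are linearly independent. Consequently I may fix, once and for all (depending only on the real zeros of $\varphi$, their orders, $T_1$, and $T_2$), functions $\psi_1, \dots, \psi_M \in C^\infty_c\big((T_1, T_1 + T_2)\big)$ for which the $M \times M$ matrix $\big[L_{j, \ell}(\psi_r)\big]$ is invertible. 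Setting $g = \sum_{r=1}^M b_r \psi_r$, where $(b_r)$ is the unique solution of the linear system matching the prescribed moments, yields a smooth $g$ supported in $[T_1, T_1 + T_2]$ satisfying all the interpolation conditions; since $(b_r)$ is the image of $(c_{j, \ell})$ under a fixed (inverse) matrix,
\be
\| g\|_{H^k(\mR)} \le \sum_{r=1}^M |b_r| \, \| \psi_r\|_{H^k(\mR)} \le C_k \max_{j, \ell} |c_{j, \ell}| \le C_k \| h\|_{H^s(\mR)},
\ee
which is exactly the claimed estimate.

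The only genuinely delicate points are the two structural facts requiring careful justification: that $\hat h$ is entire with derivatives at real points controlled by $\|h\|_{H^s}$, which is a Paley--Wiener/duality argument that must accommodate the negative regularity $s \le 0$, and the linear independence of the moment functionals, which rests on the classical independence of exponential--polynomial systems and is what guarantees solvability of the finite linear system with a data-independent matrix. I expect the independence argument to be the main obstacle to state cleanly, since it is what makes the construction of $g$ both possible and linearly bounded. I note finally that the growth hypothesis \eqref{lem-zeros-varphi} on $\varphi$ is not used in the construction itself; together with the assumption of finitely many real zeros it only ensures that we are matching finitely many conditions, each of finite order.
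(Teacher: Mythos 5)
Your proof is correct, but it follows a genuinely different route from the paper's. The paper works on the Fourier side: it takes the Gevrey bump $\eta(t) = e^{-1/(t^2 - (T_2/2)^2)} \mathds{1}_{|t| < T_2}$, forms
$\zeta(z) = \sum_{i} \hat \eta(z - z_i) \prod_{j \neq i} (z - z_j)^{m_j} \big(\sum_{l} c_{i,l} (z - z_i)^{l}\big)$
with the coefficients $c_{i,l}$ fixed by Hermite interpolation so that the derivatives of $e^{i(T_1+T_2/2)z}\zeta$ match those of $\hat h$ at each $z_i$ (possible because $\hat\eta(0) \neq 0$), and then invokes Paley--Wiener together with the sub-exponential real-axis decay $|\hat\eta(z)| \le C_1 e^{-C_2 |z|^{1/2}}$ to recognize $\zeta$ as the Fourier transform of a smooth function supported in $[-T_2/2, T_2/2]$, which is translated to $[T_1, T_1+T_2]$. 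You instead treat the same finite set of interpolation conditions as a moment problem in physical space: the $M$ functionals $g \mapsto \int g(t)\, t^{\ell} e^{-i z_j t}\, dt$ are linearly independent on $C^\infty_c\big((T_1, T_1+T_2)\big)$ because exponential--polynomials with distinct frequencies are linearly independent on any interval, so a fixed dual family $\psi_1, \dots, \psi_M$ exists and $g$ is obtained by inverting a fixed $M \times M$ matrix. Both arguments are complete; yours is more elementary (no Paley--Wiener, no Gevrey cutoff) and makes the linear, boundedly invertible dependence of $g$ on the data $(c_{j,\ell})$ completely transparent, while the paper's Fourier-side construction is the one that would survive in the genuine moment-method setting with infinitely many frequencies, which is precisely where the decay of $\hat\eta$ and the growth hypothesis \eqref{lem-zeros-varphi} would actually be needed. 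Your remark that \eqref{lem-zeros-varphi} is not used in the finite-zero construction is consistent with the paper, where it enters only nominally; and your count of conditions ($0 \le \ell \le m_j - 1$ for a zero of order $m_j$) is the correct one, the paper's range $0 \le l \le m_i$ merely imposing one harmless extra condition per zero. The one point worth writing out in full in your version is the duality estimate $|\partial_z^{\ell} \hat h(z_j)| \le C \|h\|_{H^s(\mR)}$, where you should note that the pairing of the compactly supported $h \in H^s$ with $\chi(t)(-it)^{\ell} e^{-i z_j t} \in H^{-s}(\mR)$ is independent of the choice of cutoff $\chi$ equal to $1$ near $\supp h$; with that said, the argument is airtight.
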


\begin{proof} The proof of \Cref{lem-zeros} is as in \cite{CKN20},  where a special case is considered.  The construction of $g$, inspired by the moment method, see e.g. \cite{TT07},  can be done as follows.
Set $\eta(t) = e^{-1/ (t^2 - (T_2/2)^2)} \mathds{1}_{|t| < T_2}$ for $t \in \mR$. Assume that
$z_1$, \dots, $z_k$ are real, distinct solutions of the equation $ \varphi (z)  =0$,
and $m_1$, \dots, $m_k$ are the corresponding orders. Set, for $z \in
\mC$,
\[
\zeta(z) = \sum_{i=1}^k   \left(  \hat \eta(z - z_i) \mathop{\prod_{j=1}^k}_{j \neq i} (z
- z_j)^{m_j} \Big( \sum_{l=0}^{m_i} c_{i, l}  (z - z_i)^{l} \Big) \right),
\]
where $c_{i, l} \in \mC$ is chosen such that
\[
\frac{d^{l}}{dz^{l}} \Big( e^{ i (T_1 +T_2/2) z}\zeta(z)  \Big)_{z = z_i}= \frac{d^{l}}{dz^{l}} \hat
h_3 (z_i) \mbox{ for } 0 \le l \le m_i, \; 1 \le i \le k.
\]
This can be done since $\hat \eta (0) \neq 0$. Since
\[
|\hat \eta(z)| \le C e^{T_2 |\Im (z)|/2},
\]
and, by \cite[Lemma 4.3]{TT07},
\[
|\hat \eta(z)| \le C_1 e^{- C_2 |z|^{1/2}} \mbox{ for } z \in \mR,
\]
using \eqref{lem-zeros-varphi}, and 
applying Paley-Wiener's theorem, one can prove that $\zeta$ is the Fourier transform of a
function $\psi$ of class $C^1$; moreover,  $\psi$ has the support in $[-T_2/2, T_2/2]$. Set, for
$z \in \mC$,
\[
g(t) = \psi(t + T_1  + T_2/2).
\]
Using the fact $\hat g (z) = e^{ i  (T_1 + T_2/2) z} \zeta (z)$, one can check that $\hat g -
\hat h$ has zeros $z_1$, \dots, $z_k$ with the corresponding orders $m_1$, \dots,
$m_k$.  One can check that
\[
\| \psi \|_{H^k(\mR)} \le C_{T, L, k } \sum_{i=1}^k \sum_{l=0}^{m_i} \left|\frac{d^{l}}{dz^{l}}
\hat h (z_i)\right|,
\]
which yields
\[
\| \psi \|_{H^k(\mR)} \le C_{T, L, k}  \| h \|_{H^{s}(\mR)}.
\]
The required properties of $g$ follow.
\end{proof}

\section{Scilab program for computing $L_n$ and checking the local controllability property for $n=0, 1, 2, 3$.} \label{appendix-Scilab}

Here is the Scilab program which gives the results in \Cref{rem-scilab}. 

\medskip 

\begin{lstlisting}

clc
a=0;
b=0;
L=0;
q=0;
alpha=0;
beta=0;
value=0;

function[B] = B(x)
    B = (4*x^2*%e^(-6*x) - x^2)^(1/2);
endfunction

function[F] = F(x)
    F = B(x) * cos (B(x)) + x* sin (B(x));
endfunction

function[l] = l(x)
    l = (B(x)*B(x) - 3*x^2)^(1/2);
endfunction

k=0; j=-0.44; j0=j;  
//Here are the outcomes for n=0; 
//value=-0.0140641;  L=4.5183604; a=-0.5065520; b=4.6027563;  q=0.2354919;

//k=1; j=-0.67; j0=j; 
// // Here are the outcomes for n=1: 
// // value=-0.0061256; L=10.866906; a=-0.6903700; b=10.932497; q=0.1291104;


//k=2; j=-0.78; j0=j; 
////  Here are the outcomes for n=2: 
//// value=-0.0036196; L=17.177525; a=-0.7947960; b=17.232599; q=0.0933315;

//k=3; j=-0.86; j0=j; 
////  Here are the outcomes for n=3: 
//// value=-0.0024525; L=23.476776; a=-0.8687610; b=23.524949; q=0.0744156; 

while j > -2 
    j=j - 10^(-6); 
    if (B(j)> %pi + 2*k*%pi) & (B(j)<3*%pi/2+ 2*k*%pi) & (F(j)<0)  
    then j0=j0-10^(-6); 
    else 
        j=-3;
    end
end

a=j0
b=B(a)
L=l(a)
q= - 2*a*(a^2 + b^2)/ L^3
alpha=-a/L
beta=-b/L

function[p] = phix(x)
    p = -alpha*beta*%e^(alpha*x)*cos(beta*x) 
    + beta^2*%e^(alpha*x)*sin(beta*x)
    - 2*alpha*beta*%e^(-2*alpha*x) 
    + 3*alpha^2*%e^(alpha*x)*sin(beta*x)
    + 3*alpha*beta*%e^(alpha*x)*cos(beta*x);
endfunction

function[p] = P(x)
    p = inv_coeff([-q/2 + %i*x 1 0 1]);
endfunction

function[g] = G(z,x)
    lambda= roots(P(z));
    g = (lambda(1)*%e^(lambda(1)*L) - lambda(2)*%e^(lambda(2)*L))
    *%e^(lambda(3)*x)
    + (lambda(2)*%e^(lambda(2)*L) - lambda(3)*%e^(lambda(3)*L))
    *%e^(lambda(1)*x)
    +(lambda(3)*%e^(lambda(3)*L) - lambda(1)*%e^(lambda(1)*L))
    *%e^(lambda(2)*x);
endfunction

function[h] = H(z,x)
    h = abs(G(z,x))*abs(G(z,x))*phix(x);
endfunction

function[omega]=Omega(z)
        function[xi] = Xi(x)
        xi = abs(G(z,x))*abs(G(z,x))*phix(x);
        endfunction
    omega = intg(0,L,Xi);
end

value=Omega(0)
\end{lstlisting}

\providecommand{\bysame}{\leavevmode\hbox to3em{\hrulefill}\thinspace}
\providecommand{\MR}{\relax\ifhmode\unskip\space\fi MR }
\providecommand{\MRhref}[2]{%
  \href{http://www.ams.org/mathscinet-getitem?mr=#1}{#2}
}
\providecommand{\href}[2]{#2}

\end{document}